\newcommand{\divides}{\mid} 
\newcommand{\enquote}[1]{``{#1}''}
\DeclareRobustCommand{\bbone}{\text{\usefont{U}{bbold}{m}{n}1}}
\DeclareMathOperator{\EX}{\mathbb{E}} 
\DeclareMathOperator{\PR}{\mathbb{P}} 
\newcommand{\defeq}{\overset{\mathrm{def}}{=\joinrel=}}
\newcommand\ontop[2]{\genfrac{}{}{0pt}{}{#1}{#2}}
\DeclarePairedDelimiter{\floor}{\lfloor}{\rfloor}
\DeclarePairedDelimiterX{\inner}[1]{\langle}{\rangle}{#1}
\DeclarePairedDelimiterX{\brackets}[1]{[}{]}{#1}
\DeclarePairedDelimiterX{\braces}[1]{\{}{\}}{#1}
\DeclarePairedDelimiterX{\prn}[1]{(}{)}{#1}
\DeclarePairedDelimiterX{\abs}[1]{|}{|}{#1}
\def \tabularcenter#1 {\begin{tabular}{c}{#1}\end{tabular}}
\def\acts{\curvearrowright} 
\def\convolve{\circledast} 
\def\leff{\overset{*}{\le}}
\def\multiset#1#2{\ensuremath{\prn*{\kern-.3em\prn*{\genfrac{}{}{0pt}{}{#1}{#2}}\kern-.3em}}}
\newcommand{\alg}{\textup{alg}}
\newcommand{\Decompalg}{\mathcal{D}\textup{ecomp}_{\alg}}
\newcommand{\DecompB}{\mathcal{D}\textup{ecomp}_{B}}
\newcommand{\conj}{\textup{conj}}
\newcommand{\Hom}{\textup{Hom}}
\newcommand{\moccFr}{\mathcal{MOCC}\prn*{F_r}}
\newcommand{\mucg}{\mathcal{M}u\mathcal{CG}}
\newcommand{\mucgBFr}{\mathcal{M}u\mathcal{CG}_B\prn*{F_r}}
\newcommand{\Bconv}{\underset{B}{*}}
\newcommand{\convalg}{\underset{\alg}{*}}
\newcommand{\mualg}{\mu^{\alg}}
\newcommand{\chars}{\textup{char}}
\newcommand{\sgn}{\textup{sgn}}
\newcommand{\std}{\textup{std}}
\newcommand{\Ind}{\textup{Ind}}
\newcommand{\VecLambda}{\Vec{\lambda}}
\newcommand{\IndVecLambda}{\Ind\prn*{\VecLambda}}
\newcommand{\s}{\mathfrak{s}}
\newcommand{\FG}{\C}
\newcommand{\PnGhat}{\mathscr{P}_n\prn*{\hat{G}}}
\newcommand{\PdGhat}{\mathscr{P}_d\prn*{\hat{G}}}
\newcommand{\PledGhat}{\mathscr{P}_{\le d}\prn*{\hat{G}}}
\newcommand{\PleDGhat}{\mathscr{P}_{\le D}\prn*{\hat{G}}}
\newcommand{\PGhat}{\mathscr{P}\prn*{\hat{G}}}
\newcommand{\FIG}{\textbf{FI}_G}
\newcommand{\AGled}{\mathcal{A}(G)^{\le d}}
\newcommand{\AtagGled}{\mathcal{A}'(G)^{\le d}}
\newcommand{\Indphi}{\Ind\phi}
\newcommand{\IndnVecLambda}{\Ind_n\prn*{\VecLambda}}
\newcommand{\Indnphi}{\Ind_n\phi}
\newcommand{\FGconjG}{\FG^{\conj(G)}}
\newcommand{\pilab}{\pi_1^{\textup{lab}}}
\newcommand{\Cphipi}{\mathscr{C}_{\phi}^{\pi}}
\newcommand{\bigast}{%
  \mathop{\vcenter{\hbox{\scalebox{1.6}{$\ast$}}}}\limits
}
\newcommand{\C}{\mathbb{C}}
\newcommand{\E}{\mathbb{E}}
\newcommand{\F}{\mathbb{F}}
\newcommand{\N}{\mathbb{N}}
\newcommand{\Q}{\mathbb{Q}}
\newcommand{\R}{\mathbb{R}}
\newcommand{\Ss}{\mathbb{S}}
\newcommand{\Z}{\mathbb{Z}}
\newtheorem{theorem}{Theorem}[section]
\newtheorem{corollary}[theorem]{Corollary}
\newtheorem{fact}[theorem]{Fact}
\newtheorem{lemma}[theorem]{Lemma}
\newtheorem{proposition}[theorem]{Proposition}
\newtheorem{definition}[theorem]{Definition}
\newtheorem{example}[theorem]{Example}
\newtheorem{remark}[theorem]{Remark}
\newtheorem{conjecture}[theorem]{Conjecture}
\newtheorem{observation}[theorem]{Observation}
\numberwithin{equation}{section}
\title{Word Measures on Wreath Products II}
\author{Yotam Shomroni }
\date{August 2023}
\begin{document}

\maketitle

\begin{abstract}
    Every word $w$ in $F_r$, the free group of rank $r$, induces a probability measure (the $w$-measure) on every finite group $G$, by substitution of random $G$-elements in the letters. 
    This measure is determined by its Fourier coefficients: the $w$-expectations $\EX_w[\chi]$ of the irreducible characters of $G$.
    For every finite group $G$, every stable character $\chi$ of $G\wr S_n$ (trace of a finitely generated $\FIG$-module), and every word $w\in F_r$, we approximate $\EX_w[\chi]$ up to an error term of $O\prn*{n^{-\pi(w)}}$, where $\pi(w)$ is the primitivity rank of $w$. 
    This generalizes previous works by Puder, Hanany, Magee and the author.
    As an application we show that random Schreier graphs of representation-stable actions of $G\wr S_n$ are close-to-optimal expanders.
    The paper reveals a surprising relation between stable representation theory of wreath products and not-necessarily connected Stallings core graphs.
\end{abstract}

\tableofcontents 

\section{Introduction}
\label{section_intro}

In this paper we study word measures 
on wreath products of finite groups 
with the symmetric group $S_n$,
continuing the work of \cite{Sho23I}.
Explicitly, we bound the 
$w$-expectations of stable irreducible 
characters. Then we give applications 
to expansion of random Schreier graphs. 

We start by explaining the notions of wreath product, word measures and stable characters.
The \textbf{wreath product} of a group $G$ with the symmetric group $S_n$ is $G\wr S_n\defeq G\wr_{[n]} S_n \defeq G^n\rtimes S_n$, where $S_n$ acts on $G^n$ by permuting the indices $[n]\defeq \{1, \ldots, n\}$. 
The elements are $\braces*{(v, \sigma): v\in G^n, \, \sigma\in S_n}$ and the product is $\prn*{v_1, \sigma_1} \cdot \prn*{v_2, \sigma_2} = \prn*{v_1 \cdot (\sigma_1.v_2), \sigma_1\cdot \sigma_2}$. 
An element can also be thought of as a monomial matrix (every row and column has a unique non-zero entry) with $G$-elements, e.g.\
\begin{equation*}
    \begin{pmatrix}
    0 & i & 0 \\
    0 & 0 & 1 \\
    -i & 0 & 0
    \end{pmatrix} \in \{\pm 1, \pm i\} \wr S_3, \quad
    \begin{pmatrix}
    g_1 & 0 & 0 \\
    0 & 0 & g_2 \\
    0 & g_3 & 0
    \end{pmatrix} \in G\wr S_3 \quad (g_i\in G). 
\end{equation*}

Throughout the paper, $G$ denotes a finite group, $\conj(G)$ is its set of conjugacy classes, and $\hat{G}$ is its set of irreducible characters.
We also denote the set of all characters by $\textup{char}(G)$, and the trivial character (which is constant $1$) by $\textbf{1}\colon G\to \{1\}$.

\subsection*{What are word measures?}

Given a word $w\in F_r$ in a free group $F_r = \textup{Free}\prn*{\{b_1, \ldots, b_r\}}$, and a finite group $G$, we get a map (not necessarily a homomorphism)\footnote{Even though not being homomorphisms themselves, word maps commute with homomorphisms. In the language of categories, word maps are precisely the natural transformations $\textbf{forg}^r\to \textbf{forg}$, where $\textbf{forg}\colon \textbf{Grp}\to \textbf{Set}$ is the forgetful functor and $\textbf{forg}^r(G) = G^r$ gives the $r$-fold Cartesian product, and similarly $\textbf{forg}^r(f) = (f, \ldots, f)$ for every homomorphism $f$. } 
$w\colon G^r\to G$, called a word map. 
For example, $b_1 b_2^2 b_1^{-1} b_2^{-1}$ maps $(g, h)\in G^2 \mapsto gh^2 g^{-1}h^{-1}\in G$.
We denote by $U, U^{\times r}$ the uniform measures on $G, G^r$ respectively. 
The pushforward measure $\mu_w \defeq w_*(U^{\times r})$ on $G$ is called the $w$-measure on $G$.
Equivalently, by the universal property of the free group, homomorphisms $F_r\to G$ correspond to functions $\{b_1, \ldots, b_r\}\to G$, which can be identified with $G^r$, so $\Hom(F_r, G)$ is finite and for a random homomorphism $\alpha\sim U(\Hom(F_r, G))$, the image $\alpha(w)$ distributes according to the $w$-measure. 
Alternatively, 
\[\forall g\in G:\quad  \mu_w(g) = \frac{1}{|G|^r} \abs*{\braces*{(g_1, \ldots, g_r)\in G^r: w(g_1, \ldots, g_r) = g}}. \]
For example, the word $b_1\in F_r$ induces the uniform probability measure on any finite group.

Given a function $f\colon G\to \C$, its expectation according to the $w$-measure is
\[ \EX_w[f] \defeq \EX_{\alpha}[f(\alpha(w))] = \sum_{g\in G} f(g) \mu_w(g). \]

This expectation captures interesting information about both $w$ and $f$.
As a toy example, consider $G = C_m$, the cyclic group of order $m$, and $f\colon C_m \hookrightarrow \Ss^1$ the standard embedding $f(x) = e ^ {\frac{2\pi i x}{m}}$.
Given a basis $B = \{b_1, \ldots, b_r\}$ of the free group $F_r$, define $\nu_i\in \Hom(F_r, \Z)$ for every $i\in [r]$ by letting $\nu_i(b_j)\defeq \bbone_{i=j}$ for every $j\in [r]$. 
(This determines $\nu_i$ uniquely).
Then 
\begin{equation}
\label{equation_expectation_of_cyclic_embedding}
\EX_w[f] =
    \begin{cases}
    1 & \textrm{ if } \nu_i(w)\equiv 0 \textup{ (mod }m\textup{)} \textrm{ for every }i,\\
    0 & \textrm{otherwise.}
    \end{cases}
\end{equation}
Indeed, by independence of the letters, and since the sum of a non-trivial subgroup of the $m^{th}$-roots of unity vanishes,
\[\EX_w[f] = \prod_{i=1}^r \EX_{b_i^{\nu_i(w)}}[f] = \prod_{i=1}^r \bbone_{\nu_i(w) = 0 \textup{ (mod }m\textup{)}}.\]

A much more interesting example arises when $G = S_n$ is the symmetric group, and $f = \#\textup{fix}$ is the natural character that counts fixed points of permutations.
By the well-known Nielsen-Schreier theorem, a subgroup of a free group is free. 
A primitive element in $H$ is a part of a basis of $H$, or equivalently, in the $\textup{Aut}(H)$-orbit of $h_1$ for some basis $\{h_1, \ldots, h_k\}$ of $H$.
In \cite[Theorem 1.8]{PP15}, Puder and Parzanchevski proved an approximation theorem for $\EX_w[f]$, relating it to the subgroup structure of $F_r$:
\begin{equation}
    \label{equation_PP15_thm_main_result}
    \EX_w[f] = 1 + |\textup{Crit}(w)|n^{1-\pi(w)} + O\prn*{n^{-\pi(w)}},
\end{equation}
where $\pi(w)$ and $\textup{Crit}(w)$ are the \textbf{primitivity rank} and the \textbf{critical subgroups} of $w$: 
\begin{definition}
\label{def_primitivity_rank}
\cite[Definition 1.7]{Puder_2014}
Let $w\in F_r$ be a word in a free group.
The \textbf{primitivity rank} of $w$ is
\[\pi(w) \defeq \min\braces*{\textup{rk}(H): w\in H\le F_r, \, w\textrm{ is a non-primitive element in }H},\]
with the convention $\min\emptyset = \infty$.
The subgroups achieving the minimum are called \textbf{critical}:
\[\textup{Crit}(w) \defeq \braces*{H\le F_r: w\in H\textrm{ is a non-primitive element in }H, \, \textup{rk}(H)=\pi(w)}.\]
\end{definition}

A word $w\neq 1$ is called a proper power if $w = u^k$ for some $u\in F_r, \,\,k\ge 2$ (equivalently, $\pi(w) = 1$), and otherwise it is called a non-power.
The following table gives examples for $\pi(w)$ and $\textup{Crit}(w)$:

\begin{table}[ht!]
\centering
\begin{tabular}{||c | c | c ||} 
 \hline
 Description of $w$ 
 & $\pi(w)$ 
 & $\textup{Crit}(w)$ \\[0.7ex] 
 \hline\hline
 $w=1$ & 0 & $\{\inner*{1}\}$ \\[0.7ex] 
 $w$ is a proper power & 1 & $\{\inner*{u}: \inner*{w} \lvertneqq \inner*{u}\}$  \\[0.7ex] 
 $[b_1, b_2]$ & 2 & $\{\inner*{b_1, b_2}\}$ \\[0.7ex] 
 $b_1^2 b_2^2$ & 2 & $\{\inner*{b_1, b_2}\}$ \\[0.7ex] 
 $\vdots$ & $\vdots$ & $\vdots$ \\[0.7ex] 
 $b_1^2 \ldots b_k^2$ & k & $\{\inner*{b_1, \ldots, b_k}\}$ \\[0.7ex] 
 $w$ is primitive & $\infty$ & $\emptyset$ \\ [1ex] 
 \hline
\end{tabular}
\caption{Primitivity rank and critical subgroups}
\label{table:primitivity_rank}
\end{table}
\FloatBarrier

\subsection*{What are stable characters?}

Informally, given a sequence $(G_n)_{n\in \N}$ of groups, a stable character is a sequence of characters $(\chi_n)_{n\in \N}$, where $\chi_n\in \chars(G_n)$, with a uniform definition that in some sense does not depend on $n$ for sufficiently large $n$.

To study stable characters on $G\wr S_{\bullet} \defeq \prn*{G\wr S_n}_{n\in \N}$, we first describe the case $G = \{1\}$.
The algebra of stable class functions on the symmetric groups $S_{\bullet}$ is $\Q[a_1, a_2, \ldots]$, where each $a_t$ is the family of class functions $\prn*{a_t \colon S_n\to \N}_{n\in \N}$ that count cycles of length $t$ in permutations. 

\begin{definition}
    \label{def_partition}
    An \textbf{integer partition} of $n$ is a sequence of natural numbers $\lambda_1 \ge \ldots \ge \lambda_{\ell} > 0$ with $|\lambda|\defeq \sum \lambda_i = n$. 
    We sometimes denote $\lambda\vdash n$ instead of $|\lambda| = n$.
    For every $d\in \N$ and a partition $\lambda\vdash d$, we define $\lambda[n] \vdash n$ for every $n\ge d + \lambda_1$ as $\lambda[n] \defeq \prn*{n - d, \lambda_1, \ldots, \lambda_{\ell}}$.
    A \textbf{Young diagram} $\lambda$ with $n$ cells is an integer partition of $n$, visualized as a set of points in the plane (\enquote{cells}) arranged in $\ell$ rows of lengths $\lambda_1 \ge \ldots \ge \lambda_{\ell}$.
\end{definition}

There is a well-known correspondence between Young diagrams $\lambda$ with $n$ cells and irreducible representations of $S_n$.
For every partition $\lambda$, denote by $\chi_{\lambda}$ the corresponding irreducible character.
This gives us an alternative description of stable functions: a family of class functions $f = \prn*{f_n\colon S_n \to \Q}_{n\in \N}$ is stable of degree at most $d$ if there are finitely many Young diagrams $\lambda$ with at most $d$ cells and coefficients $c_{\lambda}\in \Q$ such that for every large enough $n$, we have $f_n = \sum_{\lambda} c_{\lambda} \cdot \chi_{\lambda[n]}$.
(See the appendix \enquote{The ring of class functions} in \cite{HP22}).
For example, for every $n\ge 1$ the partition $(1)[n] = (n-1, 1)$ corresponds to the standard (stable) character $\std=\#\textup{fix}-1$ of degree $1$,
and for every $n\ge 3$ the partition $(1, 1)[n] = (n-2, 1, 1)$ corresponds to the stable character $\wedge^2 \std$ of degree $2$.

Following \cite[Section 2.3.1]{ceccherini-silberstein_scarabotti_tolli_2014},
for every finite group $G$ 
we define $a_{t, c} \colon G\wr S_n\to \N$ 
for every $t\in \Z_{\ge 1}$ and every conjugacy class $c\in \conj(G)$ as follows: for every $\sigma\in S_n, v\in G^n$,
\begin{equation}
\label{eq_def_a_tc}
a_{t, c}(v, \sigma) \defeq \abs*{\braces*{t\textrm{-cycles $\prn*{i_1 \overset{\sigma}{\to} \ldots \overset{\sigma}{\to} i_t \overset{\sigma}{\to} i_1}$ of $\sigma$ satisfying } v(i_t)\cdots v(i_1) \in c}}.
\end{equation}
This is well defined, since changing $i_1$ in the $t$-cycle amounts to conjugating the product.


\begin{definition}
\label{def_AG}
We define the algebra of stable class functions on $G\wr S_{\bullet}$ as 
\[\mathcal{A}(G)\defeq \FG\brackets*{(a_{t,\, c})_{t\in \N,\,\, c\in \conj(G)}}. \]
We define the stable irreducible characters as
\[ \widehat{G\wr S_{\bullet}} \defeq \braces*{(\chi_n)_{n\in \N}\in \mathcal{A}(G): \,\,\,\,\,\forall n \textup{ large enough}, \,\,\,\chi_n\in \widehat{G\wr S_n}}. \]
\end{definition}

The stable irreducible characters form an orthonormal basis of the algebra $\mathcal{A}(G)$, and can be characterized using partitions similarly to the case of $S_n$
(see Section~\ref{section_stable_rep}).
The algebra $\mathcal{A}(G)$ is a filtered\footnote{Recall that a filtered algebra $\mathcal{A}$ is an algebra together with a degree map $\deg\colon \mathcal{A}\to \N$ satisfying $\deg(x + y) \le \max(\deg(x), \deg(y))$ and $\deg(x\cdot y) = \deg(x) + \deg(y)$ for every $x, y\in \mathcal{A}$.} algebra:

\begin{definition}
We define a degree function $\deg\colon \mathcal{A}(G)\to \Z_{\ge 0}$ as follows.
Every $f\in \mathcal{A}(G)$ can be written uniquely as a (finite) linear combination of finite products of $\braces*{a_{t, c}}_{t\in \N,\,\, c\in \conj(G)}$:
\[ f = \sum_{\mathcal{I}} \beta_{\mathcal{I}} \prod_{i\in \mathcal{I}} a_{t_i, c_i} ^ {r_i}, \quad \quad r_i, t_i\in \Z_{\ge 1}, \,\, c_i\in \conj(G), \,\, \mathcal{I}\subseteq \N \textup{ finite},\,\, \beta_{\mathcal{I}}\in \FG-\{0\}.\]
We define $\deg(f) \defeq \max_{\mathcal{I}\subseteq \N} \sum_{i\in \mathcal{I}} t_i\cdot r_i.$
Note that
$\deg(f) = 0$ if and only if $f$ is constant (that is, only $\beta_{\emptyset}\neq 0$), 
that for every $t\in \N$ and $c\in \conj(G)$ we have $\deg(a_{t, c}) = t$, 
and for every $f, g\in \mathcal{A}(G)$ we have $\deg(f \cdot g) = \deg(f) + \deg(g)$ and $\deg(f + g) \le \max(\deg(f), \deg(g))$. 
(These are standard facts in polynomial rings).
\end{definition}

For background about stability, see Appendix~\ref{appendix_stable_algebra}.

\noindent Now we relate stable characters and word measures.
For a stable character $\chi = (\chi_n)_{n\in\N}$, we omit $n$ from the notation of the dimension $\dim(\chi) = \prn*{\dim(\chi_n)}_{n\in \N}$ and the $w$-expectation $\EX_w[\chi] = \prn*{\EX_w[\chi_n]}_{n\in \N}$.
The following conjecture is central in the investigation of $w$-measures:
\begin{conjecture}
\label{conj_great_HP22}
(\cite[Conjecture 1.13]{HP22}) 
For every stable irreducible character $\chi$ and $w\in F_r$,
\[ \EX_w[\chi] = O\prn*{\dim\prn*{\chi}^{1-\pi(w)}} \]
where $n\to\infty$ and the implied constant depends on $\chi, w$.
\end{conjecture}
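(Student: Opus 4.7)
The plan is to establish the conjecture by combining the Stallings-core-graph machinery developed in~\cite{Sho23I} with representation-theoretic bookkeeping for characters in the filtered algebra $\mathcal{A}(G)$. Fix a stable irreducible $\chi = (\chi_n)_{n \in \N}$ of $G \wr S_\bullet$ of degree $d$ in the filtration of $\mathcal{A}(G)$ (using the classification of stable irreducibles by tuples of Young diagrams indexed by $\hat{G}$, recalled in Section~\ref{section_stable_rep}), so that $\dim(\chi_n) = \Theta(n^d)$. The conjecture thus amounts to showing $\EX_w[\chi_n] = O(n^{d(1-\pi(w))})$.

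I would first write $\chi$ as a finite linear combination of monomials $M = \prod_i a_{t_i, c_i}^{r_i}$ of total degree at most $d$, and reduce by linearity to bounding each $\EX_w[M]$. Each such expectation has a direct combinatorial reading: up to the normalization $|G \wr S_n|^{-r}$, it counts pairs $(\alpha, \Sigma)$ with $\alpha \in \Hom(F_r, G \wr S_n)$ and $\Sigma$ a configuration of cycles of $\alpha(w)$ with prescribed lengths and $G$-products. Fixing first the combinatorial type of $\Sigma$---a disjoint union of labeled cycle-graphs $\Gamma_M$ with a map to the bouquet $\mathcal{B}_r$ of $r$ circles---and then summing over lifts to $G \wr S_n$, each such map factors through its (possibly disconnected) Stallings core, producing a tuple of subgroups $H_1, \ldots, H_k \le F_r$, one per connected component, each containing a conjugate of $w$. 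The number of compatible $(\alpha, \Sigma)$-pairs scales like a power of $n$ whose exponent is controlled by the ranks $\textup{rk}(H_j)$, up to $|G|$-dependent multiplicative constants---this is precisely where the \emph{not-necessarily-connected} core graphs anticipated in the abstract play their role.

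The primitivity-rank bound $\textup{rk}(H_j) \ge \pi(w)$---which applies whenever $w$ is non-primitive in $H_j$---then controls the contribution of each core tuple. Naively summed, this yields at best $O(n^{d - \pi(w)})$ per monomial, substantially weaker than the target $O(n^{d(1-\pi(w))})$. Bridging this gap is the main obstacle: the plan is to exploit orthogonality, since $\chi$ is irreducible and stable and therefore orthogonal in $\mathcal{A}(G)$ to every stable class function of strictly lower degree, and this should force systematic cancellations in the sum over monomials. Concretely, I would organize the expansion of $\chi$ so that, for every core tuple with $\sum_j \textup{rk}(H_j)$ below a threshold determined by $d \cdot \pi(w)$, the corresponding leading-order contributions aggregate to zero, leaving only residual terms of the conjectured size. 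Making the cancellation explicit likely requires an analog of the Frobenius/Murnaghan--Nakayama formula for $G \wr S_n$, from which the character expansion can be read off termwise against the core-graph decomposition.
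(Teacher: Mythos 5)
The statement you were asked to prove is labeled as a \emph{conjecture} in the paper, and indeed it remains unproven there: the paper's main theorem (Theorem~\ref{thm_main_result}) establishes only the weaker bound $\EX_w[\chi] = O\prn*{n^{-\pi(w)}}$ for non-power $w$ and $\deg(\chi)\ge 2$, which for $d \defeq \deg(\chi)$ with $\dim(\chi)=\Theta(n^d)$ translates to $O\prn*{\dim(\chi)^{-\pi(w)/d}}$ rather than the conjectured $O\prn*{\dim(\chi)^{1-\pi(w)}}$. These coincide only when $d=\pi(w)/(\pi(w)-1)$; for $d$ large the conjectured bound is exponentially stronger, and the conjecture is explicitly left open by the authors of both this paper and \cite{HP22}.

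Your proposal correctly identifies the main obstacle and even estimates it accurately: the naive count over core tuples gives $O(n^{d-\pi(w)})$, and you recognize that getting beyond this requires systematic cancellation. What you may not realize is that the cancellation mechanism you sketch in the final paragraph --- organizing the monomial expansion of $\chi$ so that low-rank core contributions pair against inner products $\inner*{\chi,\textbf{1}}$ and $\inner*{\chi,\chi_\phi}$ that vanish by orthogonality --- is precisely what the paper executes via the (algebraic) induction-convolution lemma (Corollary~\ref{corollary_AICL}), the classification of critical morphisms (Theorem~\ref{thm_classification}), and the Frobenius-reciprocity computation in Proposition~\ref{prop_FrobRecip}. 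That machinery already extracts the maximum cancellation available from orthogonality against the degree-$0$ and degree-$1$ part $P_1(\chi)$, and the net result is exactly $O(n^{-\pi(w)})$, not $O(n^{d(1-\pi(w))})$.

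The genuine gap in your argument is therefore the unfinished last step. The sentence ``Making the cancellation explicit likely requires an analog of the Frobenius/Murnaghan--Nakayama formula for $G\wr S_n$'' is hopeful rather than constructive: no known argument produces cancellation beyond the $n^{-\pi(w)}$ scale, because the contributing core graphs $\Gamma$ with $\chi(\Gamma)=-\pi(w)$ give terms of size $n^{-\pi(w)}$ that have no reason (so far) to conspire to the much smaller conjectured size. Until you can say \emph{which} extra cancellation occurs and why it drives the sum down to $n^{d(1-\pi(w))}$ --- a statement that would require controlling the interaction of $\chi(\Gamma)$ with the full degree $d$, not merely with $\pi(w)$ --- you have re-derived the paper's main theorem, not the conjecture. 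If you intend to claim only the weaker bound, rename the target; if you intend the conjecture, the proof has a hole exactly where you flagged ``the main obstacle.''
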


This conjecture is known to be true for surface words - see \cite[Theorem 1.1]{magee2021surface}, 
and for proper powers.
For example, $w = [a, b]\in \textup{Free}\braces{a, b}$ has $\pi(w)=2$ and $\EX_w[\chi] = \chi(1)^{-1}$.
In \cite{Sho23I}, the case $\deg(\chi) = 1$ of this conjecture was proved for the group sequence $G\wr S_{\bullet}$, for every compact $G$.
In this paper we prove for every finite $G$ that if $\deg(\chi)\ge 2$ and $\pi(w)\ge 2$, 
\[ \EX_w[\chi] = O\prn*{n^{-\pi(w)}}. \]

\subsection*{Statement of the main result}

In the following table, we summarize the progress that was already done\footnote{The papers are ordered chronologically by the year they were produced, not published.} in the analysis of word measures on wreath products $G_n = G\wr S_n$.
Every paper in the table approximates  $\EX_w[\chi]$ for every word $w\in F_r$, up to an error term of order $O\prn*{n^{-\pi(w)}}$, where $\chi \in \chars\prn*{G_n}$.
\begin{table}[ht!]
\centering
\begin{tabular}{|| c | c | c | c | c||} 
\hline
Paper
& Group $G$
& Stable Character $\chi$ 
& $\dim(\chi) $\\ [0.5ex] 
\hline\hline
\cite{PP15}
& $\{1\}$
& Trace of  $G_n\hookrightarrow GL_n(\C)$
& $n$\\  [1ex] \hline

\cite{magee2021surface}
& $\Ss^1$ or finite cyclic
& Trace of  $G_n\hookrightarrow GL_n(\C)$
& $n$ \\  [1ex]\hline

\cite{HP22}
& $\{1\}$
& \textbf{Every} stable character
& Arbitrary polynomial\\  [1ex]\hline

\cite{Ord20}
& $\Z/2\Z$
& A specific char' $\chi_{(n-2);(2)}$
& $\binom{n}{2}$\\  [1ex]\hline

\cite{Sho23I}
& \textbf{Any} compact group
& $\chi:\, \deg(\chi)\le 1$
& $O(n)$\\  [1ex]\hline

This paper
& \textbf{Any} finite group
& \textbf{Every} stable character
& Arbitrary polynomial\\  [1ex]\hline

 \hline
\end{tabular}
\caption{Comparison to related papers about wreath products with $S_n$}
\label{table:wreath_product_papers}
\end{table}
\FloatBarrier

The notation $G_n\hookrightarrow GL_n(\C)$ indicates the natural embedding into $GL_n(\C)$ with the corresponding representation $\C^n$.
We already described the approximation of \cite{PP15} in Equation~\eqref{equation_PP15_thm_main_result}. 
One can deduce the rest of the results using Conjecture~\ref{conj_great_HP22}: 
that is, \cite{magee2021surface} and \cite{Sho23I} prove\footnote{In fact, a stronger bound is proved, and the leading coefficient is also computed (at least in the \enquote{generic} case).} $\EX_w[\chi] = O\prn*{n^{1-\pi(w)}}$ whenever $\deg(\chi) = 1$,
while \cite{Ord20} and \cite{HP22} prove 
$\EX_w[\chi] = O\prn*{n^{-\pi(w)}}$ whenever $w$ is a non-power 
and either $\chi = \chi_{(n-2);(2)}$ 
(in the case of \cite{Ord20}) or 
$\deg(\chi)\ge 2$ (in the case of \cite{HP22}).
In the more recent \cite{puder2023stable} and 
\cite{puder2025stable}, 
the accurate asymptotics of $\EX_w[\chi]$ is 
computed for many more stable characters $\chi$ of $G\wr S_n$
for compact groups $G$.

For every stable class function $f = (f_n)_{n\in\N} \in \mathcal{A}(G)$ and $k\in \N$, we denote $\prn*{f^{(k)}}_n(x) \defeq f_n\prn*{x^k}$.
We denote the length of a word $w\in F_r$ by $|w|$.
\begin{theorem}
\label{thm_main_result} (Main Result)
Let $w\in F_r$, $G$ a finite group, and $\forall n\in \N: G_n \defeq G\wr S_n$. 
Let $\chi\in \widehat{G_{\bullet}}$ be a stable irreducible character.
\begin{enumerate}
    \item For every $n\ge \deg(\chi)\cdot |w|$, the expectation $\EX_w[\chi]$ coincides with some rational function in $\FG(n)$.
    Moreover, if $\EX_w[\chi]\neq 0$ then 
    $\EX_w[\chi] = \Omega\prn*{n^{-\deg(\chi)\cdot |w|}} = \Omega\prn*{\chi(1)^{-|w|}}$.
    \item If $\deg(\chi) \ge 2$, and $w$ is a non-power, then $ \EX_w[\chi] = O\prn*{n^{-\pi(w)}}. $
    \item If $w$ is a power, say $w = u^k$ for a non-power $u\in F_r$ and $k\ge 2$, then $\pi(w)=1$ and
    \[ \EX_w[\chi] 
    = \EX_{x^k}[\chi] + O\prn*{n^{-1}} 
    = \inner*{\chi^{(k)}, \mathbf{1}} + O\prn*{n^{-1}}. \]
\end{enumerate}
\end{theorem}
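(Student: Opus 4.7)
The plan is to exploit the filtered algebra structure of $\mathcal{A}(G)$ and reduce every claim to $w$-expectations of monomials in the generators $a_{t,c}$. Writing the stable character as
\[ \chi = \sum_{\mathcal{I}} \beta_{\mathcal{I}} \prod_{i\in \mathcal{I}} a_{t_i, c_i}^{r_i}, \qquad \textup{with}\,\,\, \sum_{i} t_i r_i \le \deg(\chi) \]
for every nonzero term, linearity of expectation reduces the whole analysis to $\EX_w\!\prn*{\prod_i a_{t_i,c_i}^{r_i}}$. Using the definition of $a_{t,c}$ as a count of $\sigma$-cycles with prescribed product class, each such expectation unfolds as a sum over placements of disjoint cycles of prescribed lengths on subsets of $[n]$, weighted by the probability that the word map $w$ outputs a monomial matrix with the specified cycle structure and conjugacy class data; this is the natural setting for a $G$-decorated version of the Stallings core graph machinery developed in \cite{PP15,HP22}.

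For Part~1, each placement of $\sum t_i r_i$ indices in $[n]$ contributes a polynomial factor in $n$ of degree $\sum t_i r_i \le \deg(\chi)$, and the accompanying probabilities decompose into finite sums indexed by $G$-decorated quotients of the Stallings core graph of $w$---data intrinsic to $w$ and independent of $n$. Combined with the normalization $|G_n|^{-r}$, this produces a rational function in $\FG(n)$, valid once $n \ge \deg(\chi)\cdot|w|$ so that the cycles fit disjointly and the polynomial identities become exact. A careful accounting of numerator and denominator degrees shows the denominator has degree at most $\deg(\chi)\cdot|w|$, so a non-vanishing such rational function must satisfy $\EX_w[\chi] = \Omega\!\prn*{n^{-\deg(\chi)\cdot|w|}} = \Omega\!\prn*{\chi(1)^{-|w|}}$.

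Part~2 is the crux. Each monomial's expectation admits an expansion in decreasing powers of $n$; the naive bound from the decorated Stallings analysis gives only $O\!\prn*{n^{d_i - \pi(w)}}$ for a degree-$d_i$ monomial, which is far too weak once $d_i = \deg(\chi) \ge 2$. The required improvement must arise from cancellations among the monomials making up $\chi$. Concretely, I expect each coefficient of $n^{-j}$ for $j < \pi(w)$ inside $\EX_w\!\prn*{\prod a_{t_i,c_i}^{r_i}}$ to be identifiable as an inner product pairing the monomial against an explicit stable class function of degree strictly smaller than $\deg(\chi)$; orthogonality of the irreducible $\chi$ against all such lower-degree stable functions then annihilates these coefficients in the sum defining $\chi$. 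Organizing this cancellation combinatorially---matching the strata indexed by \emph{not-necessarily connected} decorated Stallings core graphs (as advertised in the abstract) against the $a_{t,c}$-expansion of $\chi$---is the main technical obstacle of the proof.

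For Part~3, write $w = u^k$ with $u$ a non-power, so $\pi(w) = 1$ and $\pi(u) \ge 2$. Then
\[ \EX_w[\chi_n] \,=\, \EX_{g\sim \mu_u}\!\brackets*{\chi_n(g^k)} \,=\, \EX_u\!\brackets*{\chi_n^{(k)}}, \]
and since $\chi^{(k)} \in \mathcal{A}(G)$, I expand it in the orthonormal basis of stable irreducible characters of $G\wr S_\bullet$. The trivial character contributes exactly $\inner*{\chi^{(k)},\mathbf{1}}$; each remaining basis element $\rho$ in the (finite) expansion contributes its coefficient times $\EX_u[\rho]$, which is $O\!\prn*{n^{-\pi(u)}} = O(n^{-2})$ if $\deg(\rho)\ge 2$ by Part~2, and $O\!\prn*{n^{1-\pi(u)}} = O(n^{-1})$ if $\deg(\rho)= 1$ by the degree-$1$ result of \cite{Sho23I}. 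Summing the finitely many terms yields the claimed $O(n^{-1})$ error.
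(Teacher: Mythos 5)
Your Part~3 is essentially correct and matches the paper: write $w=u^k$, use $\EX_w[\chi]=\EX_u[\chi^{(k)}]$, expand $\chi^{(k)}$ in the stable irreducible basis, and feed each summand into Part~2 (degree~$\ge 2$) or \cite[Theorem 1.9]{Sho23I} (degree~$\le 1$). Your Part~1 is in the right spirit, though the paper works in the basis $\braces*{\s\IndVecLambda}$ and runs the argument through the algebraic induction--convolution lemma rather than a direct $a_{t,c}$ expansion; the resulting $L^{\alg}_\eta$-functions are ratios of falling factorials, which is what actually justifies the denominator-degree bound.

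Part~2 is where you have a genuine gap, and you are candid about it. Two specific things are missing, and your intuition about the mechanism is also slightly off.

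First, the central technical tool of the paper is the induction--convolution lemma (Lemma~\ref{lemma_ICL}), which expresses $\EX_\eta[\Ind\zeta]$ as $\EX[\zeta] \Bconv \mu_B \Bconv \EX[\Ind\mathbf{1}]$. This is precisely the device that disentangles the $G$-part from the $S_n$-part so that the core-graph combinatorics of \cite{HP22} can be applied. Without it, you have no way to reduce a $G\wr S_n$-expectation to a sum over morphisms of multi core graphs weighted by $G$-data, which is what your ``$G$-decorated Stallings machinery'' would have to mean in precise terms.

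Second, and more importantly, the vanishing of the coefficients of $n^{-j}$ for $1\le j\le \pi(w)-2$ is \emph{not} produced by cancellation or by orthogonality of $\chi$ against lower-degree functions, as you suggest. After applying the algebraic induction--convolution lemma and writing $\EX_w[\s\IndVecLambda]$ as $\sum_{(\eta_1,\eta_2,\eta_3)\in\Decompalg^3}\EX_{\eta_1}[\VecLambda]\cdot C^{\alg}_{\eta_2}(n)$, those intermediate orders are absent because the Classification Theorem of critical morphisms (Theorem~\ref{thm_classification}, imported from \cite{HP22}) shows there simply are no algebraic images of $w$-graphs with Euler characteristic strictly between $1-\pi(w)$ and $0$. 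What orthogonality does accomplish is to identify the surviving $n^0$ and $n^{1-\pi(w)}$ coefficients with $\inner*{f,\mathbf{1}}$ and $\sum_\phi \inner*{f,\chi_\phi}\mathscr{C}_\phi(w)$ respectively; that identification is Proposition~\ref{prop_key}, which itself rests on the Frobenius-reciprocity computation for $\inner*{\s\IndVecLambda,\Indphi}$ via sub-multi-partitions (Proposition~\ref{prop_FrobRecip} and Corollary~\ref{corollary_nonempty_irr}). Only then can you subtract $P_1(f)$ and conclude that a degree-$\ge 2$ stable irreducible has $\EX_w[\chi]=O(n^{-\pi(w)})$. So the ingredients you would need to make Part~2 rigorous are: (i) the induction--convolution lemma, (ii) the classification of critical morphisms from cyclic (multi) graphs, and (iii) the matching between critical decompositions and inner products against degree-$1$ characters; none of these appears in your sketch, and the cancellation heuristic in their place does not substitute for them.
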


In fact, 
whenever $\E_w[\chi]\neq 0$ we prove the stronger
lower bound 
$\E_w[\chi] = 
\Omega\prn*{\chi(1)^{-\frac{|w|}{2}+\frac{1}{2r}}}$, 
and for the third case where $w = u^k$ we give in Corollary~\ref{corollary_approx_power} a detailed approximation, up to an error of $O\prn*{n^{-\pi(u)}}$.
Note that together with \cite[Theorem 1.9]{Sho23I} and the vacuous statement $\forall w: \EX_w[\textbf{1}] = 1$, the theorem always gives an approximation up to a $O\prn*{n^{-\pi(w)}}$-error, for every word and every stable irreducible character of $G\wr S_{\bullet}$.
Note also that this theorem (combined with \cite{Sho23I}) 
generalizes all the previous results from Table~\ref{table:wreath_product_papers}.
Since $\EX_w$ is a linear operator, this theorem gives an approximation up to $O\prn*{n^{-\pi(w)}}$ of $\EX_w[f]$ for every stable class function $f$: for example, recall $a_{t, c}$ from Equation~\eqref{eq_def_a_tc}.

\begin{corollary}
\label{corollary_a_tc_approx}
Let $w\in F_r$ be a non-power. Then for every $t\ge 2, c\in \conj(G)$:
\[ \EX_w\brackets*{a_{t, c}} = \frac{|c|}{t|G|} + O\prn*{n^{-\pi(w)}}. \]
\end{corollary}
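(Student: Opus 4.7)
The plan is to use linearity of $\EX_w$ together with the main theorem. Since $a_{t,c}\in \mathcal{A}(G)$ has degree $t$, there is, for all sufficiently large $n$, a finite decomposition
\[ a_{t,c} = \alpha_{\mathbf{1}}\cdot \mathbf{1} + \sum_{i} \alpha_i \chi_i, \]
where the $\chi_i$ are non-trivial stable irreducible characters of $G\wr S_\bullet$ (all of degree at most $t$) and the scalars $\alpha_{\mathbf{1}}, \alpha_i$ are independent of $n$. Applying $\EX_w$ termwise and using $\EX_w[\mathbf{1}]=1$ reduces the problem to two tasks: identify the constant $\alpha_{\mathbf{1}}$, and bound $\EX_w[\chi_i]$ for each non-trivial $\chi_i$.

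For the first task, I would compute $\alpha_{\mathbf{1}} = \langle a_{t,c}, \mathbf{1}\rangle_{G\wr S_n}$ directly as the expectation of $a_{t,c}$ against the uniform measure on $G\wr S_n$ (which stabilises for $n\ge t$). Summing over all possible $t$-cycles, a uniform $\sigma\in S_n$ contains any specified $t$-cycle on prescribed indices with probability $(n-t)!/n!$, and independently, for a uniform $v\in G^n$ the product $v(i_t)\cdots v(i_1)$ along the cycle is uniform on $G$ and thus lies in $c$ with probability $|c|/|G|$. Counting $\binom{n}{t}(t-1)!$ possible $t$-cycles gives $\alpha_{\mathbf{1}} = \tfrac{1}{t}\cdot \tfrac{|c|}{|G|} = \tfrac{|c|}{t|G|}$, as required.

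For the second task, each non-trivial $\chi_i$ in the decomposition satisfies $\deg(\chi_i)\ge 1$. If $\deg(\chi_i)\ge 2$, then since $w$ is a non-power we may invoke Theorem~\ref{thm_main_result}(2) to conclude $\EX_w[\chi_i] = O(n^{-\pi(w)})$. If $\deg(\chi_i)=1$, the same bound is supplied by \cite[Theorem 1.9]{Sho23I}, exactly as mentioned in the remark following Theorem~\ref{thm_main_result}. Since only finitely many $\chi_i$ appear with nonzero coefficient (all with coefficients independent of $n$), summing yields
\[ \EX_w[a_{t,c}] = \frac{|c|}{t|G|} + O(n^{-\pi(w)}). \]

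I do not expect any serious obstacle: the computation of $\alpha_{\mathbf{1}}$ is a short averaging argument, and the bound on the remaining characters is a direct appeal to the main theorem plus the $\deg=1$ result of \cite{Sho23I}. The only point requiring care is the assertion that $a_{t,c}$ admits a stable decomposition into stable irreducibles with coefficients independent of $n$, which follows from the fact that the stable irreducible characters form an orthonormal basis of $\mathcal{A}(G)$ (as recorded after Definition~\ref{def_AG}); this ensures that the finitely many $\chi_i$ of degree $\le t$ capture $a_{t,c}$ exactly for all $n$ large enough.
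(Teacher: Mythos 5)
Your overall strategy matches the paper's: decompose $a_{t,c}$ into stable irreducibles, identify $\inner*{a_{t,c},\mathbf{1}}=\tfrac{|c|}{t|G|}$ by an averaging argument (yours is correct and essentially identical to the paper's), and bound the remaining terms via Theorem~\ref{thm_main_result}. The gap is in how you dispatch the degree-$1$ irreducibles. You assert that for $\deg(\chi_i)=1$ the bound $\EX_w[\chi_i]=O(n^{-\pi(w)})$ ``is supplied by \cite[Theorem 1.9]{Sho23I}.'' It is not: that theorem (Theorem~\ref{thm_Sho23I_main} here) states
\[
\EX_w[\chi_\phi]=\mathscr{C}_\phi(w)\,n^{1-\pi_\phi(w)}+O\bigl(n^{-\pi_\phi(w)}\bigr),
\]
and since $\pi_\phi(w)\ge\pi(w)$ the leading term can be of order $n^{1-\pi(w)}$, which is \emph{not} $O(n^{-\pi(w)})$. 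For instance $w=[b_1,b_2]$ has $\pi(w)=2$, and $\EX_w[\chi_{\mathbf{1}}]=n^{-1}+O(n^{-2})$, so a nonzero coefficient on a degree-$1$ character would ruin the claimed error term. The remark after Theorem~\ref{thm_main_result} that you cite only says that one obtains an \emph{approximation} of $\EX_w[\chi]$ up to $O(n^{-\pi(w)})$; for degree-$1$ characters that approximation has a nontrivial $n^{1-\pi(w)}$ term.

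What you are missing, and what the paper supplies, is the additional fact that $\inner*{a_{t,c},\chi_\phi}=0$ for every $\phi\in\hat{G}$ (so the degree-$1$ part of the decomposition vanishes identically, and it does not matter that $\EX_w[\chi_\phi]$ might be $\Theta(n^{1-\pi(w)})$). The paper proves this by Frobenius reciprocity: for $t\ge 2$ one computes
\[
\inner*{\Indphi,a_{t,c}}_{G_n}=\inner*{\phi,\textup{Res}_{G\times G_{n-1}}a_{t,c}}=\inner*{\phi,\mathbf{1}}_G\cdot\inner*{a_{t,c},\mathbf{1}}_{G_{n-1}},
\]
so $\inner*{\Indphi,a_{t,c}}=0$ for $\phi\neq\mathbf{1}$, and for $\phi=\mathbf{1}$ one has $\inner*{\chi_{\mathbf{1}},a_{t,c}}=\inner*{\Ind\mathbf{1},a_{t,c}}-\inner*{\mathbf{1},a_{t,c}}=0$ for large $n$. (The hypothesis $t\ge 2$ is precisely what makes the restriction factor through and the correlation with the extra $G$-coordinate vanish.) Without establishing this vanishing, your argument only yields the weaker estimate $\EX_w[a_{t,c}]=\tfrac{|c|}{t|G|}+O(n^{1-\pi(w)})$.
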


We will show in Equation~\eqref{equation_proved_a_tc_approx} how this corollary follows from the theorem. \\

\subsection*{Application: expansion of random Schreier graphs}
We call an action that induces a stable permutation representation \textbf{representation-stable}, or just \textbf{rep-stable}.
In this paper we handle only finite groups and actions on finite sets.

\begin{wrapfigure}{}{0.25\textwidth}
\captionsetup{labelformat=empty}
    \centering
        \includegraphics[scale=0.05]{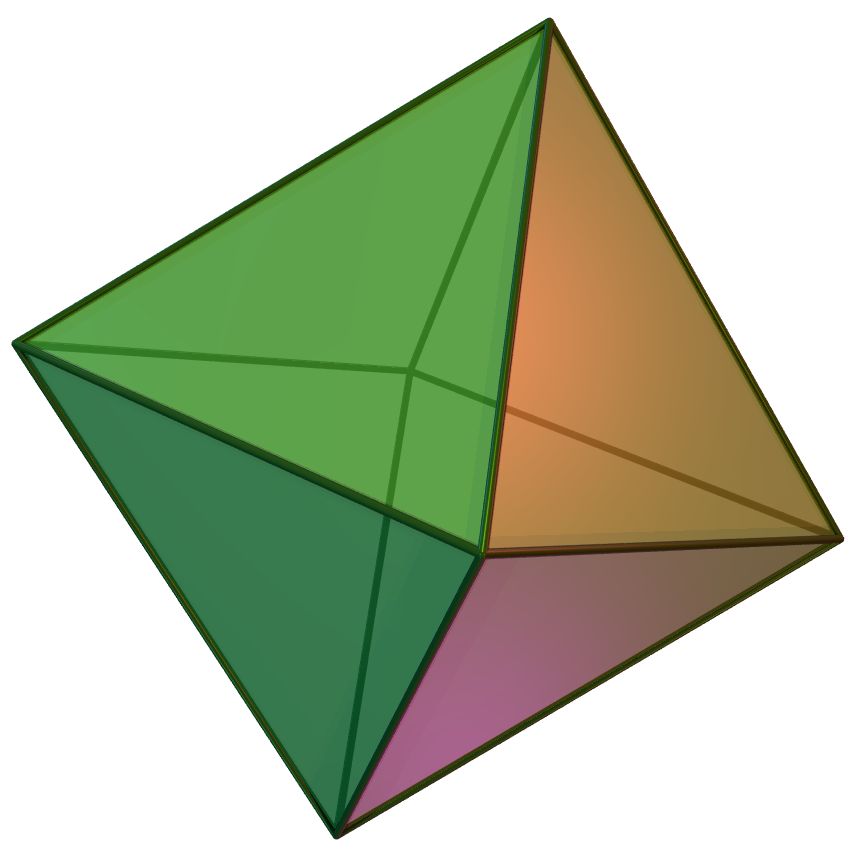}
    \includegraphics[scale=0.05]{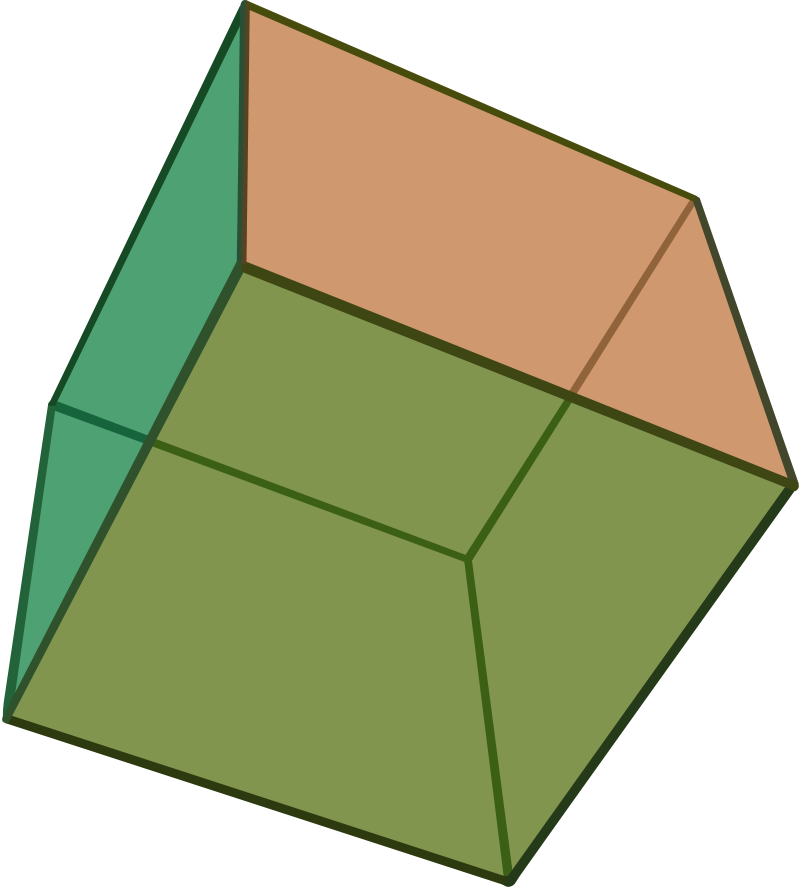}
    \caption{\footnotesize octahedron and cube: \\dual polytopes}
    \label{fig: cube_and_octahedron}
\end{wrapfigure}

For example, for every $k\in \N$, 
if $H$ is a simple non-abelian finite group 
and 
$G = \textup{Aut}(H)$, then 
$G\wr S_n = \textup{Aut}(H^n)$, 
and its action on the elements of 
$H^n$ with exactly $k$ 
nontrivial coordinates is rep-stable of 
degree $k$.
As another example, if 
$G = \Z/2\Z$, then $G\wr S_n$ is the 
hyperoctahedral group, i.e.\ the isometry group
of the $\ell^1$-ball (the cross-polytope) 
in $\R^n$, and
its action on $(k-1)$-dimensional faces is 
rep-stable of degree $k$. 
By duality, 
$\Z/2\Z \wr S_n$ is also the isometry 
group of the hypercube 
(the $\ell^{\infty}$-ball in $\R^n$), 
and its action on $(n-k)$-dimensional faces
is isomorphic to the previous action.
Together with $\{1\}\wr S_{n+1} = S_{n+1}$
acting on $k$-faces of the $n$-simplex,
these actions
cover all rep-stable symmetries of 
$n$-dimensional regular convex polytopes for 
$n\ge 5$.\footnote{
    Regular convex polytopes
    are classified in \cite{Cox73}.
    Stable actions of their isometry groups
    are classified in \cite{WILSON2014269}.
} 

Every group action $G\acts X$ together with $r$ group elements $g_1, \ldots, g_r\in G$ give rise to a $2r$-regular multi-graph, called a \textbf{Schreier graph}, with vertex set $X$ and edge multi-set $\braces*{\prn*{x, g_i(x)}:\,\,\, x\in X,\, i\in [r]}$.
We regard edges $\prn*{v, u}$ coming from different $i\in [r]$ as different. 
Note that a self-loop contributes $2$ to the degree of a vertex.
\begin{definition}
    Given a finite group $G$ and an action $\rho\colon G\to \textup{Sym}(X)$, we denote by $\textup{Sch}_r(\rho)$ the $2r$-regular random Schreier graph obtained by $\rho$ and $r$ random elements $g_1, \ldots, g_r\sim U(G)$.
\end{definition}
For example, for $n, r\in \N$, $\textup{Sch}_{r}\prn*{S_n\acts [n]}$ is just a random covering of degree $n$ of the bouquet with $r$ petals, which is closely related to the random $2r$-regular multi-graph on $n$ vertices.
Given a $d$-regular graph $\Gamma$ on $n$ vertices, denote the eigenvalues of its adjacency matrix by $d = \lambda_1 \ge \lambda_2 \ge \ldots \ge \lambda_n$, and denote $\mu_{\Gamma}\defeq \max(\lambda_2, -\lambda_n)$.
In \cite{NILLI1991207}, Alon proved that for every $d$-regular graph on $n$ vertices, $\lambda_2 \ge 2\sqrt{d-1}-o(1)_{n\to \infty}$. 
Hence a sequence of $d$-regular graphs satisfying $\lambda_2 \le 2\sqrt{d-1}$ is considered an optimal expander.
In \cite{Alon86}, Alon also conjectured that for every $\varepsilon > 0$, a random $d$-regular graph on $n$ vertices satisfies $\lambda_2 \le 2\sqrt{d-1} + \varepsilon$ with high probability as $n\to \infty$, namely, is a.a.s.\ (asymptotically almost surely) an almost-optimal expander. 
This conjecture was proved in \cite{Friedman04}.
A slightly weaker version of this conjecture was proven in \cite{Expansion_Puder_2015}, using word measures to analyze the model $\textup{Sch}_{d/2}\prn*{S_n\acts [n]}$.
In \cite{HP22}, Puder and Hanany generalized this result to any rep-stable action of $S_n$, demonstrated with a specific action: for every $s \in \N$ and sufficiently large $r$, $\textup{Sch}_r\prn*{S_n\acts \braces*{\textup{injective functions }f\colon [s]\hookrightarrow [n]}}$ is a.a.s.\ a close-to-optimal expander. 
The following theorem generalizes this result to wreath products $G\wr S_n$:

\begin{theorem}
\label{thm_expander_wreath}
Let $G$ be a finite group, $(X_n)_{n=1}^{\infty}$ a sequence of sets, and $\prn*{\rho_n\colon G\wr S_n\to \textup{Sym}(X_n)}_{n=1}^{\infty}$ a rep-stable sequence of transitive actions of degree $k$. Let $\Gamma_n\defeq \textup{Sch}_r\prn*{\rho_n}$.
Then
\[ \mu_{\Gamma_n} \le 2\sqrt{2r - 1}\cdot \exp\prn*{ \frac{2k^2}{e^2(2r-1)} } \quad\quad \textup{with probability } 1 - o(1) \textup{ as }n\to\infty. \]
In particular for every fixed $k, \varepsilon > 0$, if $r$ is large enough, $\mu_{\Gamma_n} \le 2\sqrt{2r - 1} + \varepsilon$ a.a.s.\ 
\end{theorem}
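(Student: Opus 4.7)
The plan is to apply the trace method of Broder--Shamir, Friedman, and Puder--Parzanchevski, using Theorem~\ref{thm_main_result} in place of the classical symmetric-group character bounds. Let $A_n$ denote the adjacency operator of $\Gamma_n$ on the permutation module $\C[X_n]$, and decompose $\C[X_n] = V_n^{\triv}\oplus V_n^{\std}$, where $V_n^{\triv}$ is the line spanned by the all-ones vector (an eigenspace with eigenvalue $2r$, by transitivity of $\rho_n$). The character $\chi^{\std}_n$ of $V_n^{\std}$ is the stable class function $\#\textup{fix}(\rho_\bullet) - \mathbf{1}$, and by rep-stability of degree $k$ it decomposes as a finite $\Q$-linear combination $\sum_{\chi} c_{\chi}\,\chi$ of stable irreducible characters $\chi \in \widehat{G\wr S_{\bullet}}$ with $1 \le \deg(\chi) \le k$.

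For any $L \in \N$, the inequality $\mu_{\Gamma_n}^{2L} \le \operatorname{tr}\prn*{A_n^{2L}|_{V_n^{\std}}}$ combined with Markov gives
\[ \PR\brackets*{\mu_{\Gamma_n} \ge t} \le t^{-2L}\cdot \EX\brackets*{\operatorname{tr}\prn*{A_n^{2L}|_{V_n^{\std}}}} = t^{-2L}\sum_{u\in F_r} N_{2L}(u)\cdot \EX_u\brackets*{\chi^{\std}}, \]
where the sum runs over reduced words and $N_{2L}(u)$ counts the length-$2L$ closed walks on the $r$-petal bouquet whose free-group reduction is $u$. Theorem~\ref{thm_main_result}, together with \cite[Theorem 1.9]{Sho23I} for the degree-$1$ summands, yields $\EX_u[\chi^{\std}] = O\prn*{n^{-\pi(u)}}$ for non-power $u$, and $\EX_u[\chi^{\std}] = \sum_{\chi} c_{\chi}\inner*{\chi^{(j)}, \mathbf{1}} + O(n^{-1})$ when $u = v^j$ is a proper power of a non-power $v$. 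The non-power contributions, once multiplied by the walk counts and summed, are absorbed into an $o(1)$ error by choosing $L = \lfloor c\log n\rfloor$ with $c$ small enough (via the standard walk-counting techniques of \cite{Expansion_Puder_2015, HP22}); the power contributions produce the main term $(2\sqrt{2r-1})^{2L}\cdot \exp\prn*{O_k\prn*{L^2/(2r-1)}}$, whose $2L$-th root, combined with Markov, gives the claimed bound.

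The chief obstacle is extracting the sharp constant $\exp\prn*{2k^2/(e^2(2r-1))}$. This requires a careful estimate of $\sum_{\chi} c_{\chi}\inner*{\chi^{(j)}, \mathbf{1}}$ in terms of $\deg(\chi) \le k$: this inner product counts the multiplicity of the trivial character inside the stable representations restricted to the cyclic group generated by a $j$-th power, and its growth in $j$ must be matched against the generating function $\sum_j N_{2L}(v^j)$; locating the saddle-point of the resulting exponential sum produces the $e^{-2}$ factor (a standard maximization of $x \mapsto xe^{-x}$). A secondary technical point is ensuring uniform control over the finitely many $\chi$ appearing in $\chi^{\std}$ and their coefficients $c_\chi$, which follows from the filtered-algebra structure of $\mathcal{A}(G)$ and rep-stability of degree $k$.
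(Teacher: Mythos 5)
Your outline follows the same high-level trace-method strategy as the paper: expand $\operatorname{tr}\prn*{A_n^{2L}|_{V_n^{\std}}}$ over reduced words, use the main theorem for the $w$-expectations, and choose $L \sim \log n$. That much is consistent with what the paper does.

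However, there is a genuine gap. You invoke Theorem~\ref{thm_main_result} in the asymptotic form $\EX_u[\chi^{\std}] = O\prn*{n^{-\pi(u)}}$, where the implied constant depends on $u$, and then sum over all cyclically reduced words $u$ of length $2L$. With $L = \Theta(\log n)$, this sum ranges over $\approx (2r-1)^{2L} = n^{\Theta(1)}$ words, so word-dependent implied constants are useless: without uniformity in $u$ you cannot absorb the error into $o(1)$. The actual work of the paper's Section~\ref{section_expansion} is precisely to prove an \emph{effective} version of Theorem~\ref{thm_main_result} with constants depending only on $|w|$, $\pi(w)$, and $\deg(f)$, not on $w$ itself. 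This is Proposition~\ref{prop_effective_bound_sInd} (bounding Laurent coefficients via Lemma~\ref{lemma_bound_coeff_L}, a quantitative version of Proposition~\ref{prop_LB}), together with the subsequent corollaries bounding $\abs*{\mathscr{C}_w(f)}$ by $|\textup{Crit}(w)|$. Only with such a uniform bound can one invoke the counting result $\limsup_t \brackets*{\sum_{w\in\mathfrak{CR}_t,\, \pi(w)=m}|\textup{Crit}(w)|}^{1/t} = \max(\sqrt{2r-1}, 2m-1)$ from \cite{Expansion_Puder_2015} and close the argument; the remaining steps (optimizing over the walk length, translating Hashimoto eigenvalues to adjacency eigenvalues via Ihara--Bass, and extracting the constant $\exp\prn*{2k^2/(e^2(2r-1))}$) are then taken over verbatim from \cite[Section 8]{HP22}. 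Your proposal identifies the finite character sum and the walk count, but never confronts the uniform-in-$w$ control that the quantitative bound supplies; the paragraph about ``saddle-point'' maximization is also too vague to substitute for the careful bookkeeping that produces the stated constant.
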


\subsection*{Related works}

To state the approximation of $\EX_w[f]$ for every stable class function $f$ of $G\wr S_{\bullet}$ in a unified formulation, we need to combine the result of \cite{Sho23I} with Theorem~\ref{thm_main_result}.

\subsubsection*{The main result of \cite{Sho23I}}

For a class function $f\colon G\to \FG$ and $w\in H\le F_r$ (here $H$ is a finitely generated subgroup), denote 
$\EX_{w\to H}[f]\defeq \EX_{\alpha\sim U(\Hom(H, G))}[f(\alpha(w))]$.
Recall that for every finite group $G$ we have the standard normalized inner product on $\FG^{G}$, that is $\inner*{\phi, \psi}\defeq \frac{1}{|G|}\sum_{g\in G} \phi(g) \overline{\psi}(g)$, and that the irreducible characters form an orthonormal basis for class functions.
So in other words, if $\mu_{w\to H}\colon G\to [0, 1]$ is the $w$-measure on $G$ when we think of $w$ as an element of $H$, then 
$\EX_{w\to H}[f]\defeq |G|\cdot \inner*{f, \mu_{w\to H}} = \sum_{g\in G} f(g) \mu_{w\to H}(g)$.
Note that if $w$ is primitive in $H$ then $\EX_{w\to H}[f] = \inner*{f, \textbf{1}} = \frac{1}{|G|}\sum_{g\in G}f(g)$.

There are situations where free groups $H \le J$ are \enquote{equivalent} with respect to word measures:
A subgroup $H\le F$ of a free group is 
called a (proper) \textbf{free factor} if 
there exists a (non-trivial, respectively) 
$J\le F$ 
such that $H * J = F$.
In this case we denote $H\leff{F}$, 
and say that $F$ is a 
\textbf{free extension} of $H$.
It is easy to see that for every word $w\in H\leff J$ and for every finite group $G$, the word measures $\mu_{w\to H}, \mu_{w\to J}$ on $G$ are the same\footnote{Indeed, taking some basis of $J$ that contains a basis of $H$, every basis element is sent under a random homomorphism $J\to G$ to a random element, independent of the other basis elements. Thus the word measures $\mu_{w\to H}, \mu_{w\to J}$ are the same.}, so for every $\phi\in \textup{char}(G)$, $\EX_{w\to H}[\phi] = \EX_{w\to J}[\phi]$.

\begin{definition}[Algebraic Extension]
\label{def_algebraic_extension}
This concept goes back to \cite{Takahasi1951NoteOC}, and was further studied in \cite{KAPOVICH2002608}, \cite{MVW07}.
Given $H\le J\le F_r$, we say that $J$ is an \textbf{algebraic extension} of $H$ if $H$ is not contained in any proper free factor of $J$, and then denote $H\le_{\alg} J$. If $H\neq J$, $J$ is called a \textbf{proper} algebraic extension. For a word $w\in H$ we say that $H$ is an algebraic extension of $w$ if $\inner*{w} \le_{\alg} H$.
Every finitely generated $H$ has only a finite number of algebraic extensions (see for example \cite[Corollary 4.4]{Puder_2014}).
\end{definition}

\begin{definition}
\label{def_witnesses_full}
Let $G$ be a finite group and $F_r$ the free group of rank $r$.
For every word $w\in F_r$ and an irreducible character $\phi\in \hat{G}$, we define the $\phi$\textbf{-witnesses} of $w$ as the set of subgroups 
    \[ \textup{Wit}_{\phi}(w) \defeq \braces*{H\le F_r: \quad \inner*{w} \lneqq H \textup{ is a proper algebraic extension, and } \,\EX_{w\to H}[\phi] \neq 0}.\]
    We define the $\phi$-primitivity rank of $w$ as the minimal rank in $\textup{Wit}_{\phi}(w)$:
    \[ \pi_{\phi}(w) \defeq \min\{\textup{rk}(H): \quad H\in \textup{Wit}_{\phi}(w)\}, \]
    with the convention $min(\emptyset)=\infty$. 
    We also define the $\phi$-critical groups of $w$: 
    \[ \textup{Crit}_{\phi}(w) \defeq \{H\in \textup{Wit}_{\phi}(w): \quad \textup{rk}(H) = \pi_{\phi}(w)\},\]
    and the $\phi$-critical value of $w$: $\mathscr{C}_{\phi}(w) \defeq \sum_{H\in \textup{Crit}_{\phi}(w)} \EX_{w\to H}[\phi]. $
\end{definition}

Note that for every witness subgroup $H\in \textup{Wit}_{\phi}(w)$, $w$ is not primitive in $H$ (as $H$ is a proper algebraic extension of $\inner{w}$), so $\pi_{\phi}(w) \ge \pi(w)$.
The condition $\EX_{w\to H}[\phi]\neq 0$ is equivalent to the appearance of $\phi$ in the decomposition of $\mu_{w\to H}\colon G\to [0, 1]$ into a $\FG$-linear combination of irreducible characters.
By \cite[Proposition 2.3]{Sho23I},
if $\phi = \textbf{1}$ then $\pi_{\phi}(w) = \pi(w)$, $\textup{Crit}_{\phi}(w) = \textup{Crit}(w)$ and $\mathscr{C}_{\textbf{1}}(w) = |\textup{Crit}(w)|$,
and otherwise ($\phi \neq \textbf{1}$) we have the simple descriptions
\begin{equation*}
    \begin{split}
        \pi_{\phi}(w) &= \min\braces*{\textup{rk}(H): \quad w\in H \le F_r, \,\,\,\EX_{w\to H}[\phi] \neq 0}, \\
        \textup{Crit}_{\phi}(w) &= \braces*{H\le F_r: \quad w\in H,\,\,\, \,\EX_{w\to H}[\phi] \neq 0, \,\,\, \textup{rk}(H) = \pi_{\phi}(w)}.
    \end{split}
\end{equation*}
Note that $\pi_{\phi}$ is automorphism-invariant: if $\xi\in \textup{Aut}(F_r), \psi\in \textup{Aut}(G)$ then 
\[\textup{Wit}_{\phi\circ \psi}(\xi(w)) = \textup{Wit}_{\phi}(w),\quad \pi_{\phi\circ \psi}(\xi(w)) = \pi_{\phi}(w). \]

As we explain in Fact~\ref{fact_stable_irr_chars}, irreducible stable characters of $G\wr S_{\bullet}$ of degree $1$ are in natural bijection with $\hat{G}$, so we denote them by $\braces*{\chi_{\phi}}_{\phi\in \hat{G}}$.
The main result of \cite{Sho23I} is:
\begin{theorem}
\label{thm_Sho23I_main} 
(\cite[Theorem 1.9]{Sho23I})
Let $w\in F_r$, $G$ a compact group and $\phi\in \hat{G}$. 
Then    
\[ \EX_w\brackets*{\chi_{\phi}} = \mathscr{C}_{\phi}(w) \cdot n^{1-\pi_{\phi}(w)} + O\prn*{n^{-\pi_{\phi}(w)}}.\]
\end{theorem}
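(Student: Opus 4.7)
The plan is to expand $\chi_\phi$ explicitly, reduce $\EX_w[\chi_\phi]$ to an enumeration over subgroups $H\le F_r$ with $w\in H$ via a Stallings-graph bijection, and then isolate the dominant contributions using the notion of algebraic extension. Since $\chi_\phi$ is a stable degree-$1$ irreducible character of $G\wr S_\bullet$, it has, up to a constant, the explicit form $\chi_\phi(v,\sigma)=\sum_{i\,:\,\sigma(i)=i}\phi(v_i)$ for $(v,\sigma)\in G\wr S_n$. Writing $\alpha(w)=(v,\sigma)$ and using $S_n$-symmetry of the uniform measure on $\Hom(F_r,G\wr S_n)$,
\[\EX_w[\chi_\phi]=n\cdot\EX_{\alpha}\brackets*{\phi(v_1)\cdot\bbone_{\sigma(1)=1}}+\textup{const},\]
where the constant is $-\delta_{\phi,\textbf{1}}$ and is harmless against the claimed error.

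Next, I would decompose a random $\alpha\sim U(\Hom(F_r,G\wr S_n))$ as a pair: a uniform $\overline{\alpha}\colon F_r\to S_n$ together with independent uniform $G$-labels on the edges of the resulting Schreier coset graph at the base point $1$. The event $\sigma(1)=1$ depends only on $\overline{\alpha}$ and says that the $w$-walk from $1$ closes up; conditional on this event, $\phi(v_1)$ is the $\phi$-image of a product of the random $G$-labels along that walk. Following the Puder--Parzanchevski recipe behind Equation~\eqref{equation_PP15_thm_main_result}, I would partition closed walks by combinatorial type, namely by which edges of the core graph of $\inner*{w}$ coincide under $\overline{\alpha}$, and identify each type with a pointed subgroup $\inner*{w}\le H\le F_r$. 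The number of $\overline{\alpha}$'s realizing a given type is a rational function $c_H(n)$ with leading term of order $n^{1-\textup{rk}(H)}$, and once the type is fixed, independence of the $G$-labels on distinct edges of the core graph of $H$ makes the conditional $\phi$-average equal $\EX_{w\to H}[\phi]$ as in Definition~\ref{def_witnesses_full}. This yields
\[\EX_w[\chi_\phi]=\sum_{w\in H\le F_r}c_H(n)\cdot\EX_{w\to H}[\phi]+\textup{const}.\]

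To extract the asymptotics, I would sift algebraic from non-algebraic extensions. If $\inner*{w}\le H$ is not algebraic, there is a proper free factor $H_1\leff H$ with $w\in H_1$; the extra free generators average out under the $\phi$-weighted product, giving $\EX_{w\to H}[\phi]=\EX_{w\to H_1}[\phi]$, while $c_H(n)$ is of strictly lower order than $c_{H_1}(n)$. A M\"obius-type regrouping on the lattice of free factors therefore absorbs every non-algebraic contribution into lower-order corrections. Among the surviving algebraic extensions, those with $\EX_{w\to H}[\phi]=0$ drop out, leaving as leading term precisely $\sum_{H\in\textup{Crit}_\phi(w)}\EX_{w\to H}[\phi]\cdot n^{1-\pi_\phi(w)}=\mathscr{C}_\phi(w)\cdot n^{1-\pi_\phi(w)}$; algebraic extensions of rank $\pi_\phi(w)+1$ or higher, together with the subleading terms of the $c_H(n)$'s, contribute only $O(n^{-\pi_\phi(w)})$ because the set of algebraic extensions of $\inner*{w}$ is finite.

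The hard part will be making this error control truly uniform. Although algebraic extensions of $\inner*{w}$ are finite in number, there are infinitely many subgroups $H$ with $w\in H$, so one must show that the M\"obius regrouping combined with uniform estimates on $c_H(n)$ really collapses the tail into the claimed $O(n^{-\pi_\phi(w)})$ error rather than, say, a polylogarithmic blow-up. A secondary technical point is to verify that the conditional $\phi$-average on a closed $w$-walk matches $\EX_{w\to H}[\phi]$ exactly as defined in Definition~\ref{def_witnesses_full}, and not some conjugacy- or base-point-twisted variant.
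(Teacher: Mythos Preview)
This theorem is not proved in the present paper; it is quoted from the companion work \cite{Sho23I} and only used here (for instance in Proposition~\ref{prop_approx_lin_dim_char}). That said, your sketch is essentially the approach taken in \cite{Sho23I}, and it is also the single-word ($k=1$) instance of the Induction--Convolution Lemma (Lemma~\ref{lemma_ICL}) proved in Section~\ref{section_proof_of_ICL} of this paper: your expansion $\sum_{H}c_H(n)\,\EX_{w\to H}[\phi]$ is precisely $\EX[\Ind\phi]=\EX[\phi]\Bconv\mu_B\Bconv\EX[\Ind\textbf{1}]$ evaluated at the morphism $w\to\Omega_B$, with $c_H=L^B$ as in Proposition~\ref{prop_LB}, and your ``M\"obius regrouping on free factors'' is exactly Lemma~\ref{lemma_mobius_alg_vs_B} and Corollary~\ref{corollary_AICL}, after which Corollary~\ref{corollary_L_alg_approx} supplies the asymptotics.

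One correction dissolves the difficulty you flag at the end. The sum is not indexed by all subgroups $H\ni w$ (of which there are indeed infinitely many) but by $B$-surjective quotients of the \emph{finite} graph $\Gamma_B(w)$, i.e.\ by $\eta_1\in\mathcal{Q}_B(w)$; this set is finite. Hence there is no infinite tail and no risk of polylogarithmic blow-up: once one passes to the algebraic version via Corollary~\ref{corollary_AICL}, the error $O(n^{-\pi_\phi(w)})$ follows termwise from Corollary~\ref{corollary_L_alg_approx} applied to the finitely many algebraic quotients of Euler characteristic $\le -\pi_\phi(w)$. Your secondary worry about the conditional $\phi$-average is handled by Proposition~\ref{prop_geometric_interpretation_of_E_eta}, which shows that the edge-labelled expectation is exactly $\EX_{w\to H}[\phi]$ up to conjugation, which is harmless since $\phi$ is a class function.
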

\noindent This theorem generalizes 
\cite[Theorem 1.8]{PP15} and 
\cite[Theorem 1.11]{magee2021surface}.

\noindent In the current paper we are interested in the case $\pi_{\phi}(w) = \pi(w)$, so we define for every $\phi\in \hat{G}$:

\begin{equation}
\label{eq_def_crit_phi_pi}
   \Cphipi(w)\defeq \sum_{H\in \textup{Crit}(w)} \EX_{w\to H}[\phi] = \begin{cases}
   \mathscr{C}_{\phi}(w) & \textup{ if } \pi_{\phi}(w) = \pi(w),\\
   0 & \textup{ if } \pi_{\phi}(w) > \pi(w).\\
   \end{cases}
\end{equation}
(Note that for $\phi\in \hat{G}-\{\textbf{1}\} $ and $H \in \textup{Wit}_{\phi}(w)$, $H\in \textup{Crit}(w) $ if and only if $\textup{rk}(H)=\pi(w)$). 

\noindent Now we can state the main result of the current paper in a unified formulation:
\begin{theorem}[Unified Main Result]
\label{thm_main_result_unified}
Let $w\in F_r$ be a non-power, $G$ a finite group, and $f\in \mathcal{A}(G) = \textup{span}\braces*{\widehat{G\wr S_{\bullet}}}$ a stable class function. Then
\[\EX_w[f] = \inner*{f, \mathbf{1}} + \sum_{\phi\in \hat{G}} \Cphipi(w) \inner*{f, \chi_{\phi}} n^{1-\pi(w)} + O\prn*{n^{-\pi(w)}}. \]
\end{theorem}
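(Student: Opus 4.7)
The plan is to deduce Theorem~\ref{thm_main_result_unified} from the already established results by expanding $f$ in the orthonormal basis of stable irreducible characters of $G\wr S_{\bullet}$ and handling each graded piece separately. Since $f\in \mathcal{A}(G)$ has finite degree and each bounded-degree subspace of $\mathcal{A}(G)$ is finite-dimensional, I would write
\[ f \;=\; \inner*{f,\mathbf{1}}\cdot \mathbf{1} \;+\; \sum_{\phi\in \hat{G}} \inner*{f,\chi_{\phi}}\cdot \chi_{\phi} \;+\; f', \]
where $f'$ is a \emph{finite} linear combination of stable irreducible characters of degree at least $2$. By linearity of $\EX_w$, it suffices to control these three pieces in turn.

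The degree-$0$ piece contributes $\inner*{f,\mathbf{1}}$ exactly, since $\EX_w[\mathbf{1}]=1$. For the degree-$1$ piece I would invoke Theorem~\ref{thm_Sho23I_main}, which yields
\[ \EX_w[\chi_{\phi}] \;=\; \mathscr{C}_{\phi}(w)\cdot n^{1-\pi_{\phi}(w)} \;+\; O\prn*{n^{-\pi_{\phi}(w)}}, \]
and then split the $\phi$-sum according to whether $\pi_{\phi}(w)=\pi(w)$ or $\pi_{\phi}(w)>\pi(w)$. In the former case the identity $\Cphipi(w)=\mathscr{C}_{\phi}(w)$ from Equation~\eqref{eq_def_crit_phi_pi} produces exactly the displayed $n^{1-\pi(w)}$ term with an acceptable $O(n^{-\pi(w)})$ error. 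In the latter case $\Cphipi(w)=0$ by definition, and since $w$ is a non-power we have $\pi_{\phi}(w)\ge \pi(w)+1 \ge 3$, so the whole contribution is $O(n^{-\pi_\phi(w)}) \subseteq O(n^{-\pi(w)})$; this is consistent with the missing $n^{1-\pi(w)}$-term being zero and with the overall error term.

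Finally, for the degree-$\ge 2$ remainder $f'$, Theorem~\ref{thm_main_result}(2) — which is precisely where the non-power hypothesis on $w$ gets used — gives $\EX_w[\chi]=O\prn*{n^{-\pi(w)}}$ for each stable irreducible summand of degree $\ge 2$, so by linearity $\EX_w[f']=O\prn*{n^{-\pi(w)}}$. Combining the three estimates yields the theorem. There is no real obstacle here beyond bookkeeping: the statement is essentially a packaging of Theorems~\ref{thm_main_result} and \ref{thm_Sho23I_main} via the orthogonal decomposition, and the finiteness of the decomposition (a consequence of the filtration on $\mathcal{A}(G)$) guarantees that the sum of $O$-terms remains $O\prn*{n^{-\pi(w)}}$ with an implicit constant depending only on $w$, $G$ and $f$.
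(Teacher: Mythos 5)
Your proposal is correct and is essentially the argument the paper itself gives: the paper observes that Theorem~\ref{thm_main_result_unified} (which it records as Equation~\eqref{equation_equivalent_thm_main_result}) is equivalent to Theorem~\ref{thm_main_result}(2) together with the degree-$\le 1$ information, and it packages the degree-$1$ contribution via Proposition~\ref{prop_approx_lin_dim_char}, which is precisely your split on whether $\pi_{\phi}(w)=\pi(w)$ or $\pi_{\phi}(w)>\pi(w)$ applied to Theorem~\ref{thm_Sho23I_main}. The only cosmetic slip is attributing $\pi_{\phi}(w)\ge 3$ to the non-power hypothesis; what you really use is $\pi_{\phi}(w)\ge\pi(w)+1$, which follows from the case assumption, while the non-power hypothesis is what makes Theorem~\ref{thm_main_result}(2) available.
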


\begin{corollary}
\label{corollary_approx_power}
    Let $w = u^k$ where $u$ is a non-power and $k\in\N$. Then for every $f\in \mathcal{A}(G)$:
    \[ \EX_w[f] = \EX_u\brackets*{f^{(k)}} = \inner*{f^{(k)}, \mathbf{1}} + \sum_{\phi\in \hat{G}} \Cphipi(u) \inner*{f^{(k)}, \chi_{\phi}} n^{1-\pi(u)} + O\prn*{n^{-\pi(u)}}.\]
\end{corollary}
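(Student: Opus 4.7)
The plan is to derive both equalities by applying Theorem~\ref{thm_main_result_unified} to the non-power $u$ in place of $w$ and to the class function $f^{(k)}$ in place of $f$. The first equality, $\EX_{u^k}[f]=\EX_u[f^{(k)}]$, is essentially a tautology: for $\alpha\sim U(\Hom(F_r,G\wr S_n))$ one has $\alpha(u^k)=\alpha(u)^k$, hence $f(\alpha(u^k))=f^{(k)}(\alpha(u))$, and taking expectations yields the identity.

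The substantive part is to verify the hypothesis that $f^{(k)}\in\mathcal{A}(G)$, which is what allows Theorem~\ref{thm_main_result_unified} to be applied. Since the map $f\mapsto f^{(k)}$ is a ring homomorphism (because $(fg)(x^k)=f(x^k)\,g(x^k)$), and $\mathcal{A}(G)$ is generated as an algebra by the functions $\{a_{t,c}\}$, it suffices to show that each $a_{s,d}^{(k)}$ is itself a finite linear combination of the $a_{t,c}$'s. Concretely, I would prove the explicit identity
\[ a_{s,d}^{(k)} \;=\; \sum_{\substack{t,c\,:\, t/\gcd(t,k)=s \\ c^{k/\gcd(t,k)}=d}} \gcd(t,k)\cdot a_{t,c}, \]
valid on every $G\wr S_n$. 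Its content is standard: raising $(v,\sigma)\in G\wr S_n$ to the $k$-th power splits each $t$-cycle of $\sigma$ into $\gcd(t,k)$ cycles of length $t/\gcd(t,k)$, and the $G$-value around each resulting cycle lies in the conjugacy class $c^{k/\gcd(t,k)}$, where $c$ is the original $G$-value class.

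The one point requiring care, and hence the main obstacle, is tracking the $G$-values: the $G$-value around a cycle is only well-defined up to conjugation, so one must check that the image of a conjugacy class under a $k/\gcd(t,k)$-th power map is again a single conjugacy class, and that a direct computation of $V_k(i)=v(i)\,v(\sigma^{-1}i)\cdots v(\sigma^{-(k-1)}i)$ and its cyclic product around a $\sigma^k$-cycle produces an element conjugate to $g^{k/\gcd(t,k)}$, where $g$ was the original $G$-value. Both facts are straightforward once set up carefully: writing the original $G$-value as $g=qp$ for appropriate sub-products, the new cycle's value comes out as $(pq)^{k/\gcd(t,k)}=p\,g^{k/\gcd(t,k)}\,p^{-1}$. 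With $f^{(k)}\in\mathcal{A}(G)$ in hand, Theorem~\ref{thm_main_result_unified} applied to $u$ and $f^{(k)}$ delivers exactly the asserted expansion, completing the corollary.
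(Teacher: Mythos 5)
Your proof is correct, and the overall route is the one the paper implicitly intends: use the tautology $\EX_{u^k}[f]=\EX_u[f^{(k)}]$ to pass from the proper power $w=u^k$ to the non-power $u$, then apply Theorem~\ref{thm_main_result_unified} to $u$ and $f^{(k)}$. The one substantive hypothesis to check is that $f^{(k)}\in\mathcal{A}(G)$, and you do this with an explicit and correct formula expressing $a_{s,d}^{(k)}$ as a $\Z_{\ge0}$-combination of the generators $a_{t,c}$ (the conjugacy-class bookkeeping for the $k$-th power of a $t$-cycle is exactly as you describe, with $(pq)^m=p(qp)^mp^{-1}$ giving conjugacy to $g^m$). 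The paper's own machinery gets this step more painlessly by working in the filtered basis $\{\s\IndVecLambda\}$ of Corollary~\ref{corollary_sInd_is_basis}: since $\bigl((\Ind\phi)^{(m)}\bigr)^{(k)}=(\Ind\phi)^{(mk)}$, one has $(\s\IndVecLambda)^{(k)}=\s\Ind(k\VecLambda)$, which is again a basis element, so closure of $\mathcal{A}(G)$ under $f\mapsto f^{(k)}$ is immediate from Proposition~\ref{prop_Indphi_m_is_stable}. Both verifications are fine; yours is more hands-on and has the side benefit of giving a concrete change-of-basis identity for the $a_{t,c}$ under the power map.
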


\subsubsection*{Similar phenomena in the unitary groups $U(n)$}

We have mentioned the papers \cite{PP15}, \cite{HP22}, \cite{magee2021surface}, \cite{Ord20}, \cite{Sho23I} that the current paper generalizes.
Here we present some more related papers, all with the same phenomena connecting word measures and the primitivity rank and its generalizations.
The commutator length $\textup{cl}(w)$ of a word $w\in F_r$ is defined as the minimal $g\in \N$ such that there exist words $u_1, v_1, \ldots, u_g, v_g\in F_r$ satisfying $\brackets*{u_1, v_1}\cdots \brackets*{u_g, v_g} = w$.
In \cite{unitary_Magee_2019} it was shown that for every (stable) polynomial irreducible character of the unitary group $\mu\in \widehat{\mathcal{U}(n)}$ and every word $w\in F_r$,
\begin{equation}
\label{equation_ref_unitary_magee_19}
    \EX_w[\mu] = O\prn*{\dim(\mu)^{-2\textup{scl}(w)}}_{n\to \infty}
\end{equation}
where $s\textup{cl}(w)\defeq \lim_{n\to \infty} \frac{\textup{cl}\prn*{w^n}}{n}$ is the stable commutator length (see \cite{Calegari_2009}). 
Similar results were proven for orthogonal and symplectic groups in \cite{orthogonal_magee2019matrix}. 
It is conjectured in \cite[Conjecture 6.3.2]{nicolaus2019a} (and also in \cite[Conjecture 1.14]{HP22}) that 
\begin{equation}
\label{conjecture_scl_pi}
    \pi(w) - 1\le 2\cdot \textup{scl}(w),
\end{equation}
based on numerical computer experiments.

Theorem~\ref{thm_main_result_unified} looks very similar to the recent result by Brodsky (\cite{Brodsky22}) about word measures on unitary groups:
for a unitary matrix $A$, define $\forall m\in \Z: \xi_m(A)\defeq tr\prn*{A^m}$. 
Define the algebra of stable class functions on $\prn*{U(n)}_{n=1}^{\infty}$ as the infinite-dimensional polynomial ring $\mathcal{A}_U \defeq \C\brackets*{\xi_m}_{m\in \Z}$. 
This algebra has a well defined inner product: $\inner*{f, g} \defeq \lim_{n\to\infty} \inner*{f, g}_{U(n)}$, as this inner product stabilizes for large enough $n$ (as in $\mathcal{A}(G)$).

For every $w\in F_r$, define the \textbf{commutator-witnesses} $\textup{CommWit}(w)$ of $w$ as the set of all algebraic extensions $H\in F_r$ that contain $w$ as a standard (orientable) surface-word of genus $\textup{rk}(H) / 2$, i.e.\ there exists a basis $\{u_1, v_1, \ldots, u_{\textup{rk}(H)/2}, v_{\textup{rk}(H)/2}\}$ of $H$ such that 
$w = \brackets*{u_1, v_1}\cdots \brackets*{u_{\textup{rk}(H)/2}, v_{\textup{rk}(H)/2}}. $
Similarly to the definition in equation~\eqref{eq_def_crit_phi_pi}, Brodsky defined the set of commutator-critical subgroups as
$\textup{CommCrit}(w) \defeq \textup{CommWit}(w) \cap \textup{Crit}(w).$
Since every surface-word is not primitive, $\textup{CommCrit}(w) = \braces*{H\in \textup{CommWit}(w):\quad \textup{rk}(H) = \pi(w)}$.
In fact, if $\textup{CommCrit}(w)$ is not empty, then $\pi(w) = 2\textup{cl}(w)$.
Now the analog of Theorem~\ref{thm_main_result_unified} is \cite[Corollary 1.9]{Brodsky22}:
let $f\in \mathcal{A}_U$ be a stable rational class function. 
Then for any non-power $w\in F_r$,
\[\EX_w[f] = \inner*{f, \mathbf{1}} + \inner*{f, \xi_1 + \xi_{-1}}\cdot \abs*{ \textup{CommCrit}(w) } \cdot n^{1-\pi(w)} + O\prn*{n^{-\pi(w)}}. \]
Compare this to \cite[Corollary 1.4]{HP22},
\[\EX_w[f] = \inner*{f, \mathbf{1}} + \inner*{f, \std}\cdot \abs*{ \textup{Crit}(w) } \cdot n^{1-\pi(w)} + O\prn*{n^{-\pi(w)}}, \]
and to the unified result of this paper when $G = C_m$:
\[\EX_w[f] = \inner*{f, \mathbf{1}} + \sum_{\substack{\phi\in \widehat{C_m}:\\ \pi_{\phi}(w) = \pi(w)}} \inner*{f, \chi_{\phi}}\cdot \abs*{ \textup{Crit}_{\phi}(w) } \cdot n^{1-\pi(w)} + O\prn*{n^{-\pi(w)}}. \]
The main technical tool in this paper is the (general) induction-convolution lemma, which plays a similar role in the proof of Theorem~\ref{thm_main_result_unified} as \cite[Theorem 1.7, main result]{unitary_Magee_2019} plays in the proof of \cite[Corollary 1.9]{Brodsky22}.
There are also similar results about word measures on general linear groups over finite fields, see \cite{ernstwest2021word}.

\subsection*{Overview of the paper}

\begin{enumerate}
    \item [Sec~\ref{section_stable_rep}:] 
    We investigate the filtered algebra $\mathcal{A}(G)$ of stable class functions, and give in Corollary~\ref{corollary_sInd_is_basis} a basis for it, which is suitable for the $w$-expectation operator.
    \item [Sec~\ref{section_multisets}:] 
    We present the equivalent categories $\moccFr$ (multisets of conjugacy classes of subgroups of $F_r$) 
    and $\mucgBFr$ (multi core graphs) that were defined in \cite{HP22}.
    \item [Sec~\ref{section_morphisms_free_alg}:] 
    We define free and algebraic morphisms, the lattices $\DecompB(\eta)$, $\Decompalg(\eta)$,  and the Möbius inversions on these lattices.
    \item [Sec~\ref{section_ICL}:] We state the Induction-Convolution Lemma, which is the most fundamental tool in this paper, and some of its consequences. 
    The lemma's proof is postponed to Section~\ref{section_proof_of_ICL}, since it is used as a black box in the rest of the paper. 
    \item [Sec~\ref{section_thm_main_result}:] We prove the main result of the paper - Theorem~\ref{thm_main_result}.
    \item [Sec~\ref{section_expansion}:] We show the main application: expansion of random Schreier graphs - Theorem~\ref{thm_expander_wreath}. 
    \item [Sec~\ref{section_proof_of_ICL}:] We prove the induction-convolution lemma.
    \item [App~\ref{appendix_stable_algebra}: ] We give some background about stability.
\end{enumerate}

\subsubsection*{Acknowledgments}
This project has received funding from the European Research
Council (ERC) under the European Union’s Horizon 2020 research and innovation programme
(grant agreement No 850956).

I am deeply indebted to Professor 
Doron Puder, who guided me throughout 
the research and gave helpful advice.
Special thanks to Nate Harman and Inna Entova-Eizenbud for useful discussions about stable representation theory.

\section{Stable Representations}
\label{section_stable_rep}
As described in Table~\ref{table:categoriesFI}, stable representations are defined for various classes of groups, and in this paper we are interested in the stable representations of $(G\wr S_n)_{n=0}^{\infty}$, where $G$ is an arbitrary finite group, or equivalently in $\FIG$-modules.

We start with the description of general properties, conjugacy classes and irreducible representations of wreath products with $S_n$.     
For further explanations about representation theory of wreath products, 
consult the book \cite{ceccherini-silberstein_scarabotti_tolli_2014}.
\begin{fact}
\cite[Proposition 2.1.3]{ceccherini-silberstein_scarabotti_tolli_2014}
    Let $G$ be a finite group and $n\in \N$.
    Identify $S_{n-1}$ with the stabilizer of the point $n$ 
    inside $S_n$. 
    Then $S_{n-1}$ acts on $[n]$, the wreath product $G\wr_{[n]} S_{n-1} \defeq G^n\rtimes S_{n-1} \cong G\times (G\wr_{[n-1]} S_{n-1})$ is a subgroup of $G\wr_{[n]} S_n$, and the quotient $(G\wr_{[n]} S_n) / (G\wr_{[n]} S_{n-1})$ has size $n$ and can be naturally identified with $S_n / S_{n-1} \cong [n]$.
\end{fact}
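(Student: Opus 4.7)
The fact is essentially a bookkeeping statement, and I would prove each of its four assertions directly, one after the other, with no substantial obstacle expected.

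First, I would observe that identifying $S_{n-1}$ with $\mathrm{Stab}_{S_n}(n)$ means $S_{n-1}$ acts on $[n]$ by permuting $[n-1]$ and fixing $n$. In particular, the natural action of $S_n$ on $G^n$ by index-permutation restricts to an action of $S_{n-1}$, so the semidirect product $G^n \rtimes S_{n-1} = G \wr_{[n]} S_{n-1}$ is well defined and, since both factors sit inside $G\wr_{[n]} S_n$ and the action agrees, it is a subgroup.

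Second, I would decompose $G^n = G^{n-1} \times G^{\{n\}}$ as $S_{n-1}$-modules: the first factor carries the permutation action of $S_{n-1}$ on the first $n-1$ coordinates, while the second factor (the $n$-th coordinate) is fixed pointwise because $S_{n-1}$ fixes $n$. Since the $S_{n-1}$-action on $G^{\{n\}}$ is trivial, the semidirect product factors as
\[ G^n \rtimes S_{n-1} = (G^{n-1}\times G^{\{n\}}) \rtimes S_{n-1} \cong (G^{n-1}\rtimes S_{n-1}) \times G^{\{n\}} = (G\wr_{[n-1]} S_{n-1})\times G, \]
which is the claimed isomorphism; one checks directly that the map $\bigl((v_1,\ldots,v_{n-1},v_n),\sigma\bigr) \mapsto \bigl((v_1,\ldots,v_{n-1}),\sigma,v_n\bigr)$ is a group homomorphism using that $\sigma$ fixes $n$.

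Third, the index can be computed by order-counting: $|G\wr_{[n]} S_n|/|G\wr_{[n]} S_{n-1}| = (|G|^n n!)/(|G|^n (n-1)!) = n$. For the natural identification of cosets with $[n]$, I would consider the map $\Phi \colon G\wr_{[n]}S_n \to [n]$ sending $(v,\sigma) \mapsto \sigma(n)$, which is well defined because changing $\sigma$ by right multiplication by an element of $S_{n-1}$ (which fixes $n$) does not change $\sigma(n)$. A quick check shows $\Phi$ is constant on left cosets of $G\wr_{[n]} S_{n-1}$: if $(v_1,\sigma_1)^{-1}(v_2,\sigma_2) \in G^n\rtimes S_{n-1}$ then its $S_n$-component $\sigma_1^{-1}\sigma_2$ fixes $n$, so $\sigma_2(n) = \sigma_1(n)$. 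Since $\Phi$ surjects onto $[n]$ and there are exactly $n$ cosets, $\Phi$ descends to a bijection $(G\wr_{[n]}S_n)/(G\wr_{[n]}S_{n-1}) \xrightarrow{\sim} [n]$. This bijection is $G\wr_{[n]} S_n$-equivariant and agrees via the projection $G\wr_{[n]}S_n \twoheadrightarrow S_n$ with the standard identification $S_n/S_{n-1}\cong [n]$, completing the fact.
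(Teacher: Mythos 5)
The paper does not prove this Fact; it merely cites it from \cite{ceccherini-silberstein_scarabotti_tolli_2014}. Your proof is correct and complete: the splitting $G^n\rtimes S_{n-1}\cong (G^{n-1}\rtimes S_{n-1})\times G$ follows exactly as you say from the triviality of the $S_{n-1}$-action on the $n$-th coordinate, the index count is immediate, and the map $(v,\sigma)\mapsto \sigma(n)$ gives the natural $G\wr_{[n]}S_n$-equivariant identification of the coset space with $[n]$, compatible with the projection to $S_n/S_{n-1}$.
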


\noindent Recall $a_{t, c}$, defined in Equation~\eqref{eq_def_a_tc}. 
The following fact is \cite[Theorem 2.3.11]{ceccherini-silberstein_scarabotti_tolli_2014}:
\begin{fact}
The conjugacy class of an element $(v, \sigma)\in G\wr S_n$ is determined by $ \braces*{a_{t, c}(v, \sigma)}_{t\in \N, c\in \conj(G)}$.
\end{fact}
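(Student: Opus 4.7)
The plan is to prove both directions of the characterization: (a) that each $a_{t,c}$ is indeed a class function, and (b) that two elements with matching cycle-class data $\braces*{a_{t,c}}_{t,c}$ must be conjugate.

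For (a), I would compute $(u,\tau)\cdot(v,\sigma)\cdot(u,\tau)^{-1}$ directly using the wreath product multiplication $(v_1,\sigma_1)(v_2,\sigma_2)=(v_1\cdot(\sigma_1.v_2),\,\sigma_1\sigma_2)$ and the inverse formula $(u,\tau)^{-1}=(\tau^{-1}.u^{-1},\,\tau^{-1})$. The permutation part becomes $\tau\sigma\tau^{-1}$, whose cycles are obtained from those of $\sigma$ by applying $\tau$ pointwise: a cycle $(i_1\to\ldots\to i_t\to i_1)$ of $\sigma$ becomes the cycle $(\tau(i_1)\to\ldots\to\tau(i_t)\to\tau(i_1))$ of $\tau\sigma\tau^{-1}$. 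Tracking the $G^n$-component carefully, one finds that the new cycle product $v'(\tau(i_t))\cdots v'(\tau(i_1))$ equals $u(\tau(i_t))\cdot v(i_t)\cdots v(i_1)\cdot u(\tau(i_t))^{-1}$ — a telescoping calculation in which the intermediate $u(\tau(i_j))^{\pm 1}$ factors cancel. Thus the cycle product only changes by conjugation within $G$, so its conjugacy class $c$ is preserved, proving $a_{t,c}$ is a class function.

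For (b), I would exhibit a canonical form for each assignment $\braces*{a_{t,c}}$. Given $(v,\sigma)$ and a $t$-cycle $(i_1\to\ldots\to i_t\to i_1)$ of $\sigma$ with cycle product in class $c$, I would first conjugate by an element of the form $(u,1)$ supported on $\{i_1,\ldots,i_t\}$ to transform $v$ on this cycle into a normalized shape: choosing $u$ recursively (e.g.\ $u(i_j)$ defined so that successive applications of the telescoping identity from step (a) push all nontrivial mass to $i_1$), one arranges $v'(i_1)=\hat{c}$ for a fixed representative $\hat{c}\in c$ and $v'(i_j)=1$ for $j\ge 2$. Doing this independently on every cycle yields a normal form depending only on the multiset of pairs $(t,c)$, i.e.\ only on $\braces*{a_{t,c}}$. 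Finally, two elements in normal form with the same $\braces*{a_{t,c}}$ are conjugate by an element of the form $(1,\tau)$ with $\tau\in S_n$ matching the cycle supports — this step uses only the symmetric-group fact that permutations with equal cycle type are conjugate, enhanced here by "cycle type with labels in $\conj(G)$".

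The main obstacle is the bookkeeping inside the telescoping computation in (a), since the asymmetry between the inner action $\sigma_1.v_2$ and the cycle-product convention $v(i_t)\cdots v(i_1)$ in Equation~\eqref{eq_def_a_tc} makes it easy to get the index shifts wrong. I would circumvent this by working in the monomial-matrix picture from the introduction, where $(v,\sigma)$ is a matrix with entries $v(i)$ placed according to $\sigma$, conjugation by $(u,\tau)$ corresponds to simultaneous permutation of rows/columns by $\tau$ together with multiplication by a diagonal $G$-matrix with entries from $u$, and the cycle products read off directly as traces of products of cyclically-placed entries — this makes the invariance transparent and the canonical form construction reduces to diagonalizing each cycle block to the form $\mathrm{diag}(\hat{c},1,\ldots,1)$ times a cyclic-shift matrix.
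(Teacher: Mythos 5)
The paper does not prove this Fact; it simply cites \cite[Theorem 2.3.11]{ceccherini-silberstein_scarabotti_tolli_2014}, so there is no in-paper argument to compare yours against. Your sketch is the standard direct proof and it is correct. The conjugation computation in (a) checks out: writing $(u,\tau)^{-1}=(\tau^{-1}.u^{-1},\tau^{-1})$, one gets $v'(k)=u(k)\,v(\tau^{-1}k)\,u(\tau\sigma^{-1}\tau^{-1}k)^{-1}$, so $v'(\tau(i_j))=u(\tau(i_j))\,v(i_j)\,u(\tau(i_{j-1}))^{-1}$, and the product over $j=t,\ldots,1$ telescopes to $u(\tau(i_t))\bigl(v(i_t)\cdots v(i_1)\bigr)u(\tau(i_t))^{-1}$ as you claim; this also gives a proof of the paper's parenthetical remark that $a_{t,c}$ is independent of the choice of starting index (take $\tau$ a power of the cycle). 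For (b), the recursive choice of $u$ on a single cycle does force $v'(i_j)=1$ for $j\ge 2$, and it is worth being explicit that what lands at $i_1$ is a \emph{cyclic rotation} of the cycle product (not the cycle product itself), namely $u(i_1)\,v(i_1)v(i_t)\cdots v(i_2)\,u(i_1)^{-1}$; since a cyclic rotation is a conjugate, the free parameter $u(i_1)$ (which drops out of all the other constraints) can then be used to land on the fixed representative $\hat{c}$. After that the final permutation step is correct. So the proposal closes a gap the paper leaves to a reference, and the monomial-matrix reformulation you suggest is a sensible way to keep the index bookkeeping under control.
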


\subsection*{Multi partitions and irreducible characters}
\label{subsection_multi_partitions}

Recall the concept of partitions (or Young Diagrams) from Definition~\ref{def_partition}.
Note that a partition can be identified with a multi-set of positive integers.
\begin{definition}
We denote the set of all partitions by $\mathscr{P}$, and denote $\mathscr{P}_n \defeq \{\lambda\in \mathscr{P}: \lambda\vdash n\}$. 
For a partition $\lambda = \prn*{\lambda_1, \ldots, \lambda_{\ell}}$, we denote the number of parts by $\ell(\lambda)\defeq \ell$.
Moreover, for every $r\in \N$, we denote by $\#_r(\lambda)$ the number of parts in $\lambda$ of size $r$, so $\sum_{r} \#_r(\lambda) = \ell(\lambda)$.
Note that the empty partition $\emptyset\vdash 0$ is allowed, and has $\ell(\emptyset) = 0$.
The size $p(n)\defeq |\mathscr{P}_n|$ is called the partition function. 
\end{definition}

A \textbf{sub-partition} $\mu$ of a partition $\lambda$ is a sub-multiset, and is denoted by $\mu\le \lambda$. 
Recall from Definition~\ref{def_partition} that for $\lambda = (\lambda_1, \ldots, \lambda_{\ell}) \vdash k$ and $n\ge k + \lambda_1$ we denote $\lambda[n]\defeq (n-k, \lambda_1, \ldots, \lambda_{\ell})$.
\begin{definition}
    Let $\lambda$ be a Young diagram (or equivalently an integer partition) and $S$ a set. A \textbf{function} $\zeta\colon \lambda\to S$ is a a function from the rows of $\lambda$ to $S$. We declare two such functions $\zeta_1, \zeta_2$ as equal if one is obtained from the other by permuting rows of the same length in $\lambda$.
\end{definition}

\begin{definition}
\label{def_multi_partitions}
Let $S$ be a set and $n\in \N$.
We define the sets of \textbf{multi-partitions} as
\[\mathscr{P}_n(S)\defeq \braces*{\VecLambda\colon S\to \mathscr{P} \quad | \quad \sum_{s \in S} |\VecLambda(s)| = n} \quad \textup{and} \quad \mathscr{P}(S) \defeq \biguplus_{n=0}^{\infty} \mathscr{P}_n(S).\]
We also denote $\mathscr{P}_{\le n}(S) \defeq \biguplus_{k=0}^{n} \mathscr{P}_k(S)$.
For $\VecLambda\in \mathscr{P}_n(S)$ we define $\abs*{\abs*{\VecLambda}} = n$. 
Note that $|\mathscr{P}_n(S)| = p^{\convolve{|S|}}(n)$
where $\convolve{}$ denotes convolution, e.g.\ 
$|\mathscr{P}_n(\{1, 2\})| = (p\convolve p)(n) = \sum_{k=0}^n p(k)p(n-k)$. 

\noindent Given a multi-partition $\VecLambda\in \mathscr{P}_n(S)$, we can \enquote{forget} the $S$-values, and get a partition $\floor{\VecLambda}\in \mathscr{P}_n$ which is the disjoint union (or sum of multi-sets) of $\VecLambda(s)$ over all $s \in S$.  
For every $\VecLambda\in \mathscr{P}(S)$ there is a unique function $\zeta \colon \floor{\VecLambda}\to S$ (when $\floor{\VecLambda}$ is regarded as a multi-set) satisfying $\VecLambda(s) = \zeta^{-1}(s)$ for every $s \in S$. We denote the unique such $\zeta$ by $\zeta_{\VecLambda}$.
This lets us identify
\[ \mathscr{P}_n(S) \cong \braces{\zeta \colon \lambda \to S \quad | \quad \lambda\in \mathscr{P}_n}.\]
\end{definition}

\begin{definition}
\label{def_vec_lambda_stable}
Let $G$ be a finite group, $\VecLambda = (\VecLambda(\phi))_{\phi\in \hat{G}}\in \mathscr{P}(\hat{G})$ a multi partition, and let $n \ge \abs*{\abs*{\VecLambda}} + \VecLambda(\textbf{1})_1$. Define $\VecLambda[n] \in \mathscr{P}_n(\hat{G})$ via
\begin{equation}
\begin{split}
    \VecLambda[n](\phi) \defeq &
    \begin{cases}
    \VecLambda(\phi) & \textrm{ if } \phi \neq \textbf{1}, \\
    \prn*{n-\abs*{\abs*{\VecLambda}}, \VecLambda(\textbf{1})_1, \ldots, \VecLambda(\textbf{1})_{\ell({\VecLambda(1)})}} & \textrm{ if } \phi = \textbf{1}. \\
    \end{cases}
\end{split}
\end{equation}
\end{definition}

Now we characterize the irreducible characters of 
$G\wr S_n$ using multi-partitions. 
\begin{fact}
    [See {\cite[Theorem 2.6.1]{ceccherini-silberstein_scarabotti_tolli_2014}}]
    Let $G$ be a finite group. 
    Similarly to the famous bijection between 
    $\mathscr{P}_n$ and $\widehat{S_n}$, 
    there is also a natural bijection between 
    $\PnGhat$ and $\widehat{G\wr S_n}$. 
\end{fact}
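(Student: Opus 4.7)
The plan is to invoke Clifford theory applied to the normal subgroup $G^n \triangleleft G \wr S_n$, which is the classical route to classifying irreducibles of a semidirect product when the complement acts on the dual of the normal subgroup. First, I would identify $\widehat{G^n} \cong \hat{G}^n$ via external tensor products, $(\phi_1, \ldots, \phi_n) \mapsto \phi_1 \boxtimes \cdots \boxtimes \phi_n$. Under this identification, the conjugation action of $S_n = (G \wr S_n)/G^n$ on $\widehat{G^n}$ becomes the coordinate permutation action on $\hat{G}^n$, so that $S_n$-orbits correspond to functions $f \colon \hat{G} \to \N$ with $\sum_{\phi} f(\phi) = n$: such an $f$ records the multiplicities in a chosen representative $\rho_f = \phi_1 \boxtimes \cdots \boxtimes \phi_n$ with equal $\phi_i$'s grouped together.

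Next, I would compute the stabilizer of $\rho_f$ in $G \wr S_n$, which (by construction of $\rho_f$) is the subgroup $\prod_{\phi \in \hat{G}} (G \wr S_{f(\phi)})$, each factor permuting only those coordinates that carry the same irreducible $\phi$. Crucially, $\rho_f$ extends to a genuine (non-projective) representation $\tilde{\rho}_f$ of this stabilizer: on $G^n$ it acts as before, and the permutation part acts by permuting the tensor factors, which is well-defined precisely because only coordinates carrying the same $\phi$ are ever permuted. By the standard Clifford--Mackey theorem, since $\rho_f$ extends to its stabilizer, the irreducibles of $G \wr S_n$ lying over the orbit of $\rho_f$ are in bijection with irreducibles $\tau$ of $\prod_{\phi} S_{f(\phi)}$, via $\tau \mapsto \Ind_{\textup{stab}}^{G \wr S_n}\prn*{\tilde{\rho}_f \otimes \tau}$.

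Finally, the irreducibles of $\prod_{\phi} S_{f(\phi)}$ are by the classical $S_n$-theory exactly tensor products $\bigotimes_{\phi} \chi_{\lambda^\phi}$ with $\lambda^\phi \vdash f(\phi)$. Packaging the data as $\VecLambda(\phi) \defeq \lambda^\phi$ yields exactly an element of $\PnGhat$, since $\sum_{\phi} |\VecLambda(\phi)| = \sum_{\phi} f(\phi) = n$; conversely every multi-partition arises this way. Naturality in $G$ is automatic because nothing in the construction uses more than the combinatorics of $\hat{G}$ and the internal structure of $G \wr S_{f(\phi)}$. The main obstacle I anticipate is the extension step: verifying that $\tilde{\rho}_f$ is a bona fide representation of the stabilizer (rather than merely a projective one) and that the Clifford correspondence is genuinely a bijection. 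Both points are clean here because the stabilizer decomposes as a direct product of the smaller wreath products $G \wr S_{f(\phi)}$, each of which is the natural domicile for the isotypic factor $\phi^{\otimes f(\phi)}$, so no cohomological obstructions arise.
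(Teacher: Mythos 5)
Your argument is correct and is essentially the same Clifford theory / little-group argument given in the cited reference \cite{ceccherini-silberstein_scarabotti_tolli_2014} (the paper itself just cites this result without proof). The key points you flag—that the inertia group of $\rho_f$ is $\prod_{\phi}\bigl(G\wr S_{f(\phi)}\bigr)$ and that $\rho_f$ extends honestly (no projective obstruction) because the permutation part only shuffles tensor factors carrying identical irreducibles—are precisely the points on which the standard treatment hinges.
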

We denote the irreducible character 
corresponding to $\VecLambda\in \PnGhat$ by 
$\chi_{\VecLambda}\in \widehat{G\wr S_n}$.
The following fact is proven in Theorem~\ref{thm_irred_basis_of_AG}:
\begin{fact}
\label{fact_stable_irr_chars}
    For every $d\in \N$ there is a bijection between $\PdGhat$ and $\braces*{\chi\in \widehat{G\wr S_{\bullet}}: \deg(\chi) = d}$, that maps $\VecLambda\in \PdGhat$ to the unique stable character that coincides with the sequence $\prn*{\chi_{\VecLambda[n]}}_{n=\abs*{\abs*{\VecLambda}} + \VecLambda(\textbf{1})_1}^{\infty}$.
    In particular, since we may identify $\mathscr{P}_1\prn*{\hat{G}}$ with $\hat{G}$, there is a bijection between $\hat{G}$ and irreducible stable characters of $G\wr S_{\bullet}$ of degree $1$, which are denoted in Theorem~\ref{thm_Sho23I_main} as $\braces*{\chi_{\phi}}_{\phi\in \hat{G}}$.
\end{fact}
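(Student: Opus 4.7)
The plan is to prove both directions in sequence: first, well-definedness (each $\VecLambda\in\PdGhat$ gives rise to a stable irreducible character of degree exactly $d$), and then bijectivity. The technical backbone is the generalized Frobenius--Specht character formula for wreath products (see \cite[Section 2.6]{ceccherini-silberstein_scarabotti_tolli_2014}) combined with induction from parabolic-type subgroups $\prod_i (G\wr S_{t_i})\le G\wr S_n$. As warm-up I would invoke the known $S_n$ case ($G=\{1\}$), where $\prn*{\chi_{\lambda[n]}}_n\in\Q[a_1,a_2,\dots]$ has degree exactly $|\lambda|$, a standard stability fact recorded in the appendix of \cite{HP22}.

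First I would show $\prn*{\chi_{\VecLambda[n]}}_n\in\mathcal{A}(G)$ with $\deg\le d$. The character formula expresses $\chi_{\VecLambda[n]}(v,\sigma)$ as a sum, over fillings of the Young diagrams $\VecLambda[n](\phi)$, of products of values that depend on $(v,\sigma)$ only through $\braces*{a_{t,c}(v,\sigma)}$. The $n$-dependence enters only through the $\VecLambda[n](\textbf{1})$ block, whose top row grows by $n-\abs*{\abs*{\VecLambda}}$; applying the $S_n$ case to this block shows its contribution is polynomial in $\braces*{a_{t,\{e\}}}$ of degree $\abs*{\VecLambda(\textbf{1})}$, while the fixed non-trivial components contribute factors of total degree $\sum_{\phi\ne\textbf{1}}\abs*{\VecLambda(\phi)}$. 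Summing gives $\deg\le d$.

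To upgrade to $\deg=d$, I would expand $\chi_{\VecLambda[n]}$ in the basis of characters induced from $\prod_i(G\wr S_{t_i})$ with external tensor data built from $\phi_{c_i}$ and the trivial symmetric-group character $\chi_{(t_i)}$; each such induced character is directly a monomial in $\braces*{a_{t,c}}$ of degree $\sum t_i$, and a wreath-product Pieri/Littlewood--Richardson decomposition is upper-triangular with respect to the natural partial order on multi-partitions, so the top-degree contribution cannot cancel. For bijectivity, injectivity is immediate because for each sufficiently large $n$ the characters $\braces*{\chi_{\VecLambda[n]}}$ are pairwise distinct irreducibles; surjectivity follows because any stable irreducible character $\chi$ is, for each large $n$, of the form $\chi_{\VecLambda_n}$ with $\VecLambda_n\in\PnGhat$, and stability (the uniform polynomial expression in $\braces*{a_{t,c}}$) forces every component of $\VecLambda_n$ except the top row of $\VecLambda_n(\textbf{1})$ to stabilize to a fixed $\VecLambda\in\PdGhat$ with $\abs*{\abs*{\VecLambda}}=d$. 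The main obstacle I anticipate is the exact-degree claim: ruling out accidental leading-term cancellation in the Pieri expansion; an inner-product fallback using $\inner*{\chi_{\VecLambda[n]},\chi_{\VecLambda[n]}}=1$ (high-degree monomials in $\braces*{a_{t,c}}$ have growing norms) can serve as a backup argument if the triangularity analysis becomes delicate.
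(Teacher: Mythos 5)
Your proposal takes a genuinely different route from the paper. The paper (Theorem~\ref{thm_irred_basis_of_AG}) derives Fact~\ref{fact_stable_irr_chars} from a dimension count: using the $\FIG$-module black boxes (citing \cite{gan2016coinduction}, \cite{casto2016fig}, \cite{ryba2019stable}) it first gets $\deg \le d$ for each $\VecLambda \in \PdGhat$; the $\xi(f_{\VecLambda})$ for $\VecLambda\in\PledGhat$ are linearly independent in $\AGled$ because they restrict to distinct irreducibles for large $n$; and $|\PledGhat| = \dim(\AGled)$ because both sides count monomials in $(a_{t,c})$ of weighted degree $\le d$. Hence they form a basis of $\AGled$, which simultaneously gives exact degree (a degree-$< d$ element would live in $\mathcal{A}(G)^{\le d-1}$, whose basis $\{\xi(f_{\Vec{\mu}})\}_{\Vec{\mu}\in\PGhat, \abs{\abs{\Vec{\mu}}}<d}$ it cannot lie in by linear independence) and surjectivity. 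Your approach instead proposes explicit Frobenius--Specht-type character formulas and a Pieri/Littlewood--Richardson triangularity analysis. That could in principle yield the same conclusion but is substantially heavier machinery than the dimension count, and the non-cancellation step at top degree is exactly the kind of thing the paper's argument sidesteps.

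Two concrete gaps. First, your surjectivity argument is circular: the assertion that ``stability forces every component of $\VecLambda_n$ except the top row of $\VecLambda_n(\textbf{1})$ to stabilize to a fixed $\VecLambda$'' is, in effect, the content of the Fact itself. Writing $\chi_n = \chi_{\VecLambda_n}$ for large $n$ is just the definition; you need an actual argument that the $\VecLambda_n$ are eventually constant up to padding the $\textbf{1}$-block, and you do not supply one. The paper's dimension count makes this automatic (once $\{\xi(f_{\VecLambda})\}$ is a basis, any stable irreducible is some unique $\xi(f_{\VecLambda_0})$ by comparing decompositions at a single large $n$). Second, the ``inner-product fallback'' as stated does not work: for stable class functions of bounded degree, the normalized inner products $\inner{\cdot,\cdot}_{G_n}$ \emph{stabilize} as $n\to\infty$; they do not grow, so ``high-degree monomials have growing norms'' is false. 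The correct version of the fallback you want is to evaluate at the identity of $G\wr S_n$: since $a_{t,c}(1_{G_n}) = n\cdot\bbone_{t=1,\,c=\{e\}}$, a degree-$D$ polynomial in $(a_{t,c})$ has value at $1$ that is $O(n^D)$, whereas $\dim(\chi_{\VecLambda[n]})$ is known to grow like $n^{\abs{\abs{\VecLambda}}}$; this forces $\deg = \abs{\abs{\VecLambda}}$ cleanly without Pieri triangularity. Replacing your fallback with this dimension-at-identity argument, and replacing the hand-waved surjectivity with a dimension count (or at least a careful branching-rule argument), would make your proof correct, though still longer than the paper's.
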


\begin{example}
    Let $\phi, \psi\in \hat{G}-\{\textbf{1}\}$ be different (i.e.\ $\phi\neq \psi$), and let $(v, \sigma)\in G\wr S_n$. 
    
\begin{table}[ht!]
\centering
\begin{tabular}{|| c | c | c | c ||} 
\hline
$\VecLambda$
& $\VecLambda[n]$
& $\chi_{\VecLambda[n]}(v, \sigma)$
& Degree\\ [0.5ex] 
\hline\hline

\rule{0pt}{5ex}    
$\displaystyle{ \prn*{\ontop{(1)}{\phi}}}$
& $\displaystyle{ \prn*{\ontop{(n-1)}{\textbf{1}}; \ontop{(1)}{\phi}}}$ 
& $\displaystyle{ \sum_{i:\, \sigma(i) = i} \phi(v(i))}$
& $ {1}$ \\  [1ex] \hline

\rule{0pt}{5ex}    
{$\displaystyle{ \prn*{ \ontop{(1)}{\phi}; \ontop{(1)}{\psi}}}$}
& $\displaystyle{ \prn*{\ontop{(n-2)}{\textbf{1}}; \ontop{(1)}{\phi}; \ontop{(1)}{\psi}}}$ 
& $\displaystyle{ \sum_{\substack{i\neq j: \\\sigma(i) = i, \,\,\sigma(j) = j}} \phi(v(i))\psi(v(j))}$
& $ {2}$ \\  [1ex] \hline
 \hline
\end{tabular}
\label{table:example_irreps_wreath}
\caption{Examples of formulas of stable irreducible characters.}
\end{table}
\FloatBarrier
\end{example}

For the next example, we denote 
the characters of $C_2 = \{\pm 1\}$ by $\textbf{1}, \textbf{-1}$, and denote $\binom{[n]}{2} \defeq \{A\subseteq \{1,\ldots, n\}: |A|=2\}$.
\noindent Recall from Table~\ref{table:wreath_product_papers} that the result of \cite{Ord20} is that for every non-power $w\in F_r$, $\EX_w\brackets*{\chi_{\prn*{\ontop{(n-2)}{\textbf{1}};\ontop{(2)}{\textbf{-1}}}}} = O\prn*{n^{-\pi(w)}}$. 

\begin{example}
\label{Ordos_char}
Let $H_n\defeq C_2\wr S_n$ be the Hyperoctahedral 
group, and let 
$\VecLambda = \prn*{\ontop{(n-2)}{\textbf{1}};\ontop{(2)}{\textbf{-1}}}\in \mathscr{P}_n\prn*{\hat{C_2}}$. 
The irrep 
$\rho\in \Hom\prn*{H_n, \textup{GL}_{\binom{n}{2}}(\Q)}$ 
that maps every 
$v\in \{\pm 1\}^n, \,\,\sigma\in S_n$ to
\[ \rho(v, \sigma) = 
\prn*{\bbone_{\sigma(\{i_1, j_1\}) = \{i_2, j_2\}} 
v(i_1) v(j_1)}_{{\{i_1, j_1\}, \{i_2, j_2\}}
\in \binom{[n]}{2}} \]
yields the character 
$\chi_{\VecLambda} = 
\textup{tr}\circ \rho$, 
and we get the formula
\[ \chi_{\VecLambda}(v, \sigma) = 
\sum_{\{i, j\}: \,\,\sigma(\{i, j\}) = \{i, j\}} 
v(i)v(j). \]
\end{example}

\subsection*{$\FIG$-modules}

Recall that $G$ is a fixed finite group.
The algebra $\mathcal{A}(G)$ is closely related to $\FIG$-modules.
To use some known results about $\FIG$-modules, we first explain what $\FIG$-modules are.
We define $\FIG$-modules only over $\FG$, and besides that we follow \cite[Definition 1.2]{Sam_2019}.

\begin{definition}
For every $n\in \N$, we fix an embedding $S_n\overset{\cong}{\to} \textup{Stab}_{S_{n+1}}(n+1)\le S_{n+1}$ of symmetric groups, which induces an embedding $\iota_n\colon G\wr S_n\hookrightarrow G\wr S_{n+1}$.    
\end{definition}

\begin{definition}
\noindent An $\FIG$-module is a sequence $(V_n)_{n=0}^{\infty}$ of $G\wr S_n$-representations over $\FG$ with linear maps $\psi_n\colon V_n\to V_{n+1}$, 
such that for every $g\in G_n$, the following diagram commutes
\[\begin{tikzcd}
	{V_n} & {V_{n+1}} \\
	{V_n} & {V_{n+1}}
	\arrow["{\psi_n}", from=1-1, to=1-2]
	\arrow["{\iota_n(g)}", from=1-2, to=2-2]
	\arrow["g"', from=1-1, to=2-1]
	\arrow["{\psi_n}", from=2-1, to=2-2]
\end{tikzcd}\]
and for every $k < n$, the subgroup $G\wr_{[n]} \text{Stab}_{S_n}(\{1, \ldots, k\}) \le G\wr S_n$ acts trivially on the image $\textup{Im}\prn*{\psi_{k\to n}} \subseteq V_n$, where $\psi_{k\to n}\defeq \psi_{n-1}\circ \cdots \circ \psi_k$ (and $\psi_{n\to n}\defeq \textup{id}$). 
We omit $\psi_n$ from the notation when there is no risk of ambiguity.
An $\FIG$-submodule of $(V_n)_{n=0}^{\infty}$ is a sequence $(W_n)_{n=0}^{\infty}$ of $G\wr S_n$-subrepresentations, with the restricted maps $\psi_n\restriction_{W_n}\colon W_n\to W_{n+1}$.
The \textbf{character} of an $\FIG$-module $(V_n)_{n=0}^{\infty}$ is the sequence $\prn*{\chi_n\colon G\wr S_n\to \FG}_{n=0}^{\infty}$ of characters of $V_n$.
\end{definition}

\begin{remark}
Define $\FIG$ as the following category:
\begin{itemize}
    \item An object is a finite set $X$ with a free action $G\acts X$ that has finitely many orbits.
    \item Given objects $X$ and $Y$, morphisms $X\to Y$ are $G$-equivariant injections.
\end{itemize}
The original (equivalent!) definition of an $\FIG$-module over $\FG$ is a functor from the category $\FIG$ to the category $\FG$\textbf{-Vec} of vector spaces over $\FG$.
Naturally, $\FIG$-modules form a category themselves: a \textbf{morphism} between $\FIG$-modules is a natural transformation of functors. 
This gives rise to natural definitions of a $\FIG$-submodule and a quotient $\FIG$-module.
\end{remark}

\begin{definition}[Generation of $\FIG$-module]
Let $V=(V_n)_{n=0}^{\infty}$ be an $\FIG$-module, and let $B\subseteq \biguplus_{n=0}^{\infty} V_n$ be a set of vectors. 
We define the $\FIG$-module \textbf{generated} by $B$ as 
\[\biguplus_{n=0}^{\infty} \textup{span}_{\FG} \prn*{\bigcup_{k=0}^n  \psi_{k\to n} \prn*{B\cap V_k}}.\]
This is the minimal $\FIG$-submodule containing $B$.
An $\FIG$-module is called \textbf{finitely generated} if it has a finite generating set, and called \textbf{generated in degree $\le d$} if it is generated by $\biguplus_{q=0}^d V_q$.
\end{definition}

\begin{definition}
\label{def_AtagG}
    We define $\mathcal{A}'(G)$ as the set of sequences of class functions $\prn*{f_n\colon G\wr S_n\to \FG}_{n=0}^{\infty}$ that coincide with a sequence from $\textup{span}_{\FG} \braces*{\prn*{\chi_{\VecLambda[n]}}_{n=\abs*{\abs*{\VecLambda}} + \VecLambda(\textbf{1})_1}^{\infty}}$ for every large enough $n$.
\end{definition}

We denote $f_n\to 0$ whenever $f_n\colon G\wr S_n\to \FG$ is identically zero for every large enough $n$.

\begin{theorem}
\cite[Theorem 1.12]{gan2016coinduction} 
Let $V = (V_n)_{n=0}^{\infty}$ be an $\FIG$-module with $\dim(V_n) < \infty$ for every $n$.
Then $V$ is finitely generated if and only if for every large enough $n$, the $\FG$-span of the union of all $G\wr S_{n+1}$-orbits of $\psi_n(V_n)$ is all of $V_{n+1}$. 
Moreover, if $V$ is finitely generated, then the character of $V$ is an element of $\mathcal{A}'(G)$, and $\psi_n$ is injective for every large enough $n$.
\end{theorem}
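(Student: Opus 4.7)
The plan is to prove each direction of the biconditional directly, and then to address the two ``moreover'' assertions.

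For the ($\Rightarrow$) direction, I would fix a finite generating set $B\subseteq \biguplus_{k=0}^d V_k$ (a single bound $d$ exists because $B$ is finite). By the definition of the $\FIG$-submodule generated by $B$, the space $V_{n+1}$ is the $\FG$-span of elements $g\cdot \psi_{k\to n+1}(b)$ with $b\in B\cap V_k$, $k\le d$, $g\in G\wr S_{n+1}$. For $n\ge d$, each such $\psi_{k\to n+1}(b)=\psi_n\prn*{\psi_{k\to n}(b)}$ lies in $\psi_n(V_n)$, so the $G\wr S_{n+1}$-orbit span of $\psi_n(V_n)$ is all of $V_{n+1}$, as required.

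For the converse, suppose the orbit-span condition holds for every $n\ge N$. Let $V'$ be the $\FIG$-submodule generated by $\biguplus_{q=0}^{N} V_q$; this is a finite generating set since every $V_q$ is finite-dimensional. I would prove $V'_n=V_n$ by induction on $n$. The base case $n\le N$ is immediate, since $V_n$ itself is part of the chosen generating set. For the inductive step with $n\ge N$, assuming $V'_n=V_n$, we have $\psi_n(V_n)\subseteq V'_{n+1}$, and since $V'_{n+1}$ is a $G\wr S_{n+1}$-subrepresentation of $V_{n+1}$, it contains every $G\wr S_{n+1}$-orbit through $\psi_n(V_n)$; by hypothesis these orbits span $V_{n+1}$, so $V'_{n+1}=V_{n+1}$. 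Hence $V=V'$ is finitely generated.

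For the ``moreover'', the statement that the character of a finitely generated $\FIG$-module lies in $\mathcal{A}'(G)$ is the content of representation stability for $\FIG$-modules: for $n$ large, $V_n$ decomposes into irreducibles $\chi_{\VecLambda[n]}$ with multiplicities stable in $n$, and with $\abs*{\abs*{\VecLambda}}$ bounded by the generation degree; combined with Definition~\ref{def_AtagG}, this places the character in $\mathcal{A}'(G)$. For eventual injectivity of $\psi_n$, I would consider the ``eventual kernel'' $K_n\defeq \bigcup_m \ker\prn*{\psi_{n\to m}}$; these assemble into an $\FIG$-submodule $K\subseteq V$, since $\psi_n(K_n)\subseteq K_{n+1}$ by construction. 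By the Noetherian property of finitely generated $\FIG$-modules, $K$ is itself finitely generated, say in degree $\le D$. The main obstacle, as I see it, is combining this finite generation degree with the defining property that each $v\in K_n$ is eventually killed by some $\psi_{n\to m}$, to force $K_n=0$ for $n$ large; once achieved, $\ker(\psi_n)\subseteq K_n=0$ delivers injectivity.
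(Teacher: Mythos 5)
The paper does not prove this theorem; it cites it as \cite[Theorem 1.12]{gan2016coinduction}, so there is no in-paper argument to compare yours against. Your treatment of the biconditional is correct and complete: the forward direction follows by factoring $\psi_{k\to n+1} = \psi_n\circ\psi_{k\to n}$ once $n$ exceeds the generation degree $d$, and the converse by induction on $n$ starting from the submodule $V'$ generated by the finite set $\biguplus_{q\le N} V_q$, using that $V'_{n+1}$ is a $G\wr S_{n+1}$-subrepresentation containing $\psi_n(V_n)$.

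The gaps are in the two ``moreover'' claims. For the assertion that the character lies in $\mathcal{A}'(G)$, you invoke representation stability as a black box; but that assertion \emph{is} the substantive content of the cited theorem, so either keep this as an explicit citation rather than a proof, or reproduce the Gan--Li argument via induced modules, the shift functor, and coinduction. For eventual injectivity, your torsion-submodule idea is the right one, and the obstacle you flag closes quickly: granted Noetherianity of finitely generated $\FIG$-modules (itself a nontrivial input), the torsion submodule $K = (K_n)_n$ is finitely generated, say by $v_1, \ldots, v_s$ with $v_i\in K_{k_i}$; by definition of $K$ there are $m_i \ge k_i$ with $\psi_{k_i\to m_i}(v_i) = 0$. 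Set $M\defeq \max_i m_i$. For every $n\ge M$ we have $\psi_{k_i\to n}(v_i) = \psi_{M\to n}\prn*{\psi_{m_i\to M}\prn*{\psi_{k_i\to m_i}(v_i)}} = 0$, so every element whose $G\wr S_n$-orbit spans $K_n$ vanishes, forcing $K_n = 0$ and therefore $\ker\prn*{\psi_n}\subseteq K_n = 0$ for all $n\ge M$. You should also check explicitly that $K$ is an $\FIG$-submodule, i.e.\ that each $K_n$ is a linear subspace and a $G\wr S_n$-subrepresentation with $\psi_n(K_n)\subseteq K_{n+1}$; all three verifications are routine but should not be left implicit.
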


It also follows from \cite{gan2016coinduction} that for every $d\in \N$ and $\VecLambda\in \PdGhat$, $\prn*{\chi_{\VecLambda[n]}}_{n=\abs*{\abs*{\VecLambda}} + \VecLambda(\textbf{1})_1}^{\infty}$ is the character of a finitely generated $\FIG$-module, generated in degree $\le d$.

\begin{definition}
By definition, for every $f\in \mathcal{A}'(G)$ there are $\VecLambda_1, \ldots, \VecLambda_k \in \PGhat$ and coefficients $\beta_1, \ldots, \beta_k\in \FG-\{0\}$ such that 
$\abs*{f_n - \sum_{i=1}^{k} \beta_i \cdot \chi_{\VecLambda_i [n]}}\to 0$. 
We define $\deg\colon \mathcal{A}'(G)\to \Z_{\ge 0}$ by $\deg(f)\defeq \max_{1\le i\le k} \abs*{\abs*{\VecLambda_i}}$.
Note that $\deg(f) = 0$ if and only if $f\to 0$.
We also denote 
\[\AtagGled \defeq \braces*{f\in \mathcal{A}'(G): \, \deg(f)\le d}, \quad\quad \AGled \defeq \braces*{f\in \mathcal{A}(G): \, \deg(f)\le d}.\]
\end{definition}

\begin{theorem}
    \cite[Part of Theorem 7.15, Example 7.16]{ryba2019stable} 
    The pointwise addition and multiplication give $\mathcal{A}'(G)$ the structure of a filtered algebra with respect to $\deg$.
\end{theorem}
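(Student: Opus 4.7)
The plan is to check the three axioms of a filtered algebra in turn: closure of $\mathcal{A}'(G)$ under pointwise addition and multiplication, subadditivity of $\deg$ under addition, and additivity (or at least subadditivity) of $\deg$ under multiplication. The first and second are essentially trivial: if $f_n - \sum_i \beta_i \chi_{\VecLambda_i[n]} \to 0$ and $g_n - \sum_j \gamma_j \chi_{\VecMu_j[n]} \to 0$, then $(f+g)_n$ differs from the concatenated linear combination by something tending to $0$, and the only $\VecLambda$'s appearing in the combined expression are those already appearing in the expressions for $f$ and $g$, so $\deg(f+g) \le \max(\deg(f), \deg(g))$.

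The content lies in multiplication. Since the pointwise product of characters is the character of the tensor product of representations, reducing by linearity it suffices to show that for any $\VecLambda, \VecMu \in \PGhat$ there is a \emph{stable} decomposition
\begin{equation*}
\chi_{\VecLambda[n]} \cdot \chi_{\VecMu[n]} \;=\; \sum_{\VecNu} c^{\VecNu}_{\VecLambda,\VecMu}\, \chi_{\VecNu[n]}
\end{equation*}
valid for all sufficiently large $n$, with finitely many nonzero coefficients $c^{\VecNu}_{\VecLambda,\VecMu}$ independent of $n$, and with every nonzero term satisfying $\abs{\abs{\VecNu}} \le \abs{\abs{\VecLambda}} + \abs{\abs{\VecMu}}$. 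Granting this, the product $f\cdot g$ lies in $\mathcal{A}'(G)$ and the bound on $\abs{\abs{\VecNu}}$ immediately gives $\deg(f\cdot g) \le \deg(f) + \deg(g)$.

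The stability of the coefficients $c^{\VecNu}_{\VecLambda,\VecMu}$ is the wreath-product analog of Murnaghan's stability theorem for Kronecker coefficients of $S_n$. I would prove it by lifting the question to $\FIG$-modules: by the preceding theorem, each sequence $\prn*{\chi_{\VecLambda[n]}}_n$ is the character of a finitely generated $\FIG$-module $V^{\VecLambda}$ generated in degree $\le \abs{\abs{\VecLambda}}$, and analogously for $V^{\VecMu}$. The pointwise tensor product $(V^{\VecLambda} \otimes V^{\VecMu})_n \defeq V^{\VecLambda}_n \otimes_{\FG} V^{\VecMu}_n$ inherits a natural $\FIG$-module structure from the diagonal action, is finitely generated in degree $\le \abs{\abs{\VecLambda}} + \abs{\abs{\VecMu}}$ (a generating set being the tensors of generators of each factor), and hence its character again lies in $\mathcal{A}'(G)$ with degree bounded by this sum. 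Applying the classification of finitely generated $\FIG$-module characters in terms of $\prn*{\chi_{\VecNu[n]}}$, the bounded-degree expansion drops out.

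The one genuinely nontrivial step is proving that the tensor product of finitely generated $\FIG$-modules is itself finitely generated with the expected generation-degree bound. I expect this to be the main obstacle and the technical heart of the argument: one must verify that applying the $G \wr S_{n+1}$-action to $\psi_n(v \otimes w)$ already produces, by linear span, the entire $(n+1)$-th level $V^{\VecLambda}_{n+1} \otimes V^{\VecMu}_{n+1}$ for $n$ large, which in turn relies on the branching rules for wreath products applied to each factor together with a pigeonhole argument on the indices outside the common support of a pair of generators. This is precisely the content of the cited \cite[Theorem 7.15, Example 7.16]{ryba2019stable}, which I would invoke as a black box; the remaining bookkeeping, matching $\FIG$-modules with elements of $\mathcal{A}'(G)$ and reading off the degree inequalities, is then formal.
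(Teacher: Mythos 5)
Since the paper offers no proof here beyond the citation, your proposal is really being compared against what the cited result establishes, and your reduction is the right one: pass from products of class functions to tensor products of $\FIG$-modules, invoke the preceding theorem to see that each $\prn*{\chi_{\VecLambda[n]}}_n$ is the character of a finitely generated $\FIG$-module generated in degree $\le\abs*{\abs*{\VecLambda}}$, and then appeal to the $\FIG$-analogue of Church--Ellenberg--Farb's tensor-product lemma to get finite generation with the summed degree bound. One small but genuine imprecision: the parenthetical assertion that a generating set for $V^{\VecLambda}\otimes V^{\VecMu}$ is ``the tensors of generators of each factor'' is false as stated. Already for $\FI$ with $V=W=M(1)$ (the free module on a degree-$1$ generator, so $V_n\cong\FG^n$), the element $v_1\otimes w_1\in V_1\otimes W_1$ generates only the $n$-dimensional diagonal $\textup{span}\{e_i\otimes e_i\}$ inside the $n^2$-dimensional $(V\otimes W)_n$; one also needs the off-diagonal generator $e_1\otimes e_2$, which lives in degree $2=1+1$ and not in degree $1$. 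The correct statement is that the needed generators are tensors of \emph{images} of generators under arbitrary (not matching) pairs of injections, and any such pair has combined image of cardinality at most the sum of the two generation degrees --- which is exactly the pigeonhole observation you make at the very end, so you recover, but the parenthetical should be deleted or corrected. With that fixed, your outline correctly captures the content of the cited theorem.
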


By \cite[Theorem 1.12]{gan2016coinduction}, $\AtagGled$ coincides with the set of characters of finitely generated $\FIG$-modules over $\FG$ generated in degree $\le d$. This also follows from \cite[Theorem 7.15]{ryba2019stable}.

\subsection*{Bases for the stable algebra}

Recall from Definition~\ref{def_AG} that $\mathcal{A}(G)\defeq \FG\brackets*{(a_{t,\, c})_{t\in \N,\,\, c\in \conj(G)}}$.

\begin{theorem}
\cite[Theorem 2.2]{casto2016fig}
Let $G$ be a finite group, and $V$ a finitely generated $\FIG$-module over $\FG$, generated in degree $\le d$.
Then there is a polynomial $P\in \mathcal{A}(G)$ of degree $\le d$, such that for every large enough $n$, the character of $V$ coincides with $P$ on $G\wr S_n$.
\end{theorem}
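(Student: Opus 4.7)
By linearity and the identification in \cite[Theorem 1.12]{gan2016coinduction} of $\AtagGled$ with $\FG$-spans of stable characters $(\chi_{\VecLambda[n]})_n$ with $|\VecLambda|\le d$, the plan reduces to showing: for each $\VecLambda\in\PledGhat$, the sequence $(\chi_{\VecLambda[n]})_n$ coincides, for $n$ large enough, with a single polynomial $P_{\VecLambda}\in \mathcal{A}(G)$ of weighted degree $\le |\VecLambda|$. Since conjugacy classes in $G\wr S_n$ are determined by the multi-cycle data $\braces*{a_{t,c}(v,\sigma)}_{t,c}$ (as recorded above), $\chi_{\VecLambda[n]}(v,\sigma)$ is automatically \emph{some} function of the generators $\{a_{t,c}\}$; what must be proved is that this function is a polynomial of bounded weighted degree, given by the same formula for all large $n$.

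First I would handle $G=\{1\}$, i.e.\ the claim that each stable $S_n$-character $\chi_{\lambda[n]}$ is eventually a polynomial in the cycle counts $(a_t)_{t\in \N}$ of weighted degree $\le|\lambda|$. This is the classical Kerov--Olshanski polynomiality result, which one obtains via the Frobenius characteristic map: $\chi_{\lambda[n]}$ corresponds to the Schur function $s_{\lambda[n]}$, whose expansion in the power-sum basis $\{p_\mu\}$ stabilizes as $n\to\infty$ to a polynomial in the $p_t$'s of weighted degree exactly $|\lambda|$ (with $\deg(p_t)=t$). A triangular change of variables between the two bases $\{p_t\}$ and $\{a_t\}$ of the polynomial algebra of stable $S_{\bullet}$-class functions preserves this weighted degree, yielding the claim.

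For general $G$, I would invoke Specht's construction of $\chi_{\VecLambda[n]}$ as a character induced from the Young-type subgroup $\prod_{\phi \in \hat{G}} \prn*{G\wr S_{|\VecLambda[n](\phi)|}}$ of $G\wr S_n$, with inducing representation built from the symmetric-group characters $\chi_{\VecLambda[n](\phi)}$ twisted by tensor powers of $\phi$ on the $G$-base (see \cite[Ch.~2]{ceccherini-silberstein_scarabotti_tolli_2014}). Applying Frobenius's induced-character formula and substituting the $G=\{1\}$ case factor by factor, one sees that the contributions of $\phi\prn*{v(i_t)\cdots v(i_1)}$ summed along $\sigma$-cycles are exactly $\sum_{c\in\conj(G)} \phi(c)\cdot a_{t,c}$, by Equation~\eqref{eq_def_a_tc}. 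This yields $\chi_{\VecLambda[n]}$ as a polynomial of weighted degree $\sum_{\phi}|\VecLambda(\phi)|=|\VecLambda|$ in the $a_{t,c}$'s.

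The main obstacle is the uniformity in $n$: the inducing subgroup itself grows with $n$, and one must check that the coefficients in the resulting $a_{t,c}$-polynomial do not depend on $n$ once $n$ is large enough. This is the essence of representation-stability for wreath products; it reduces, via the Frobenius characteristic map, to the stability of the power-sum expansion of $s_{\lambda[n]}$ as $n$ grows with the lower rows of $\lambda$ fixed. The \enquote{growing row} of $\VecLambda[n](\mathbf{1})$ contributes only through the trivial $G$-character and in a bounded-degree way, so the polynomial stabilizes as required.
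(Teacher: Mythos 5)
This theorem is cited from \cite{casto2016fig} and is used as a black box here; the paper under review does not prove it, so there is no internal proof to compare your sketch against. What can be assessed is whether your outline would work, and the crux of it has a gap.

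Your reduction to $G=\{1\}$ and the subsequent handling of general $G$ via Specht's Clifford-theoretic construction (\cite[Theorem 2.6.1]{ceccherini-silberstein_scarabotti_tolli_2014}) and Frobenius's induced-character formula is a reasonable plan. The problem is the $G=\{1\}$ step itself. You claim that the expansion of $s_{\lambda[n]}$ in the power-sum basis \emph{stabilizes} to a polynomial in the $p_t$'s, and that a \emph{triangular change of variables} between $\{p_t\}$ and $\{a_t\}$ transfers this to a polynomial in the cycle counts. Neither claim holds as stated. First, $s_{\lambda[n]}$ is homogeneous of degree $n$ in the symmetric-function algebra, so its power-sum expansion $s_{\lambda[n]}=\sum_{\mu\vdash n}\frac{\chi_{\lambda[n]}(\mu)}{z_\mu}p_\mu$ involves monomials of ever-larger weight as $n\to\infty$ and does not converge to a fixed polynomial in the $p_t$'s. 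Second, $\{p_t\}$ live in the ring of symmetric functions whereas $\{a_t\}$ live in the stable class-function algebra $\Q[a_1,a_2,\ldots]$; the Frobenius characteristic map takes the \emph{pointwise} product of class functions to the \emph{induction} (Littlewood--Richardson) product of symmetric functions, not to the ordinary product, so one cannot read off a polynomial identity in the $a_t$'s from a polynomial identity in the $p_t$'s by a mere relabeling. The Kerov--Olshanski polynomiality you invoke concerns normalized characters as shifted-symmetric functions of $\lambda$ (for fixed cycle type), which is a dual statement to what you need; it does not directly assert that $\chi_{\lambda[n]}(\cdot)$ is a polynomial in $(a_t(\cdot))_t$. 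The fact you want \emph{is} true, but needs a different justification, e.g.\ producing for each monomial $\prod a_{t_i}$ an explicit $\FIG$-module with that character by fixed-point counting and then matching dimensions as in the paper's Theorem~\ref{thm_irred_basis_of_AG}, or by an explicit inductive computation via the Murnaghan--Nakayama rule. Once that base case is in place, your Specht/Frobenius argument for general $G$ (where the $\sigma$-cycle contributions assemble into $\sum_{c\in\conj(G)}\phi(c)\,a_{t,c}$) is in the right spirit, though still in need of the uniformity-in-$n$ details you flag at the end.
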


\begin{corollary}
There is a function $\xi_d \colon \AtagGled \to \AGled$ such that for every $f\in \AtagGled$, $\abs*{f - \xi(f)}\to 0$.
Note that by definition, $\xi_d$ maps sequences of irreducible characters to $\widehat{G\wr S_{\bullet}}$.
\end{corollary}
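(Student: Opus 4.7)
The plan is to unpack what membership in $\AtagGled$ means and then apply the preceding theorem of Casto termwise to a finite decomposition.

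First I would recall from Definition~\ref{def_AtagG} that every $f\in \AtagGled$ agrees, for all sufficiently large $n$, with a finite $\FG$-linear combination
\[
\sum_{i=1}^k \beta_i \cdot \chi_{\VecLambda_i[n]}, \qquad \VecLambda_i \in \PleDGhat, \ \ \beta_i\in \FG-\{0\},
\]
with $\max_i \abs*{\abs*{\VecLambda_i}}\le d$. By the paragraph immediately following the Gan--Li theorem, each sequence $\prn*{\chi_{\VecLambda_i[n]}}_n$ is the character of a finitely generated $\FIG$-module over $\FG$ generated in degree $\le \abs*{\abs*{\VecLambda_i}}\le d$.

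Next I would apply the Casto theorem (\cite[Theorem 2.2]{casto2016fig}) quoted just above the corollary: for each $i$ there exists a polynomial $P_i \in \AGled$ such that for every sufficiently large $n$ we have $\chi_{\VecLambda_i[n]} = P_i$ as functions on $G\wr S_n$. Define
\[
\xi_d(f) \defeq \sum_{i=1}^k \beta_i \cdot P_i \in \AGled.
\]
Since $\AGled$ is closed under $\FG$-linear combinations and each $P_i$ has degree $\le d$, this lives in $\AGled$. Moreover, for $n$ large enough so that all the pointwise equalities $\chi_{\VecLambda_i[n]} = P_i$ and the equality $f_n = \sum_i \beta_i \chi_{\VecLambda_i[n]}$ simultaneously hold, the difference $f_n - \xi_d(f)_n$ vanishes identically, giving $\abs*{f - \xi_d(f)}\to 0$.

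For the second assertion I would argue as follows: if $f$ is itself a sequence of irreducible characters of $G\wr S_n$, then by Fact~\ref{fact_stable_irr_chars} the decomposition above reduces to a single term $f_n = \chi_{\VecLambda[n]}$ with one $\VecLambda \in \PleDGhat$, and the polynomial $\xi_d(f) = P_{\VecLambda}\in \AGled$ agrees with $\chi_{\VecLambda[n]}$ for all large $n$. By Definition~\ref{def_AG}, this is exactly the condition placed on members of $\widehat{G\wr S_{\bullet}}$, so $\xi_d(f)\in \widehat{G\wr S_{\bullet}}$. There is essentially no real obstacle here; the only subtlety worth spelling out is the well-definedness of $\xi_d$, which follows from uniqueness of the decomposition in $\mathcal{A}'(G)$ together with the uniqueness (for large $n$) of the polynomial $P\in \mathcal{A}(G)$ representing a given stable sequence, itself a consequence of the fact that the $a_{t,c}$ are algebraically independent on $G\wr S_n$ once $n$ is large.
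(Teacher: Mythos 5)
Your argument for the existence of $\xi_d$ is exactly the intended one: decompose $f$ as a finite $\FG$-linear combination $\sum_i \beta_i \chi_{\VecLambda_i[n]}$ via Definition~\ref{def_AtagG}, use the observation (cited from \cite{gan2016coinduction}) that each $\prn*{\chi_{\VecLambda_i[n]}}_n$ is the character of a finitely generated $\FIG$-module generated in degree $\le d$, apply \cite[Theorem 2.2]{casto2016fig} to get $P_i\in\AGled$, and set $\xi_d(f)=\sum_i\beta_i P_i$. This part is correct and matches the paper's intent.

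For the closing assertion, however, your invocation of Fact~\ref{fact_stable_irr_chars} introduces a circularity: that Fact is stated to be ``proven in Theorem~\ref{thm_irred_basis_of_AG},'' and the proof of Theorem~\ref{thm_irred_basis_of_AG} is precisely where the present corollary is applied (it needs $\xi\prn*{f_{\VecLambda}}\in\widehat{G\wr S_{\bullet}}$). You also do not need the reduction to a single term. The phrase ``by definition'' in the corollary refers to Definition~\ref{def_AG}: since $\xi_d(f)_n = f_n$ for all sufficiently large $n$, and $f_n$ is an irreducible character of $G\wr S_n$ for all sufficiently large $n$ by hypothesis, the polynomial $\xi_d(f)\in\mathcal{A}(G)$ coincides with an irreducible character for large $n$, which is exactly the defining condition for membership in $\widehat{G\wr S_{\bullet}}$. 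The well-definedness remarks at the end are harmless but not required, as the corollary asserts only the existence of some function $\xi_d$.
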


\begin{theorem}
\label{thm_irred_basis_of_AG}
For every $d\in \N$, $\widehat{G\wr S_{\bullet}}\cap \AGled$ is a linear basis of the algebra $\AGled$.
Moreover, for every $\VecLambda\in \PdGhat$ there is a unique polynomial $\chi_{\VecLambda[\bullet]}\in \mathcal{A}(G)$ of degree exactly $d$ that coincides with the sequence $\chi_{\VecLambda[n]}$ for every large enough $n$, and $\widehat{G\wr S_{\bullet}}\cap \AGled = \braces*{\chi_{\VecLambda[\bullet]}}_{\VecLambda\in \PledGhat}$.
\end{theorem}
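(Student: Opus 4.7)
The plan is, in order: (i) establish uniqueness of $\chi_{\VecLambda[\bullet]}$, (ii) establish existence with exact degree $d$, (iii) prove linear independence of the family $\braces*{\chi_{\VecLambda[\bullet]}}_{\VecLambda\in\PledGhat}$, (iv) deduce the basis statement by a dimension count, and (v) identify the basis with $\widehat{G\wr S_\bullet}\cap\AGled$.

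\textbf{Uniqueness, existence, and exact degree.} Any $P\in\mathcal{A}(G)$ that vanishes as a class function on $G\wr S_n$ for every large $n$ must be the zero polynomial: given a finite tuple of nonnegative integers $(r_{t,c})$, for $n=\sum t\cdot r_{t,c}$ one can explicitly build a monomial matrix $g\in G\wr S_n$ with $a_{t,c}(g)=r_{t,c}$, so viewed as a polynomial in only the finitely many $a_{t,c}$ it actually involves, $P$ vanishes on a Zariski-dense subset of the corresponding affine space; the algebraic independence of the $a_{t,c}$ then forces $P=0$. For existence, the sequence $\prn*{\chi_{\VecLambda[n]}}_n$ is the character of a finitely generated $\FIG$-module generated in degree $\le\abs*{\abs*{\VecLambda}}$ (from the discussion after the cited Gan--Li theorem), so by the polynomial theorem just above it coincides for every large $n$ with a (necessarily unique) polynomial $\chi_{\VecLambda[\bullet]}\in\AGled$. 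For exact degree $d$ when $\VecLambda\in\PdGhat$, the substitution $a_{1,\{1\}}=n$, $a_{t,c}=0$ otherwise, corresponds to evaluation at the identity of $G\wr S_n$ and so yields $\dim\chi_{\VecLambda[n]}$, which by the wreath-product hook-length formula is a polynomial in $n$ of degree exactly $d$; this matches the upper bound $\deg\le d$ already secured.

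\textbf{Linear independence and basis.} If $\sum c_\VecLambda\chi_{\VecLambda[\bullet]}=0$ in $\AGled$, then for every large $n$ the relation $\sum c_\VecLambda\chi_{\VecLambda[n]}=0$ holds in the character ring of $G\wr S_n$; for large enough $n$ the map $\VecLambda\mapsto\VecLambda[n]$ is injective on $\PledGhat$ and the $\chi_{\VecLambda[n]}$ are then distinct irreducibles, so orthogonality forces all $c_\VecLambda=0$. To upgrade this to a spanning statement I count dimensions. A basis of $\AGled$ over $\FG$ is given by the monomials $\prod a_{t_i,c_i}^{r_i}$ with $\sum t_i r_i\le d$, yielding $\dim\AGled=\sum_{k=0}^{d}\abs*{\mathscr{P}_k(\conj(G))}$; on the other hand $\abs*{\PledGhat}=\sum_{k=0}^{d}\abs*{\mathscr{P}_k(\hat{G})}$, and since $\abs*{\hat{G}}=\abs*{\conj(G)}$ the two counts coincide. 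A linearly independent family of the correct cardinality must span.

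\textbf{Identification and main obstacle.} Each $\chi_{\VecLambda[\bullet]}$ lies in $\widehat{G\wr S_\bullet}\cap\AGled$ by construction. Conversely, any $\chi\in\widehat{G\wr S_\bullet}\cap\AGled$ expands uniquely as $\sum c_\VecLambda\chi_{\VecLambda[\bullet]}$, and since $\chi_n$ is a single irreducible of $G\wr S_n$ for every large $n$, linear independence of distinct irreducibles forces exactly one coefficient to equal $1$ and the rest to vanish. The step I expect to be hardest is the exact-degree claim, which requires invoking a concrete stable dimension formula for wreath-product irreducibles; every other step reduces to Zariski-density, orthogonality of characters, or counting.
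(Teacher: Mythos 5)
Your proof is correct, and at its core it follows the same route as the paper: the family $\braces*{\chi_{\VecLambda[\bullet]}}_{\VecLambda\in\PledGhat}$ is linearly independent because for large $n$ it consists of distinct irreducibles, and the dimension count $\dim\AGled=\sum_{q\le d} p^{\convolve|\conj(G)|}(q)=\abs*{\PledGhat}$ (using $\abs*{\hat G}=\abs*{\conj(G)}$) then forces it to be a basis. The paper's proof is telegraphic, citing the existence of the polynomial representative $\xi_d(f_{\VecLambda})$ from the preceding Casto/Gan--Li corollaries and ending with ``the rest of the claims follows.'' You go further in two places. First, you give an explicit uniqueness argument (that a polynomial in the $a_{t,c}$ vanishing on $G\wr S_n$ for all large $n$ must be zero) via realizability of arbitrary cycle-type tuples and Zariski density; the paper leaves this implicit in its appeal to algebraic independence of the $a_{t,c}$ in Definition 1.5. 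Second, and this is the genuine difference in route, you establish the exact-degree claim by evaluating at the identity --- i.e.\ substituting $a_{1,\{1\}}=n$ and $a_{t,c}=0$ otherwise to recover $\dim\chi_{\VecLambda[n]}$, then invoking the wreath-product hook-length formula to see that this is a degree-$d$ polynomial in $n$. The paper instead gets exact degree essentially for free from the filtered basis structure: if $\deg\chi_{\VecLambda[\bullet]}<d$ for some $\VecLambda$ with $\abs*{\abs*{\VecLambda}}=d$, it would already lie in $\mathcal{A}(G)^{\le d-1}$, contradicting linear independence against the (by induction) basis $\braces*{\chi_{\Vec\mu[\bullet]}}_{\Vec\mu\in\mathscr{P}_{\le d-1}(\hat G)}$. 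Your dimension-polynomial argument is a valid alternative but imports an external fact (the wreath hook-length formula), whereas the paper's argument is purely internal to the filtration and also yields Corollary 2.18 (the degree-$d$ dimension polynomial) as a \emph{consequence} rather than an input; you should be careful not to cite that corollary, which follows the theorem, but the hook-length formula itself is independently known so there is no circularity.
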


\begin{proof}
    Let $\VecLambda\in \PledGhat$. Choose an arbitrary prefix for the sequence $\prn*{\chi_{\VecLambda[n]}}_{n=\abs*{\abs*{\VecLambda}} + \VecLambda(\textbf{1})_1}^{\infty}$ to get an element $f_{\VecLambda}\in \mathcal{A}'(G)$.
    The set $\braces*{\xi\prn*{f_{\VecLambda}}}_{\VecLambda\in \PledGhat} \subseteq \widehat{G\wr S_{\bullet}}\cap \AGled$ is linear independent, as for sufficiently large $n$ it coincides with a set of distinct irreducible characters of $G\wr S_n$.
    Thus it suffices to prove $\abs*{\PledGhat} = \dim\prn*{\AGled}$. 
    Indeed, $\dim\prn*{\AGled}$ is the number of monomials in $\prn*{a_{t, c}}_{t\in \N, c\in \conj(G)}$ with total degree at most $d$, which is $\sum_{q=0}^d p^{\convolve{|\conj(G)|}}(q) = \abs*{\PledGhat}$. The rest of the claims follows.
\end{proof}

\begin{corollary}
\label{corollary_dim_poly}
    Let $\VecLambda\in \PdGhat$. Then there is a polynomial $P\in \Q[X]$ of degree $d$ such that $\dim\prn*{\chi_{\VecLambda[n]}} = P(n)$ for every large enough $n$.
\end{corollary}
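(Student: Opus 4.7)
The plan is to combine Theorem~\ref{thm_irred_basis_of_AG} (which identifies $\chi_{\VecLambda[\bullet]}$ as a polynomial in $(a_{t,c})$ of degree exactly $d$) with an explicit evaluation at the identity element. Since $\dim(\chi_{\VecLambda[n]}) = \chi_{\VecLambda[n]}(1_{G\wr S_n})$, and the identity permutation has only $1$-cycles with $v(i) = 1_G$ for every $i$, the only $a_{t,c}$ that is nonzero at the identity is $a_{1,\{1_G\}}$, whose value is exactly $n$. Substituting $a_{1,\{1_G\}}\mapsto n$ and $a_{t,c}\mapsto 0$ for all other pairs into the degree-$d$ polynomial $\chi_{\VecLambda[\bullet]}\in \mathcal{A}(G)$ immediately produces a polynomial $P\in \Q[X]$ with $\dim\prn*{\chi_{\VecLambda[n]}} = P(n)$ for large enough $n$, and $\deg(P)\le d$.

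The main obstacle is upgrading this upper bound to the sharp statement $\deg(P) = d$. A purely algebraic argument using Theorem~\ref{thm_irred_basis_of_AG} does not suffice: the polynomial $\chi_{\VecLambda[\bullet]}$ has total degree $d$ in the $a_{t,c}$, but a priori its degree-$d$ part may live entirely in monomials involving factors $a_{t,c}$ with $(t,c)\ne (1,\{1_G\})$, which vanish at the identity. I would therefore appeal to the classical dimension formula for irreducible representations of $G\wr S_n$ indexed by $\VecLambda[n]$ (see \cite[Theorem 2.6.1]{ceccherini-silberstein_scarabotti_tolli_2014}):
\[
\dim\prn*{\chi_{\VecLambda[n]}} \,=\, \frac{n!}{(n-d+k)!\,\prod_{\phi\ne \textbf{1}} |\VecLambda(\phi)|!} \cdot \dim\prn*{S^{\VecLambda[n](\textbf{1})}} \cdot \prod_{\phi\ne \textbf{1}} \dim(\phi)^{|\VecLambda(\phi)|}\, \dim\prn*{S^{\VecLambda(\phi)}},
\]
where $k\defeq |\VecLambda(\textbf{1})|$ and $S^\mu$ denotes the Specht module of $S_{|\mu|}$.

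The third and fourth steps of the plan are then routine degree estimates in $n$. First, the multinomial-like factor $\tfrac{n!}{(n-d+k)!}$ is a polynomial in $n$ of degree exactly $d-k$ with positive leading coefficient. Second, $\dim\prn*{S^{\VecLambda[n](\textbf{1})}} = \dim\prn*{S^{(n-d,\,\VecLambda(\textbf{1})_1,\ldots)}}$ is a polynomial in $n$ of degree exactly $k = |\VecLambda(\textbf{1})|$ with positive leading coefficient (this is the well-known stability of Specht dimensions for padded partitions, recorded in the appendix cited in \cite{HP22}, or derivable directly from the hook-length formula by isolating the hook lengths in the top row). All remaining factors are positive constants independent of $n$. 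Multiplying, the total degree in $n$ equals $(d-k)+k=d$, and the leading coefficient is a strictly positive rational number, so $P$ is indeed a polynomial of degree exactly $d$, completing the proof.
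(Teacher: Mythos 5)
The paper gives no explicit proof of this corollary; it appears immediately after Theorem~\ref{thm_irred_basis_of_AG}, and the implicit argument is that evaluating the polynomial $\chi_{\VecLambda[\bullet]}\in\mathcal{A}(G)$ at the identity yields a polynomial in $n$ of degree $\le d$ (since $a_{1,\{1_G\}}$ evaluates to $n$ and every other $a_{t,c}$ to $0$). You have correctly spotted that this only gives the inequality $\deg(P)\le d$ out of the box: the degree-$d$ part of $\chi_{\VecLambda[\bullet]}$ in the graded algebra $\FG[(a_{t,c})]$ could a priori consist entirely of monomials containing a factor $a_{t,c}$ with $(t,c)\neq (1,\{1_G\})$, all of which vanish at the identity. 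Your fix — invoking the classical dimension formula for irreducibles of $G\wr S_n$ indexed by a multi-partition, and then degree-counting the multinomial factor $\tfrac{n!}{(n-d+k)!}$ (degree $d-k$, positive leading coefficient) against the Specht dimension $\dim S^{\VecLambda(\textbf{1})[n]}$ (degree $k$, positive leading coefficient, by the hook-length formula) — is correct and is the standard way to establish the sharp equality. This is a genuine refinement of what the paper leaves implicit; the paper presumably regards the degree-exactly-$d$ statement as a known fact about dimensions of irreducible $\FIG$-modules, and you have supplied the argument that backs it up. Minor nit: it would be cleaner to say explicitly that the factors $\prod_{\phi\neq\textbf{1}}\dim(\phi)^{|\VecLambda(\phi)|}\dim S^{\VecLambda(\phi)}$ and $\tfrac{1}{\prod_{\phi\neq\textbf{1}}|\VecLambda(\phi)|!}$ are strictly positive constants, which you imply but don't state; but the proof as written is correct.
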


We introduce yet another basis, which is be the most convenient for our needs.
We have already discussed $\{\chi_{\phi}\}_{\phi\in \hat{G}}$, the irreducible stable characters of degree 1, that we defined by $ \chi_{\phi} \defeq \prn*{\chi_{\binom{(1)}{\phi}[n]}}_{n=1}^{\infty}$.
It turns out (as a special case of \cite[Theorem 2.6.1]{ceccherini-silberstein_scarabotti_tolli_2014}) that $\{\chi_{\phi} + \bbone_{\phi = \textbf{1}}\}_{\phi\in \hat{G}}$ have a uniform formula, which is how we compute $\chi_{\phi}$:
\begin{definition}
\label{def_Ind_phi}
\cite[Definition 1.7]{Sho23I}
Let $\phi\in \hat{G}$. 
We give two equivalent definitions for the $\phi(1)\cdot n$-dimensional character $\Indnphi\in \textup{char}(G\wr_{[n]} S_n)$ and the corresponding stable character $\Indphi \defeq \prn*{\Indnphi}_{n\in \N}$:
\begin{itemize}
    \item A formula: $\Indphi = \sum_{c\in \conj(G)} \phi(c)\cdot a_{1, c}$. That is, for every $v\in G^n, \sigma\in S$:
    \[ \Indnphi(v, \sigma) \defeq \sum_{i: \,\,\sigma(i)=i} \phi(v(i)). \]
    \item Character description: extend $\phi$ to be defined over $G\wr_{[n]} S_{n-1}$, where $S_{n-1}\le S_n$ is the stabilizer of the point $n$, via $\phi(v, \sigma) = \phi(v(n))$.
    Then 
    \[\Indnphi \defeq \Ind_{G\wr_{[n]} S_{n-1}}^{G\wr_{[n]} S_n} \phi. \]
\end{itemize}
\end{definition}

In \cite[Proposition 3.2]{Sho23I}, it is shown that the two definitions coincide and give a well defined character, and that the stable irreducible character $\chi_{\phi}$ is $\prn*{\Indnphi- \bbone_{\phi = \textbf{1}}}_{n=1}^{\infty}$.
When $\phi = \textbf{1}$, $\chi_{\textbf{1}}\defeq \prn*{\std(S_n)}_n$ is the standard $(n-1)$-dimensional irreducible character of $S_n$, inflated to $G\wr S_n$ via the epimorphism $G\wr S_n\twoheadrightarrow S_n$; explicitly $\forall v\in G^n, \sigma\in S_n: \,\,\, \chi_{\textbf{1}}(v, \sigma) = \#\{\textup{fixed points of }\sigma\} - 1$.\\

\begin{definition}[$\FGconjG, f^{(n)}$]
\label{def_summary_func}
Let $G$ be a finite group.
Its algebra of $\FG$-valued class functions is $\FGconjG \cong Z\prn*{\FG\brackets*{G}}$, the center of the group algebra $\FG[G]$.
For any $f\in \FGconjG$ and $n\in \Z$, denote $f^{(n)}(x)\defeq f\prn*{x^n}$. 
\end{definition}

To get our desired basis for $\mathcal{A}(G)$ we need to analyze $(\Indphi)^{(k)}$ and show that 
\[ \textup{span}_{\FG}\prn*{\braces*{a_{t, c}}_{t\le k, \,\,c\in \conj(G)}} 
= \textup{span}_{\FG}\prn*{\braces*{(\Indphi)^{(m)}}_{m\le k, \,\,\phi\in \hat{G}}}. \]
This is the content of the following two propositions:

\begin{proposition}
\label{prop_Indphi_m_is_stable}
Let $\phi\in \hat{G}, k\in [n]$. Then 
$(\Indphi)^{(k)}\in \textup{span}_{\FG}\prn*{\braces*{a_{t, c}}_{t\le k,\,\, c\in \conj(G)}}$.
Explicitly,
\[ (\Indphi)^{(k)} = \sum_{t\divides k} t \sum_{c\in \conj(G)} \phi\prn*{c^{k/t}}\cdot a_{t, c}.\]
Note that $\phi\prn*{c^{k/t}}$ is well defined, i.e.\ does not depend on a representative of $c$. 
\end{proposition}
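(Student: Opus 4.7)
The plan is to compute the $k$-th power $(v,\sigma)^k$ directly in the wreath product and then apply the formula definition of $\Indphi$ from Definition~\ref{def_Ind_phi}.

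First I would unwind the definition: by definition, $(\Indphi)^{(k)}(v,\sigma) = \Indphi\bigl((v,\sigma)^k\bigr)$, and writing $(v,\sigma)^k = (v_k, \sigma^k)$ for some $v_k \in G^n$, this equals
\[ \sum_{i:\,\sigma^k(i)=i} \phi(v_k(i)). \]
So the task reduces to identifying the fixed points of $\sigma^k$ and computing $v_k(i)$ modulo conjugation. The fixed points of $\sigma^k$ are precisely those $i$ lying in a cycle of $\sigma$ whose length $t$ divides $k$.

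Next I would carry out the key structural step: fix a cycle $C = (i_1 \overset{\sigma}{\to} i_2 \overset{\sigma}{\to} \cdots \overset{\sigma}{\to} i_t \overset{\sigma}{\to} i_1)$ of $\sigma$ with $t \mid k$, and let $g_C \defeq v(i_t) v(i_{t-1}) \cdots v(i_1)$, so that the conjugacy class $c$ of $g_C$ is the one counted by $a_{t,c}(v,\sigma)$. A direct induction on the multiplication rule $(v_1,\sigma_1)(v_2,\sigma_2) = (v_1 \cdot (\sigma_1.v_2), \sigma_1\sigma_2)$ shows that for each $j \in \{1,\dots,t\}$, the value $v_k(i_j)$ is conjugate to $g_C^{k/t}$; indeed, cyclically rotating the starting point of a product around a cycle only conjugates the product, and raising the single-cycle product to the power $k/t$ is exactly what $k$ applications of the multiplication to a single $t$-cycle produce. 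Therefore
\[ \sum_{j=1}^{t} \phi(v_k(i_j)) = t \cdot \phi(c^{k/t}), \]
where the value $\phi(c^{k/t})$ is well-defined because $c^{k/t}$ lies in a single conjugacy class and $\phi$ is a class function.

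Finally I would assemble the total: partitioning the fixed points of $\sigma^k$ by the cycles of $\sigma$ containing them and then by the conjugacy class $c$ of the associated product, the sum becomes
\[ (\Indphi)^{(k)}(v,\sigma) = \sum_{t \mid k}\sum_{c \in \conj(G)} a_{t,c}(v,\sigma) \cdot t \cdot \phi\bigl(c^{k/t}\bigr), \]
which is the desired identity, and simultaneously exhibits $(\Indphi)^{(k)}$ as an element of $\textup{span}_{\FG}\{a_{t,c}\}_{t \le k,\, c \in \conj(G)}$. The main (only nontrivial) obstacle is the bookkeeping in step two: verifying that $v_k(i_j)$ is conjugate to $g_C^{k/t}$ by correctly tracking how the twist $\sigma_1.v_2$ distributes the coordinates along the cycle. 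Once this is nailed down, the proposition follows immediately.
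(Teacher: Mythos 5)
Your proof is correct and takes essentially the same approach as the paper's: compute $(v,\sigma)^k$ explicitly, note that fixed points of $\sigma^k$ lie in cycles of length $t \mid k$, observe that the resulting $G$-component at each such point is conjugate to the $(k/t)$-th power of the cycle product, and then reorganize the sum by cycle length and conjugacy class.
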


\begin{proof}
Since for every $\sigma\in S_n, v\in G^n$ we have 
$(v, \sigma)^k = \prn*{\sigma^k, \prod_{\ell=0}^{k-1}\sigma^{\ell}.v}$, we get
\begin{equation*}
\begin{split}
    \prn*{\Indphi}^{(k)}(v, \sigma)
    &= \Indphi\prn*{\sigma^k, \prod_{\ell=0}^{k-1}\sigma^{\ell}.v}
    = \sum_{i\in \textup{fix}\prn*{\sigma^k}} \phi\prn*{\prod_{\ell=0}^{k-1} v\prn*{\sigma^{\ell}.i}}\\
    &= \sum_{t\divides k} \sum_{s \in t\textrm{-cycles}(\sigma)} \sum_{i\in s} \phi\prn*{\prn*{\prod_{j\in s} v(j)}^{k/t}}
    = \sum_{t|k} t \sum_{c\in \conj(G)} a_{t, c}(v, \sigma) \phi\prn*{c^{k/t}}.
\end{split}
\end{equation*}
\end{proof}   

\begin{example}
As we prove in Equation~\eqref{equation_proved_a_tc_approx}, $\inner*{a_{t, c}, \textbf{1}} = \frac{|c|}{|G|\cdot t}$. Therefore 
\[ \inner*{\prn*{\Indphi}^{(k)}, \textbf{1}}
= \frac{1}{|G|} \sum_{t\divides k} \sum_{c\in \conj(G)} \phi\prn*{c^{k/t}} |c|
= \sum_{t\divides k} \inner*{ \phi^{(t)}, \textbf{1}}
.\]
For example, if $G = C_m$ and $\phi\colon G\hookrightarrow \Ss^1$, then $\phi^{(t)}\in \widehat{C_m}$ and $\phi^{(t)} = \textbf{1}$ if and only if $m\divides t$, so
\[\inner*{\prn*{\Indphi}^{(k)}, \textbf{1}}
= \bbone_{m\divides k}\cdot \#\{\textup{divisors of } k/m\}.\]
\end{example}

\begin{proposition}
\label{prop_Indphi_span_AG}
For every $k\in \N, c\in \conj(G)$, we 
have
$ a_{k, c}\in \textup{span}_{\FG}\prn*{\braces*{(\Indphi)^{(m)}}_{m\le k, \,\,\phi\in \hat{G}}}. $
\end{proposition}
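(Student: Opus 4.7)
The plan is to proceed by induction on $k$, using the explicit formula from Proposition~\ref{prop_Indphi_m_is_stable} together with the invertibility of the character table of $G$ (column orthogonality of irreducible characters).

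For the base case $k=1$, the formula reduces to $\Indphi = \sum_{c \in \conj(G)} \phi(c) \cdot a_{1, c}$ for each $\phi \in \hat{G}$. This is a square linear system in the $|\conj(G)|$ unknowns $\braces*{a_{1, c}}_{c \in \conj(G)}$, whose coefficient matrix is the character table of $G$. Since this matrix is invertible, each $a_{1, c}$ lies in $\textup{span}_{\FG}\prn*{\braces*{\Indphi}_{\phi\in\hat{G}}}$, as desired.

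For the inductive step, assume the claim holds for all $t < k$. Isolating the term corresponding to $t = k$ in Proposition~\ref{prop_Indphi_m_is_stable} gives, for every $\phi \in \hat{G}$,
\[ k \sum_{c \in \conj(G)} \phi(c) \cdot a_{k, c} = \prn*{\Indphi}^{(k)} - \sum_{\substack{t \divides k \\ t < k}} t \sum_{c \in \conj(G)} \phi\prn*{c^{k/t}}\cdot a_{t, c}. \]
By the induction hypothesis, every $a_{t, c}$ with $t < k$ lies in $\textup{span}_{\FG}\prn*{\braces*{(\Indphi)^{(m)}}_{m\le t, \,\,\phi\in \hat{G}}} \subseteq \textup{span}_{\FG}\prn*{\braces*{(\Indphi)^{(m)}}_{m\le k, \,\,\phi\in \hat{G}}}$, so the right-hand side lies in this span. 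Therefore, for every $\phi \in \hat{G}$, the linear combination $\sum_{c} \phi(c)\cdot a_{k, c}$ lies in the desired span.

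Finally, to extract each individual $a_{k, c}$, I apply the column orthogonality relation $\sum_{\phi \in \hat{G}} \overline{\phi(c)}\,\phi(c') = \delta_{c, c'}\cdot |G|/|c|$, which yields
\[ a_{k, c} = \frac{|c|}{|G|}\sum_{\phi \in \hat{G}} \overline{\phi(c)} \prn*{\sum_{c' \in \conj(G)} \phi(c')\cdot a_{k, c'}}, \]
expressing $a_{k, c}$ as an $\FG$-linear combination of elements already shown to lie in the span. There is no real obstacle here, only bookkeeping: the only nontrivial fact used is the orthogonality of characters, and the induction dispatches the lower-order terms that appear as byproducts of the divisor sum in $(\Indphi)^{(k)}$.
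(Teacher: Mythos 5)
Your proof is correct and takes essentially the same approach as the paper: induction on $k$, isolating the $t=k$ term in the divisor-sum formula of Proposition~\ref{prop_Indphi_m_is_stable}, and then applying Schur (column) orthogonality to extract each $a_{k,c}$ from the combination $\sum_{c}\phi(c)\,a_{k,c}$. The only cosmetic difference is that the paper begins the induction at $k=0$ with the convention $a_{0,c}=0$, so that the $k=1$ case is handled by the same inductive step you spell out separately as a base case.
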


\begin{proof}
By induction on $k$.
For $k=0$ we 
have\footnote{We have defined $a_{k,c}$ only for 
$k\in \Z_{\ge 1}$, but it is natural to define 
$a_{0,c} = 0$ for every $c\in \conj(G)$.}
 $a_{0, c} = 0$ so there is nothing to prove.
Assume that we have proved for every $k' < k$. Then, due to the previous proposition, 
\[ (\Indphi)^{(k)} - k\sum_{c\in \conj(G)}\phi(c)\cdot a_{k, c} 
        = \sum_{\substack{t\divides k,\\t\neq k}} t \sum_{c\in \conj(G)} \phi\prn*{c^{k/t}}\cdot a_{t, c} \]
which is inside $\textup{span}_{\FG}\prn*{\braces*{(\Indphi)^{(m)}}_{m\le k, \,\,\phi\in \hat{G}}}$ by the induction hypothesis, so 
\begin{equation}
\label{eq_proof_a_tc_in_span}
    \sum_{c\in \conj(G)}\phi(c)\cdot a_{k, c} \in \textup{span}_{\FG}\prn*{\braces*{(\Indphi)^{(m)}}_{m\le k, \,\,\phi\in \hat{G}}}.
\end{equation}
By Schur orthogonality, for every $c'\in \conj(G)$ we have
\[ \sum_{\phi\in \hat{G}}\phi(c)\overline{\phi(c')} = |G|/|c|\cdot\bbone_{c=c'}. \]
Now we replace $c$ by $c'$ in Equation~\eqref{eq_proof_a_tc_in_span} to get
\begin{equation*}
    \begin{split}
        a_{k, c}
        &= \sum_{c'\in \conj(G)} \bbone_{c=c'}\cdot a_{k, c'}\\
        &= \sum_{c'\in \conj(G)} \prn*{\frac{|c|}{|G|} \sum_{\phi\in \hat{G}}\overline{\phi(c)}\phi(c')}\cdot a_{k, c'}\\
        &= \frac{|c|}{|G|}\sum_{\phi\in \hat{G}}\overline{\phi(c)} \prn*{\sum_{c'\in \conj(G)} \phi(c')\cdot a_{k, c'}}\\
        &\in \textup{span}_{\FG}\prn*{\braces*{(\Indphi)^{(m)}}_{m\le k, \,\,\phi\in \hat{G}}}
    \end{split}
\end{equation*}
as needed.
\end{proof}

Now we are ready to introduce the desired basis for $\mathcal{A}(G)$: 

\begin{definition}
Define the \textbf{summary} function
\[ \s \colon \mathscr{P}\prn*{\FGconjG}\to \FGconjG, \quad 
\s \prn*{\VecLambda} \defeq \prod_{\phi\in \FGconjG: \,\,\VecLambda(\phi)\neq \emptyset} \prod_i \phi^{(\VecLambda(\phi)_i)}.\]
Equivalently, we may regard $\floor{\VecLambda}$ as a multi-set of natural numbers and define for every $x\in G$
\[ \s \prn*{\VecLambda}(x) \defeq \prod_{n\in \floor{\VecLambda}} \zeta_{\VecLambda}(n)\prn*{x^n}. \]

\end{definition}

\begin{definition}[$\IndVecLambda, \s\IndVecLambda$]
\label{def_ind_lambda}
Let $\VecLambda\in \PGhat$. Define $\IndVecLambda\in \mathscr{P}(\textup{char}(G_{\bullet}))$
by replacing every $\phi\in \hat{G}$ with $\Indphi\in \textup{char}(G_{\bullet})$. 
(Recall that $\Indphi$ is indeed stable). 
Equivalently, $\IndVecLambda$ is the only multi-partition with $||\IndVecLambda|| = \abs*{\abs*{\VecLambda}}$ and
$\IndVecLambda(\Indphi) = \VecLambda(\phi)$ for every $\phi\in \hat{G}$.
To ease notation, denote $\s\IndVecLambda\defeq \s\prn*{\IndVecLambda}$. Explicitly,
\[ \s\IndVecLambda \defeq \prod_{\phi\in \hat{G}} \prod_{i} \prn*{\Indphi}^{(\VecLambda(\phi)_i)}. \]
\end{definition}

We conclude now that $\braces*{\s\IndVecLambda}_{\VecLambda\in \PledGhat}$ is a filtered linear basis\footnote{Recall that a filtered basis $\mathcal{B}$ for a filtered algebra $\mathcal{A}$ is a linear basis such that for every $d\in \N$, $\braces*{b\in \mathcal{B}: \deg(b)\le d}$ is a linear basis of the algebra $\braces*{a\in \mathcal{A}: \deg(a)\le d}$. } for $\AGled$:

\begin{corollary}
\label{corollary_sInd_is_basis}
The functions $\braces*{\prn*{\Indphi}^{(m)}}_{m=0}^{\infty}$ are stable class functions with $\deg\prn*{\prn*{\Indphi}^{(m)}} = m$ (by Proposition~\ref{prop_Indphi_m_is_stable}), and they generate the algebra $\mathcal{A}(G)$ (by Proposition~\ref{prop_Indphi_span_AG}).
Consequently, since
$\braces*{\s\IndVecLambda}_{\VecLambda\in \PGhat}$
are the monomials in $\braces*{\prn*{\Indphi}^{(m)}}_{m=0}^{\infty}$,
and there are $\abs*{\PledGhat} = \dim\prn*{\AGled}$ such monomials,
we get that $\braces*{\s\IndVecLambda}_{\VecLambda\in \PGhat}$
is a filtered linear basis for $\mathcal{A}(G)$, with $\deg\prn*{\s\IndVecLambda} = \abs*{\abs*{\VecLambda}}$.
\end{corollary}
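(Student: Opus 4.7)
The plan is to verify the three defining properties of a filtered basis for $\braces*{\s\IndVecLambda}_{\VecLambda \in \PGhat}$: first, that each element has the claimed degree; second, that the subset with $\abs*{\abs*{\VecLambda}} \le d$ spans $\AGled$; and third, linear independence, deduced via a dimension count.

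First I would confirm that $\deg\prn*{(\Indphi)^{(m)}} = m$ exactly. Proposition~\ref{prop_Indphi_m_is_stable} gives the inequality $\deg\prn*{(\Indphi)^{(m)}} \le m$ together with an explicit expansion into the $a_{t, c}$'s. In that expansion, the $t = m$ summand contributes $m \sum_{c \in \conj(G)} \phi(c)\, a_{m, c}$; the coefficient on $a_{m, \braces*{1}}$ is $m \phi(1) > 0$, so the weight-$m$ part is nonzero in $\mathcal{A}(G)$, giving $\deg\prn*{(\Indphi)^{(m)}} = m$. By multiplicativity of the degree function on the filtered algebra $\mathcal{A}(G)$,
\[ \deg\prn*{\s\IndVecLambda} = \sum_{\phi \in \hat{G}} \sum_i \deg\prn*{(\Indphi)^{(\VecLambda(\phi)_i)}} = \sum_{\phi, i} \VecLambda(\phi)_i = \abs*{\abs*{\VecLambda}}. \]

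Next, for spanning, I would use that $\AGled$ is spanned by monomials $\prod_j a_{t_j, c_j}$ of total weight $\sum_j t_j \le d$. Proposition~\ref{prop_Indphi_span_AG} rewrites each such $a_{t, c}$ as a linear combination of $(\Indphi)^{(m)}$'s with $m \le t$, so substitution into any such monomial expresses it as a linear combination of the $\s\IndVecLambda$, all with $\abs*{\abs*{\VecLambda}} \le d$ (since the substitution preserves the weight bound).

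Finally, for linear independence, I would compare cardinalities. From the proof of Theorem~\ref{thm_irred_basis_of_AG} we know $\dim\prn*{\AGled} = \abs*{\PledGhat}$. The map $\VecLambda \mapsto \s\IndVecLambda$ is a bijection between $\braces*{\VecLambda \in \PGhat : \abs*{\abs*{\VecLambda}} \le d}$ and the weighted monomials of total weight $\le d$ in the variables $(\Indphi)^{(m)}$, and the domain has size $\abs*{\PledGhat}$. Hence the spanning set of size $\abs*{\PledGhat}$ inside the $\abs*{\PledGhat}$-dimensional space $\AGled$ must be a basis. Letting $d$ vary yields the filtered basis claim. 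The only real point requiring verification is that $\deg\prn*{(\Indphi)^{(m)}}$ equals $m$ rather than being strictly smaller; once this is in place, the rest is a direct bookkeeping consequence of Propositions~\ref{prop_Indphi_m_is_stable} and~\ref{prop_Indphi_span_AG} together with the dimension formula in Theorem~\ref{thm_irred_basis_of_AG}.
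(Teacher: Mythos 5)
Your proposal is correct and follows essentially the same route the paper takes: degree bookkeeping via Proposition~\ref{prop_Indphi_m_is_stable}, spanning via Proposition~\ref{prop_Indphi_span_AG}, and a dimension count against $\abs*{\PledGhat} = \dim\prn*{\AGled}$ from Theorem~\ref{thm_irred_basis_of_AG}. You do make explicit one small point the paper leaves implicit — namely that $\deg\prn*{(\Indphi)^{(m)}}$ is exactly $m$ and not smaller, by inspecting the coefficient $m\phi(1) > 0$ of $a_{m,\{1\}}$ in the expansion — which is a legitimate detail worth recording, since Proposition~\ref{prop_Indphi_m_is_stable} as stated only gives membership in the span of $\braces*{a_{t,c}}_{t\le m}$.
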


\section{Multisets of Free Groups and Graphs}
\label{section_multisets}

Similarly to the case of $S_n$, our analysis of word measures on $G\wr S_n$ uses the categories $\moccFr, \mucgBFr$ defined in \cite{HP22}. 

\subsection*{Core graphs}
This subsection is based on \cite[Sections 3.1-3.2]{HP22}.
Let $B = \{b_1, \ldots, b_r\}$ be a basis of $F_r$, and consider the bouquet 
$\Omega_B$ on $r$ circles with distinct labels
from $B$ and arbitrary orientations and with wedge point $\otimes$. 
Then $\pi_1(\Omega_B, \otimes)$ is naturally identified
with $F_r$.
For example,
\[\pi_1\prn*{\begin{tikzcd}
	\otimes \arrow[loop left]{l}{b_1} \arrow[loop right]{r}{b_2}
\end{tikzcd}, \,\otimes} = \textup{Free}(\{b_1, b_2\}).\]
The notion of ($B$-labeled) core graphs, introduced in \cite{Stallings1983TopologyOF}, refers to finite, connected graphs with every vertex having degree at least two (so no leaves and no isolated vertices), 
that comes with a graph morphism to 
$\Omega_B$ which is an immersion, namely, locally injective. 
In other words, this is a finite connected graph with at least one edge and no leaves, with edges that are directed and labeled by the elements of $B$, such that for every vertex $v$ and every $b\in B$, there is at most one incoming $b$-edge and at most one outgoing $b$-edge at $v$. We stress that multiple edges between two vertices and loops at vertices are allowed.

There is a natural one-to-one correspondence 
between finite $B$-labeled core graphs and 
conjugacy classes of non-trivial finitely generated 
subgroups of $F_r$.
Indeed, given a core graph $\Gamma$ as above, 
pick an arbitrary vertex $v$ and consider the 
\enquote{labeled fundamental group} 
$\pilab(\Gamma, v)$: closed paths in a graph 
with oriented and $B$-labeled edges correspond to 
words over the alphabet $B$. 
In other words, if 
$p\colon \Gamma\to \Omega_B$ is the immersion, 
then $\pilab(\Gamma, v)$ is the subgroup 
$p_*(\pi_1(\Gamma, v))$ of 
$\pi_1(\Omega_B, \otimes) = F_r$.
The conjugacy class of $\pilab(\Gamma, v)$ 
is independent of the choice of $v$ and it 
corresponds to $\Gamma$. 
We denote it by $\pilab(\Gamma)$. 
Conversely, if $H\le F_r$ is a non-trivial 
finitely generated subgroup, the conjugacy class $H^{F_r}$ corresponds to a finite core graph, denoted $\Gamma_B(H^{F_r})$, which can be obtained in different manners - see \cite{HP22} and the references therein. 
\begin{example}
For $F_3 = \textup{Free}(\{a, b, c\})$, the 
core graph of $\inner*{c, aca, a^{-1}ba}^{F_3}$ is
\[\begin{tikzcd}
	\bullet \arrow[loop left]{l}{c} & \bullet \\
	& \bullet \arrow[loop right]{r}{b}
	\arrow["a", from=1-1, to=1-2]
	\arrow["a", from=2-2, to=1-1]
	\arrow["c", from=1-2, to=2-2]
\end{tikzcd}\]
\end{example}
\noindent Here, we consider core graphs which are not necessarily connected:
\begin{definition}
\cite[Definition 3.1]{HP22}
Let $B$ be a basis of $F_r$.
A $B$-labeled multi core graph is a disjoint union of finitely many core graphs. 
In other words, this is a finite graph, not necessarily connected, with no leaves and no isolated vertices, 
and which comes with an immersion to $\Omega_B$. 
We denote the set of $B$-labeled multi core graphs by $\mucgBFr$.
\end{definition}

\begin{definition}
\cite[Definition 3.2]{HP22}
A morphism $\eta\colon \Gamma\to \Delta$ between $B$-labeled multi core graphs is a graph-morphism which 
commutes with the immersions to $\Omega_B$.
\[\begin{tikzcd}
	\Gamma && \Delta \\
	& \Omega_B
	\arrow["\eta", from=1-1, to=1-3]
	\arrow["{i_{\Gamma}}"', from=1-1, to=2-2]
	\arrow["{i_{\Delta}}", from=1-3, to=2-2]
\end{tikzcd}\]
In particular, the morphism $\eta$ is an immersion, and it preserves orientations and edge-labels.
\end{definition}

Because a core graph corresponds to a conjugacy class of non-trivial finitely generated subgroups of $F_r$, a multi core graph corresponds to a multiset of such objects. 
This motivates the following definition:

\begin{definition}
\label{def_mocc}
\cite[Definition 2.1]{HP22}
For a free group $F_r$ of rank $r$, we define the category $\moccFr$, where objects are \textbf{m}ultisets  \textbf{o}f \textbf{c}onjugacy \textbf{c}lasses of non-trivial finitely generated subgroups of $F_r$.
Given 2 objects
$\vec{\mathcal{H}}= \{H_1^{F_r}, \ldots, H_{\ell}^{F_r}\}$ 
and 
$\vec{\mathcal{J}}= \{J_1^{F_r}, \ldots, J_{m}^{F_r}\}$
of $\moccFr$, a \textbf{morphism} 
$\eta\colon \mathcal{H}\to \mathcal{J}$ consists of a map 
$f\colon [\ell]\to [m]$
and a choice of representatives 
$\overline{H_1}\in H_1^{F_r}, \ldots, \overline{H_{\ell}}\in H_{\ell}^{F_r}, \overline{J_1}\in J_1^{F_r}, \ldots, \overline{J_{m}}\in J_{m}^{F_r}$
so that
$\overline{H_i}\le \overline{J_{f(i)}}$
for all
$i\in [\ell]$.
\end{definition}

Every basis $B$ of $F_r$ gives rise to a one-to-one correspondence $\mucgBFr\leftrightarrow \moccFr$, that is between $B$-labeled multi core graphs and finite multisets of conjugacy classes of non-trivial finitely generated subgroups of $F_r$.
For a multi core graph $\Gamma\in \mucgBFr$ we let $\pilab(\Gamma)$ denote the corresponding multiset in $\moccFr$, and for a multiset $\mathcal{H}\in \moccFr$ we let $\Gamma_B(\mathcal{H})$ denote the corresponding multi core graph.

Every morphism $\eta\colon \Gamma\to \Delta$ of multi core graphs has also a description in terms of subgroups à la Definition~\ref{def_mocc}:
Assume that $\Gamma$ consists of $\ell$ components $\Gamma_1, \ldots, \Gamma_{\ell}$ and that $\Delta$ consists of $m$ components $\Delta_1, \ldots, \Delta_m$.
Let $f\colon [\ell]\to [m]$ be the induced map on connected components, so $\eta(\Gamma_i)\subseteq \Delta_{f(i)}$.
For every $i\in [\ell]$, pick an arbitrary vertex $v_i\in \Gamma_i$ and let $H_i\defeq \pilab(\Gamma_i, v_i)$.
As $\eta$ is an immersion, it induces injective maps at the level of fundamental groups: indeed, any non-backtracking cycle in $\Gamma$ is mapped to a non-backtracking cycle in $\Delta$. 
Therefore, $\eta$ can be thought of as the embedding, for all $i\in [\ell]$,
\begin{equation}
\label{equation_def_morphism}
    H_i\hookrightarrow \pi_1\prn*{\Delta_{f(i)}, \eta(v_i)}.
\end{equation}

We still need to conjugate the images in equation \eqref{equation_def_morphism} so that they all sit in the same subgroups in the conjugacy class of subgroups of $\Delta_j$.
Formally, pick an arbitrary vertex $p_k\in \Delta_k$ for all $k\in [m]$ and
let $J_k\defeq \pi_1(\Delta_k, p_k)$.
For every $i\in [\ell]$, let $u_i\in F_r$ satisfy
$u_i\brackets*{ \pi_1\prn*{ \Delta_{f(i)}, \eta(v_i) } }u_i^{-1} = J_{f(i)}$.
So now $u_i H_i u_i^{-1}\le J_{f(i)}$, and we get a morphism as in Definition~\ref{def_mocc}.

Conversely, every embedding of subgroups $H\hookrightarrow J$ of $F_r$ gives rise to a morphism of core graphs $\Gamma_B(H)\to \Gamma_B(J)$ (see \cite{HP22}).
Thus any morphism in $\moccFr$ as in Definition~\ref{def_mocc}, gives rise to a morphism of the corresponding multi core graphs.
We say that two morphisms in $\moccFr$ are identical if they induce the same morphism of multi core graphs, up to a post-composition by an isomorphism of the co-domain. 
This equivalence of morphisms in $\moccFr$ can also be defined in completely algebraic terms, and, in particular, it does not depend on the basis B.

\begin{definition}
\cite[Definition 3.3]{HP22}
Let $\Gamma\in \mucgBFr$ be a multi core graph and 
$\vec{\mathcal{H}}= \pilab(\Gamma)$ 
the corresponding object in $\moccFr$.
We denote by $\chi(\Gamma) = \chi(\mathcal{H}) \defeq \#V(\Gamma) - \#E(\Gamma)$ the Euler characteristic of $\Gamma$.
Note that as we excluded the trivial group, $\chi(\Gamma)\le 0$ for all $\Gamma\in \mucgBFr$.
\end{definition}

Note also that despite discarding the trivial group in the definition of $\moccFr$, the object $\emptyset$ is valid, and corresponds to the empty multi core graph $\Gamma_B(\emptyset)$.

\subsection*{Expectation of class function via morphisms}
\label{subection_groups_graphs_expectation}

From now on, we fix an ambient group $F_r$ with a basis $B = \{b_i\}_{i=1}^r$.

\begin{definition}[Multi-word, Cyclic Object, Cyclic Graph]
\label{def_cyclic_cat}
A \textbf{cyclic object} of $\moccFr$ is a multiset 
consisting only of cyclic subgroups.
Equivalently, a \textbf{cyclic graph} is a multi 
core graph with Euler characteristic 0. 
Note that if there exists a morphism 
$\Vec{\mathcal{H}}\to \Vec{\mathcal{J}}$ and 
$\Vec{\mathcal{J}}$ is cyclic, then 
$\Vec{\mathcal{H}}$ is cyclic as well 
(as every non-trivial subgroup of $\Z$
is isomorphic to $\Z$). 
The category of cyclic objects is a full sub-category 
of $\moccFr$.
Note also that a morphism of cyclic graphs is a 
covering map onto its image.

A \textbf{multi-word} is a multi-set $\vec{w}$ of conjugacy classes of words $w_1, \ldots, w_k\in F_r$.
Equivalently, a multi-word is the data of a cyclic object, together with the generators $w_i$ of the groups $\inner{w_i}$, thus making $w, w^{-1}$ inequivalent (even though they generate the same cyclic group). 
Given a $\moccFr$-object $\Vec{\mathcal{H}}$, a morphism $\eta\colon \Vec{w}\to \Vec{\mathcal{H}}$ is just an ordinary $\moccFr$-morphism $\eta\colon \{\inner{w_1}, \ldots, \inner{w_k}\}\to \Vec{\mathcal{H}}$, i.e.\ it conjugates the words $w_i$ into the subgroups in $\Vec{\mathcal{H}}$, but the image $\eta(\Vec{w})$ is a multi-word.
In the language of graphs, $\Vec{w}$ \enquote{remembers} the orientation of every cycle $\Gamma_B(w_i)$ (clockwise or anti-clockwise).
Given a multi core graph $\Delta$, a morphism $\eta\colon \Gamma_B(\Vec{w})\to \Delta$ is just an ordinary morphism of multi core graphs, but $\eta(\Vec{w})$ is a multi-set of directed paths $\eta(w_i)$.
\end{definition}

We abbreviate notation and denote a multi word by 
$\braces*{w_1, \ldots, w_k}$ instead 
of\footnote{Note that repetition is allowed.} 
$\braces*{w_1^{F_r}, \ldots, w_k^{F_r}}$.

\begin{definition}[$\EX_{\eta}\brackets*{\zeta}$]
\label{def_ex_eta_zeta}
Let $\Vec{w} = \braces*{w_1, \ldots, w_k}$ be a multi-word, $\Vec{\mathcal{J}}\in \moccFr$ 
and let $\eta \in \Hom(\Vec{w}, \Vec{\mathcal{J}})$ be a choice of representatives $\braces*{J_1, \ldots, J_{\ell}}$ of $\Vec{\mathcal{J}}$ and a function $f\colon [k]\to [\ell]$. 
Also let $G$ be a finite group, and $\zeta\colon \Vec{w} \to \FGconjG$ a function assigning each word $w_j$ a class function $\zeta_j\colon G\to \FG$. Define
\[ \EX_{\eta}[\zeta] \defeq \prod_{i=1}^{\ell} \EX_{\alpha_i\sim U(\Hom(J_i, G))} \brackets*{\prod_{j\in f^{-1}(i)} \zeta_j(\alpha_i(w_j))}. \]
It is well defined, i.e.\ does not depend on the representatives of the conjugacy classes, since $\zeta_j$ are class functions.
Note that $\EX_{\eta}[\zeta]$ is multi-linear in $\zeta$, as a function $\EX_{\eta}\colon (\FGconjG)^k\to \FG$ (that is, for every $j\in [k]$, it is linear in $\zeta_j$).
\end{definition}

This quantity also has a geometric interpretation, 
which generalizes \cite[Proposition 2.9]{Sho23I}.
Here $E(\Delta)$ denotes the set of oriented edges 
of a graph $\Delta$.

\begin{proposition}
\label{prop_geometric_interpretation_of_E_eta}
In the notation above, denote $\Delta \defeq \Gamma_B(\Vec{\mathcal{J}})$. 
Consider the set of anti-symmetric functions $\beta\colon E(\Delta)\to G$, that is $\beta(u\to v) = \beta(v\to u)^{-1}$, and draw a uniformly random element $\beta$.
Then
\[ \EX_{\eta}[\zeta] 
= \EX_{\beta} \brackets*{\prod_{j=1}^k 
\zeta_j\prn*{\prod_{e\in E(\Gamma_B(w_j))} 
\beta(\eta_*(e))}}, \]
where $\eta_*(e)\in E(\Delta)$ is the image of $e$ under the graph-morphism $\eta_*$, and the product is over the edges in the path $w_j$ in the order they appear in $w_j$.
(This is again defined only up to conjugation so it is important that $\zeta_j$ are class functions).
\end{proposition}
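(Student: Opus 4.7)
The plan is to reduce to a single connected target component and then use a gauge-theoretic identification between uniform anti-symmetric $G$-labelings and uniform homomorphisms out of the fundamental group.

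First, I would factor the expectation over connected components. Write $\Delta = \Delta_1 \sqcup \cdots \sqcup \Delta_\ell$ where $\Delta_i = \Gamma_B(J_i)$, and let $\beta_i$ denote the restriction of $\beta$ to $E(\Delta_i)$. A uniformly random anti-symmetric $\beta$ is the same as independent uniform anti-symmetric $\beta_1, \ldots, \beta_\ell$. Because $\eta_*(E(\Gamma_B(w_j)))$ is contained in $\Delta_{f(j)}$, the product $\prod_{e} \beta(\eta_*(e))$ depends only on $\beta_{f(j)}$. Hence the expectation factorizes as $\prod_{i=1}^{\ell} \EX_{\beta_i}[\prod_{j\in f^{-1}(i)} \zeta_j(\ldots)]$, so that it suffices to prove the proposition in the connected case $\ell=1$.

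Second, for a single connected $\Delta$ with a chosen base vertex $v_0$ and $J = \pilab(\Delta, v_0) \le F_r$, I would show that the map $\beta \mapsto \alpha_\beta$, where $\alpha_\beta(p) \in G$ is the $\beta$-product along a closed path $p$ at $v_0$, is a well-defined uniform surjection onto $\Hom(J, G)$. Fix a spanning tree $T \subseteq \Delta$. The group $G^{V(\Delta)\setminus\{v_0\}}$ acts freely on anti-symmetric labelings via the gauge transformation $\beta(u \to u') \mapsto h_u \beta(u\to u') h_{u'}^{-1}$ (extending by $h_{v_0} = 1$); on any loop at $v_0$ the $h$-factors telescope, so $\alpha_\beta$ is invariant under this action. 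The unique gauge representative that is trivial on $T$ freely assigns to each non-tree edge an arbitrary element of $G$, yielding a bijection from orbits to $\Hom(J, G)$. Hence if $\beta$ is uniform then $\alpha_\beta$ is uniform on $\Hom(J, G)$.

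Third, I would identify $\prod_{e \in E(\Gamma_B(w_j))} \beta(\eta_*(e))$ with a $G$-conjugate of $\alpha_\beta(w_j)$. Let $u_j \in V(\Delta)$ be the image under $\eta_*$ of a base vertex of $\Gamma_B(w_j)$, let $\tau_j$ be any path in $\Delta$ from $v_0$ to $u_j$, and let $s_j$ be the $\beta$-product along $\tau_j$. Then $\tau_j \cdot \eta_*(w_j) \cdot \tau_j^{-1}$ is a loop at $v_0$ whose class, under the algebraic-vs-graphical dictionary for morphisms in $\moccFr$ from Section~\ref{section_multisets}, is exactly the element $w_j \in J$ singled out by $\eta$. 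Taking $\beta$-products along this loop gives
\[ \alpha_\beta(w_j) \;=\; s_j \cdot \Big(\prod_{e \in E(\Gamma_B(w_j))} \beta(\eta_*(e))\Big) \cdot s_j^{-1}, \]
so the two elements of $G$ are conjugate, and since $\zeta_j$ is a class function it takes the same value on both. Combining these three steps, the right-hand side of the proposition equals $\prod_{i=1}^\ell \EX_{\alpha_i \sim U(\Hom(J_i, G))}[\prod_{j \in f^{-1}(i)} \zeta_j(\alpha_i(w_j))] = \EX_\eta[\zeta]$. The main obstacle I anticipate is the base-point bookkeeping in the third step: one must verify that the specific algebraic morphism encoded in $\eta$ matches the graph morphism $\eta_*$ precisely through the chosen $\tau_j$, so that the loop read off the $\beta$-labels represents $w_j$ itself rather than some unspecified conjugate. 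Once this matching is checked, the class-function hypothesis on $\zeta_j$ absorbs all residual conjugation and the computation becomes formal.
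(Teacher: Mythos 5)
Your proof is correct and follows the paper's own argument: reduce to a connected target component, fix a spanning tree to identify uniform anti-symmetric edge-labelings of $\Delta$ with uniform homomorphisms $J\to G$, and evaluate the $w_j$-paths. The gauge-transformation step in your part two merely makes explicit what the paper compresses into the remark that one can assume $\beta$ is trivial on the tree, and the base-point bookkeeping you flag at the end is, as you yourself note, absorbed by the hypothesis that the $\zeta_j$ are class functions, so no further verification is required.
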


\begin{proof}
By restricting the morphism $\eta$ to pre-images of connected components of $\Delta$ we get independent morphisms, thus the expectation is multiplicative with respect to the connected components of $\Delta$ so we may assume that $\Delta$ is connected, with $\pi_1(\Delta) = J\le F_r$ (for some basepoint of $\Delta$, which corresponds to some group $J$ from the conjugacy class). 
We can fix a basis of $J$ by choosing a spanning tree $T\subseteq E(\Delta)$ and orientation for the rest of the edges. 
Regardless of the values $\{\beta(e)\}_{e\in T}$ (which we can assume are all $1$), the random variables $\{\beta(e)\}_{e\not\in T}$ are independent and uniform in $G$, thus defining a random homomorphism $\alpha \colon J\to G$. 
We finish the proof by observing that the $w_j$-path in $\Delta$ is evaluated as $\alpha(w_j)$ when we write $w_j$ in the basis corresponding to $T$.
\end{proof}

\begin{remark}
\label{remark_2complex}
In \cite[Definition 2.6]{louder2021uniform}, Wilton and Louder define a 2-complex by what we call a morphism (in $\mucgBFr$) from a cyclic object. 
\end{remark}

\section{Morphisms: Free, Algebraic, and Möbius Inversions}
\label{section_morphisms_free_alg}

We start by citing some definitions and theorems 
from \cite{HP22}. 

\begin{definition}[Free morphism] 
    \cite[Definition 4.2]{HP22}
If $H_1, \ldots, H_{\ell}$ are subgroups of the free group $J$, we say that $J$ is a free extension of the multiset $\braces*{H_1, \ldots, H_{\ell}}$, denoted $\braces*{H_1, \ldots, H_{\ell}}\leff J$, if $J$ decomposes as a free product 
\[ J = \prn*{\underset{i=1}{\overset{\ell}{\mathop{\bigast}}} j_i H_i j_i^{-1}} * K\]
for some conjugate subgroups $j_i H_i j_i^{-1}$ of $H_i$ (so $j_i\in J$) and some subgroup $K\le J$.

Now let $\eta\colon \Gamma\to \Delta$ be a morphism of multi core graphs with $\Delta $ connected. One can pick an arbitrary subgroup $J$ in the single conjugacy class in $\pilab(\Delta)$, and for every component $\Gamma_1, \ldots, \Gamma_{\ell}$ of $\Gamma$, a suitable subgroup $H_i$ so that $H_i\le J$.
Say that $\eta$ is a free morphism if $\{H_1, \ldots, H_{\ell}\}\leff J$. Finally, say that a general morphism $\eta\colon \Gamma\to \Delta$ of multi core graphs is free if $\eta_{\eta^{-1}(\Delta')}\colon \eta^{-1}(\Delta')\to \Delta'$ is free for every connected component $\Delta'$ of $\Delta$.
\end{definition}


\begin{definition}
    A morphism in $\moccFr$ is called $B$-surjective if the corresponding morphism in the equivalent category $\mucgBFr$ is surjective (on both vertices and edges).
\end{definition}
We will use the term \enquote{$B$-surjective} also for morphisms in $\mucgBFr$ interchangeably. 
Recall $\EX_{\eta}[\zeta]$ from 
Definition~\ref{def_ex_eta_zeta}, 
where $\zeta\colon \Vec{w}\to \FGconjG$ 
associates each cycle a class function.

\begin{proposition}
[Generalization of {\cite[Proposition 4.3(4)]{HP22}}]
Let $\Vec{w} \overset{\eta_1}{\twoheadrightarrow} \Vec{\mathcal{H}} \overset{\eta_2}{\twoheadrightarrow} \Vec{\mathcal{J}} $
be $B$-surjective morphisms, where $\Vec{w}$ is a multi-word. 
Let $G$ be a finite group and let $\zeta\colon \Vec{w}\to \FGconjG$. 
If $\eta_2$ is free then 
\[ \EX_{\eta_1}[\zeta] = \EX_{\eta_2\circ \eta_1}[\zeta]. \]
\end{proposition}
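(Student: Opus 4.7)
The plan is to reduce to the case where $\Vec{\mathcal{J}}$ has a single connected component and then exploit the universal property of the free product implicit in the definition of a free morphism. For the reduction: by Definition~\ref{def_ex_eta_zeta}, both $\EX_{\eta_1}[\zeta]$ and $\EX_{\eta_2 \circ \eta_1}[\zeta]$ are products of expectations over the connected components (equivalently, conjugacy classes of representatives) of their respective codomains. Grouping the components of $\Vec{\mathcal{H}}$ according to which component of $\Vec{\mathcal{J}}$ they land in, and noting that $\eta_2$ is free on each such preimage by definition, both sides factor as products indexed by the components of $\Vec{\mathcal{J}}$. It thus suffices to treat $\Vec{\mathcal{J}} = \{J\}$ for a single $J \le F_r$.

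Second, with $\Vec{\mathcal{J}} = \{J\}$, freeness of $\eta_2$ together with $B$-surjectivity gives a free decomposition
\[ J \;=\; \left(\underset{i=1}{\overset{\ell}{\mathop{\bigast}}} j_i H_i j_i^{-1}\right) * K \]
where $H_1, \dots, H_\ell$ are representatives of the components of $\Vec{\mathcal{H}}$, $j_i \in J$, and $K \le J$. By the universal property of free products, a uniformly random $\alpha \sim U(\Hom(J, G))$ is the same data as mutually independent uniform $\alpha_i \sim U(\Hom(H_i, G))$ for $i \in [\ell]$, together with an independent uniform $\beta \sim U(\Hom(K, G))$ which will play no role. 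For each word $w_j \in \Vec{w}$ (sent by $\eta_1$ into the $f(j)$-th component of $\Vec{\mathcal{H}}$), the composed morphism $\eta_2 \circ \eta_1$ sends $w_j$ to the conjugate $j_{f(j)}\, \eta_1(w_j)\, j_{f(j)}^{-1}$ inside $J$. Since $\zeta_j$ is a class function, the conjugation by $\alpha(j_{f(j)})$ disappears, so
\[ \zeta_j\bigl(\alpha(\eta_2 \circ \eta_1)(w_j)\bigr) \;=\; \zeta_j\bigl(\alpha_{f(j)}(\eta_1(w_j))\bigr). \]
Multiplying over $j$ and taking expectation using independence of the $\alpha_i$'s yields
\[ \EX_{\eta_2 \circ \eta_1}[\zeta] \;=\; \prod_{i=1}^{\ell} \EX_{\alpha_i} \prod_{j \,:\, f(j) = i} \zeta_j(\alpha_i(\eta_1(w_j))) \;=\; \EX_{\eta_1}[\zeta], \]
which is exactly what we want.

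The probabilistic content — namely, that the restriction of a uniform homomorphism out of a free product to each factor is an independent uniform homomorphism — is standard. The only real care is bookkeeping: tracking the conjugating elements $j_i$ supplied by the free decomposition, keeping straight the map $f \colon [k] \to [\ell]$ induced by $\eta_1$ versus the map $[k] \to \{1\}$ induced by $\eta_2 \circ \eta_1$, and applying the class-function property of each $\zeta_j$ at the right moment to absorb the conjugation. This is where I would expect most of the effort to go, although it should fit on one page.
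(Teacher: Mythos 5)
Your proof is correct and follows essentially the same route as the paper's: reduce to a single component of $\Vec{\mathcal{J}}$ by multiplicativity, then exploit that a uniform homomorphism out of a free product restricts to independent uniform homomorphisms on the free factors. The only real difference is presentational: the paper glosses over the conjugating elements $j_i$ from the free decomposition (effectively replacing the representatives $H_i$ by the conjugates $j_i H_i j_i^{-1}$ without comment and relying implicitly on the class-function property), whereas you track them explicitly and note where the class-function hypothesis on $\zeta_j$ absorbs the conjugation — a minor but genuine clarification, not a different argument.
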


\begin{proof}
We may assume without loss of generality that $\Vec{\mathcal{J}}$ is a single conjugacy class with a representative $J$, since both sides of the equation are multiplicative with respect to the different conjugacy classes in $\Vec{\mathcal{J}}$.
Let $\beta\sim U(\Hom(J, G))$. Let $\braces*{w_1, \ldots, w_k}$ and $\braces*{H_1, \ldots, H_{\ell}}$ be the representatives of $\vec{w}$ and $\Vec{\mathcal{H}}$ given by $\eta_1$ and $\eta_2$ respectively. 
Since the free product of $H_i$ is a free factor of $J$, the homomorphisms $\beta_i \defeq \beta\restriction_{H_i}$ are uniform and independent. Thus
\[\EX_{\eta_1}[\zeta] 
= \prod_{j=1}^{\ell} \EX\brackets*{\prod_{i\in \eta_1^{-1}(j) }\zeta_i(\beta_j(w_i))}
= \EX\brackets*{\prod_{j=1}^{\ell} \prod_{i\in \eta_1^{-1}(j) }\zeta_i(\beta(w_i))} 
=  \EX_{\eta_2\circ \eta_1}[\zeta]. \]
\end{proof}

\begin{definition}[Algebraic Morphism]
    \cite[Definition 4.6]{HP22}
Let $\eta\colon \Gamma\to \Delta$ be a morphism of multi core graphs. We say that $\eta$ is \textbf{algebraic} if whenever $\Gamma\overset{\eta_1}{\to} \Sigma \overset{\eta_2}{\to} \Delta$ is a decomposition of $\eta$ with $\eta_2$ free, we have that $\eta_2$ is an isomorphism.
\end{definition}

\begin{proposition}
\cite[Theorem 4.7]{HP22}
Some properties of algebraic morphisms.
\begin{enumerate}
    \item Every algebraic morphism of multi core graphs is surjective.
    \item The identity morphism is algebraic, and so is the composition of two algebraic morphisms.
\end{enumerate}
\end{proposition}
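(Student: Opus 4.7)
The plan splits naturally into the two assertions of the proposition. For (1), let $\eta\colon \Gamma \to \Delta$ be algebraic. I would factor $\eta$ through its image $\eta(\Gamma) \subseteq \Delta$, after first observing that $\eta(\Gamma)$ is itself a multi core subgraph of $\Delta$: each vertex $v \in V(\eta(\Gamma))$ equals $\eta(u)$ for some $u \in V(\Gamma)$, whose $\ge 2$ half-edges (coming from the core condition on $\Gamma$) map injectively to distinct half-edges at $v$ by local injectivity of $\eta$, so $v$ has degree $\ge 2$ in $\eta(\Gamma)$ and there are no leaves or isolated vertices. Thus $\eta$ factors as $\Gamma \twoheadrightarrow \eta(\Gamma) \hookrightarrow \Delta$. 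Next I would show this subgraph inclusion is free: working on each connected component separately, choose a spanning tree $T_0$ of $\eta(\Gamma)$ and extend it to a spanning tree $T$ of $\Delta$; the free basis of $\pilab(\eta(\Gamma))$ indexed by $E(\eta(\Gamma)) \setminus T_0$ is a subset of the free basis of $\pilab(\Delta)$ indexed by $E(\Delta) \setminus T$, exhibiting $\pilab(\eta(\Gamma))$ as a free factor. Since $\eta$ is algebraic and the inclusion is free, it must be an isomorphism, so $\eta(\Gamma) = \Delta$ and $\eta$ is surjective.

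For the identity part of (2), suppose $\mathrm{id}_\Gamma = f_2 \circ f_1$ with $f_1\colon \Gamma \to \Sigma$ and $f_2\colon \Sigma \to \Gamma$ (and $f_2$ free); then $f_1$ is injective and $f_2$ surjective, forcing $|V(\Gamma)| = |V(\Sigma)|$ and $|E(\Gamma)| = |E(\Sigma)|$, which makes both morphisms bijections on vertices and edges, hence isomorphisms. In particular $f_2$ is an isomorphism, so the identity is algebraic.

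The composition part is the most substantial. Given algebraic $\eta_1\colon \Gamma_1 \to \Gamma_2$ and $\eta_2\colon \Gamma_2 \to \Gamma_3$, suppose $\eta_2 \circ \eta_1 = f_2 \circ f_1$ with $f_2\colon \Lambda \to \Gamma_3$ free. I would form the fiber product $P \defeq \Gamma_2 \times_{\Gamma_3} \Lambda$ in $\mucgBFr$ (vertices are pairs $(g, \ell)$ with $\eta_2(g) = f_2(\ell)$, edges analogously, then pass to the underlying multi core graph). The universal property yields a morphism $\Gamma_1 \to P$ whose composition with the projection $P \to \Gamma_2$ recovers $\eta_1$. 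The principal technical obstacle is to verify that the projection $P \to \Gamma_2$, being the pullback of the free morphism $f_2$, is itself free; I would establish this by transporting the free-product decomposition realizing $f_2$ across the fiber product. Granting this, algebraicity of $\eta_1$ forces $P \to \Gamma_2$ to be an isomorphism; the other projection then becomes a morphism $\Gamma_2 \cong P \to \Lambda$ whose composition with $f_2$ is $\eta_2$, and algebraicity of $\eta_2$ concludes that $f_2$ is an isomorphism, as required.
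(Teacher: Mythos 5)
The paper itself does not prove this; it quotes \cite[Theorem 4.7]{HP22} as a black box, so I evaluate your proof independently. Part (1) is correct: $\eta(\Gamma)$ is a multi core subgraph of $\Delta$ because $\eta$ is an immersion, the inclusion of a multi core subgraph is free via the spanning-forest extension you describe, and algebraicity then forces $\eta(\Gamma)=\Delta$. In the identity part of (2), however, the counting does not close: $f_1$ injective and $f_2$ surjective both yield the \emph{same} inequality $|V(\Gamma)|\le |V(\Sigma)|$, not the two directions needed for equality. The repair is to use the standing convention (as in $\DecompB$) that the first leg of a decomposition surjects onto the intermediate object; then $f_1$ is a bijection, hence an isomorphism, and $f_2 = f_1^{-1}$.

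The composition part carries the real content and has two gaps. The crucial one is the assertion that the projection from the core of the fiber product $P = \Gamma_2 \times_{\Gamma_3} \Lambda$ onto $\Gamma_2$ is free. After trimming, the components of $P$ correspond to the nontrivial conjugate intersections $J \cap g H_i g^{-1}$ where $J = \pilab(\Gamma_2)$ and $H_i$ ranges over the $\pilab$ of components of $\Lambda$, and the claim that the free product of these is a free factor of $J$ is exactly the Kurosh-subgroup-theorem statement that a free factor of $K$ meets any $J\le K$ in a free factor of $J$. Your phrase ``transporting the free-product decomposition across the fiber product'' names this step rather than carrying it out, and that is where the work lies. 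The second, smaller gap: the induced map $\Gamma_1 \to P$ need not be surjective, so you must pass to its image $P'\subseteq P$, note that $P'\to\Gamma_2$ is still free (the free subgraph inclusion $P'\hookrightarrow P$ composed with the free $P\to\Gamma_2$), conclude $P'\cong \Gamma_2$ from algebraicity of $\eta_1$, and then observe that $P'\twoheadrightarrow\Lambda$ because the surjective $f_1$ factors through $P'$, before invoking algebraicity of $\eta_2$. With these two repairs the argument closes.
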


\begin{theorem}[Algebraic-Free Decomposition]
\label{thm_AFD}
\cite[Theorem 4.9]{HP22}
Let $\eta\colon \Gamma\to \Delta$ be a morphism of multi core graphs. Then there is a decomposition
$\Gamma \overset{\phi}{\underset{\textrm{algebraic}}{\longrightarrow}} \Sigma \overset{\psi}{\underset{\textrm{free}}{\longrightarrow}} \Delta$
with $\eta = \psi\circ \phi$ such that $\phi$ is algebraic and $\psi$ is free. 
This decomposition is unique in the following sense: if there is another decomposition 
$\Gamma \overset{\phi}{\underset{\textrm{algebraic}}{\longrightarrow}} \Sigma \overset{\psi}{\underset{\textrm{free}}{\longrightarrow}} \Delta$
where $\phi'$ is algebraic and $\psi'$ is free, then there is an isomorphism $\iota$ 
which makes the following diagram commute:
\[\begin{tikzcd}
	\Gamma & \Sigma \\
	{\Sigma'} & \Delta
	\arrow["\phi", from=1-1, to=1-2]
	\arrow["\psi", from=1-2, to=2-2]
	\arrow["{\phi'}"', from=1-1, to=2-1]
	\arrow["{\psi'}"', from=2-1, to=2-2]
	\arrow["\iota"{description}, dashed, from=1-2, to=2-1]
\end{tikzcd}\]
\end{theorem}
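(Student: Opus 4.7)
The plan is to handle existence and uniqueness separately, with the Euler characteristic $\chi$ serving as the termination invariant on the existence side.

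For existence, I would iterate a reduction procedure starting from the trivial decomposition $\Gamma \xrightarrow{\eta} \Delta \xrightarrow{\mathrm{id}_\Delta} \Delta$, whose second arrow is vacuously free. If the current first arrow $\phi \colon \Gamma \to \Sigma$ is not yet algebraic, then by definition there exists a non-trivial factorization $\Gamma \xrightarrow{\phi_1} \Sigma_1 \xrightarrow{\psi_1} \Sigma$ with $\psi_1$ free and not an isomorphism. Since a free factor of a free factor is again a free factor, the composition $\Sigma_1 \to \Sigma \to \Delta$ remains free, so we may replace the pair $(\phi, \psi)$ by $(\phi_1, \psi \circ \psi_1)$. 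Because $\chi(\Sigma) \in \Z_{\le 0}$ and every non-trivial free extension strictly increases $\chi$ (a free factor of positive rank contributes additional edges without additional vertices in the core), the loop terminates. The resulting first arrow admits no further non-trivial free factorization and is algebraic by definition.

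For uniqueness, suppose $\Gamma \xrightarrow{\phi} \Sigma \xrightarrow{\psi} \Delta$ and $\Gamma \xrightarrow{\phi'} \Sigma' \xrightarrow{\psi'} \Delta$ are two such decompositions of $\eta$. I would pass to the category $\moccFr$ via the equivalence with $\mucgBFr$, reducing the problem to a statement about multisets of subgroups of $F_r$. Restricting to a single connected component of $\Delta$ with subgroup representative $J$, the map $\psi$ exhibits $\pilab(\Sigma)$ as a free-factor multiset inside $J$ that contains the multiset $\phi(\pilab(\Gamma))$ up to conjugation, and similarly for $\psi'$. Algebraicity of $\phi$ and $\phi'$ means $\phi(\pilab(\Gamma))$ is not contained in any proper free-factor refinement of $\pilab(\Sigma)$ or of $\pilab(\Sigma')$. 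By a Grushko-type uniqueness statement --- any two free-factor completions of the same subgroup multiset inside a free group which are themselves algebraically generated by that multiset must agree up to conjugation --- we conclude $\pilab(\Sigma) = \pilab(\Sigma')$ up to conjugation. Transporting back through the equivalence $\mucgBFr \leftrightarrow \moccFr$ yields a graph isomorphism $\iota \colon \Sigma \to \Sigma'$, and commutativity with $\phi, \phi'$ follows from the algebraic side while commutativity with $\psi, \psi'$ follows from the free side.

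The hard part will be the uniqueness step, specifically promoting the abstract subgroup-level agreement into an actual graph isomorphism $\iota$ that simultaneously commutes with both $\phi,\phi'$ and $\psi,\psi'$. The obstruction is bookkeeping: the equivalence $\mucgBFr \leftrightarrow \moccFr$ only identifies morphisms up to post-composition by automorphisms of the codomain, and in the multi-component setting one must also canonically match connected components of $\Sigma$ with those of $\Sigma'$. The Grushko-uniqueness lemma above is exactly what pins down this matching, and its careful statement and proof --- separating the contribution of each component of $\Delta$ and each component of the algebraic core --- will be where the real work lies.
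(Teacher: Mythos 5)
Note first that the paper itself gives no proof of this theorem: it is cited as \cite[Theorem 4.9]{HP22} and used as a black box, so your proposal has to stand entirely on its own.

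Your existence argument (iterated factorization through free morphisms, with $\chi$ as a termination measure) is the right strategy, but the stated invariant --- that every non-isomorphic free morphism strictly increases $\chi$ --- is false as written. The inclusion $\Sigma_1 \hookrightarrow \Sigma_1 \sqcup \Gamma_B(\langle a\rangle)$ of one component into a disjoint union with an extra cyclic loop is free and not an isomorphism, yet leaves $\chi$ unchanged because the added component has $\chi = 0$. The repair is to keep the intermediate arrow $\Gamma \twoheadrightarrow \Sigma_i$ surjective by replacing $\Sigma_i$ with the image (and noting that a free morphism restricted to a core subgraph remains free, since a core subgraph of a core graph always realizes a free factor); under surjectivity every component of $\Sigma_i$ receives a component of $\Sigma_{i+1}$, and then the rank accounting does yield a strict increase in $\chi$.

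The genuine gap is in uniqueness. The \enquote{Grushko-type uniqueness statement} you invoke --- that any two free-factor completions of a fixed subgroup multiset which are algebraically generated by it must agree up to conjugation --- is not a known lemma; it is precisely the uniqueness you are trying to establish, translated through the equivalence $\mucgBFr \leftrightarrow \moccFr$. Citing it is circular. The non-trivial input actually required, already in the connected single-subgroup case, is that the intersection of two free factors of a free group is again a free factor, so that a smallest free factor of $J$ containing $H$ exists; this is the engine behind the Takahasi--Kapovich--Miasnikov--Ventura--Weil algebraic-closure theory, which must then be extended to multisets with a canonical matching of connected components. You correctly flag this as \enquote{where the real work lies}, but isolating the key step only to re-assert its conclusion leaves the hard half of the theorem unproved.
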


\subsection*{Möbius inversions}
\label{subsection_mobius_inversions}

\begin{definition}[$B$-surjective Decomposition]
\label{def_decompos_B}
\cite[Definition 6.3]{HP22}
Let $\eta\in Hom_{\mucgBFr}(\Gamma, \Delta)$ be a $B$-surjective morphism. Define
\[ \DecompB(\eta) \defeq \{(\eta_1, \eta_2): 
\Gamma \overset{\eta_1}{\twoheadrightarrow} \textup{Im}(\eta_1) \overset{\eta_2}{\twoheadrightarrow} \Delta \}\]
modulo the following equivalence relation: $(\eta_1, \eta_2) \sim (\eta_1', \eta_2') $ whenever there is an isomorphism $\theta \colon \textup{Im}(\eta_1) \to \textup{Im}(\eta_1')$ such that the diagram
\[\begin{tikzcd}
	\Gamma & \textup{Im}(\eta_1) \\
	& {\textup{Im}(\eta_1')} & \Delta
	\arrow["{\eta_1}", two heads, from=1-1, to=1-2]
	\arrow["{\eta_1'}"', two heads, from=1-1, to=2-2]
	\arrow["\cong", from=1-2, to=2-2]
	\arrow["{\eta_2}", two heads, from=1-2, to=2-3]
	\arrow["{\eta_2'}", two heads, from=2-2, to=2-3]
\end{tikzcd}\]
commutes. Similarly, let $\DecompB^3(\eta)$ denote the set of decompositions $\Gamma\overset{\eta_1}{\twoheadrightarrow}\sigma_1\overset{\eta_2}{\twoheadrightarrow}\sigma_2\overset{\eta_3}{\twoheadrightarrow}\Delta$ of $\eta$ into three surjective morphisms. Again, two such decompositions are considered equivalent (and therefore the same element in $\DecompB^3(\eta)$) if there are isomorphisms $\Sigma_i\cong \Sigma_i', i = 1, 2$, which commute with the decompositions.\\
The set $\DecompB(\eta)$ is endowed with a partial order: $(\eta_1, \eta_2)\le (\eta_1', \eta_2') $ whenever there is a morphism $\textup{Im}(\eta_1)\twoheadrightarrow \textup{Im}(\eta_1')$ such that the diagram commutes: see \cite[Remark 6.5]{HP22}.
\end{definition}

\begin{definition}[$\textup{sur}_B, \mathcal{Q}_B(\Gamma)$]
\label{def_surB_QB}
We define $\textup{sur}_B$ to be the collection of all $B$-surjective morphisms in the category $\mucgBFr$, 
where we declare two morphisms $(\Gamma \overset{\eta}{\twoheadrightarrow} \Delta), (\Gamma' \overset{\eta'}{\twoheadrightarrow} \Delta')$ as equivalent if there are isomorphisms $\Gamma \overset{\phi}{\to} \Gamma', \Delta \overset{\psi}{\to} \Delta'$
such that the following diagram commutes:
\[\begin{tikzcd}
	\Gamma & {\Gamma'} \\
	\Delta & {\Delta'}
	\arrow["\phi", from=1-1, to=1-2]
	\arrow["{\eta'}", from=1-2, to=2-2]
	\arrow["\eta"', from=1-1, to=2-1]
	\arrow["\psi"', from=2-1, to=2-2]
\end{tikzcd}\]
Moreover, given an object $\Gamma$, define $\mathcal{Q}_B(\Gamma)$ as the set of (classes of) morphisms in $\textup{sur}_B$ which originates in $\Gamma$. 
These are called $\Gamma$\textbf{-quotients}. 
For a multi-word $\Vec{w}$, define $\mathcal{Q}_B(\Vec{w})$ similarly, and recall that a morphism $\eta\colon \Vec{w}\to \Vec{\mathcal{H}}$ remembers the orientation of $\vec{w}$ (so it does not depend only on the generated subgroups $\inner*{w_i}$). 
\end{definition}

\begin{definition}[$B$-Convolution]
Let $f, g \colon \textup{sur}_B\to \FG$.
We define their $B$-convolution as follows: for every $B$-surjective morphism $\eta \colon \mathcal{H}\to \mathcal{J}$,
\[(f \underset{B}{*} g)(\eta) \defeq \sum_{(\eta_1, \eta_2)\in \DecompB(\eta)} f(\eta_1) g(\eta_2). \]
This operation, together with the obvious vector space structure, makes $\FG^{\textup{sur}_B}$ into an associative convolution algebra.
The identity element is the function $\delta_B$ that gives $1$ to isomorphisms and $0$ to any other morphism.
\end{definition}

The following definition relies on the local finiteness 
of the poset $\textup{sur}_B$. 

\begin{definition}[Möbius Inversion]
Let $\textbf{1}\in \FG^{\textup{sur}_B}$ be the constant $1$ function. It is invertible in $\FG^{\textup{sur}_B}$, and we denote its inverse by $\mu_B$, called the \textbf{Möbius Inversion} in the basis $B$. 
\end{definition}

\begin{definition}
\cite[Definition 3.4]{HP22}\footnote{In \cite{HP22} it was denoted by $\Phi_{\eta}(n)$.}
For every $n\in \N$, we define 
$\EX\brackets*{S_n\acts [n]} \colon \textup{sur}_B\to \FG$ as follows: given a morphism 
$\eta\colon \mathcal{H}\to \mathcal{J}$, 
\[ \EX_{\eta}\brackets*{S_n\acts [n]} \defeq \prod_{J\in \mathcal{J}} \EX_{\alpha\sim U(\Hom(J, \,S_n))} \brackets*{\prod_{H\in \eta^{-1}(J)}\#\{\textup{common fixed points of }\alpha(H)\le S_n\}}, \]    
where the product is over representatives of the 
conjugacy classes 
$J\in \mathcal{J}$.
\end{definition}

\begin{definition}[Möbius Derivations of $\EX\brackets*{S_n\acts [n]}$]
\label{def_LB}
\cite[Definition 6.6]{HP22}
We define the \textbf{L}eft, \textbf{R}ight and \textbf{C}entral Möbius derivations of $\EX\brackets*{S_n\acts [n]}$ by
\begin{align*}
    L^B(n) & \defeq \mu^B \underset{B}{*} \mathbb{E}\brackets*{S_n\curvearrowright[n]}, && \textup{i.e. } \mathbb{E}\brackets*{S_n\curvearrowright[n]} = \sum_{(\eta_1, \eta_2)\in \DecompB(\eta)} L_{\eta_2}^B(n).\\
    R^B(n) &\defeq \mathbb{E}\brackets*{S_n\curvearrowright[n]} \underset{B}{*} \mu^B, && \textup{i.e. } \mathbb{E}\brackets*{S_n\curvearrowright[n]} = \sum_{(\eta_1, \eta_2)\in \DecompB(\eta)} R_{\eta_1}^B(n).\\
    C^B(n) &\defeq \mu^B \underset{B}{*} \mathbb{E}\brackets*{S_n\curvearrowright[n]} \underset{B}{*} \mu^B, && \textup{i.e. } \mathbb{E}\brackets*{S_n\curvearrowright[n]} 
    = \sum_{(\eta_1, \eta_2, \eta_3)\in \DecompB^3(\eta)} C_{\eta_2}^B(n).
\end{align*}
Abbreviating notation via $\EX\defeq \mathbb{E}\brackets*{S_n\curvearrowright[n]}$, 
this can be summarized in the following diagram:
\[\begin{tikzcd}
	& {{ \mathbb{E}}} \\
	{{ L^B(n) \defeq \mu^B \underset{B}{*} \mathbb{E}}} && 
 {{ R^B(n) \defeq \mathbb{E} \underset{B}{*} \mu^B}} \\
	& {{C^B(n) \defeq \mu^B \underset{B}{*} \mathbb{E} \underset{B}{*} \mu^B }}
	\arrow[from=1-2, to=2-3]
	\arrow[from=1-2, to=2-1]
	\arrow[from=2-1, to=3-2]
	\arrow[from=2-3, to=3-2]
\end{tikzcd}\]
\end{definition}

\begin{proposition}[\enquote{Basis dependent Möbius inversions}]
\label{prop_LB}
\cite[Proposition 6.6]{HP22}
Let $\eta\colon \Gamma\to \Delta$ be a $B$-surjective morphism in $\mucgBFr$. 
Then for every $n\ge \abs*{E(\Delta)}$:
\[ L_{\eta}^B(n) = \frac{\prod_{v\in V(\Delta)}(n)_{|\eta^{-1}(v)|}}
{\prod_{e\in E(\Delta)}(n)_{|\eta^{-1}(e)|}} = n^{\chi(\Gamma)} \cdot (1 + O\prn*{n^{-1}}). \]
(We use the notation $(n)_t\defeq n\cdot (n-1)\cdots (n-t+1)$). 
Moreover, it has the following properties:
\begin{itemize}
    \item $L_{\eta}^B(n)$ is equal to the average number of injective lifts from $\Gamma$ to a random $n$-cover of $\Delta$.
    \item $L_{\eta}^B(n)$ is multiplicative with respect to the connected components of $\textup{Im}(\eta)$.
    \item If there is only one letter then $L_{\eta}^B(n)=1$.
\end{itemize}
\end{proposition}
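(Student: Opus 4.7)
The plan is to interpret both sides of the Möbius identity geometrically, via lifts to random $n$-covers of $\Delta$, and match them up.

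\medskip

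\textbf{Step 1 (geometric meaning of $\EX_\eta[S_n\acts[n]]$).} For each conjugacy-class representative $J$ of a component of $\Delta$ and $\alpha\sim U(\Hom(J,S_n))$, the common fixed points of $\alpha(H)\le S_n$ for a subgroup $H\le J$ are exactly the basepoints in the $n$-cover of $\Gamma_B(J)$ determined by $\alpha$ from which the core subgraph of $H$ lifts. Applying Proposition~\ref{prop_geometric_interpretation_of_E_eta} (or its $S_n$-analogue from \cite{HP22}), $\EX_\eta[S_n\acts[n]]$ equals the expected number of (not necessarily injective) graph morphisms $\Gamma\to\widetilde{\Delta}$ commuting with the projections, where $\widetilde{\Delta}$ is a uniformly random $n$-cover of $\Delta$.

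\medskip

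\textbf{Step 2 (Möbius inversion from the image stratification).} Any morphism from $\Gamma$ into a cover of $\Delta$ factors uniquely as a $B$-surjection $\eta_1\colon\Gamma\twoheadrightarrow\textup{Im}(\eta_1)$ followed by an injective lift of $\textup{Im}(\eta_1)$ into $\widetilde{\Delta}$; and since the lift composed with the covering projection factors through $\Delta$, the image comes equipped with a $B$-surjection $\eta_2\colon\textup{Im}(\eta_1)\twoheadrightarrow\Delta$ such that $(\eta_1,\eta_2)\in\DecompB(\eta)$. Letting $\widetilde L_{\eta_2}(n)$ denote the expected number of injective lifts of $\textup{Im}(\eta_1)$ into $\widetilde{\Delta}$, this stratification yields
\[
\EX_\eta[S_n\acts[n]]\;=\;\sum_{(\eta_1,\eta_2)\in\DecompB(\eta)}\widetilde L_{\eta_2}(n),
\]
which by the definition of $L^B(n)=\mu^B\Bconv\EX[S_n\acts[n]]$ and the uniqueness of Möbius inverses forces $L_\eta^B(n)=\widetilde L_\eta(n)$. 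This proves the first bullet.

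\medskip

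\textbf{Step 3 (direct count of injective lifts, hence the formula).} An injective lift of $\Gamma$ into $\widetilde{\Delta}$ consists of an injective assignment of vertices, one fiber at a time, together with the event that each required edge is present. The number of injective assignments of vertex-preimages in the fibers of $\Delta$ is $\prod_{v\in V(\Delta)}(n)_{|\eta^{-1}(v)|}$. Conditional on such an assignment, for each edge $e\in E(\Delta)$ the $|\eta^{-1}(e)|$ prescribed edges above $e$ correspond to $|\eta^{-1}(e)|$ distinct pair-conditions on the uniform random permutation encoding the $b$-edges in the $e$-fiber, and the probability that a uniform $\pi\in S_n$ satisfies $k$ distinct pair-conditions is $1/(n)_k$. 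Multiplying over edges (using independence across edges of $\Delta$ with distinct labels, and over distinct $\Delta$-edges sharing a label, via conditioning) gives
\[
L^B_\eta(n)\;=\;\widetilde L_\eta(n)\;=\;\frac{\prod_{v\in V(\Delta)}(n)_{|\eta^{-1}(v)|}}{\prod_{e\in E(\Delta)}(n)_{|\eta^{-1}(e)|}}.
\]

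\medskip

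\textbf{Step 4 (asymptotics and the remaining bullets).} From $(n)_k=n^k(1+O(n^{-1}))$ and $|V(\Gamma)|=\sum_v|\eta^{-1}(v)|$, $|E(\Gamma)|=\sum_e|\eta^{-1}(e)|$, the formula becomes $n^{\chi(\Gamma)}(1+O(n^{-1}))$. Multiplicativity over the components of $\textup{Im}(\eta)$ follows because both the vertex set and the edge set of $\textup{Im}(\eta)$ decompose as disjoint unions of their components, and the formula is a product over $V(\Delta)$ and $E(\Delta)$. Finally, if $|B|=1$ then every multi core graph is a disjoint union of oriented cycles, and a $B$-surjection restricts on each cycle of $\Delta$ to a covering by cycles of $\Gamma$; hence every vertex and every edge of a fixed component of $\Delta$ has the same number of preimages in $\Gamma$, and since that component has equally many vertices and edges, the corresponding contributions to numerator and denominator cancel, giving $L^B_\eta(n)=1$.

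\medskip

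The main obstacle will be Step 2: one must justify carefully that the factorization ``lift $=$ surjection onto image, then inclusion'' is bijective with pairs in $\DecompB(\eta)$ together with an injective lift of the intermediate graph, up to the correct equivalence. The computations in Steps 3–4 are routine once this is in place.
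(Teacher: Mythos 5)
Your proof is correct and follows the same route as the cited source \cite[Proposition~6.6]{HP22} (this paper does not reprove it): interpret $\EX_{\eta}[S_n\acts[n]]$ as the expected number of all lifts of $\Gamma$ into a random $n$-cover of $\Delta$, stratify lifts by their image to match the defining convolution identity for $L^B$, and count injective lifts directly via independent uniform permutations on the edge-fibers. The factorization argument you flag in Step~2 is indeed the point that needs care, but it goes through exactly as you outline, since $\eta_1$ being surjective makes the map $\psi\mapsto\psi\circ\eta_1$ from injective lifts of $\mathrm{Im}(\eta_1)$ to lifts of $\Gamma$ with image $\cong\mathrm{Im}(\eta_1)$ a bijection.
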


\begin{definition}[\enquote{Algebraic Möbius Inversion}]
\label{def_mobius_alg}
\cite[Definition 6.13]{HP22}
For an algebraic morphism $\eta\colon \Gamma\to \Delta$ in $\mucgBFr$, or equivalently in $\moccFr$, denote by $\Decompalg(\eta)$ and $\Decompalg^3(\eta)$ the set of decompositions of $\eta$ into two (three, respectively) algebraic morphisms, with the same identifications as in Definition~\ref{def_decompos_B}.
We also define algebraic left, right and central Möbius inversions of $\EX[S_n\acts [n]]$, denoted $L_{\eta}^{\alg}(n), R_{\eta}^{\alg}(n), C_{\eta}^{\alg}(n)$, respectively. For instance, $L^{\alg}$ is defined by
\[ \EX_{\eta}[S_n\acts [n]] = \sum_{(\eta_1, \eta_2)\in \Decompalg(\eta)} L_{\eta_2}^{\alg}(n). \]
If $\eta$ is algebraic, then $\Decompalg(\eta)\subseteq \DecompB(\eta)$, by \cite[Theorem 4.7]{HP22}.
\end{definition}

\section{The Induction-Convolution Lemma}
\label{section_ICL}

The induction-convolution lemma provides a method 
to compute the word measure of an induced character, 
under certain independence assumptions on the group 
and subgroup used in the induction process which 
are satisfied in the case of wreath products.
Recall Definition~\ref{def_ex_eta_zeta} (expectation of class functions via a morphism) and Definition~\ref{def_ind_lambda} (induction of a multi partition). 
Let $G$ be a finite group, let $\vec{w} = \{w_1, \ldots, w_k\}$ be a multi-word, and $\zeta\colon \{w_1, \ldots ,w_k\}\to \FGconjG$ a class function on every cycle. 
We denote by $\Ind\zeta$ the coordinate-wise induction of the corresponding multi partition: for every $n\in \N$,
$\Ind\zeta\colon \{w_1, \ldots, w_k\}\to \FG^{\conj\prn*{G\wr S_n}}$ is defined by $(\Ind\zeta)(w_i) \defeq \Ind(\zeta(w_i))$.
Our objective is to express 
$\EX_{\eta}[\Ind\zeta]$ 
in terms of formulas that do not depend on the wreath product $G_n = G\wr S_n$, but instead only involve either $G$ or $S_n$ separately.
The induction-convolution lemma gives such a disentanglement, in the form of convolution in the lattice of $B$-surjective morphisms.

\noindent Recall that $\Ind\textbf{1} = \#\textup{fixed points}$ is the permutation character of $S_n\acts [n]$.
Thus it is natural to denote for every morphism $\eta\colon \vec{\mathcal{H}}\to \vec{\mathcal{J}}$:
\[\EX_{\eta}[\Ind\textbf{1}]\defeq \EX_{\eta}[S_n\acts [n]].\]
This notation is less intuitive, but it emphasizes the idea of the induction-convolution lemma:

\begin{lemma}
    [The Induction-Convolution Lemma]
\label{lemma_ICL}
Let $B\subseteq F_r$ be a basis, and $\vec{w} = \{w_1, \ldots, w_k\}$ a multi-word. 
Then
\[ \EX[\Ind\zeta] = \EX[\zeta] \underset{B}{*} \mu_B \underset{B}{*} \EX[\Ind\textbf{1}]\]
where $\EX[\zeta], \EX[\Ind\zeta]$ are elements of $\FG^{\mathcal{Q}_B(\vec{w})}$, i.e.\ map morphisms $\eta$ to their $\zeta$-expectations
$\EX_{\eta}[\zeta], \EX_{\eta}[\Ind\zeta]$.
\end{lemma}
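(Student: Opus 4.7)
The plan is as follows. By associativity of the $B$-convolution and the identity $L^B(n) = \mu_B \Bconv \EX[\Ind\textbf{1}]$ from Definition~\ref{def_LB}, it suffices to prove, for every $\eta\in \mathcal{Q}_B(\vec{w})$, that
\[
\EX_{\eta}[\Ind\zeta]
= \sum_{(\eta_1,\eta_2)\in \DecompB(\eta)} \EX_{\eta_1}[\zeta]\cdot L^B_{\eta_2}(n),
\]
where $\eta_1\colon \vec{w}\twoheadrightarrow \sigma$ and $\eta_2\colon \sigma\twoheadrightarrow \Vec{\mathcal{J}}$. I will establish this by classifying fixed points of $\alpha_S(w_j)$ in a random $n$-cover of $\Delta\defeq \Gamma_B(\Vec{\mathcal{J}})$ according to the geometric shape of the induced closed lifts.

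First I would unpack the LHS. Each representative free subgroup $J\in \Vec{\mathcal{J}}$ supports a uniform homomorphism $\alpha\colon J\to G\wr S_n$ which splits independently as $\alpha=(\alpha_G,\alpha_S)$: the projection $\alpha_S\colon J\to S_n$ determines an $n$-cover $\tilde\Delta\to \Delta$, while $\alpha_G$ provides a uniform antisymmetric $G$-labeling $\beta\colon E(\tilde\Delta)\to G$, independent of $\alpha_S$ (since each basis element of $J$ contributes an independent $u_b\in G^n$ to the labels on the corresponding fiber of edges). Using $\Ind\zeta(w_j)(v,\sigma) = \sum_{i:\,\sigma(i)=i}\zeta(w_j)(v(i))$ and the fact that, for a fixed point $i$ of $\alpha_S(w_j)$, the coordinate $v(i)$ is the $\beta$-product along the unique closed lift $\tilde w_j^{(i)}$ of $w_j$ at $i$, I obtain
\[
\EX_\eta[\Ind\zeta]
= \EX_{\alpha_S}\sum_{\vec{i}}\EX_\beta\brackets*{\prod_j \zeta(w_j)\prn*{\beta\text{-product along }\tilde w_j^{(i_j)}}},
\]
where $\vec{i}=(i_j)_j$ ranges over tuples with $i_j\in\textup{Fix}(\alpha_S(w_j))$.

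The geometric heart of the argument is a bijection between such tuples $\vec i$ and pairs (decomposition, injective lift). Setting $\tilde\sigma\defeq \bigcup_j \tilde w_j^{(i_j)}\subseteq \tilde\Delta$, this subgraph is abstractly a multi core graph $\sigma$ equipped with: a surjection $\eta_1\colon \vec{w}\twoheadrightarrow \sigma$ that records the closed $w_j$-lifts; a surjection $\eta_2\colon \sigma\twoheadrightarrow \Vec{\mathcal{J}}$ coming from the restricted cover projection (surjective because $\eta$ is $B$-surjective, so the $w_j$-paths cover $\Delta$); and an injective embedding $\iota\colon \sigma\hookrightarrow \tilde\Delta$ realizing $\sigma$ as $\tilde\sigma$. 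Conversely, any triple $((\eta_1,\eta_2),\iota)$ with $(\eta_1,\eta_2)\in \DecompB(\eta)$ and $\iota\colon \sigma\hookrightarrow \tilde\Delta$ an injective lift of $\sigma$ over $\eta_2$ recovers a unique tuple $\vec{i}$ via the starting vertices of the lifted $w_j$-paths.

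To conclude, conditional on $\alpha_S$ and on a fixed injective lift $\iota$, the restriction of $\beta$ to the edges of $\tilde\sigma\cong \sigma$ is itself a uniform antisymmetric $G$-labeling, so Proposition~\ref{prop_geometric_interpretation_of_E_eta} evaluates the inner $\beta$-expectation as $\EX_{\eta_1}[\zeta]$, a quantity depending only on $\eta_1$ and not on $\alpha_S$ or $\iota$. Factoring this out and invoking Proposition~\ref{prop_LB} to identify the remaining $\alpha_S$-expectation of the number of injective lifts with $L^B_{\eta_2}(n)$ yields the claimed identity. The hard part will be the careful formalization of the bijection in the third paragraph: verifying that the isomorphism type of $\tilde\sigma$ together with $(\eta_1,\eta_2)$ descends correctly to an equivalence class in $\DecompB(\eta)$ (so that counting is compatible with the quotient defining $\DecompB(\eta)$) and that distinct injective lifts of the same $\sigma$ contribute genuinely distinct tuples $\vec{i}$; once this bookkeeping is clean, the remaining steps are essentially elementary.
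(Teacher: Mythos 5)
Your proposal is correct and arrives at the same combinatorial core as the paper's proof, but it packages the argument differently. The paper's Section~\ref{section_proof_of_ICL} proves \eqref{ICL_withL} by writing out the matrix of each induced representation explicitly, then juggling Kronecker products and block traces (Proposition~\ref{prop_tensors}) so that the product of $k$ traces of matrices of different sizes becomes a single block trace, and finally expanding that trace into a sum over functions $f\colon V(\vec{w})\to[n]$ grouped by their image quotient $\eta_1$. You bypass all the tensor machinery by invoking the character formula $\Ind\zeta_j(v,\sigma)=\sum_{i:\sigma(i)=i}\zeta_j(v(i))$ directly; the resulting sum over tuples of fixed points of $\alpha_S(w_j)$ is then reinterpreted as a sum over closed lifts of the $w_j$'s in the random $n$-cover determined by $\alpha_S$. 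Both proofs then hinge on the identical two facts: (a) for a word in $G\wr S_n$, the $G$-coordinate at a fixed $S_n$-point is exactly the $\beta$-product along the closed lift, and the $\beta$-labeling on distinct edges of the cover is uniform and independent of $\alpha_S$ (the paper's Step 4), so the inner expectation is $\EX_{\eta_1}[\zeta]$ via Proposition~\ref{prop_geometric_interpretation_of_E_eta}; and (b) the outer count of valid injective assignments is the expected number of injective lifts, i.e.\ $L^B_{\eta_2}(n)$ by Proposition~\ref{prop_LB}. Your self-identified hard part — the bijection between fixed-point tuples and pairs (decomposition, injective lift) that is compatible with the equivalence classes in $\DecompB(\eta)$ — is precisely what the paper handles in Step~2 by surjective-then-injective factorization of the functions $f$, so that is the right place to concentrate effort if you want to write this out fully. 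The trade-off: your route is conceptually cleaner, needs no Step~0 reduction to irreducibles (the character formula holds for any class function), and eliminates Proposition~\ref{prop_tensors}; the paper's route is more mechanical and makes the bookkeeping of which terms survive (monomial matrices force the validity constraint) entirely explicit.
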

Note that the convolution is well defined even though $\EX[\zeta], \EX[\Ind\zeta]$ are defined only for morphisms $\eta\in \FG^{\mathcal{Q}_B(\vec{w})}$. 
In contrast, $\mu_B$ and $\EX[\Ind\textbf{1}]$ are defined on every morphism in $\textup{sur}_B$, and this is crucial because the convolution operator involves many morphisms that appear in $\DecompB(\eta)$.  
An alternative formulation 
is that for every morphism $\eta\in \mathcal{Q}_B(\vec{w})$,
\begin{equation}
\label{ICL_withL}
    \EX_{\eta}[\Ind\zeta] = \sum_{(\eta_1, \eta_2)\in \DecompB(\eta)} \EX_{\eta_1}[\zeta]\cdot L_{\eta_2}^B(n).
\end{equation}

\noindent Actually, the induction-convolution lemma is applicable to every wreath product $G\wr_{[n]} S$ where $G, S$ are compact groups and $S\acts [n]$ is transitive, but we do not need this in this paper.\\

Weaker versions of the induction-convolution lemma were proven before.
The simplest case 
is when $\zeta_j = \textbf{1}$ for every $j\in [k]$: then the statement of the lemma is merely the definition of the Möbius inversion $\mu_B$ as the inverse of the constant function $\textbf{1}$ in the convolution algebra $\FG^{\textup{sur}_B}$. 
One can think of this lemma as a way of lifting results from $S_n$ to $G\wr S_n$ when $G$ is not trivial.

The first non-trivial version of the induction-convolution lemma was proven in \cite[Subsection 3.1]{magee2021surface}:
For $m\in \Z_{\ge 0}$ let 
$C_m\defeq \{z\in \Ss^1: z^m = 1\}$
(so $C_0 = \Ss^1$), and let $\phi\colon C_m\hookrightarrow \Ss^1$ be the inclusion map.
Recall from Equation~\eqref{equation_expectation_of_cyclic_embedding} that $\EX_w[\phi]$ 
is the indicator of the following event: 
every letter in $w$ appears $m\Z$ times in total, 
counted with signs.
For a subgroup $H\le F_r$, compose the abelianization map $H\twoheadrightarrow H^{\textup{ab}}$ with the \enquote{modulo $m$} map to define
\begin{equation}
    \label{eq_def_KmH}
    K_m(H)\defeq \ker\prn*{H\twoheadrightarrow H^{\textup{ab}}/mH^{\textup{ab}}}.
\end{equation}
By \cite{magee2021surface}, for every $w\in F_r$, $\EX_{w\to H}[\phi] = \bbone_{w\in K_m(H)}$.
Now $\Indphi$ is the character of the standard faithful representation $C_m\wr_{[n]} S_n \le \textup{GL}_n(\C)$.
The main result of \cite{magee2021surface} is the approximation of the 
$w$-expectation of $\Indphi \colon C_m\wr_{[n]} S_n\to \C$.
They showed that for every $w\in H\le F_r$ 
and any basis $B\subseteq F_r$ 
such that the associated core-graph morphism 
$\eta\colon \Gamma_B(w)\to \Gamma_B(H)$ 
is surjective, $w\in K_m(H)$ 
if and only if the directed path 
$\eta(w)$ transverses each edge 
$e\in E(\Gamma_B(H))$ a total number of $m\Z$ times, where each crossing is counted as $\pm1$ according to the direction.
Then the corresponding version of the induction-convolution lemma is
\[ \EX_w[\Indphi] = \sum_{H\in \mathcal{Q}_B(w)} \bbone_{w\in K_m(H)} L_B(n), \]
from which the main result of \cite{magee2021surface} follows immediately.

For the next special case of the induction-convolution lemma, let $\vec{w} = \braces*{w_1, \ldots, w_k}$ be a multi-word, and let $\zeta\colon \vec{w}\to \widehat{C_m}$ be $\zeta_j = \phi$ for every $1\le j\le k$.
For a general morphism $\eta\colon \vec{w}\to \braces*{J_1, \ldots, J_{\ell}}$ we have 
\begin{equation*}
\EX_{\eta}[\zeta] =
    \begin{cases}
    1 & \textrm{if for every } i\in [\ell]:\,\,\, \prn*{\prod_{j\in \eta^{-1}(i)} w_j} \in K_m(J_i),\\
    0 & \textrm{otherwise.}
    \end{cases}
\end{equation*}
Equivalently, $\EX_{\eta}[\zeta]$ is the indicator of the event that for each edge $e$ in the graph $\Gamma_B(\textup{Im}(\eta))$, the total (signed) number of times that $\eta_{*}(w_j)$-paths cross $e$ is $0$ mod $m$. 
If this happens we say that $\eta$ is $m$\textbf{-balanced}.
The induction-convolution lemma then reads 
\begin{equation*}
    \EX_{\eta}[\Ind\zeta] = \sum_{\substack{(\eta_1, \eta_2)\in \DecompB(\eta):\\ \eta_1 \textrm{ is }m\textrm{-balanced}}} L_{\eta_2}^B(n).
\end{equation*}
This was proven in \cite[Claim 3.5]{Ord20}, but only for $m=2$. 
Finally, the induction-convolution lemma was proven for every compact group $G$ in \cite[Lemma 3.5]{Sho23I}, but only for $k=1$ (a single word $w$).

\subsection*{Algebraic induction-convolution lemma}

We will soon state an algebraic version of the same lemma, but first we have to understand the connection between $L^{\alg}$ and $L^B$.
It follows from \cite[Proposition 6.15]{HP22} that for an algebraic morphism $\eta$ and any basis $B\subseteq F_r$,
\begin{equation}
\label{eq_Lalg_vs_LB}
    L^{\alg}_{\eta} = \sum_{\substack{(\eta_1, \eta_2)\in \DecompB(\eta):\\ \eta_1 \textrm{ is free}}} L^B_{\eta_2}.
\end{equation}

\begin{corollary}
\label{corollary_L_alg_approx}
[Generalization of {\cite[Corollary 3.7]{Sho23I}}]
For every basis $B\subseteq F_r$, algebraic morphism 
$\eta\colon \Gamma\to \Omega_B$ and 
$n\ge \abs*{E(\Gamma)}$, $L^{\alg}_{\eta}(n)$ coincides with a rational function in $\Q(n)$ and 
\[ L^{\alg}_{\eta}(n) = n^{\chi(\Gamma)} \cdot \prn*{1 + O\prn*{n^{-1}}}. \]
\end{corollary}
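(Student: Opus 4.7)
The plan is to reduce everything to Proposition~\ref{prop_LB} via the expansion Equation~\eqref{eq_Lalg_vs_LB}, which expresses $L^{\alg}_{\eta}$ as a finite sum
\[ L^{\alg}_{\eta}(n) \;=\; \sum_{\substack{(\eta_1, \eta_2)\in \DecompB(\eta):\\ \eta_1 \text{ free}}} L^B_{\eta_2}(n). \]
Each summand, by Proposition~\ref{prop_LB}, coincides for $n\ge |E(\Gamma)|$ with an explicit rational function in $\Q(n)$ of the form $n^{\chi(\Sigma)}(1 + O(n^{-1}))$, where $\Sigma$ denotes the image of $\eta_1$. Since $\DecompB(\eta)$ is finite, $L^{\alg}_{\eta}$ itself coincides for such $n$ with a rational function in $\Q(n)$, which handles the first assertion.

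For the asymptotic I would isolate the decomposition $(\mathrm{id}_\Gamma, \eta)$: the identity is simultaneously free and algebraic, so this is a legitimate summand, and its contribution is exactly $L^B_{\eta}(n) = n^{\chi(\Gamma)}(1 + O(n^{-1}))$, which is the claimed leading term. It then remains to absorb all other summands into the error term, i.e., to show that for every free morphism $\eta_1\colon \Gamma \twoheadrightarrow \Sigma$ which is \emph{not} an isomorphism, $\chi(\Sigma) \le \chi(\Gamma) - 1$.

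The Euler-characteristic inequality I would extract directly from the free condition. Let $\Gamma$ have $\ell$ connected components with fundamental-group ranks $r_1,\ldots,r_\ell$ and let $\Sigma$ have $m$ components with ranks $R_1,\ldots,R_m$. Surjectivity of $\eta_1$ gives $\ell \ge m$, while the free condition applied to each component $\Sigma_j$ gives $R_j \ge \sum_{i\colon \eta_1(\Gamma_i)\subseteq \Sigma_j} r_i$, hence $\sum_j R_j \ge \sum_i r_i$. Writing $\chi(\Gamma) = \ell - \sum_i r_i$ and $\chi(\Sigma) = m - \sum_j R_j$ yields
\[ \chi(\Gamma) - \chi(\Sigma) \;=\; (\ell - m) + \left(\sum_j R_j - \sum_i r_i\right) \;\ge\; 0. \]

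The main obstacle, as I see it, is ruling out equality when $\eta_1$ is not an isomorphism. Equality forces $\ell = m$ (so $\eta_1$ induces a bijection of connected components) and every free part of the free decomposition to be trivial (so each $\eta_1|_{\Gamma_i} \colon \Gamma_i \to \Sigma_{f(i)}$ is a $\pi_1$-isomorphism). At this point I would invoke the bijective correspondence between $B$-labeled core graphs and conjugacy classes of finitely generated subgroups of $F_r$: since $\pi_1(\Gamma_i)$ and $\pi_1(\Sigma_{f(i)})$ now coincide as conjugacy classes in $F_r$, the graphs $\Gamma_i$ and $\Sigma_{f(i)}$ represent the same class, hence are canonically isomorphic as $B$-labeled core graphs, and the morphism between them is that isomorphism. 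Assembling these component-wise isomorphisms gives $\eta_1 \cong \mathrm{id}_\Gamma$, contradicting the assumption. With the strict inequality $\chi(\Sigma) \le \chi(\Gamma) - 1$ secured, the non-identity summands total $O(n^{\chi(\Gamma)-1})$, and the full sum becomes $n^{\chi(\Gamma)}(1 + O(n^{-1}))$, as required.
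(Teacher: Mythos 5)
Your proof is correct and follows the paper's strategy exactly: expand $L^{\alg}_{\eta}$ via Equation~\eqref{eq_Lalg_vs_LB}, apply Proposition~\ref{prop_LB} to each summand, isolate the identity decomposition as the leading term, and show every other free decomposition strictly drops Euler characteristic. The paper asserts that last fact without justification, so the rank computation you supply is a genuine addition; the one spot to tighten is the final inference --- once you know $\Gamma_i\cong\Sigma_{f(i)}$ as $B$-labeled core graphs, you should conclude that $\eta_1|_{\Gamma_i}$ is an isomorphism by noting it is a \emph{surjective} graph map between finite graphs with the same number of vertices and edges, hence bijective, rather than by appealing directly to the canonical isomorphism.
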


The short proof is the same as in 
\cite[Corollary 3.7]{Sho23I}; 
we give it here for completeness:

\begin{proof}
The rationality follows from 
Proposition~\ref{prop_LB} and 
Equation~\eqref{eq_Lalg_vs_LB}. Moreover,
\begin{equation*}
    \begin{split}
    (\textup{Equation}~\eqref{eq_Lalg_vs_LB})\quad\quad
    L^{\alg}_{\eta} (n)
    &= \sum_{\substack{(\eta_1, \eta_2)\in \DecompB(\eta):\\ \eta_1 \textrm{ is free}}} L^B_{\eta_2}(n)\\
    (\textup{Proposition}~\ref{prop_LB})\quad\quad\quad\quad\quad &= 
    \sum_{\substack{(\eta_1, \eta_2)\in \DecompB(\eta):\\ \eta_1 \textrm{ is free}}} n^{\chi(\textup{Im}(\eta_1))} \cdot \prn*{1 + O\prn*{n^{-1}}}\\
    &= n^{\chi(\Gamma)} \cdot \prn*{1 + O\prn*{n^{-1}}}
    \end{split}
\end{equation*}
where the last step is obtained by splitting the summation to the case where $\eta_1$ is an isomorphism, which contributes the dominant term, and every free morphism that is not an isomorphism, which must decrease Euler characteristic.
\end{proof}

However, we prove a slightly more general version of this connection.

\begin{definition}
\label{def_free_inv_func}
(Free-Invariant Function)
A function $g\in \C^{\textup{sur}_B}$ is called \textbf{free-invariant} if it is invariant with respect to post-composition with a free morphism, i.e.\ if 
$\eta_{0\to 2} = \eta_{1\to 2}\circ \eta_{0\to 1}$
and $\eta_{1\to 2}$ is free, 
as in the following diagram
\[\begin{tikzcd}
	{\Gamma_0} & {\Gamma_1} & {\Gamma_2}
	\arrow[from=1-1, to=1-2]
	\arrow["free", from=1-2, to=1-3]
\end{tikzcd}\]
then $g(\eta_{0\to 1}) = g(\eta_{0\to 2})$.\\
It is a known fact that if a function depends only on the distribution of words under random homomorphisms to finite groups, then it is free-invariant. 
Indeed, if $F=H*J$ is a free decomposition, and $G$ is any group, then there is a natural identification
$\Hom(F, G) \cong \Hom(H, G) \times \Hom(J, G) $.
In particular if $G$ is finite and $\alpha\sim U(\Hom(F, G))$ is a random homomorphism, then $\alpha\restriction_H\sim U(\Hom(H, G))$ has uniform distribution.
\end{definition}

The connection between the $B$-surjective category 
and the algebraic category 
is described in the following straight-forward 
generalization of both \cite[Proposition 6.13]{HP22} and \cite[Lemma 3.9]{Sho23I}:

\begin{lemma}[$\mualg$ v.s. $\mu_B$]
\label{lemma_mobius_alg_vs_B}
Let $f\in \C^{\textup{sur}_B}$. Then for every algebraic morphism $\eta$,
\begin{enumerate}
    \item 
    \[ (\mualg \convalg f)(\eta) = \sum_{\substack{(\eta_1, \eta_2)\in \textup{Decomp}_{B}(\eta):\\ \eta_1 \textrm{ is free}}} (\mu_B \Bconv f)(\eta_2). \]
    In particular, since the operators $\Bconv, \convalg$ share the same identity element, we have
    \[ \mualg(\eta) = \sum_{\substack{(\eta_1, \eta_2)\in \textup{Decomp}_{B}(\eta):\\ \eta_1 \textrm{ is free}}} \mu_B (\eta_2).\]
    \item For every free-invariant $g\in \C^{\textup{sur}_B}$, 
    \[ g \Bconv \mu_B \Bconv f = g \convalg \mualg \convalg f. \]
\end{enumerate}
\end{lemma}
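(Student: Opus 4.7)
The plan is to prove both parts as consequences of two key ingredients: (i) the uniqueness of the algebraic-free decomposition (Theorem~\ref{thm_AFD}), which implies the convolution identity $\mathbf{1}_{\alg} \Bconv \mathbf{1}_{\textup{free}} = \mathbf{1}_B$ in $\C^{\textup{sur}_B}$, where $\mathbf{1}_{\alg}$ and $\mathbf{1}_{\textup{free}}$ denote the indicators of algebraic and free morphisms; and (ii) the closure property that $\alpha \circ \beta$ algebraic implies $\alpha$ algebraic. A short AFD chase proves (ii): given AFDs $\alpha = \psi \circ \phi$ and $\phi \circ \beta = \psi' \circ \phi'$, the composite $(\psi \circ \psi') \circ \phi'$ is an AFD of $\alpha \circ \beta$ (since composition of free morphisms is free), so algebraicity of $\alpha \circ \beta$ forces $\psi \circ \psi'$, and hence $\psi$, to be an isomorphism.

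For Part 1 (first equation), I will define $R(\eta) \defeq \sum_{(\eta_1, \eta_2) \in \DecompB(\eta),\, \eta_1\ \textup{free}} (\mu_B \Bconv f)(\eta_2)$ and verify $(\mathbf{1}_{\alg} \convalg R)(\eta) = f(\eta)$ for every algebraic $\eta$. Combined with the analogous identity $(\mathbf{1}_{\alg} \convalg (\mualg \convalg f))(\eta) = f(\eta)$ and the invertibility of $\mathbf{1}_{\alg}$ in the algebraic convolution algebra, this forces $R = \mualg \convalg f$ on algebraic morphisms. Expanding $\mathbf{1}_{\alg} \convalg R$ yields a quadruple sum over $(\alpha_1, \alpha_2, \eta_1, \eta_2)$ with $\alpha_1, \alpha_2$ algebraic, $\eta_1$ free, $\alpha_2 \circ \alpha_1 = \eta$ and $\eta_2 \circ \eta_1 = \alpha_2$. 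By AFD uniqueness, the pair $(\alpha_1, \eta_1)$ is in bijection with arbitrary $B$-surjective morphisms $\beta = \eta_1 \circ \alpha_1$; moreover the constraint that $\alpha_2 = \eta_2 \circ \eta_1$ be algebraic is automatic by (ii) applied to $\eta = (\eta_2 \circ \eta_1) \circ \alpha_1$. The quadruple sum thus collapses to $\sum_{(\beta, \eta_2) \in \DecompB(\eta)} (\mu_B \Bconv f)(\eta_2) = (\mathbf{1}_B \Bconv \mu_B \Bconv f)(\eta) = f(\eta)$. The second equation of Part 1 is then the specialization $f = \delta$, since $\delta_B$ and $\delta_{\alg}$ agree on isomorphisms and each is the convolution identity in its algebra.

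For Part 2, I will expand $(g \convalg \mualg \convalg f)(\eta)$ and insert Part 1 to rewrite each $(\mualg \convalg f)(\alpha')$-factor. Re-indexing via $\eta_0 \defeq \eta_1 \circ \alpha_0$, whose AFD is precisely this composition, and invoking free-invariance of $g$ to get $g(\eta_0) = g(\alpha_0)$, the $g$-factors match those on the LHS. The algebraicity constraints on intermediate objects again become automatic via (ii), so the sum runs over arbitrary $(\eta_0, \eta_2) \in \DecompB(\eta)$ and coincides with the expansion of $(g \Bconv \mu_B \Bconv f)(\eta)$.

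The main obstacle I anticipate is a careful treatment of the equivalence classes in $\DecompB$ and $\Decompalg$ — both are defined up to isomorphisms of intermediate objects, and the bijections arising from AFD uniqueness must respect these equivalences. Modulo this bookkeeping, the argument is a direct convolution computation driven entirely by Theorem~\ref{thm_AFD}.
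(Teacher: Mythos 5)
Your proposal is correct. The paper itself offers no direct proof of this lemma --- it simply points to the proof of \cite[Lemma~3.9]{Sho23I}, which it says was already formulated in the language of morphisms so as to apply verbatim to the multi-word setting --- so a line-by-line comparison is not possible from the text of this paper alone. Your argument, however, is the natural one, and the two key ingredients you isolate (existence-and-uniqueness of the algebraic--free decomposition from Theorem~\ref{thm_AFD}, and the closure property that $\alpha\circ\beta$ algebraic forces $\alpha$ algebraic) are exactly what drive the proof. Both parts reduce, via the bijection $(\alpha_1,\eta_1)\leftrightarrow\eta_1\circ\alpha_1$ provided by the AFD, to the $B$-convolution identity $\mathbf{1}_B\Bconv\mu_B=\delta_B$, and for Part~2 the free-invariance of $g$ absorbs the passage from $\alpha_0$ to $\eta_0=\eta_1\circ\alpha_0$ cleanly. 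Your concern about equivalence classes in $\DecompB$ and $\Decompalg$ is legitimate, but it is routine bookkeeping: the AFD bijection is stated up to isomorphism of the intermediate object, which is precisely the equivalence relation built into $\DecompB$ and $\Decompalg$.

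One remark: your proof of the closure property~(ii) is more elaborate than it needs to be, and the step \enquote{$\psi\circ\psi'$ is an isomorphism, hence $\psi$ is an isomorphism} requires a short additional argument (surjectivity of $\psi'$ is not automatic from the AFD alone; one needs the core-graph/free-extension structure or the surjectivity supplied by the ambient $\DecompB$-context). The claim~(ii) actually falls out directly from the definition of an algebraic morphism: if $\alpha=\eta_2\circ\eta_1$ with $\eta_2$ free, then $\alpha\circ\beta=\eta_2\circ(\eta_1\circ\beta)$ exhibits $\eta_2$ as the free tail of a factorization of the algebraic morphism $\alpha\circ\beta$, forcing $\eta_2$ to be an isomorphism. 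This avoids both the appeal to \enquote{composition of free morphisms is free} and any surjectivity discussion.
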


\begin{proof}
    Although this lemma is a generalization of 
    \cite[Lemma 3.9]{Sho23I}, the proof in 
    \cite{Sho23I} was intentionally formulated in 
    the language of morphisms so that it could be 
    applied without any modifications to the 
    generalized lemma as well. 
\end{proof}

\begin{corollary}[Algebraic Induction-Convolution Lemma]
\label{corollary_AICL}
Let $\Vec{w}$ 
be a multi-word, $\Vec{\mathcal{J}}\in \moccFr$, and $\eta \in \Hom(\Vec{w}, \Vec{\mathcal{J}})$ an algebraic morphism. 
Let $G$ be a finite group, and $\zeta\colon \Vec{w}\to \FGconjG$.
Then
\[ \EX_{\eta}[\Ind\zeta] 
= \sum_{(\eta_1, \eta_2)\in \Decompalg(\eta)} \EX_{\eta_1}[\zeta] \cdot L_{\eta_2}^{\alg}(n). \]
\end{corollary}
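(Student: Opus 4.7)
The plan is to deduce the algebraic version from the already-stated basis-dependent Induction-Convolution Lemma (Lemma~\ref{lemma_ICL}) by converting the $B$-convolution into an algebraic one via Lemma~\ref{lemma_mobius_alg_vs_B}(2), and then recognizing $\mualg \convalg \EX[\Ind\textbf{1}]$ as $L^{\alg}$ by Definition~\ref{def_mobius_alg}. Everything else is formal bookkeeping about the lattices of decompositions.

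Concretely, I first invoke Lemma~\ref{lemma_ICL} at our algebraic morphism $\eta$ to write
\[ \EX_{\eta}[\Ind\zeta] = \bigl(\EX[\zeta] \Bconv \mu_B \Bconv \EX[\Ind\textbf{1}]\bigr)(\eta). \]
Next I would verify that $\EX[\zeta]$ is free-invariant in the sense of Definition~\ref{def_free_inv_func}. This is precisely the content of the earlier generalization of \cite[Proposition 4.3(4)]{HP22} proved in Section~\ref{section_morphisms_free_alg}: for any factorization $\vec{w} \overset{\eta_1}{\twoheadrightarrow} \vec{\mathcal{H}} \overset{\eta_2}{\twoheadrightarrow} \vec{\mathcal{J}}$ with $\eta_2$ free, one has $\EX_{\eta_1}[\zeta] = \EX_{\eta_2\circ\eta_1}[\zeta]$, which is exactly the free-invariance condition. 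With this in hand, Lemma~\ref{lemma_mobius_alg_vs_B}(2) applied to $g = \EX[\zeta]$ and $f = \EX[\Ind\textbf{1}]$ gives, at the algebraic morphism $\eta$,
\[ \EX_{\eta}[\Ind\zeta] = \bigl(\EX[\zeta] \convalg \mualg \convalg \EX[\Ind\textbf{1}]\bigr)(\eta) = \bigl(\EX[\zeta] \convalg L^{\alg}\bigr)(\eta), \]
where the second equality uses the defining identity $\mualg \convalg \EX[\Ind\textbf{1}] = L^{\alg}$ from Definition~\ref{def_mobius_alg}. Unfolding the algebraic convolution as a sum over $\Decompalg(\eta)$ (well defined since $\eta$ is algebraic) then yields the asserted formula.

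Since the substantive work is carried by Lemma~\ref{lemma_ICL} (whose proof is postponed to Section~\ref{section_proof_of_ICL}) and Lemma~\ref{lemma_mobius_alg_vs_B}, this corollary poses no real obstacle. The only non-formal step is the free-invariance of $\EX[\zeta]$, which reflects the intuition that appending a free factor to the target subgroup merely provides an independent, uniform extension of the random homomorphism, leaving the joint distribution of the $\alpha(w_i)$, and hence $\EX_{\eta_1}[\zeta]$, unchanged.
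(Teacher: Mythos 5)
Your proof is correct and follows essentially the same route as the paper: both invoke Lemma~\ref{lemma_ICL}, then apply Lemma~\ref{lemma_mobius_alg_vs_B}(2) using the free-invariance of $\EX[\zeta]$, and finally identify $\mualg \convalg \EX[\Ind\textbf{1}]$ with $L^{\alg}$. The only difference is that you spell out the free-invariance check by explicitly citing the earlier Proposition generalizing \cite[Proposition 4.3(4)]{HP22}, where the paper leaves this implicit.
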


\begin{proof}
By Lemma~\ref{lemma_ICL}, $ \EX[\Ind\zeta] = \EX[\zeta] \underset{B}{*} \mu_B \underset{B}{*} \EX[\Ind\textbf{1}]$.
By part (2) of Lemma~\ref{lemma_mobius_alg_vs_B}, and since $\EX[\zeta] \in \C^{Q_B(\vec{w})}$ is free-invariant,
$ \EX[\Ind\zeta] = \EX[\zeta] \convalg \mualg \convalg \EX[\Ind\textbf{1}]$, as needed.
\end{proof}

A special case of the algebraic induction-convolution lemma, where $G$ is cyclic and $\vec{w} = \{w\}$ consists of a single word, was proven in \cite[equation (3.7)]{magee2021surface}, where $L^{\alg}$ is called \enquote{contrib}.

\section{Proof of the Main Result}
\label{section_thm_main_result}

In this section we prove our main result, Theorem~\ref{thm_main_result}. 
Recall that the third part of Theorem~\ref{thm_main_result}, which approximates $\EX_w[\chi]$  up to an $O\prn*{n^{-1}}$-error in the case where $w$ is a proper power, is a special case of Corollary~\ref{corollary_approx_power}, which relies only on the second part of Theorem~\ref{thm_main_result} that handles non-power words, and on \cite{Sho23I}.
Therefore we need to prove only the first and the second parts.
Recall from Definition~\ref{def_Ind_phi} that we denote the irreducible stable characters of $G\wr S_{\bullet}$ of degree $1$ by
$ \braces*{\chi_{\phi}}_{\phi\in \hat{G}}$, where $\chi_{\phi} = \chi_{\binom{(1)}{\phi}[\bullet]} = \Indphi - \bbone_{\phi = \textbf{1}}$ has dimension $\phi(1)\cdot n - \bbone_{\phi = \textbf{1}}$.

\begin{definition}
\label{def_w_object}
Let $w\in F_r$ be a non-power.
For every partition $\lambda = (\lambda_1, \ldots, \lambda_k) \in \mathscr{P}$, we define a multi-word $w^{\lambda} \defeq \prn*{w^{\lambda_1}, \ldots, w^{\lambda_k}}$.
We call such a multi-word a $w$\textbf{-object}.
Equivalently, a cyclic multi core graph $\Gamma$ with orientation on every cycle
is called a $w$\textbf{-graph} if every connected component (which is a cycle) is the path $w^{\lambda_i}$ for some $\lambda_i\in \N$.
Note that a cyclic quotient graph of a $w$-graph is also a $w$-graph (as explained in Definition~\ref{def_cyclic_cat}).
Given a $w$-graph $\Gamma$, its $w$\textbf{-roots} are all the vertices $v\in V(\Gamma)$ which indicate the starting point (which is also an end point) of a $w$-path.
By definition, the number of $w$-roots in a connected component $\Gamma_j$ with $\pilab(\Gamma_j) = \inner*{w^{\lambda_i}}$ is $\lambda_i$.
\end{definition}

\begin{example}
Let $\lambda = (\lambda_1, \ldots, \lambda_k)\in \Z^k$. Then $w^{\lambda} \defeq \prn*{w^{\lambda_1}, \ldots, w^{\lambda_k}}$ is a $w$-object if and only if $\lambda_i > 0$ for every $i\in [k]$ (as a free group has no torsion).
In $F_1 = \textup{Free}(\{b\})$, $(b^4, b^6)$ is a $b$-object in $F_1 = \inner*{b}$, but is not a $b^2$-object in $\inner*{b}$, since $b^2$ is a proper power. 
It is a $b^2$-object in $\inner*{b^2}$, however, as $b^2$ is a non-power in $\inner*{b^2}$.
\end{example}

\begin{definition}[$w^{\VecLambda}$]
\label{def_w_VecLambda}
Let $w\in F_r$ be a non-power and let $\VecLambda\in \mathscr{P}(\FGconjG)$ (see Definition~\ref{def_multi_partitions}).
We define $w^{\VecLambda}$ to be the $w$-object $w^{\floor{\VecLambda}}$ together with the function 
$\zeta_{\VecLambda}\colon w^{\floor{\VecLambda}}\to \FGconjG$ that gives every cycle $p\in w^{\floor{\VecLambda}}$ the corresponding class function $\zeta_{\VecLambda}(p)$.
By definition, 
$\EX_w[\VecLambda] = \EX_{\eta}[w^{\VecLambda}]$ 
where $\eta$ is the unique map to the bouquet 
$w^{\VecLambda}\to \Omega_B$.
\end{definition}

\begin{observation}
\label{observe_s}
Let $w\in F_r$ be a non-power, $B\subseteq F_r$ a basis, $\VecLambda\in \mathscr{P}(\FGconjG)$ a multi partition, and let $p\colon \Gamma_B\prn*{w^{\VecLambda}} \twoheadrightarrow \Gamma_B(w)$ be the unique covering map. 
Let $\Gamma$ be a multi core graph, $\eta_0\colon \Gamma_B(w)\to \Gamma$ a $B$-surjective morphism, and let $\eta \defeq \eta_0\circ p$
(see the diagram below).
Then
\[ \EX_{\eta}[\VecLambda] = \EX_{\eta_0}[\s(\VecLambda)].\]
This is immediate by definition.
If $w$ is a proper power, the map $p$ is not unique anymore, but the result is still valid (for all covering maps $p$).
\[\begin{tikzcd}
	{\Gamma_B\prn*{w^{\VecLambda}}} \\
	\Gamma_B(w) & \Gamma
	\arrow["p"', two heads, from=1-1, to=2-1]
	\arrow["{\eta_0}"', from=2-1, to=2-2]
	\arrow["\eta", from=1-1, to=2-2]
\end{tikzcd}\]
\end{observation}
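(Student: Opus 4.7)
The plan is to unpack Definition~\ref{def_ex_eta_zeta} on both sides and verify the equality through a direct computation. First I would use the multiplicativity of $\EX_{\eta}[\cdot]$ over the connected components of the target $\Gamma$ to reduce to the case that $\Gamma$ is connected, with $\pilab(\Gamma) = J^{F_r}$ for some fixed representative $J \le F_r$. In that case, every cycle $w^n$ of $\Gamma_B\prn*{w^{\VecLambda}}$ (for each $n \in \floor{\VecLambda}$) gets mapped by $\eta = \eta_0 \circ p$ into this single component, and its image path represents the element $w^n \in J$ (up to conjugation, handled by the class-function hypothesis).

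Next I would apply Definition~\ref{def_ex_eta_zeta} to rewrite
\[
\EX_{\eta}[\VecLambda] = \EX_{\alpha \sim U(\Hom(J, G))} \brackets*{\prod_{n \in \floor{\VecLambda}} \zeta_{\VecLambda}(n)\prn*{\alpha\prn*{w^n}}}.
\]
Since $\alpha$ is a homomorphism, $\alpha(w^n) = \alpha(w)^n$, and by the explicit formula in the definition of the summary function,
\[
\s(\VecLambda)(x) = \prod_{n \in \floor{\VecLambda}} \zeta_{\VecLambda}(n)\prn*{x^n},
\]
so the integrand equals $\s(\VecLambda)\prn*{\alpha(w)}$. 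Applying Definition~\ref{def_ex_eta_zeta} once more to the right-hand side (where $\eta_0$ maps the single $w$-cycle into the same component) gives exactly $\EX_{\eta_0}[\s(\VecLambda)]$.

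For the proper-power case $w = u^k$, the reason uniqueness of $p$ fails is that a $w^n$-cycle can wrap onto $\Gamma_B(w) = \Gamma_B(u)$ starting at any of its $|u|\cdot k$ possible base points, yielding combinatorially distinct covering maps. However, each such $p$ composed with $\eta_0$ still traces out the path $\eta_0(w)^n$ in $\Gamma$, so under any $\alpha\sim U(\Hom(J,G))$ the $w^n$-cycle evaluates to $\alpha(w)^n$, and the identical calculation goes through. Since the argument consists entirely of substituting definitions and using $\alpha(w^n) = \alpha(w)^n$, there is no real obstacle; the only subtlety worth mentioning is the multiplicativity reduction to connected $\Gamma$, which is built into Definition~\ref{def_ex_eta_zeta} and explains why the observation is labeled \emph{immediate by definition}.
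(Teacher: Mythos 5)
Your verification of the main equality is correct and is exactly what the paper means by "immediate by definition": apply Definition~\ref{def_ex_eta_zeta}, use $\alpha(w^{n_i}) = \alpha(w)^{n_i}$ for a homomorphism $\alpha$, and identify the resulting product as $\s(\VecLambda)(\alpha(w))$. Two peripheral remarks, neither of which affects the argument. First, the reduction to connected $\Gamma$ via multiplicativity is vacuous here: $\eta_0$ is $B$-surjective with connected source $\Gamma_B(w)$, so $\Gamma$ is automatically connected. Second, in the proper-power discussion, the identification $\Gamma_B(w) = \Gamma_B(u)$ for $w = u^k$ is incorrect; one has $\Gamma_B(w) = \Gamma_B(u^k)$, a labelled cycle of length $k|u|$, and the non-uniqueness of $p$ stems from the $\Z/k\Z$ rotational symmetry of this cycle (the base point of a $w^{n}$-cycle can land on any of the $k$ positions congruent to $0$ mod $|u|$, not on any of the $k|u|$ vertices, since labels must match and $u$ is non-power). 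Your conclusion for this case is nonetheless right: any covering $p$ composed with $\eta_0$ still traces each $w^{n_i}$-cycle onto a conjugate of $\eta_0(w)^{n_i}$ in $\Gamma$, and the class-function evaluation is therefore independent of the choice of $p$.
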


The following proposition generalizes 
\cite[Lemma 4.1]{Expansion_Puder_2015},
but the proof is the same.
\begin{proposition}
\label{prop_alg_no_id_from_words_is_at_least_2_to_1_on_edges}
    Let $\Gamma$ and $\Delta$ be non-isomorphic multi core graphs, where $\Gamma$ is cyclic (i.e.\ $\chi(\Gamma) = 0$), and let $\eta\colon \Gamma\to \Delta$ be an algebraic morphism. Then for every edge $e\in \Delta$, $\abs*{\eta^{-1}(e)} \ge 2$.
\end{proposition}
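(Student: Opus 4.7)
The plan is to argue the contrapositive: assume $\eta$ is algebraic and that some edge $e \in E(\Delta)$ satisfies $\abs*{\eta^{-1}(e)} = 1$, and deduce $\Gamma \cong \Delta$. The strategy is to exhibit a decomposition $\Gamma \to \Sigma \to \Delta$ where the second arrow is free; algebraicity will then force the second arrow to be an isomorphism, and a short combinatorial analysis will show that this already implies $\Gamma \cong \Delta$.

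First I would reduce to the case where $\Delta$ is connected by restricting to the component containing $e$ (both algebraicity and cyclicity of $\Gamma$ pass to the preimage). Let $\tilde{e}$ be the unique preimage of $e$. Since $\Gamma$ is a disjoint union of cycles, $\tilde{e}$ lies on a single cycle component $\Gamma_i$ corresponding to a word $w := w_i$, while every other component $\Gamma_{j'}$ carries a word $w_{j'}$ that avoids $e$ altogether. As a preliminary observation, $e$ cannot be a bridge of $\Delta$: any closed walk traverses a bridge an even number of times, which is incompatible with $\abs*{\eta^{-1}(e)} = 1$.

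Since $e$ lies on a cycle of $\Delta$, there is a spanning tree $T \subseteq \Delta \setminus \{e\}$. Taking base point $p$ at the tail of $e$, the associated basis of $H := \pi_1(\Delta, p)$ realizes a free decomposition $H = \langle g_e\rangle * H'$ with $H' = \pi_1(\Delta \setminus \{e\}, p)$. Lifting $\Gamma_i$ so that it begins at the lift of $p$ along $\tilde{e}$ yields a representative $w = g_e u$ with $u \in H'$, and a single Nielsen move replaces $g_e$ by $w$ in the basis, producing the free decomposition $H = \langle w\rangle * H'$. Each remaining $w_{j'}$ is conjugate into $H'$, since it does not traverse $e$ and $\Delta \setminus \{e\}$ is connected.

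With these ingredients I would set $\Sigma \defeq \Gamma_B\prn*{\langle w\rangle} \sqcup \Gamma_B(H')$, define $\eta_1\colon \Gamma \to \Sigma$ by sending $\Gamma_i$ identically to its labeled cycle and each $\Gamma_{j'}$ into $\Gamma_B(H')$ via the corresponding inclusion, and take $\eta_2\colon \Sigma \to \Delta$ to be the free morphism produced by the free factor system $\{\langle w\rangle, H'\} \leff H$. By construction $\eta_2 \circ \eta_1 = \eta$, so algebraicity forces $\eta_2$ to be an isomorphism. Since $\Delta$ is connected while $\Sigma$ would have two components unless $\Gamma_B(H') = \emptyset$, this requires $H' = 1$; consequently $\Delta \cong \Gamma_B\prn*{\langle w\rangle}$ is a single cycle, and the absence of any target for the $\Gamma_{j'}$ under $\eta_1$ then forces $\Gamma = \Gamma_i$. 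Hence $\Gamma \cong \Delta$.

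The main technical point I expect to handle carefully is translating the algebraic identity $H = \langle w\rangle * H'$ into a bona fide free morphism in $\mucgBFr$, which amounts to keeping track of basepoints and representatives of conjugacy classes under the equivalence with $\moccFr$; once that bookkeeping is in place, the remaining ingredients are standard Stallings–Nielsen manipulations.
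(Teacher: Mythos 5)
Your proof is correct and follows essentially the same approach the paper intends: the paper cites \cite[Lemma 4.1]{Expansion_Puder_2015} with the remark that ``the proof is the same,'' and that lemma's argument is exactly the Stallings--Nielsen maneuver you carry out (a uniquely-traversed edge cannot be a bridge, deleting it exhibits $H = \langle w\rangle * H'$ with $w$ primitive, and the other cycle components land in $H'$, producing a free decomposition that algebraicity forces to collapse). Your write-up supplies the multi-word bookkeeping needed to generalize the single-word case, which is precisely what the paper's brief remark glosses over. One trivial point you leave implicit: the case $\abs*{\eta^{-1}(e)} = 0$ is ruled out because algebraic morphisms are surjective (\cite[Theorem 4.7]{HP22}), so it is enough to treat $\abs*{\eta^{-1}(e)} = 1$ as you do.
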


Now we prove the first part of Theorem~\ref{thm_main_result}:
\begin{theorem}
Let $w\in F_r$ be a word, $G$ a finite group, 
and $f\in \mathcal{A}(G)$ a stable class function.
Then $\EX_w[f]$ coincides with a rational function in $\FG(n)$ for every $n\ge \deg(f)\cdot \abs{w}$.
Moreover, if $\EX_w[f]\neq 0$, then
\[\EX_w[f] = \Omega\prn*{n^{-\deg(f)\cdot \prn*{\frac{1}{2} - \frac{1}{2r}}\abs{w}}} = \Omega\prn*{\dim(f)^{-\prn*{\frac{1}{2} - \frac{1}{2r}}\abs{w}}},\]
where $\dim(f) \defeq \max_{\chi\in \widehat{G_{\bullet}}, \,\,\inner*{f, \chi}\neq 0} \dim(\chi)$ (and recall Corollary~\ref{corollary_dim_poly}).
\end{theorem}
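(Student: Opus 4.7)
My plan is to expand $f = \sum_{\VecLambda} c_{\VecLambda}\, \s\Ind\VecLambda$ as a finite linear combination in the filtered basis $\braces*{\s\Ind\VecLambda}_{\VecLambda \in \PledGhat}$ of $\AGled$ from Corollary~\ref{corollary_sInd_is_basis} (with $d \defeq \deg(f)$), so that by linearity it is enough to handle each $\EX_w[\s\Ind\VecLambda]$ individually and then to bound the common denominator of the resulting finite collection of rational functions.

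For a fixed $\VecLambda$ with $\lambda \defeq \floor*{\VecLambda}$, Definition~\ref{def_w_VecLambda} together with Observation~\ref{observe_s} rewrites the expectation as $\EX_w[\s\Ind\VecLambda] = \EX_\eta[\Ind\zeta_\VecLambda]$, where $\eta\colon w^\lambda \to \Omega_B$ is the tautological morphism. Splitting $\eta = \psi\circ\phi$ into algebraic and free parts via Theorem~\ref{thm_AFD} and using the free-part invariance of $\EX$, Corollary~\ref{corollary_AICL} yields
\[ \EX_w[\s\Ind\VecLambda] = \sum_{(\phi_1,\phi_2)\in\Decompalg(\phi)} \EX_{\phi_1}[\zeta_{\VecLambda}]\cdot L^{\alg}_{\phi_2}(n). \]
The constants $\EX_{\phi_1}[\zeta_{\VecLambda}]$ are independent of $n$, and by Corollary~\ref{corollary_L_alg_approx} each $L^{\alg}_{\phi_2}(n)$ coincides with a rational function in $\FG(n)$ for every $n \ge \abs*{E(\textup{Im}(\phi_1))}$. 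Since $\textup{Im}(\phi_1)$ is a cyclic quotient of $w^{\lambda}$ it has at most $\abs*{\lambda}\cdot\abs*{w} \le d\,\abs*{w}$ edges, so $\EX_w[\s\Ind\VecLambda]$—and hence $\EX_w[f]$ by linearity—is a rational function in $\FG(n)$ for all $n \ge d\,\abs*{w}$, establishing the first claim.

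For the $\Omega$-bound I would resolve each $L^{\alg}_{\phi_2}(n)$ via Lemma~\ref{lemma_mobius_alg_vs_B} into a $\Z$-linear combination of $L^B_\tau(n) = \prod_v(n)_{\abs*{\tau^{-1}(v)}} / \prod_e(n)_{\abs*{\tau^{-1}(e)}}$ from Proposition~\ref{prop_LB}; the polynomial denominator of each $L^B_\tau$ has total degree $\abs*{E(\textup{Dom}(\tau))} = \abs*{E(\textup{Im}(\phi_1))}$. Proposition~\ref{prop_alg_no_id_from_words_is_at_least_2_to_1_on_edges} forces this to be at most $d\abs*{w}/2$ whenever $\phi_1$ is not an isomorphism, supplying the $1/2$ contribution to the bound. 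In the remaining identity case $\phi_1 = \textup{id}$—where the naive denominator degree is $d\abs*{w}$—one exploits the $r$-colouring of the edges of $w^{\lambda}$ by $B$ together with the fact that $\Omega_B$ has exactly one edge per colour: a pigeonhole argument forces at least $d\abs*{w}/r$ edges of $w^{\lambda}$ to collapse onto a single target edge, producing an $(n)_{\lceil d\abs*{w}/r\rceil}$ factor that cancels against matching falling-factorials in the numerator and thereby saves $d\abs*{w}/(2r)$ from the denominator. Combining the two contributions shows that the common denominator of $\EX_w[f]$ has polynomial degree at most $d\,(1/2 - 1/(2r))\,\abs*{w}$, so when $\EX_w[f]$ is not identically zero its large-$n$ asymptotics are $\Omega\prn*{n^{-\deg(f)(1/2 - 1/(2r))\abs*{w}}}$; the equivalent bound in terms of $\dim(f) = \Theta(n^d)$ follows from Corollary~\ref{corollary_dim_poly}.

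The main obstacle is the pigeonhole-cancellation step for the identity case: extracting the additional $d\abs*{w}/(2r)$ saving uniformly across all morphisms $\phi_2$ and all $\VecLambda$ in the support of $f$ requires carefully matching falling-factorial factors between numerator and denominator of the $L^B_\tau$ terms so that the cancellations survive both the algebraic Möbius inversion inside $L^{\alg}_{\phi_2}$ and the common denominator of the overall linear combination.
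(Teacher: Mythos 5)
Your treatment of the rationality claim matches the paper's: expand in the filtered basis $\braces*{\s\IndVecLambda}$, reduce to an algebraic morphism, apply the algebraic induction-convolution lemma, and invoke Corollary~\ref{corollary_L_alg_approx} for each $L^{\alg}_{\eta_2}(n)$. That part is fine.

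The $\Omega$-bound is where your argument breaks down, and the gap is conceptual rather than technical. The quantity the paper controls is not the raw denominator degree of the $L^B_\tau$'s but the \emph{leading Laurent exponent} $\chi(\Gamma)$ of each $L^{\alg}_{\eta_2}$, where $\Gamma = \textup{Im}(\eta_1)$. In the identity case $\eta_1 = \textup{id}$, the graph $\Gamma = \Gamma_B(w^{\VecLambda})$ is cyclic, so $\chi(\Gamma) = 0$ automatically: the numerator $(n)_{|V(\Gamma)|}$ and denominator $\prod_j (n)_{|\tau^{-1}(e_j)|}$ have \emph{equal} degree $|V(\Gamma)| = |E(\Gamma)|$, and the contribution is $1 + O(n^{-1})$. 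There is nothing to ``save'' here, and your pigeonhole-cancellation machinery for the identity case is aimed at a problem that does not exist. You also acknowledge that the cancellation step is unresolved.

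Symmetrically, you leave value on the table in the non-identity case. The two inequalities the paper uses apply \emph{simultaneously} to the same $\Gamma$ and multiply: Proposition~\ref{prop_alg_no_id_from_words_is_at_least_2_to_1_on_edges} gives $\abs*{E(\Gamma)} \le \tfrac{1}{2}\abs*{\abs*{\VecLambda}}\,\abs*{w}$, and the observation that every vertex of a $B$-labeled core graph has degree at most $2r$ (so $\abs*{E(\Gamma)} \le r\abs*{V(\Gamma)}$) gives $-\chi(\Gamma) = \abs*{E(\Gamma)} - \abs*{V(\Gamma)} \le \prn*{1-\tfrac{1}{r}}\abs*{E(\Gamma)}$. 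Together: $-\chi(\Gamma) \le \prn*{\tfrac{1}{2}-\tfrac{1}{2r}}\abs*{\abs*{\VecLambda}}\,\abs*{w}$. The factor $\tfrac{1}{2} - \tfrac{1}{2r} = \tfrac{1}{2}\cdot\prn*{1-\tfrac{1}{r}}$ is a product, not a sum of contributions from two disjoint cases. Your proposal treats ``$\tfrac{1}{2}$'' and ``$\tfrac{1}{2r}$'' as coming from the non-isomorphism and isomorphism cases respectively, and the final ``combining'' step does not even work arithmetically: the bounds you claim for the two cases are $\tfrac{1}{2}d\abs*{w}$ and $\prn*{1-\tfrac{1}{2r}}d\abs*{w}$, whose maximum exceeds $\prn*{\tfrac{1}{2}-\tfrac{1}{2r}}d\abs*{w}$. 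The missing ingredient is the vertex-degree bound $\abs*{E(\Gamma)} \le r\abs*{V(\Gamma)}$; once you have it, the entire identity-case machinery disappears and the $\tfrac{1}{2r}$ saving comes for free from $V - E \ge -\prn*{1-\tfrac{1}{r}}E$.
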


\begin{proof}
The proof is very similar to the proof of \cite[Proposition 4.1]{Sho23I}.
It is enough to prove the theorem for a basis of $\mathcal{A}(G)$, 
so we prove it for 
$\braces*{\s\IndVecLambda}_{\VecLambda\in \PGhat}$. 
Let $\VecLambda\in \PGhat$, let 
$\eta\colon w^{\VecLambda}\to \Omega_B$ 
be the unique morphism to the bouquet and let 
$\zeta = \zeta_{\VecLambda}$.
By Observation~\ref{observe_s}, $\EX_w\brackets*{\s\IndVecLambda} = \EX_{\eta}\brackets*{\IndVecLambda}$. 
We may assume that $\eta$ is algebraic. Indeed, if $(\eta_1, \eta_2)$ is the algebraic-free decomposition of $\eta$, then by the free-invariance of $\EX\brackets*{\IndVecLambda}$ (recall Definition~\ref{def_free_inv_func}) we have $\EX_{\eta}\brackets*{\IndVecLambda} = \EX_{\eta_1}\brackets*{\IndVecLambda}$, so without loss of generality $\eta = \eta_1$.
By Corollary~\ref{corollary_AICL} (the algebraic induction-convolution lemma),
\[ \EX_{\eta}[\Ind\zeta] 
= \sum_{(\eta_1, \eta_2)\in \Decompalg(\eta)} \EX_{\eta_1}[\zeta] \cdot L_{\eta_2}^{\alg}(n). \]
Note that $\EX_{\eta_1}\brackets*{\zeta_{\VecLambda}}\in \FG$.
Fix a basis $B\subseteq F_r$ for which $|w| = \abs*{E(\Gamma_B(w))}$, and let $(\eta_1, \eta_2)\in \Decompalg(\eta)$.
Since $\eta_1$ is $B$-surjective, the $B$-labeled multi core graph $\Gamma\defeq\textup{Im}(\eta_1)$ has at most $\abs*{\abs*{\VecLambda}}\cdot \abs{w}$ edges.
By Corollary~\ref{corollary_L_alg_approx}, if $n\ge \abs*{\abs*{\VecLambda}}\cdot \abs{w}$ then $L_{\eta_2}^{\alg}(n)$ coincides with some rational function in $\Q(n)$ of degree $\chi\prn*{\Gamma}$. 
This implies the rationality of $\EX_w\brackets*{\s\IndVecLambda}$.
If $\eta_1$ is the identity map then $\chi(\Gamma) = 0$; otherwise, by Proposition~\ref{prop_alg_no_id_from_words_is_at_least_2_to_1_on_edges}, $\abs*{E(\Gamma)} \le \frac{1}{2}\abs*{\abs*{\VecLambda}}\cdot \abs{w}$. 
On the other hand, every vertex of $\Gamma$ has degree at most $2r$, so $\abs*{E(\Gamma)}\le r\cdot \abs*{V(\Gamma)}$. 
Therefore 
\[ -\chi(\Gamma) = \abs*{E(\Gamma)} - \abs*{V(\Gamma)} \le \prn*{1 - \frac{1}{r}} \abs*{E(\Gamma)} \le \prn*{\frac{1}{2} - \frac{1}{2r}} \abs*{\abs*{\VecLambda}}\cdot \abs{w}.\]
Summing over all $(\eta_1, \eta_2)\in \Decompalg(w\to F_r)$ gives the desired lower bound.
\end{proof}

It is only left to prove the second part of Theorem~\ref{thm_main_result}, regarding non-power words $w$.

\begin{definition}
Denote by $P_1\colon \mathcal{A}(G)\to \mathcal{A}(G)^{\le 1}$ the orthogonal projection. 
Since $\widehat{G\wr S_{\bullet}}$ is a filtered orthonormal basis for $\mathcal{A}(G)$, for every $f\in \mathcal{A}(G)$ we have
\[ P_1\prn*{f} = \sum_{\substack{\chi\in \widehat{G\wr S_{\bullet}}:\\ \deg(\chi)\le 1}} \inner*{f, \chi}\cdot \chi. \]
\end{definition}

Recall the definition $\mathscr{C}_{\phi}(w) \defeq \sum_{H\in \textup{Crit}_{\phi}(w)} \EX_{w\to H}[\phi]$ from Definition~\ref{def_witnesses_full} and $\Cphipi(w) \defeq \sum_{H\in \textup{Crit}(w)} \EX_{w\to H}[\phi]$ from Equation~\eqref{eq_def_crit_phi_pi}.
Our next goal is to finish the proof of Theorem~\ref{thm_main_result}.
The following proposition is an immediate 
consequence of the definitions and 
Theorem~\ref{thm_Sho23I_main}:
\begin{proposition}
\label{prop_approx_lin_dim_char}
For every $w\in F_r$ and $\phi\in \hat{G}$,
$ \EX_w[\chi_{\phi}] = \Cphipi(w)\cdot n^{1-\pi(w)} + O\prn*{ n^{-\pi(w)} }. $
\end{proposition}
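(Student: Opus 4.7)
The plan is to derive this from Theorem~\ref{thm_Sho23I_main} by splitting into two cases according to whether $\pi_\phi(w) = \pi(w)$ or $\pi_\phi(w) > \pi(w)$. Recall that $\pi_\phi(w) \ge \pi(w)$ always, as noted right after Definition~\ref{def_witnesses_full}: every $H \in \textup{Wit}_\phi(w)$ is a proper algebraic extension of $\inner*{w}$, so $w$ is non-primitive in $H$, which means $H$ is an admissible candidate in the minimization defining $\pi(w)$.

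First I would treat the case $\pi_\phi(w) = \pi(w)$. Equation~\eqref{eq_def_crit_phi_pi} then gives $\Cphipi(w) = \mathscr{C}_\phi(w)$, so Theorem~\ref{thm_Sho23I_main} reads
\[ \EX_w[\chi_\phi] = \mathscr{C}_\phi(w) \cdot n^{1-\pi(w)} + O\prn*{n^{-\pi(w)}} = \Cphipi(w) \cdot n^{1-\pi(w)} + O\prn*{n^{-\pi(w)}}, \]
which is exactly what is claimed.

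Next I would handle the case $\pi_\phi(w) > \pi(w)$. Since both quantities are integers (or $+\infty$), we have $\pi_\phi(w) \ge \pi(w) + 1$, while Equation~\eqref{eq_def_crit_phi_pi} gives $\Cphipi(w) = 0$. Applying Theorem~\ref{thm_Sho23I_main},
\[ \EX_w[\chi_\phi] = \mathscr{C}_\phi(w) \cdot n^{1 - \pi_\phi(w)} + O\prn*{n^{-\pi_\phi(w)}}, \]
and both terms on the right-hand side are $O\prn*{n^{-\pi(w)}}$, since $1 - \pi_\phi(w) \le -\pi(w)$ and $-\pi_\phi(w) \le -\pi(w)$. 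Hence $\EX_w[\chi_\phi] = 0 \cdot n^{1-\pi(w)} + O\prn*{n^{-\pi(w)}} = \Cphipi(w) \cdot n^{1-\pi(w)} + O\prn*{n^{-\pi(w)}}$, as desired. There is no real obstacle here: as the excerpt states, the proposition is an immediate bookkeeping consequence of Equation~\eqref{eq_def_crit_phi_pi} and Theorem~\ref{thm_Sho23I_main}.
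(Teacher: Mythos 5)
Your proof is correct and follows exactly the route the paper intends: the paper simply asserts that the proposition is "an immediate consequence of the definitions and Theorem~\ref{thm_Sho23I_main}," and your two-case analysis (using $\pi_\phi(w) \ge \pi(w)$ and Equation~\eqref{eq_def_crit_phi_pi}) supplies precisely the bookkeeping the paper leaves implicit.
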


Instead of tackling irreducible characters of degree $\ge 2$ directly, 
our objective is  the following formula: for every $f\in \mathcal{A}(G)$ and a non-power $w\in F_r$,
\begin{equation}
\label{equation_equivalent_thm_main_result}
   \begin{split}
       \EX_w[f]
       &= \EX_w\brackets*{P_1(f)} + O\prn*{n^{-\pi(w)}}\\
       (\textrm{Proposition}~\ref{prop_approx_lin_dim_char}) \quad\quad\quad\quad &\overset{}{=} \inner*{f, \textbf{1}} + \sum_{\phi\in \hat{G} } \inner*{f, \chi_{\phi}}\cdot  \Cphipi(w) n^{1-\pi(w)} + O\prn*{n^{-\pi(w)}}.\quad\quad\quad
   \end{split} 
\end{equation}

Recall the second part of Theorem~\ref{thm_main_result}: for every stable irreducible character $\chi$ of $G\wr S_{\bullet}$ of degree $\deg(\chi)\ge 2$ and a non-power $w\in F_r$, we have $\EX_w[\chi] = O\prn*{n^{-\pi(w)}}$. 
This is equivalent to Equation~\eqref{equation_equivalent_thm_main_result}, as $\braces*{\chi\in \widehat{G\wr S_{\bullet}}:\,\, \deg(\chi)\ge 2}$ is a basis for the linear subspace $\ker(P_1)$.

\subsection*{The inner product formula and Frobenius reciprocity}

To the end of this section, we denote $G_n\defeq G\wr S_n$.

\begin{definition}
    Let $\lambda\vdash n$ be a partition. 
    Let $\sigma\in S_n$ be a permutation with cycle-structure $\lambda$.
    We denote $z_{\lambda}\defeq \abs*{\textup{Cntr}_{S_n}(\sigma)} \defeq \abs*{\braces*{\tau\in S_n: \,\, \sigma\tau=\tau\sigma}}$.
    Note that
    \[z_{\lambda} = \prod_{r=1}^{\infty} r^{\#_r(\lambda)} \#_r(\lambda)!,\]
    as we can permute cycles of the same length and then rotate each of them separately.
\end{definition}

Lemma~\ref{lemma_ICL} gives us a tool for computing inner products of stable functions:
\begin{corollary}
\label{corollary_inner_product_formula}
Let $\VecLambda\in \mathscr{P}_n(\textup{char}(G))$.
Then for every non-power word $w\in F_r$,
\[\inner*{\s\IndVecLambda, \textbf{1}} = \sum_{k=0}^n \sum_{\tau \vdash k} \frac{1}{z_{\tau}} 
\sum_{\eta\in Q_{\{w\}}\prn*{w^{\VecLambda}}: \,\,\, \textup{Im}(\eta) = w^{\tau}} \EX_{\eta}\brackets*{\VecLambda}. \]
\end{corollary}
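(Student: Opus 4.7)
The plan is to realize $\inner*{\s\IndVecLambda,\textbf{1}}$ as a word-expectation inside the cyclic subgroup $\inner*{w}\le F_r$ and then apply the Induction-Convolution Lemma with the single-letter basis $\{w\}$, in which regime the relevant Möbius term trivializes.

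Since $w$ is a non-power in $F_r$, the subgroup $\inner*{w}$ is infinite cyclic with $w$ as a free generator. Uniform random homomorphisms $\inner*{w}\to G_n$ are therefore in bijection with uniform choices of $\alpha(w)\in G_n$, so
\[\inner*{\s\IndVecLambda,\textbf{1}}_{G_n} \;=\; \EX_{g\sim U(G_n)}\brackets*{\s\IndVecLambda(g)} \;=\; \EX_{w\to \inner*{w}}\brackets*{\s\IndVecLambda}.\]
By (the evident extension of) Observation~\ref{observe_s} to class functions on $G_\bullet$, applied with the single-letter basis $\{w\}$ of $\inner*{w}$ and the unique covering map $p\colon w^{\VecLambda}\twoheadrightarrow \Gamma_{\{w\}}(w)=\Omega_{\{w\}}$, this further equals $\EX_p\brackets*{\IndVecLambda}$.

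Next, I apply the Induction-Convolution Lemma (in the form of Equation~\eqref{ICL_withL}) with basis $B=\{w\}$ inside the free group $\inner*{w}\cong F_1$, obtaining
\[\EX_p\brackets*{\IndVecLambda} \;=\; \sum_{(\eta_1,\eta_2)\in \mathcal{D}\textup{ecomp}_{\{w\}}(p)} \EX_{\eta_1}\brackets*{\VecLambda}\cdot L^{\{w\}}_{\eta_2}(n).\]
The crucial simplification comes from Proposition~\ref{prop_LB}: whenever the basis has a single letter, $L^{\{w\}}_{\eta_2}(n)\equiv 1$ for every $\{w\}$-surjective morphism $\eta_2$. Consequently the right-hand side collapses to $\sum_{\eta}\EX_\eta\brackets*{\VecLambda}$, where $\eta$ ranges over $\mathcal{Q}_{\{w\}}(w^{\VecLambda})$.

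Finally, any surjective image of $w^{\VecLambda}$ in the $\{w\}$-labelled category is a disjoint union of $w$-cycles, hence of the form $w^{\tau}$ for a unique partition $\tau$. Grouping the collapsed sum by $\tau$ and converting between equivalence classes of morphisms (modulo codomain isomorphism) and their representative labelled lifts accounts for the factor $\frac{1}{z_{\tau}}$, since $z_{\tau}=\prod_r r^{\#_r(\tau)}\#_r(\tau)!$ is exactly $\abs{\textup{Aut}(w^{\tau})}$, the semidirect product of same-length cycle-permutations with per-cycle rotations. The main subtlety, and the step I would write out carefully, is this last combinatorial conversion: one checks that $\textup{Aut}(w^{\tau})$ acts freely on surjective labelled morphisms $w^{\VecLambda}\twoheadrightarrow w^{\tau}$, since any non-trivial rotation or cycle-permutation would permute the images of the base-points of the cycles of $w^{\VecLambda}$ and thus fail to fix a morphism whose image covers all of $w^{\tau}$.
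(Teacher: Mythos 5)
Your proof takes essentially the same route as the paper's: interpret $\inner*{\s\IndVecLambda,\textbf{1}}$ as the $w$-expectation over $\inner*{w}$, pass to the covering $w^{\VecLambda}\twoheadrightarrow w$ via Observation~\ref{observe_s}, apply the Induction--Convolution Lemma in the single-letter basis $\{w\}$ where $L^{\{w\}}\equiv 1$, and absorb the automorphism count $\abs{\textup{Aut}(w^{\tau})}=z_{\tau}$ when regrouping by image. Your explicit check that $\textup{Aut}(w^{\tau})$ acts freely on surjective morphisms (immediate since a surjection-fixing automorphism fixes the whole image) is a correct and helpful justification that the paper states only informally.
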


\begin{proof}
The following computation takes place in the ambient group $\inner*{w}$.
We have 
\[\inner*{\s\IndVecLambda, \textbf{1}} 
= \EX_{w\to w}[\s\IndVecLambda]
= \EX_{w^{\VecLambda} \twoheadrightarrow w}[\Ind \VecLambda]
= \EX_{\eta}[\Ind \VecLambda]\]
where $\eta\colon w^{\VecLambda} \twoheadrightarrow w$ is the unique covering map.
Now the induction-convolution lemma tells us
\[\EX_{\eta}[\Ind \VecLambda] 
= \sum_{(\eta_1, \eta_2)\in \textup{Decomp}_{\{w\}}(\eta)} \EX_{\eta_1}\brackets*{\VecLambda} \cdot L_{\eta_2}^{\{w\}}(n), \]
but $L_{\eta_2}^{\{w\}}(n) = 1$ for every morphism $\eta_2$ since there is only one letter in the basis $\{w\}$.
Moreover, every quotient $\textup{Im}(\eta_1)$ is cyclic so it is of the form $w^{\tau}$ for some $\tau\vdash k, \,\,k\le n$.
The division by $z_{\tau}$ comes from the fact that generally, $\DecompB(\eta)$ consist of equivalence classes: 
each graph $w^{\tau}$ has exactly $z_{\tau}$ automorphisms. Indeed, $z_{\tau}$ is the number of permutations that commute with $\tau$, which correspond to automorphism of the cycle graph of $\tau$.
\end{proof}

\begin{corollary}[Stabilization of Characters]
\label{corollary_stabilization}
For $n\ge \abs*{\abs*{\VecLambda}}$, the inner product 
\[ \inner*{\s\IndnVecLambda, \textbf{1}} = \inner*{\s\IndVecLambda, \textbf{1}} \]
stabilizes (i.e.\ does not depend on $n$ anymore).
\end{corollary}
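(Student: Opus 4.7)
The plan is to read the stabilization directly off the formula proved in Corollary~\ref{corollary_inner_product_formula}. That formula expresses $\inner*{\s\IndnVecLambda, \textbf{1}}$ as
\[ \sum_{k=0}^n \sum_{\tau \vdash k} \frac{1}{z_{\tau}} \sum_{\eta\in \mathcal{Q}_{\{w\}}\prn*{w^{\VecLambda}}:\,\textup{Im}(\eta) = w^{\tau}} \EX_{\eta}\brackets*{\VecLambda},\]
in which the summands themselves carry no $n$-dependence: this is the whole point of choosing the single-letter basis $\{w\}$, since $L^{\{w\}}_{\eta_2}(n)=1$ for every $\eta_2$ and the quotients $w^{\VecLambda}\twoheadrightarrow w^{\tau}$ are purely combinatorial objects. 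Consequently the only possible source of $n$-dependence is the upper limit of the outer sum, so stabilization will follow as soon as I show that the inner sum is empty for every $k > \abs*{\abs*{\VecLambda}}$.

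To see this, I would analyze a surjective morphism $\eta\colon w^{\VecLambda}\twoheadrightarrow w^{\tau}$ component by component. Such a morphism must send each cyclic component $w^{\lambda_i}$ of $w^{\VecLambda}$ to some component $w^{\tau_{f(i)}}$ of $w^{\tau}$ by a covering map, and $B$-surjectivity forces the induced map $f\colon [\ell(\floor{\VecLambda})]\twoheadrightarrow [\ell(\tau)]$ on connected components to be surjective. Since a cover of one oriented $w$-cycle onto another requires the target $w$-length to divide the source $w$-length, we have $\tau_{f(i)}\mid \lambda_i$, and in particular $\tau_{f(i)}\le \lambda_i$. Using that $\abs*{f^{-1}(j)}\ge 1$ for every $j\in [\ell(\tau)]$,
\[ \abs*{\tau} \;=\; \sum_j \tau_j \;\le\; \sum_j \tau_j\cdot \abs*{f^{-1}(j)} \;=\; \sum_i \tau_{f(i)} \;\le\; \sum_i \lambda_i \;=\; \abs*{\abs*{\VecLambda}}. \]

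Therefore every non-zero contribution comes from some $k = \abs*{\tau}\le \abs*{\abs*{\VecLambda}}$. Once $n\ge \abs*{\abs*{\VecLambda}}$, all such $k$ are already in the range of summation, so the right-hand side of the formula is independent of $n$, which is precisely the claimed stabilization. I do not expect any of the steps to present a genuine obstacle: the real content was proved in the induction-convolution lemma and in Corollary~\ref{corollary_inner_product_formula}, and what remains is the short combinatorial observation above about covering maps between cyclic multi core graphs with prescribed component lengths.
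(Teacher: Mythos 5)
Your proof is correct and follows the paper's own route: both read off stabilization from the formula in Corollary~\ref{corollary_inner_product_formula} by bounding the size of any surjective quotient of $w^{\VecLambda}$ by $\abs*{\abs*{\VecLambda}}$. The paper states this bound in one line (a surjective image of a multi core graph with $\abs*{\abs*{\VecLambda}}$ vertices has at most $\abs*{\abs*{\VecLambda}}$ vertices), whereas you re-derive it via the component-wise divisibility $\tau_{f(i)}\mid\lambda_i$ — a finer fact than is needed, but perfectly valid.
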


\begin{proof}
In Corollary~\ref{corollary_inner_product_formula}, we sum over all $k\le n$ and all partitions $\tau\vdash k$ the expectations of (surjective) quotient maps. 
Every quotient is of size (i.e.\ number of vertices = number of edges) at most $\abs*{\abs*{\VecLambda}}$, so when $n$ is larger than that we do not get any new possible quotients.
Note that we must count every equivalence class in $\textup{sur}_B$ only once.
\end{proof}

A useful tool for our computation is Frobenius reciprocity, both the classical one, that shows adjunction between induction and restriction, and a version for $\s\IndVecLambda$.
We start with an example of a simple usage of Frobenius reciprocity.
Recall Corollary~\ref{corollary_a_tc_approx}:
Let $w\in F_r$ be a non-power. Then for every $t\ge 2,\,\, c\in \conj(G)$:
$\EX_w[a_{t, c}] = \frac{|c|}{t|G|} + O\prn*{n^{-\pi(w)}}. $



\begin{proof}
[Proof of Corollary~\ref{corollary_a_tc_approx} assuming Theorem~\ref{thm_main_result}]
For a uniformly random $(v, \sigma)\sim U\prn*{G\wr S_n}$, $\EX[a_{t, c}] = \frac{|c|}{t|G|}. $
Indeed, for every $\sigma\in S_n, v\in G^n$: $a_{t, c}(v, \sigma) = \frac{1}{t} \sum_{i_1, \ldots, i_t \in [n] \textup{ distinct}} \bbone_{\sigma(i_1)=i_2, \ldots, \sigma(i_t)=i_1} \bbone_{v(i_1)\cdots v(i_t)\in c}. $
Thus 
\begin{equation}
\label{equation_proved_a_tc_approx}
    \begin{split}
        \EX[a_{t, c}] 
        &= \frac{1}{t} \sum_{i_1, \ldots, i_t \in [n] \textrm{ distinct}} \PR_{\sigma}(\sigma(i_1)=i_2, \ldots, \sigma(i_t)=i_1)\cdot \PR_{v}(v(i_1)\cdots v(i_t)\in c)\\
        & = \frac{1}{t}\cdot (n)_t \cdot \frac{1}{(n)_t} \cdot \frac{|c|}{|G|} = \frac{|c|}{t|G|}.
    \end{split}
\end{equation}
where $(n)_t\defeq n\cdot (n-1)\cdots (n-t+1)$ is the falling factorial.
Now by Theorem~\ref{thm_main_result}, it is enough to check that $\inner*{a_{t, c}, \chi_{\phi}} = 0$ for every $\phi\in \hat{G}$. 
By Frobenius reciprocity,
\begin{equation*}
    \begin{split}
        \inner*{\Indphi, a_{t, c}}_{G_n} 
        &= \inner*{\phi, \textup{Res}_{G\times G_{n-1}} a_{t, c}}_{G\times G_{n-1}} \\
        &= \frac{1}{|G|^n (n-1)!} \sum_{g\in G} \sum_{v\in G^{n-1}} \sum_{\sigma\in S_{n-1}} \phi(g) \#\braces*{t\textrm{-cycles} \textrm{ of }\sigma \textrm{ with }\prod_{i\in s}v(i)\in c}\\
        (\textup{since }t\ge 2)&\overset{}{=} \prn*{\frac{1}{|G|} \sum_{g\in G} \phi(g) } \prn*{\frac{1}{|G|^{n-1} (n-1)!}  \sum_{v\in G^{n-1}} \sum_{\sigma\in S_{n-1}}  \#\braces*{t\textrm{-cycles} \textrm{ of }\sigma \textrm{ with }\prod_{i\in s}v(i)\in c}}\\
        &= \inner*{\phi, \mathbf{1}}_G \cdot \inner*{a_{t, c}, \mathbf{1}}_{G_{n-1}}
    \end{split}
\end{equation*}
So for $\phi\in \hat{G}-\{\textbf{1}\} $ we have $\inner*{\Indphi, a_{t, c}} = 0$ and for $\phi = \textbf{1}$ we have $\inner*{\Indphi, a_{t, c}} = \inner*{a_{t, c}, \textbf{1}}$ for $n$ large enough, so $\inner*{\chi_{\phi}, a_{t, c}} = 0$. This shows the desired implication.
\end{proof}

Recall from Proposition~\ref{prop_Indphi_m_is_stable} that for $\phi\in \hat{G},\,\, k\in \N$,
$ (\Indphi)^{(k)} = \sum_{t\divides k} t \sum_{c\in \conj(G)} \phi\prn*{c^{k/t}}\cdot a_{t, c}.$
Now we generalize the number $k$ into a 
multi partition $\vec{\lambda}$.

\begin{definition}
Given two Young diagrams $\lambda, \tau\in \mathscr{P}$, we define 
\[ \Hom(\lambda, \tau) \defeq \braces*{\eta\colon \lambda\to \tau\middle\vert\,\, \textup{ for every part } p\in \VecLambda,\, |\eta(p)|\,\divides\, |p|}. \]
For $\VecLambda\in \PGhat$ we define $\Hom(\VecLambda, \tau)\defeq \Hom(\floor{\VecLambda}, \tau)$.
\end{definition}

\begin{corollary}
\label{corollary_sIndFormula}
Let $\VecLambda\in \PGhat$. For every $p = \lambda(\phi)_i\in \VecLambda$ denote $\zeta_p = \phi$.
Then for every $v\in G^n$ and $\sigma\in S_n$ with cycle-structure $\tau\in \mathscr{P}_n$:
\[\s\IndVecLambda(v, \sigma)  
= \sum_{\eta\in \Hom(\VecLambda, \tau)} \prod_{p\in \vec{\lambda}} |\eta(p)| \cdot \zeta_p \prn*{ \prn{\prod_{k\in \eta(p)} v(k)}^{\frac{|p|}{|\eta(p)|}} }. \]
\end{corollary}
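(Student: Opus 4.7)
The plan is to reduce the statement to a direct expansion of the product that defines $\s\IndVecLambda$, using the explicit computation of $(\Ind\phi)^{(k)}$ established in Proposition~\ref{prop_Indphi_m_is_stable}.

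First I would unfold the left-hand side. By Definition~\ref{def_ind_lambda},
\[ \s\IndVecLambda \;=\; \prod_{\phi\in \hat G} \prod_i (\Ind\phi)^{(\VecLambda(\phi)_i)} \;=\; \prod_{p\in \VecLambda} (\Ind \zeta_p)^{(|p|)}, \]
where the last product is indexed by parts $p$ of $\VecLambda$, each carrying its character $\zeta_p$. For a single factor, the computation inside the proof of Proposition~\ref{prop_Indphi_m_is_stable} gives, for any $v\in G^n,\sigma\in S_n$,
\[ (\Ind\zeta_p)^{(|p|)}(v,\sigma) \;=\; \sum_{\substack{s\,\text{cycle of }\sigma\\ |s|\,\divides\, |p|}} |s|\cdot \zeta_p\!\prn*{\prn*{\prod_{j\in s} v(j)}^{|p|/|s|}}, \]
where the product over $j\in s$ is taken in cyclic order (so the argument of $\zeta_p$ is only defined up to conjugation, which is harmless since $\zeta_p$ is a class function).

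Next I would multiply these formulas across the parts $p\in\VecLambda$ and distribute, turning the product of sums into a single sum indexed by tuples $(s_p)_{p\in\VecLambda}$, where each $s_p$ is a cycle of $\sigma$ satisfying $|s_p|\divides |p|$. Identifying the cycles of $\sigma$ with the rows of $\tau$ (its cycle-structure) is precisely the bijection between such tuples and elements of $\Hom(\VecLambda,\tau)$ from the definition just before the corollary: a function $\eta\colon \floor{\VecLambda}\to \tau$ with $|\eta(p)|\divides |p|$ is the same data as a choice of cycle $s_p=\eta(p)$ for every part $p$. Substituting $s_p=\eta(p)$ into each factor yields exactly
\[ \prod_{p\in \VecLambda} |\eta(p)|\cdot \zeta_p\!\prn*{\prn*{\prod_{k\in \eta(p)} v(k)}^{|p|/|\eta(p)|}}, \]
and summing over $\eta$ gives the desired formula.

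The only potentially delicate point is the bookkeeping in the identification \enquote{tuples $(s_p)_p$ $\leftrightarrow$ elements of $\Hom(\VecLambda,\tau)$}, since the definition of $\Hom$ quotients by permutations of equal-length rows of $\floor{\VecLambda}$. This however causes no issue: if two rows of $\VecLambda$ have the same length (whether or not they carry the same $\zeta$), swapping their images in a tuple merely permutes the factors of the multiplicative summand and leaves it unchanged, so picking any representative per equivalence class is well-defined and each class is counted once. I would state this observation explicitly and conclude the proof.
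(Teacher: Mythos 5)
Your core approach — unfold $\s\IndVecLambda$ as $\prod_{\phi}\prod_i\prn*{\Indphi}^{(\VecLambda(\phi)_i)}$, apply the explicit formula from Proposition~\ref{prop_Indphi_m_is_stable} to each factor, and expand brackets into a sum over tuples of cycles — is exactly the paper's one-line proof, just spelled out. Up to and including the paragraph ending ``and summing over $\eta$ gives the desired formula,'' the argument is correct.

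The final paragraph, however, is wrong in two ways. First, the invariance claim is false: if $p_1,p_2$ are equal-length parts carrying \emph{different} class functions $\zeta_{p_1}\neq\zeta_{p_2}$, swapping $\eta(p_1)$ and $\eta(p_2)$ replaces $\zeta_{p_1}(A_1)\zeta_{p_2}(A_2)$ by $\zeta_{p_1}(A_2)\zeta_{p_2}(A_1)$ (where $A_i$ is the group element associated to $\eta(p_i)$), and these are generally unequal; the parenthetical ``whether or not they carry the same $\zeta$'' is not true. Second, and more fundamentally, even when the summand \emph{is} invariant under such swaps, replacing the sum over tuples by a sum over equivalence classes (``each class is counted once'') undercounts: the expansion of $\prod_p\prn*{\cdots}$ genuinely produces one term per tuple, so an equivalence class of size $m$ must contribute $m$ copies of its common value. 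Concretely, take $G=\{1\}$, $\VecLambda=\binom{(1,1)}{\textbf{1}}$, $\sigma=\mathrm{id}\in S_2$: then $\s\IndVecLambda(\mathrm{id})=\#\text{fix}^2=4$, which matches the $4=2^2$ tuples, whereas quotienting by the swap of the two length-one parts leaves only $3$ classes and would give $3$.

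The correct reading is that $\Hom(\VecLambda,\tau)$ in Corollary~\ref{corollary_sIndFormula} ranges over genuine functions from the \emph{indexed} parts $(\phi,i)$ of $\VecLambda$ to the cycles of $\sigma$ — i.e.\ ordered tuples, with no identification of equal-length parts at all. This is what ``expand brackets'' produces, and it is also what the paper uses downstream: in the worked example after Corollary~\ref{corollary_nonempty_irr} with $\VecLambda=\binom{(1,1)}{\textbf{1}}$, the paper counts \emph{four} sub-partitions (empty, first row, second row, both), treating the two equal rows as distinguishable. So rather than trying to reconcile the sum with the quotienting in the earlier definition of ``function from a Young diagram'' (a minor inconsistency in the paper's bookkeeping), you should simply observe that the bijection is between tuples $(s_p)_p$ and \emph{labeled} assignments, and drop the last paragraph.
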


\begin{proof}
Just substitute Proposition~\ref{prop_Indphi_m_is_stable} in the definition $\s\IndVecLambda(v, \sigma) 
        = \prod_{\phi} \prod_{i} \Indphi\prn*{(v,\sigma)^{\lambda(\phi)_i}}$ and expand brackets.
\end{proof}

\begin{definition}
Let $\VecLambda\in \PGhat$. A \textbf{multi-subpartition} $\Vec{\tau} \le \VecLambda$ is a function $\Vec{\tau} \colon \PGhat\to \mathscr{P}$ such that for every $\phi\in \hat{G}$, $\Vec{\tau}(\phi) \le \VecLambda(\phi)$.
The \textbf{difference} $\VecLambda \setminus \Vec{\tau}$ between two such multi-partitions is defined coordinate-wise using difference of multi-sets: $\prn*{\VecLambda \setminus \Vec{\tau}}(\phi) \defeq \VecLambda(\phi)\setminus \Vec{\tau}(\phi)$.
\end{definition}

In the following proposition, we denote the inner product in $G_n = G\wr S_n$ by $\inner*{\cdot, \cdot}_{G_n}$.

\begin{proposition}
\label{prop_FrobRecip}
For every $\phi\in \hat{G}$, $\VecLambda\in \mathscr{P}(\textup{char}(G))$ and $n\in \N$, we have
\[ \inner*{\s\IndnVecLambda, \Indnphi}_{G_n} = \sum_{\vec{\tau}\le \VecLambda} 
        \inner*{\s\Ind_{n-1}\prn*{\VecLambda \setminus \Vec{\tau}}, \textbf{1}}_{G_{n-1}}\cdot
        \inner*{\s\prn*{\vec{\tau}}, \phi}_{G_1}. \]
In particular, by Corollary~\ref{corollary_stabilization}, if $n\ge \abs*{\abs*{\VecLambda}}$ the terms do not depend on $n$:
\[ \inner*{\s\IndVecLambda, \Indphi} = \sum_{\vec{\tau}\le \VecLambda} 
        \inner*{\s\Ind\prn*{\VecLambda \setminus \Vec{\tau}}, \textbf{1}}\cdot
        \inner*{\s\prn*{\vec{\tau}}, \phi}. \]
\end{proposition}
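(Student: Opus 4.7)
The plan is to apply classical Frobenius reciprocity and then to unfold the restriction of $\s\IndnVecLambda$ to the subgroup $G\wr_{[n]}S_{n-1}\cong G\times G_{n-1}$ using the explicit formula from Corollary~\ref{corollary_sIndFormula}. Concretely, the second description of $\Indnphi$ in Definition~\ref{def_Ind_phi} is $\Indnphi = \Ind_{G\wr_{[n]}S_{n-1}}^{G_n}\phi$, with $\phi$ extended by $\phi(v,\sigma) = \phi(v(n))$. Thus
\[ \inner*{\s\IndnVecLambda,\Indnphi}_{G_n} = \inner*{\textup{Res}_{G\wr_{[n]}S_{n-1}}\s\IndnVecLambda,\,\phi}_{G\wr_{[n]}S_{n-1}}, \]
and under the identification $G\wr_{[n]}S_{n-1}\cong G\times G_{n-1}$ (where the $G$-factor records $v(n)$ and the $G_{n-1}$-factor records $(v\restriction_{[n-1]},\sigma\restriction_{[n-1]})$), the character $\phi$ becomes $\phi\otimes\mathbf{1}$.

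Next I would evaluate $\textup{Res}\s\IndnVecLambda$ pointwise. Write $(g,(\tilde v,\tilde\sigma))$ for a typical element. The cycle structure of the corresponding $\sigma\in S_n$ is $\tau = \{1\}\sqcup\tau'$, where the distinguished singleton comes from the fixed point $n$ and $\tau'$ is the cycle structure of $\tilde\sigma$. In the formula of Corollary~\ref{corollary_sIndFormula}, every map $\eta\in\Hom(\VecLambda,\tau)$ uniquely determines a multi-subpartition $\vec\tau\le\VecLambda$ consisting of the parts sent to the singleton $\{n\}$, together with a residual map $\eta'\in\Hom(\VecLambda\setminus\vec\tau,\tau')$ for the parts sent elsewhere. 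For each part $p\in\vec\tau$ we have $|\eta(p)|=1\mid|p|$ automatically, and its contribution is $\zeta_p(g^{|p|})$; the parts in $\VecLambda\setminus\vec\tau$ contribute exactly the formula computing $\s\Ind_{n-1}(\VecLambda\setminus\vec\tau)(\tilde v,\tilde\sigma)$. Recognizing the product $\prod_{p\in\vec\tau}\zeta_p(g^{|p|})$ as $\s(\vec\tau)(g)$ yields the pointwise identity
\[ \textup{Res}\s\IndnVecLambda\bigl(g,(\tilde v,\tilde\sigma)\bigr) = \sum_{\vec\tau\le\VecLambda} \s(\vec\tau)(g)\,\cdot\,\s\Ind_{n-1}(\VecLambda\setminus\vec\tau)(\tilde v,\tilde\sigma). \]

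Finally I would take the inner product against $\phi\otimes\mathbf{1}$. Because the $g$-dependence and the $(\tilde v,\tilde\sigma)$-dependence are completely separated, the averaging over $G\times G_{n-1}$ factors as a product of averages, giving
\[ \inner*{\textup{Res}\s\IndnVecLambda,\,\phi\otimes\mathbf{1}}_{G\times G_{n-1}} = \sum_{\vec\tau\le\VecLambda} \inner*{\s(\vec\tau),\phi}_{G}\,\cdot\,\inner*{\s\Ind_{n-1}(\VecLambda\setminus\vec\tau),\mathbf{1}}_{G_{n-1}}, \]
and since $G_1 = G\wr S_1 = G$, the first factor equals $\inner*{\s(\vec\tau),\phi}_{G_1}$, yielding the statement. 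The second assertion (stabilization in $n$ once $n\ge\|\VecLambda\|$) is then immediate from Corollary~\ref{corollary_stabilization} applied to $\VecLambda\setminus\vec\tau$, since $\|\VecLambda\setminus\vec\tau\|\le\|\VecLambda\|\le n-1$.

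The only delicate point I anticipate is bookkeeping around the equivalence relation identifying functions $\eta$ that differ by permuting equal-sized rows: I need to check that the decomposition $\eta\mapsto(\vec\tau,\eta')$ is a bijection modulo these equivalences, which amounts to noting that $\vec\tau$ is a multi-sub\emph{partition} (not an ordered choice) and that permutations of equal-sized rows inside $\vec\tau$ and inside $\VecLambda\setminus\vec\tau$ account exactly for the equivalences on $\eta$. Everything else is routine substitution and Fubini.
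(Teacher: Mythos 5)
Your proof takes essentially the same route as the paper's: both invoke classical Frobenius reciprocity to restrict to the subgroup $G\wr_{[n]}S_{n-1}\cong G\times G_{n-1}$, then use Corollary~\ref{corollary_sIndFormula} to split the sum over $\Hom(\VecLambda,\tau)$ according to which parts are sent to the distinguished singleton $\{n\}$, identify the resulting split with multi-subpartitions $\vec{\tau}\le\VecLambda$, and finally factor the average over $G\times G_{n-1}$ into a product using the independence of $v(n)$ from $(v',\sigma')$. The only difference is cosmetic: you apply reciprocity first and then expand the restricted character, whereas the paper first records the pointwise identity for $\s\IndVecLambda(v,\sigma)$ on elements with $\sigma(n)=n$ and only then applies reciprocity. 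The extra bookkeeping point you flag (that $\eta\mapsto(\vec{\tau},\eta')$ is a bijection modulo the row-permutation equivalence) is a genuine subtlety the paper elides, and your reasoning for why it works is correct.
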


\begin{proof}
Embed $\iota \colon S_{n-1}\hookrightarrow S_n$ as the permutations which fix $n$. This gives rise to the embedding $G\times G_{n-1}\hookrightarrow G_n$, by which we induct characters.
Let $\sigma'\in S_{n-1}$ and denote $\sigma\defeq \iota(\sigma')$. (Equivalently, take $\sigma\in S_n$ with $\sigma(n)=n$ and denote 
$\sigma'\defeq \sigma\restriction_{[n-1]}$). 
Denote the cycle-structure of $\sigma'$ by $\tau'$.
Similarly let $v\in G^n$ and denote 
$v'\defeq v\restriction_{[n-1]}$. 
In Corollary~\ref{corollary_sIndFormula}, we can divide the sum according to the pre-image $\tau = \eta^{-1}(n)$ to get
\begin{equation*}
    \begin{split}
        \s\IndVecLambda(v, \sigma) 
        &= \sum_{\vec{\tau}\le \VecLambda} \sum_{\eta \in \Hom(\VecLambda \setminus \Vec{\tau}, \tau')} \prod_{p\in \VecLambda \setminus \Vec{\tau}} |\eta(p)| \zeta_p \prn*{ \prn{ \prod_{k\in \eta(p)} v(k)}^{\frac{|p|}{|\eta(p)|}}} 
        \prod_{p\in \vec{\tau}} \zeta_p\prn*{v(n)^{|p|}} \\
        &= \sum_{\vec{\tau}\le \VecLambda} \s\Ind_{n-1}\prn*{\VecLambda \setminus \Vec{\tau}}(v', \sigma') \prod_{p\in \vec{\tau}} \zeta_p\prn*{v(n)^{|p|}}\\
        &= \sum_{\vec{\tau}\le \VecLambda} \s\Ind_{n-1}\prn*{\VecLambda \setminus \Vec{\tau}}(v', \sigma') \cdot s\prn*{\vec{\tau}}(v(n)).
    \end{split}
\end{equation*}
(This is because for $p\in \vec{\tau}$ we have $|\eta(p)| = 1$).
By Frobenius reciprocity,
\begin{equation*}
    \begin{split}
        \inner*{\s\IndVecLambda, \Indphi} 
        &= \inner*{\textup{Res}_{G\times G_{n-1}} \s\IndVecLambda, \phi}_{G\times G_{n-1}} \\
        &= \EX_{\substack{v\sim U(G^n)\\ \sigma\sim U(S_{n})}} [\Bar{\phi}(v(n))\cdot \s\IndVecLambda(v, \sigma)] \\
        &= \EX_{\substack{v\sim U(G^n)\\ \sigma'\sim U(S_{n-1})}} \brackets*{\Bar{\phi}(v(n)) \sum_{\vec{\tau}\le \VecLambda} \s\Ind_{n-1}\prn*{\VecLambda \setminus \Vec{\tau}}(v', \sigma') \cdot \s\prn*{\vec{\tau}}(v(n))} \\
        &= \sum_{\vec{\tau}\le \VecLambda} \EX_{\substack{v'\sim U(G^{n-1})\\ \sigma'\sim U(S_{n-1})}} \brackets*{ \s\Ind_{n-1}\prn*{\VecLambda \setminus \Vec{\tau}}}(v', \sigma') \cdot \EX_{g\in U(G)} \brackets*{ \Bar{\phi}(g) \cdot s\prn*{\vec{\tau}}(g)} \\
        &= \sum_{\vec{\tau}\le \VecLambda} 
        \inner*{\s\Ind_{n-1}\prn*{\VecLambda \setminus \Vec{\tau}}, \textbf{1}}_{G_{n-1}} \cdot
        \inner*{\s\prn*{\vec{\tau}}, \phi}_G.
    \end{split}
\end{equation*}
\end{proof}

\subsubsection*{Examples}

Proposition~\ref{prop_FrobRecip} may seem complicated and technical, so we give some usage examples.

\begin{example}
Consider $\phi = \textbf{1}$ and $\VecLambda = \binom{(1, 1)}{\textbf{1}}$.
Then $\Ind\textbf{1} = \#\textup{fixed points} = \std + \textbf{1} = \chi_{(n-1, 1)} + \chi_{(n)}$ 
and $\s\IndVecLambda$ decomposes as 
$\prn*{\Ind\textbf{1}}^2 = 2 + 3\cdot \std + \chi_{(n-2, 2)} + \chi_{(n-2, 1, 1)}$
so $\inner*{\prn*{\Ind\textbf{1}}^2, \Ind\textbf{1}} = 5$, and indeed there are $4$ sub-partitions of $\VecLambda$, which correspond to taking the empty partition (that contributes $\inner*{\textbf{1}, \textbf{1}} = 1$ to the sum), the first cycle ($\inner*{\Ind\textbf{1}, \textbf{1}} = 1$), the second (contributes $1$), or both ($\inner*{\Ind\textbf{1}^2, \textbf{1}} = 2$). Indeed $1 + 1 + 1 + 2 = 5$.
Moreover we see that the partitions which are not all of $(1, 1)$ give the coefficient of the standard character ($3$). 
This suits the result in Corollary~\ref{corollary_nonempty_irr}.
\end{example}

We continue to more complicated examples:

\begin{example}
For $G = C_2$ denote $\hat{C_2} = \{\textbf{1}, \textbf{-1}\}$.
For $\VecLambda = \prn*{\ontop{(1)}{\textbf{1}}; \ontop{(1)}{\textbf{-1}}} $, 
we have\\
$ \s\IndVecLambda 
= \Ind\textbf{1}\cdot \Ind(\textbf{-1}) 
= \chi_{\prn*{\ontop{(n-2, 1)}{\textbf{1}}; \ontop{(1)}{\textbf{-1}}}} 
+ 2\cdot \chi_{\prn*{\ontop{(n-1)}{\textbf{1}}; \ontop{(1)}{\textbf{-1}}}} 
$
and

\begin{table}[ht!]
\centering
\begin{tabular}{|| c | c | c | c | c ||} 
\hline
$\vec{\tau}$ 
& $\s\Ind\prn*{\VecLambda \setminus \Vec{\tau}}$
& $\inner*{\s\Ind\prn*{\VecLambda \setminus \Vec{\tau}}, \textbf{1}}$
& $\s\prn*{\vec{\tau}}$ 
& $\inner*{\s\prn*{\vec{\tau}}, \phi}$ \\ [0.5ex] 
\hline\hline
$\emptyset$
& $\Ind\textbf{1}\cdot \Ind(\textbf{-1}) $ 
& $0$
& $\textbf{1}$ 
& $\bbone_{\phi=\textbf{1}}$\\  [0.5ex] \hline

$\prn*{\ontop{(1)}{\textbf{1}}}$
& $\Ind(\textbf{-1}) $ 
& $0$
& $\textbf{1}$ 
& $\bbone_{\phi=\textbf{1}}$\\  [0.5ex]\hline

$\prn*{\ontop{(1)}{\textbf{-1}}}$
& $\Ind\textbf{1} $ 
& $1$
& $\textbf{-1}$ 
& $\bbone_{\phi=\textbf{-1}}$\\  [0.5ex]\hline

$\prn*{\ontop{(1)}{\textbf{1}}; \ontop{(1)}{\textbf{-1}}}$
& $ \textbf{1} $ 
& $1$
& $\textbf{-1}$ 
& $\bbone_{\phi=\textbf{-1}}$\\  [0.5ex]\hline

 \hline
\end{tabular}
\label{table:frobenius_reciprocity_example_1}
\end{table}
\FloatBarrier

\end{example}

\begin{example}
For $G = S_3$ denote $\hat{S}_3 = \{1, \sgn, \std\}$. For 
$\VecLambda = \prn*{\ontop{\emptyset}{\textbf{1}}; \ontop{\emptyset}{\sgn}; \ontop{(2)}{\std}}$,
the decomposition of $\IndVecLambda$ into (stable) irreducible characters is
$\IndVecLambda = \textbf{1} + \chi_{\textbf{1}} - \chi_{\sgn} + \chi_{\std}, $
and 
\begin{table}[ht!]
\centering
\begin{tabular}{|| c | c | c | c | c ||} 
\hline
$\vec{\tau}$ 
& $\s\Ind\prn*{\VecLambda \setminus \Vec{\tau}}$
& $\inner*{\s\Ind\prn*{\VecLambda \setminus \Vec{\tau}}, \textbf{1}}$
& $\s\prn*{\vec{\tau}}$ 
& $\inner*{\s\prn*{\vec{\tau}}, \phi}$ \\ [0.5ex] 
\hline\hline
$\emptyset$
& $\textup{\Ind(std)}^{(2)} $ 
& $1$
& $\textbf{1}$ 
& $\bbone_{\phi=\textbf{1}}$\\  [1ex] \hline

$\prn*{\ontop{(2)}{\std}}$
& $\textbf{1} $ 
& $1$
& $\std^{(2)}$ 
& $\bbone_{\phi=\textbf{1}} - \bbone_{\phi=\sgn} + \bbone_{\phi=\std}$\\  [1ex]\hline

 \hline
\end{tabular}
\label{table:frobenius_reciprocity_example_4}
\end{table}
\FloatBarrier
\end{example}

\subsubsection*{Putting it all together}

\begin{corollary}
\label{corollary_nonempty_irr}
For every $\phi\in \hat{G}$ and $\VecLambda\in \PGhat$, we have
\[ \inner*{\s\IndVecLambda, \chi_{\phi}}_{G_n} = \sum_{\emptyset\neq \vec{\tau}\le \VecLambda} 
        \inner*{\s\Ind\prn*{\VecLambda \setminus \Vec{\tau}}, \textbf{1}}_{G_{n-1}}\cdot
        \inner*{\s\prn*{\vec{\tau}}, \phi}_G. \]
\end{corollary}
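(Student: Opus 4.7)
The plan is to derive this directly from Proposition~\ref{prop_FrobRecip} by using the identity $\chi_{\phi} = \Indphi - \bbone_{\phi = \textbf{1}}\cdot \textbf{1}$ (from the discussion after Definition~\ref{def_Ind_phi}), and then showing that the $\vec{\tau} = \emptyset$ term of the Frobenius reciprocity sum cancels precisely with the correction term coming from $-\bbone_{\phi = \textbf{1}}$.

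First I would expand by linearity of the inner product:
\[\inner*{\s\IndVecLambda, \chi_{\phi}}_{G_n} = \inner*{\s\IndVecLambda, \Indphi}_{G_n} - \bbone_{\phi = \textbf{1}}\cdot \inner*{\s\IndVecLambda, \textbf{1}}_{G_n}.\]
Then I would apply Proposition~\ref{prop_FrobRecip} to the first term and split off the $\vec{\tau} = \emptyset$ summand:
\[\inner*{\s\IndVecLambda, \Indphi}_{G_n} = \inner*{\s\IndVecLambda, \textbf{1}}_{G_{n-1}}\cdot \inner*{\s(\emptyset), \phi}_G + \sum_{\emptyset \neq \vec{\tau}\le \VecLambda} \inner*{\s\Ind\prn*{\VecLambda \setminus \vec{\tau}}, \textbf{1}}_{G_{n-1}}\cdot \inner*{\s(\vec{\tau}), \phi}_G.\]

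Next I would observe that $\s(\emptyset)$ is the empty product, which by definition equals the constant function $\textbf{1}$ on $G$, so $\inner*{\s(\emptyset), \phi}_G = \inner*{\textbf{1}, \phi}_G = \bbone_{\phi = \textbf{1}}$. Thus the $\vec{\tau} = \emptyset$ term equals $\bbone_{\phi = \textbf{1}}\cdot \inner*{\s\IndVecLambda, \textbf{1}}_{G_{n-1}}$. Invoking Corollary~\ref{corollary_stabilization}, for $n\ge \abs*{\abs*{\VecLambda}}$ this equals $\bbone_{\phi = \textbf{1}}\cdot \inner*{\s\IndVecLambda, \textbf{1}}_{G_n}$, which is exactly the subtracted correction term. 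The cancellation yields the claimed identity.

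There is essentially no obstacle here; the whole argument is bookkeeping once Proposition~\ref{prop_FrobRecip} and the stabilization Corollary~\ref{corollary_stabilization} are in hand. The only subtle point worth stating carefully is that the stabilization guarantees the two copies of $\inner*{\s\IndVecLambda, \textbf{1}}$ (one in $G_{n-1}$, one in $G_n$) agree for $n$ large enough, which is exactly the regime in which the statement $\chi_{\phi}\in \widehat{G\wr S_{\bullet}}$ is itself meaningful.
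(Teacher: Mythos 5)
Your proof is correct and follows essentially the same route as the paper's (which simply declares the result "immediate from Proposition~\ref{prop_FrobRecip}, since $\s(\emptyset)=\textbf{1}$"). You have usefully made explicit both the cancellation between the $\vec{\tau}=\emptyset$ summand and the $-\bbone_{\phi=\textbf{1}}$ correction from $\chi_{\phi}=\Indphi-\bbone_{\phi=\textbf{1}}$, and the role of Corollary~\ref{corollary_stabilization} in identifying $\inner*{\s\IndVecLambda,\textbf{1}}_{G_{n-1}}$ with $\inner*{\s\IndVecLambda,\textbf{1}}_{G_n}$, which the paper leaves implicit.
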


\begin{proof}
Immediate from Proposition~\ref{prop_FrobRecip}, since $\s(\emptyset) = \textbf{1}$ so
$ \inner*{\s\prn*{\vec{\tau}}, \phi}_G = \bbone_{\phi=\textbf{1}}. $
\end{proof}

Recall that a morphism $w\to H$ of groups (and the corresponding morphism on core graphs) is said to be critical if it is not free and $\textup{rk}(H) = \pi(w)$ (in particular it is algebraic).

We cite the classification theorem from \cite[Chapter 7]{HP22}:
\begin{theorem}[Classification of Critical Morphisms]
\label{thm_classification}
Let $1\neq w\in F$ be a non power, and let $\lambda\vdash d$. The set of algebraic morphisms $\eta\colon w^{\lambda}\to \Gamma$ with $-\pi(w) < \chi(\Gamma) < 0$ 
is precisely the set of possible outcomes of the following procedure:
\begin{enumerate}
    \item Choose a cyclic quotient $w^{\mu}$ of $w^{\lambda}$ with a marked $w^1$ cycle. (Choices of different marked cycles in the same $w^{\mu}$ are considered different).
    \item Choose a critical morphism $w\to H$ of core graphs and compose it on the marked cycle, leaving the other cycles untouched.
\end{enumerate}
The image $\Gamma$ of the resulting morphism has $\chi(\Gamma) = 1 - \pi(w)$, and has a unique non-cyclic connected component.
\end{theorem}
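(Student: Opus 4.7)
The plan is to establish both inclusions: every outcome of the two-step procedure is an algebraic morphism with $\chi(\Gamma)=1-\pi(w)\in(-\pi(w),0)$, and conversely every algebraic $\eta\colon w^{\lambda}\to \Gamma$ with $-\pi(w)<\chi(\Gamma)<0$ arises from the procedure.

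For the forward direction, I would verify each step. A cyclic quotient $w^{\lambda}\twoheadrightarrow w^{\mu}$ is a covering of cyclic graphs, hence algebraic with $\chi=0$. A critical morphism $w\to H$ is algebraic because, by the characterization of $\textup{Crit}(w)$ in terms of proper algebraic extensions of $\langle w\rangle$ (Definition~\ref{def_witnesses_full}), the inclusion $\langle w\rangle\le_{\alg} \pi_1(H)$ holds. Composition of algebraic morphisms is algebraic, so the resulting $\eta$ is algebraic; its target has one non-cyclic component $H$ with $\chi(H)=1-\pi(w)$ together with cyclic $w^d$-loops, giving $\chi(\Gamma)=1-\pi(w)$.

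For the converse, decompose $\Gamma=\bigsqcup_j \Gamma_j$ and set $w^{\mu_j}=\eta^{-1}(\Gamma_j)$. Each restricted $\eta_j\colon w^{\mu_j}\to \Gamma_j$ is algebraic, since a non-trivial free factorization of $\eta_j$ would extend---by identity on the other components---to a free factorization of $\eta$, contradicting algebraicity of $\eta$. Each cyclic component $\Gamma_j$ (with $\chi(\Gamma_j)=0$) must be a $w^{d_j}$-loop, because algebraic maps between cyclic graphs are covering maps and the source cycles are powers of the non-power $w$. The heart of the argument is the lemma that every non-cyclic component $\Gamma_j$ satisfies $\textup{rk}(\pi_1(\Gamma_j))\geq \pi(w)$. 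I would first observe that each source cycle $w^{\mu_{j,i}}$ maps to a $w^{m_{j,i}}$-loop in $\Gamma_j$ with $m_{j,i}\mid \mu_{j,i}$, since the $w$-roots of the source descend under the map to a single $\mathbb{Z}/m_{j,i}$-orbit of the action ``walk $w$'' in $\Gamma_j$. If every $m_{j,i}\geq 2$, the immersion property forces any two such $w^{m}$-loops sharing a vertex to share the entire outgoing $w$-path, and iterating this collapses the connected graph $\Gamma_j$ to a single cyclic loop, contradicting non-cyclicity. Hence some $m_{j,i}=1$, producing a $w$-loop in $\Gamma_j$ based at a vertex $v$, so $w\in \pi_1(\Gamma_j,v)$. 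Algebraicity of $\eta_j$ prevents $\langle w\rangle$ from being a proper free factor of $\pi_1(\Gamma_j)$, so $w$ is non-primitive there, and $\textup{rk}(\pi_1(\Gamma_j))\geq \pi(w)$ by the definition of the primitivity rank.

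Given the lemma, the bound $\chi(\Gamma)>-\pi(w)$ rules out two non-cyclic components---which would contribute $\chi\leq 2(1-\pi(w))\leq -\pi(w)$ when $\pi(w)\geq 2$---while $\chi(\Gamma)<0$ forces at least one. So $\Gamma$ has a unique non-cyclic component $\Gamma_0$ with $\chi(\Gamma_0)=1-\pi(w)$ and $\textup{rk}(\pi_1(\Gamma_0))=\pi(w)$, making $w\to \Gamma_0$ a critical morphism. All source cycles mapping to $\Gamma_0$ factor through a single marked $w$-cycle via a cyclic quotient, and combining with the cyclic quotients on the other components realizes $\eta$ as an outcome of the procedure. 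The main obstacle is the immersion-based collapse argument in the key lemma, which requires careful tracking of how identifications propagate along $w$-paths; the rest is Euler characteristic bookkeeping and standard properties of algebraic morphisms from Section~\ref{section_morphisms_free_alg}.
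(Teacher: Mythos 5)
The paper does not prove this theorem: it is cited verbatim from \cite[Chapter 7]{HP22}, so there is no ``paper's own proof'' against which to compare your argument. What follows is a review of your argument on its own terms.

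Your overall architecture (forward: the procedure yields algebraic morphisms with the right $\chi$; converse: decompose $\Gamma$, restrict to components, bound ranks, isolate a unique non-cyclic $\Gamma_0$, refactor) is the right shape, and the forward direction, while compressed, is essentially sound once one checks that surjective morphisms of cyclic multi core graphs onto cyclic targets are always algebraic (the key observation being that $\mathbb{Z}$ has no proper non-trivial free factors) and that a critical inclusion $\langle w\rangle\hookrightarrow H$ is algebraic (if $\langle w\rangle$ sat in a proper free factor $J\leff H$, then $\textup{rk}(J)<\pi(w)$ would force $w$ primitive in $J$ and hence in $H$). The converse direction, however, has genuine gaps concentrated in the ``key lemma'' and the final refactoring step. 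First, the image of a source cycle $w^{\mu_{j,i}}$ in $\Gamma_j$ is the set of edges traced by a closed $w^{m_{j,i}}$-walk, but this need \emph{not} be an embedded cycle (for example, a commutator word already traces out a $\chi<0$ subgraph from a single vertex). The ``collapse'' step then compounds this: two $w^m$-walks meeting at a vertex need not agree on the outgoing path unless they are at the same phase of $w$ at that vertex, and a single closed $w^m$-walk can revisit a vertex at several different phases. So the claim that $\Gamma_j$ collapses to a cycle when all $m_{j,i}\ge 2$ is not established; one would need to track and rule out phase mismatches, and your proposal explicitly waves past this.

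The second, and to my mind more serious, gap is at the end. Granting that some vertex $v_0\in\Gamma_0$ supports a closed $w$-path and that $\textup{rk}(\pi_1(\Gamma_0))=\pi(w)$, you assert ``All source cycles mapping to $\Gamma_0$ factor through a single marked $w$-cycle via a cyclic quotient'' without any argument. This is the crux of the classification. Writing $J=\pi_1(\Gamma_0,p)$ and letting $u_a\langle w^{\mu_{0,a}}\rangle u_a^{-1}\le J$ be the conjugates corresponding to the different source cycles, the claim amounts to saying that all the coset representatives $u_a$ lie in a common coset of $\langle w\rangle$, and a priori nothing forces this. Relatedly, your inference ``algebraicity of $\eta_j$ prevents $\langle w\rangle$ from being a proper free factor of $\pi_1(\Gamma_j)$'' conflates two different algebraicity statements: algebraicity of $\eta_j$ concerns the whole multiset $\{u_a\langle w^{\mu_{0,a}}\rangle u_a^{-1}\}_a$, not the single group $u_1\langle w\rangle u_1^{-1}$, and the implication only goes through cleanly if one already knows the other source cycles also sit under the same conjugate of $\langle w\rangle$ — which is exactly the unproven refactoring claim. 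So the argument is circular at precisely the point where real work is needed, and the proposal as written does not give a proof.
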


Recall that 
$\eta_{\VecLambda}\colon w^{\VecLambda}\to \Omega_B$ 
is the unique morphism to the bouquet.
The following proposition is the key for our main result.

\begin{proposition}
\label{prop_key}
Let $w\in F_r$ be a non power, and $\VecLambda\in \PGhat$. Then
\begin{equation}
    \begin{split}
        \sum_{\substack{(\eta, \eta')\in \Decompalg(\eta_{\VecLambda}):\\ \chi(\textup{Im}(\eta)) = 1 - \pi(w)}} \EX_{\eta}[\VecLambda]
        &=
        \sum_{\substack{\phi\in \hat{G}:\\ \pi_{\phi}(w) = \pi(w)}} \inner*{\s\IndVecLambda, \chi_\phi} \cdot \mathscr{C}_{\phi}(w)
    \end{split}
\end{equation}
\end{proposition}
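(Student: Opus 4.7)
The strategy is to show that both sides equal the common expression
\[
S \defeq \sum_{\emptyset \neq \vec{\tau} \le \VecLambda} \inner*{\s\Ind(\VecLambda \setminus \vec{\tau}), \textbf{1}} \cdot \sum_{H \in \textup{Crit}(w)} \EX_{w \to H}[\s(\vec{\tau})].
\]

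First I would transform the RHS into $S$. By Equation~\eqref{eq_def_crit_phi_pi}, $\Cphipi(w) = \sum_{H \in \textup{Crit}(w)} \EX_{w \to H}[\phi]$ for every $\phi \in \hat{G}$ (vanishing automatically when $\pi_\phi(w) > \pi(w)$), so I can drop the restriction $\pi_\phi(w) = \pi(w)$ and sum over all $\phi$. Next, expand $\inner*{\s\IndVecLambda, \chi_\phi}$ using the Frobenius reciprocity formula of Corollary~\ref{corollary_nonempty_irr} and swap the order of summation. For each fixed $H$ and $\vec{\tau}$, the inner sum $\sum_{\phi \in \hat{G}} \inner*{\s(\vec{\tau}), \phi} \cdot \EX_{w \to H}[\phi]$ collapses to $\EX_{w \to H}[\s(\vec{\tau})]$ via the Fourier expansion $\s(\vec{\tau}) = \sum_\phi \inner*{\s(\vec{\tau}), \phi}\, \phi$ and linearity of $\EX_{w \to H}$, yielding $S$.

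Next I would transform the LHS into $S$ using the classification theorem (Theorem~\ref{thm_classification}). Every algebraic morphism $\eta \colon w^{\VecLambda} \twoheadrightarrow \Gamma$ with $\chi(\Gamma) = 1 - \pi(w)$ arises from choosing a non-empty sub-multipartition $\vec{\tau} \le \VecLambda$ (whose cycles are the ones funneled through the marked $w^1$-loop), a critical subgroup $H \in \textup{Crit}(w)$ (the unique non-cyclic component of $\Gamma$ up to isomorphism), and a cyclic quotient of the remaining cycles $w^{\VecLambda \setminus \vec{\tau}} \twoheadrightarrow w^{\mu'}$. Because the image decomposes as $\Gamma = H \sqcup w^{\mu'}$ and random homomorphisms on disjoint components are independent, Observation~\ref{observe_s} together with Proposition~\ref{prop_geometric_interpretation_of_E_eta} give the factorization
\[
\EX_\eta[\VecLambda] = \EX_{w \to H}[\s(\vec{\tau})] \cdot \EX_{\eta_{\textup{cyc}}}[\VecLambda \setminus \vec{\tau}].
\]
The critical factor is $\EX_{w \to H}[\s(\vec{\tau})]$ because each cycle $w^{p}$ (with $p$ a part of $\vec{\tau}$ carrying character $\zeta_p$) wraps $|p|$ times around the $w$-loop in $H$, contributing $\zeta_p^{(|p|)}(\alpha(w))$, whose product over the parts of $\vec{\tau}$ is exactly $\s(\vec{\tau})(\alpha(w))$. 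Summing the cyclic-quotient contributions over all $\mu'$ and $\eta_{\textup{cyc}}$ yields $\inner*{\s\Ind(\VecLambda \setminus \vec{\tau}), \textbf{1}}$ by the inner product formula (Corollary~\ref{corollary_inner_product_formula}), producing $S$ a second time and completing the identification LHS $=$ RHS.

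The hard part will be the bookkeeping of automorphism-based equivalences. I would need to verify that the $\Decompalg$-equivalence on decompositions (up to isomorphism of the intermediate $\Gamma = H \sqcup w^{\mu'}$) matches the $1/z_{\mu'}$ weighting built into Corollary~\ref{corollary_inner_product_formula} for the cyclic-part sum, and that critical graph morphisms $w \to H$ considered up to isomorphism of $H$ are in bijection with the conjugacy classes $H \in \textup{Crit}(w)$ used in the definition of $\mathscr{C}_\phi(w)$. The classification (Theorem~\ref{thm_classification}) already distinguishes the marked cycle from the rest and confines the non-cyclic component to a single critical $H$, so once these combinatorial identifications are in place the two sides align formally as above.
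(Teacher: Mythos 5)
Your proposal is correct and follows essentially the same approach as the paper's proof: the paper runs the computation one-directionally from LHS to RHS, but it passes through precisely the intermediate expression $S$ you identify (displayed with the $\phi$-sum already expanded, so that the grouping $\sum_{\phi}\inner*{\s(\vec\tau),\phi}\EX_{w\to H}[\phi]=\EX_{w\to H}[\s(\vec\tau)]$ appears implicitly between its two middle lines). The key ingredients — Theorem~\ref{thm_classification} for the structure of critical morphisms from $w$-graphs, Observation~\ref{observe_s} for the factorization $\EX_{\eta}[\VecLambda]=\EX_{w\to H}[\s(\vec\tau)]\cdot\EX_{\eta_0\restriction}[\VecLambda\setminus\vec\tau]$, Corollary~\ref{corollary_inner_product_formula} (with its $1/z_\tau$ automorphism weighting) to collapse the cyclic-part sum to $\inner*{\s\Ind(\VecLambda\setminus\vec\tau),\mathbf{1}}$, and Corollary~\ref{corollary_nonempty_irr} plus Equation~\eqref{eq_def_crit_phi_pi} on the $\phi$-side — are identical to the paper's, and the automorphism bookkeeping you flag as the hard part is handled in exactly the way you anticipate.
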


\begin{proof}
Fix an arbitrary basis $B\subseteq F_r$.
By the classification theorem (Theorem~\ref{thm_classification}), algebraic morphisms $\eta\colon \Gamma_B(w^{\VecLambda}) \to \Gamma$ with $\chi(\Gamma) = 1 - \pi(\Gamma)$ are exactly $\eta_1\circ \eta_0$ where $\eta_0$ is a morphism of $w$-graphs with a marked $w^1$ cycle and $\eta_1$ is the identity on non-marked cycles and critical (of the form $w^1\to H$) on the marked cycle.
Denote by $w^{\vec{\tau}} \defeq \eta_0^{-1}(w^1)$ the cycles which $\eta_0$ sends to the marked cycle.
This is illustrated in the following diagram, 
where OOooo represents the graph $\Gamma_B\prn*{w^{(2, 2, 1, 1, 1)}}$ and 
$\otimes$ represents a quotient graph of $\Gamma_B(w)$. 
The cycles on which $\eta$ acts non-trivially are \textcolor{orange}{highlighted}: 

\[\begin{tikzcd}
	{\Gamma_B(w^{\VecLambda})=\textrm{O\textcolor{orange}{Oo}oo}} && {\Gamma_B(w^{\vec{\tau}})=\textrm{\textcolor{orange}{Oo}}} \\
	& {\Gamma_B(w^{\vec{\mu}})=\textrm{O\textcolor{orange}{o}oo}} & {\Gamma_B(w)=\textrm{\textcolor{orange}{o}}} \\
	{\Gamma=\textrm{O}\textcolor{orange}\otimes\textrm{oo}} && {\Gamma_B(H)=\textcolor{orange}{\otimes}}
	\arrow["{\eta_0}", two heads, from=1-1, to=2-2]
	\arrow["{\eta_1}", two heads, from=2-2, to=3-1]
	\arrow["\eta"', two heads, from=1-1, to=3-1]
	\arrow["{\eta_0\restriction_{\Gamma_B(w^{\vec{\tau}})}}"', two heads, from=1-3, to=2-3]
	\arrow["{\eta_1\restriction_{\Gamma_B(w)}}"', two heads, from=2-3, to=3-3]
	\arrow[dotted, hook', from=1-3, to=1-1]
	\arrow[dotted, hook', from=2-3, to=2-2]
	\arrow[dotted, hook', from=3-3, to=3-1]
\end{tikzcd}\]

By Observation~\ref{observe_s}, 
\begin{equation}
\label{equation_decompose_to_tau_and_rest}
    \begin{split}
        \EX_{\eta}\brackets*{\VecLambda} 
        &= \EX_{\eta_1\circ \eta_0}\brackets*{\VecLambda} \\
        &= \EX_{\eta_0 \restriction_{w^{\VecLambda \setminus \Vec{\tau}}}} 
        \brackets*{\VecLambda \setminus \Vec{\tau}} \cdot \EX_{\eta_1\circ \eta_0\restriction_{w^{\vec{\tau}}}} \brackets*{\vec{\tau}} \\
        (\textup{Observation }\ref{observe_s}) &= \EX_{\eta_0 \restriction_{w^{\VecLambda \setminus \Vec{\tau}}}}
        \brackets*{\VecLambda \setminus \Vec{\tau}} \cdot \EX_{w\to H}\brackets*{\s\prn*{\vec{\tau}}} \\
        &= \EX_{\eta_0 \restriction_{w^{\VecLambda \setminus \Vec{\tau}}}
        }\brackets*{\VecLambda \setminus \Vec{\tau}} \cdot \sum_{\phi\in \hat{G}} \inner*{\s\prn*{\vec{\tau}}, \phi} \EX_{w\to H}\brackets*{\phi}.  
    \end{split}
\end{equation}
Denote the set of cyclic quotients of which $\eta_{\VecLambda}$ factors through by 
\[ \textup{Quot}_0\prn*{\eta_{\VecLambda}} \defeq \braces*{\eta_0: \prn*{\eta_0, \eta'}\in \DecompB\prn*{\eta_{\VecLambda}}, \chi\prn*{\textup{Im}(\eta_0)} = 0}. \]
Given two morphisms $\eta_i\colon \Gamma_i\to \Delta_i, \, i\in \{1, 2\}$ we denote their disjoint union by $\eta_1\uplus \eta_2\colon \Gamma_1\uplus \Gamma_2 \to \Delta_1\uplus \Delta_2$.
Now we sum over all decompositions $(\eta_0, \eta_1)$ as above (and remember that $\vec{\tau}$, the pre-image of the marked $w^1$ cycle, should not be empty):
\allowdisplaybreaks
    \begin{align*}
        \sum_{\substack{(\eta, \eta')\in \Decompalg(\eta_{\VecLambda}):\\ \chi(\textup{Im}(\eta)) = 1 - \pi(w)}} \EX_{\eta}[\VecLambda] &= \\
        (\textup{Theorem }\ref{thm_classification}) \quad\quad &\overset{}{=} 
        \sum_{\substack{\eta_0\in \textup{Quot}_0(\eta_{\VecLambda}):\\ w^1\in \textup{Im}(\eta_0)}} \sum_{\substack{\vec{\tau}\le \VecLambda:\\ \eta_0\prn*{\vec{\tau}} = w^1}} \sum_{H\in \textup{Crit}(w)} \EX_{\prn*{ (w\to H)\uplus id_{w^{\VecLambda \setminus \Vec{\tau}}} }\circ \eta_0}[\VecLambda] \\
        (\textup{equation }\eqref{equation_decompose_to_tau_and_rest}) \quad\quad &\overset{}{=} 
        \sum_{\substack{\eta_0\in \textup{Quot}_0(\eta_{\VecLambda}):\\ w^1\in \textup{Im}(\eta_0)}} \sum_{\substack{\vec{\tau}\le \VecLambda:\\ \eta_0\prn*{\vec{\tau}} = w^1}} \sum_{H\in \textup{Crit}(w)} \EX_{\eta_0 \restriction_{w^{\VecLambda \setminus \Vec{\tau}}}
        }[\VecLambda \setminus \Vec{\tau}] \cdot \sum_{\phi\in \hat{G}} \inner*{\s\prn*{\vec{\tau}}, \phi} \EX_{w\to H}[\phi] \\
        (\textup{Corollary }\ref{corollary_inner_product_formula}) \quad\quad &\overset{}{=}
        \sum_{\substack{\emptyset \neq \vec{\tau}\le \VecLambda}} 
        \inner*{\s\Ind\prn*{\VecLambda \setminus \Vec{\tau}}, \textbf{1}}   
        \sum_{H\in \textup{Crit}(w)} \sum_{\phi\in \hat{G}} \inner*{\s\prn*{\vec{\tau}}, \phi} \EX_{w\to H}[\phi] \\
        (\textup{Corollary }\ref{corollary_nonempty_irr}) \quad\quad &\overset{}{=}   
        \sum_{\phi\in \hat{G}}  
        \inner*{\s\IndVecLambda, \chi_{\phi}} \sum_{H\in \textup{Crit}(w)} \EX_{w\to H}[\phi] \\
        (*) \quad\quad &\overset{}{=} 
        \sum_{\substack{\phi\in \hat{G}:\\ \pi_{\phi}(w) = \pi(w)}} 
        \inner*{\s\IndVecLambda, \chi_{\phi}} \sum_{H\in \textup{Crit}(w)} 
        \EX_{w\to H}[\phi]\\
        (\textup{Definition}~\eqref{eq_def_crit_phi_pi}) \quad\quad & \overset{}{=} 
        \sum_{\substack{\phi\in \hat{G}:\\ \pi_{\phi}(w) = \pi(w)}} 
        \inner*{\s\IndVecLambda, \chi_{\phi}} \mathscr{C}_{\phi}(w).
    \end{align*}
To justify the $(*)$ step, assume that $\phi\in \hat{G} $ has $\pi_{\phi}(w) > \pi(w) = \textup{rk}(H)$: then $H\not \in \textup{Wit}_{\phi}(w)$, i.e.\ $\EX_{w\to H}[\phi] = 0$.
\end{proof}

Now we prove formula~\eqref{equation_equivalent_thm_main_result} and finish the proof of the main result, Theorem~\ref{thm_main_result}.
Recall the formula~\eqref{equation_equivalent_thm_main_result}: For every non-power $w\in F_r$ and $\psi\in \mathcal{A}(G)$,
\begin{equation*}
    \EX_w[\psi] = \inner*{\psi, \mathbf{1}} + \sum_{\phi\in \hat{G}} \inner*{\psi, \chi_{\phi}} \EX_w[\chi_{\phi}] + O\prn*{n^{-\pi(w)}}.
\end{equation*}

The proof is similar to that of \cite[Theorem 7.2]{HP22}. Recall $\Decompalg^3$ from Definition~\ref{def_mobius_alg}.
\begin{proof}
It is sufficient to prove for the basis $\braces*{\s\IndVecLambda}_{\VecLambda\in \PledGhat}$, so let $\VecLambda\in \PGhat$ and denote $\psi \defeq \s\IndVecLambda$.
By Corollary~\ref{corollary_AICL},
\begin{equation}
\label{eq_C}
\begin{split}
    \EX_w[\psi]
    = \EX_{\eta_{\VecLambda}}[\Ind \VecLambda] 
    &= \sum_{(\eta_1, \eta_2)\in \Decompalg(\eta_{\VecLambda})} \EX_{\eta_1}\brackets*{\VecLambda} \cdot L_{\eta_2}^{\alg}(n) \\
    &= \sum_{(\eta_1, \eta_2, \eta_3)\in \Decompalg^3(\eta_{\VecLambda})} \EX_{\eta_1}\brackets*{\VecLambda} \cdot C_{\eta_2}^{\alg}(n).
\end{split}
\end{equation}
Now we split the summation into three parts: \\

\underline{\textbf{Case I:} $\chi(\textup{Im}(\eta_2)) = 0$}. 
By \cite[Lemma 6.15]{HP22}, 
$ C_{\eta_2}^{\alg}(n) = \bbone_{\eta_2 = id}$, 
and thus by Corollary~\ref{corollary_inner_product_formula}, 
\[ \sum_{\substack{(\eta_1, \eta_2, \eta_3)\in \Decompalg^3(\eta_{\VecLambda}):\\ \chi(\textup{Im}(\eta_2)) = 0}} \EX_{\eta_1}\brackets*{\VecLambda} \cdot C_{\eta_2}^{\alg}(n) = \inner*{\psi, \textbf{1}}.\] 

\underline{\textbf{Case II:} $\chi(\textup{Im}(\eta_2)) = 1-\pi(w)$ and $\eta_2$ is an isomorphism}. \\
By \cite[Corollary 6.14]{HP22}, 
$ C_{\eta_2}^{\alg}(n) = n^{\chi(\textup{Im}(\eta_1))} = n^{\chi(\textup{Im}(\eta_2))} = n^{1-\pi(w)}$.
There is exactly one such decomposition in $\Decompalg^3$ for every critical $\eta_1$, and so the total contribution is 
\begin{equation*}
    \begin{split}
        \sum_{\substack{(\eta_1, \eta_3)\in \Decompalg(\eta_{\VecLambda}):\\ \chi(\textup{Im}(\eta_1)) = 1 - \pi(w)}} \EX_{\eta_1}\brackets*{\VecLambda} C_{\eta_2}^{\alg}(n) &=\\
        (\textup{Proposition}~\ref{prop_key}) \quad\quad\quad\quad &\overset{}{=}
        \sum_{\substack{\phi\in \hat{G}:\\ \pi_{\phi}(w) = \pi(w)}} \inner*{\s\IndVecLambda, \chi_\phi} \cdot \mathscr{C}_{\phi}(w) n^{1-\pi(w)} \\
        (\textup{Proposition}~\ref{prop_approx_lin_dim_char}) \quad\quad\quad\quad &\overset{}{=} \sum_{\phi\in \hat{G}} \inner*{\psi, \chi_{\phi}} \EX_w[\chi_{\phi}] + O\prn*{n^{-\pi(w)}}.
    \end{split}
\end{equation*}

\underline{\textbf{All remaining terms in } (\ref{eq_C})}: 
For each term, either $\chi(\textup{Im}(\eta_2)) \le -\pi(w)$ or $\chi(\textup{Im}(\eta_2)) = 1 - \pi(w)$ but $\eta_2$ is not an isomorphism, and \cite[Corollary 6.14]{HP22} yields that $C_{\eta_2}^{\alg}(n) = O\prn*{n^{-\pi(w)}}$.
\end{proof}

\section{Expansion of Random Schreier Graphs}
\label{section_expansion}

In this section we use our results about word measures to deduce Theorem~\ref{thm_expander_wreath}:
Let $G$ be a finite group, $(X_n)_{n=1}^{\infty}$ a sequence of sets, and $\prn*{\rho_n\colon G\wr S_n\to \textup{Sym}(X_n)}_{n=1}^{\infty}$ a rep-stable sequence of transitive actions of degree $s$. Let $\Gamma_n\defeq \textup{Sch}_r\prn*{\rho_n}$.
Then, a.a.s.\ (as $n\to \infty$), the largest absolute value $\mu_{\Gamma_n}$ of a non-trivial eigenvalue of the adjacency matrix of $\Gamma_n$ satisfies
\[ \mu_{\Gamma_n} \le 2\sqrt{2r - 1}\cdot \exp\prn*{ \frac{2s^2}{e^2(2r-1)} }. \]
In particular for every fixed $s, \varepsilon > 0$, if $r$ is large enough, $\mu_{\Gamma_n} \le 2\sqrt{2r - 1} + \varepsilon$ a.a.s.\

To the end of this section, we denote for every stable class function $f\in \mathcal{A}(G)$ and $w\in F_r$,
\[ \mathscr{C}_w(f) \defeq \sum_{\substack{\phi\in \hat{G}:\\ \pi_{\phi}(w)=\pi(w)}} \inner*{f, \chi_\phi} \mathscr{C}_{\phi}(w). \]
As in \cite{HP22}, we denote by $\mathfrak{CR}_t({F}_r)$ the set of cyclically reduced words of length $t$ in $F_r$.

\begin{lemma}
    [{\cite[Lemma 8.5]{HP22}}]
\label{lemma_bound_coeff_L}
Let $\eta\colon \Gamma\to \Omega_B$ 
be the unique morphism from a multi core graph 
$\Gamma$ 
to the bouquet. 
Assume that $|V(\Gamma)|, |E(\Gamma)|\le T$.
Then the coefficients $b_p$ in the Laurent expansion 
$ L_{\eta}^B(n) = n^{\chi(\Gamma)} \sum_{p=0}^{\infty} b_p n^{-p} $ 
are bounded by $|b_p|\le T^{2p}$.
\end{lemma}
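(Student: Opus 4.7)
The plan is to start from the closed form given in Proposition~\ref{prop_LB}: since $\eta$ targets the bouquet $\Omega_B$ (one vertex and $r$ edges), with $V=|V(\Gamma)|$ and $e_i=|\eta^{-1}(b_i)|$, we get $L_\eta^B(n)=(n)_V/\prod_i(n)_{e_i}$. Factoring out $n^{\chi(\Gamma)}$ via the identity $(n)_k=n^k\prod_{j=1}^{k-1}(1-j/n)$ rewrites this as $L_\eta^B(n)=n^{\chi(\Gamma)}F(1/n)$, where
\[F(x)=\prod_{j=1}^{V-1}(1-jx)\cdot\prod_{i=1}^r\prod_{j=1}^{e_i-1}(1-jx)^{-1}\]
is a rational function holomorphic near $0$ with $F(0)=1$, so its Taylor coefficients $(b_p)_{p\ge 0}$ are well-defined and are precisely the $b_p$ of the lemma.

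The key step I would take is to extract a recursion from the logarithmic derivative. Writing $\log F(x)=-\sum_{m\ge 1}c_m x^m/m$ with the power sums
\[c_m=\sum_{j=1}^{V-1}j^m-\sum_{i=1}^r\sum_{j=1}^{e_i-1}j^m,\]
the identity $F'=-F\cdot\sum_{m\ge 1}c_m x^{m-1}$ produces the Newton-type recursion $pb_p=-\sum_{j=1}^p c_j b_{p-j}$ with $b_0=1$. A uniform bound on the $c_m$ will follow from the integral estimate $\sum_{k=1}^{M-1}k^m\le M^{m+1}/(m+1)$ and from $\sum_ie_i^{m+1}\le T^m\sum_ie_i\le T^{m+1}$, giving
\[|c_m|\le\frac{V^{m+1}+\sum_ie_i^{m+1}}{m+1}\le\frac{2T^{m+1}}{m+1}.\]

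The conclusion will then follow by induction on $p$. The base case $p=0$ is trivial, and $p=1$ is immediate: $b_1=-c_1=-\binom{V}{2}+\sum_i\binom{e_i}{2}$, where $\sum_ie_i^2\le E^2\le T^2$ yields the tight bound $|b_1|\le T^2$. For $p\ge 2$ and $T\ge 2$, the recursion and inductive hypothesis give
\[|b_p|\le\frac{1}{p}\sum_{j=1}^p|c_j|\,T^{2(p-j)}\le\frac{2T^{2p+1}}{p}\sum_{j\ge 1}\frac{1}{(j+1)T^j}\le\frac{T^{2p+1}}{p(T-1)}\le T^{2p},\]
where the last inequality uses $p(T-1)\ge T$, which holds for every $p\ge 2$ and $T\ge 2$. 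The degenerate cases $T\le 1$ (where $\Gamma$ is empty or a single loop) are checked directly, since then $F\equiv 1$ and all higher coefficients vanish. I expect the only real subtlety will be making sure the $1/(m+1)$ factor in $|c_m|$ is retained: without it the induction closes only with a constant $C\cdot T^{2p}$ rather than the clean bound $T^{2p}$, which is also why the base $p=1$ needs direct verification rather than being absorbed into the inductive step.
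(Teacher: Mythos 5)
Your proof is correct and self-contained. Note that the paper itself does not prove this lemma; it is cited verbatim from Hanany--Puder (\cite[Lemma 8.5]{HP22}), so there is no in-paper argument to compare against. I have checked your steps:

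The reduction $L_\eta^B(n) = n^{\chi(\Gamma)}F(1/n)$ with
$F(x)=\prod_{j=1}^{V-1}(1-jx)\cdot\prod_{i=1}^r\prod_{j=1}^{e_i-1}(1-jx)^{-1}$
is exact via $(n)_k = n^k\prod_{j=1}^{k-1}(1-j/n)$, and since $\Omega_B$ has one vertex and $r$ edges, Proposition~\ref{prop_LB} indeed gives $L_\eta^B(n)=(n)_V/\prod_i(n)_{e_i}$. The Newton recursion $pb_p=-\sum_{j=1}^p c_j b_{p-j}$, $b_0=1$, follows from matching coefficients in $F' = -F\sum_{m\ge 1}c_m x^{m-1}$. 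The bound $|c_m|\le 2T^{m+1}/(m+1)$ holds: the integral estimate gives $\sum_{j=1}^{V-1}j^m\le V^{m+1}/(m+1)\le T^{m+1}/(m+1)$, while $e_i\le E\le T$ gives $\sum_i e_i^{m+1}\le T^m\sum_i e_i\le T^{m+1}$. The induction closes: for $p\ge 2$, $T\ge 2$ one gets
$|b_p|\le \frac{2T^{2p+1}}{p}\sum_{j\ge 1}\frac{1}{(j+1)T^j}\le \frac{2T^{2p+1}}{p}\cdot\frac{1}{2(T-1)}=\frac{T^{2p+1}}{p(T-1)}\le T^{2p}$,
the last step being $p(T-1)\ge T$, valid since $p\ge 2\ge T/(T-1)$. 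The base case $p=1$ gives $|b_1|=|\binom{V}{2}-\sum_i\binom{e_i}{2}|\le T^2/2$, as both terms are $\le T^2/2$ (using $\sum_i e_i^2\le E^2$). And the degenerate cases $T\in\{0,1\}$ have $F\equiv 1$. Your meta-observation about the necessity of the $1/(m+1)$ factor and the need to verify $p=1$ separately is accurate --- without it the geometric sum only gives $p(T-1)\ge 2T$, which fails for small $p$ and $T$.
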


\begin{proposition}[Effective Bound on $\EX_w\brackets*{\s\IndVecLambda}$, generalizing {\cite[Lemma 8.6]{HP22}}]
\label{prop_effective_bound_sInd}
For $w\in \mathfrak{CR}_t(\textbf{F}_r)$, 
and 
$\VecLambda\in \mathscr{P}(\textup{char}(G))$ 
that has 
$\ell(\VecLambda)$ 
parts overall, denote by 
$T\defeq t\cdot \abs*{\abs*{\VecLambda}}$ 
the number of vertices (and edges) in $w^{\VecLambda}$. 
Then $\forall n > T^2$,
\[ \abs*{ \EX_w\brackets*{\s\IndVecLambda} - \inner*{\s\IndVecLambda, \mathbf{1}} - n^{1-\pi(w)}\cdot \mathscr{C}_w\prn*{\s\IndVecLambda} } 
\le \frac{\s\VecLambda(1)\cdot T^{2(\pi(w) + \ell(\VecLambda))}}{n^{\pi(w) - 1}\cdot (n - T^2)}. \]
\end{proposition}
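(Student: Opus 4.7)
The plan is to follow the proof of Theorem~\ref{thm_main_result} in Section~\ref{section_thm_main_result}, replacing its $O\prn*{n^{-\pi(w)}}$ error terms by effective Laurent-coefficient bounds coming from Lemma~\ref{lemma_bound_coeff_L}; this follows the strategy used for the $G=\{1\}$ case in \cite[Lemma 8.6]{HP22}. Starting from Equation~\eqref{eq_C},
\[
\EX_w\brackets*{\s\IndVecLambda}
=\sum_{(\eta_1,\eta_2,\eta_3)\in \Decompalg^3(\eta_{\VecLambda})} \EX_{\eta_1}[\VecLambda]\cdot C^{\alg}_{\eta_2}(n),
\]
I would apply the three-case split of Section~\ref{section_thm_main_result} by $\chi(\textup{Im}(\eta_2))$. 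Cases~I and~II contribute exactly $\inner*{\s\IndVecLambda,\mathbf{1}}$ and $n^{1-\pi(w)}\cdot\mathscr{C}_w\prn*{\s\IndVecLambda}$ by Corollary~\ref{corollary_inner_product_formula} and Proposition~\ref{prop_key}, respectively, so the quantity to bound is the absolute value of the Case~III sum.

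The bound on the Case~III sum rests on three ingredients. First, since every $\zeta_i$ attached to a cycle of $w^{\VecLambda}$ is a character of $G$ with $|\phi(g)|\le \phi(1)$, factoring over the $\ell(\VecLambda)$ cycles gives $\abs*{\EX_{\eta_1}[\VecLambda]}\le \s\VecLambda(1)$. Second, by Lemma~\ref{lemma_mobius_alg_vs_B} one can expand $C^{\alg}_{\eta_2}(n)$ as a finite signed sum of values $L^B_{\sigma}(n)$ over $B$-surjective morphisms $\sigma$ whose source has at most $T$ vertices and edges, so by Lemma~\ref{lemma_bound_coeff_L} each Laurent coefficient of $C^{\alg}_{\eta_2}(n)$ is bounded by $T^{2p}$. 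In Case~III we have $\chi(\textup{Im}(\eta_2))\le -\pi(w)$ (or $\chi(\textup{Im}(\eta_2))=1-\pi(w)$ with $\eta_2$ non-isomorphic, in which case the extra $\mu^{\alg}$-convolution costs one more power of $n$), so the Laurent expansion of $C^{\alg}_{\eta_2}(n)$ begins only at the $n^{-\pi(w)}$-term, and summing the geometric series yields
\[
\abs*{C^{\alg}_{\eta_2}(n)}\le \sum_{p\ge \pi(w)}\frac{T^{2p}}{n^p}=\frac{T^{2\pi(w)}}{n^{\pi(w)-1}\prn*{n-T^2}}\qquad\textup{for }n>T^2.
\]
Third, the number of Case~III triples is at most $T^{2\ell(\VecLambda)}$: each of the $\ell(\VecLambda)$ cycles of $w^{\VecLambda}$ has length at most $T$ and maps into a multi core graph with at most $T$ edges, contributing at most $T^2$ choices (an image edge together with its orientation). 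Multiplying these three bounds produces precisely the claimed right-hand side.

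The main obstacle is the bookkeeping in the second ingredient: unfolding $C^{\alg}_{\eta_2}(n)$ through two $\mu^{\alg}$-inversions and verifying that every surviving Laurent monomial has order $\ge n^{-\pi(w)}$ requires careful tracking of which intermediate graphs contribute and how their Euler characteristics combine. The analogous argument for $G=\{1\}$ in \cite[Lemma 8.6]{HP22} can be adapted to our setting without essential changes.
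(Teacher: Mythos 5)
Your proposal takes a genuinely different route from the paper's: you start from the algebraic $\Decompalg^3$ decomposition (Equation~\eqref{eq_C}), isolate Case~III, and try to bound the number of Case~III triples and the size of each contribution separately. The paper instead works with the $B$-surjective Induction-Convolution Lemma (Equation~\eqref{ICL_withL}), observes that $\EX_w\brackets*{\s\IndVecLambda}$ is a degree-$0$ rational function with Laurent series $\sum_p a_p n^{-p}$ whose first $\pi(w)$ coefficients are exactly the two main terms, and then bounds each individual coefficient $\abs{a_p}$ using Lemma~\ref{lemma_bound_coeff_L} together with \cite[Proposition~5.7]{HP22}, which says the number of $(\eta_1,\eta_2)\in\DecompB$ with $\chi(\textup{Im}(\eta_1))=x$ is at most $\binom{T}{2}^{\ell(\VecLambda)-x}$. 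The geometric series in $n$ is summed only at the very end.

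The step in your argument that does not hold up is the count of Case~III triples. Your claim that there are at most $T^{2\ell(\VecLambda)}$ of them, justified by ``each cycle contributes at most $T^2$ choices of image edge and orientation,'' presupposes a fixed target graph; but $\eta_1$ maps onto an arbitrary quotient multi core graph $\Gamma=\textup{Im}(\eta_1)$, and the number of such quotients grows with $-\chi(\Gamma)$. The correct bound (from \cite[Proposition~5.7]{HP22}) is $\binom{T}{2}^{\ell(\VecLambda)-\chi(\Gamma)}$, which for large $-\chi(\Gamma)$ is far larger than $T^{2\ell(\VecLambda)}$. Simultaneously, your uniform per-triple bound $\abs{C^{\alg}_{\eta_2}(n)}\le T^{2\pi(w)}/(n^{\pi(w)-1}(n-T^2))$ is an overestimate precisely for those triples with $\chi(\textup{Im}(\eta_2))\ll -\pi(w)$, whose contribution carries extra negative powers of $n$. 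These two inaccuracies pull in opposite directions, and you cannot conclude they cancel without tracking the Euler characteristic through the sum --- which is exactly the ``careful bookkeeping'' you flag as the remaining obstacle. The paper's proof resolves this by keeping the exponent $-\chi$ attached to both the count (via $\binom{T}{2}^{\ell-x}$) and the coefficient bound (via $b_{p+x}\le T^{2(p+x)}$), and only then summing the resulting geometric series; the cancellation happens at the level of Laurent coefficients, not by comparing two independent crude bounds. Your Case~I and Case~II identifications and the bound $\abs{\EX_{\eta_1}[\VecLambda]}\le\s\VecLambda(1)$ are fine, but the proof as written does not close.
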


\begin{proof}
By part (1) of Theorem~\ref{thm_main_result}, 
$\EX_w\brackets*{\s\IndVecLambda}$ coincides with a rational function in $n$ of degree $0$, so it has a Laurent series
$ \EX_w\brackets*{\s\IndVecLambda} = \sum_{p = 0}^{\infty} a_p n^{-p}. $
By formula \eqref{equation_equivalent_thm_main_result} we know that 
\[a_0 = \inner*{\s\IndVecLambda, \mathbf{1}},\quad a_1 = \ldots = a_{\pi(w)-2} = 0, \quad a_{\pi(w)-1} = \mathscr{C}_w\prn*{\s\IndVecLambda}\] 
so the LHS we want to bound is $\sum_{p = \pi(w)}^{\infty} a_p n^{-p}$.
We will prove
$|a_p| \le \s\VecLambda(1) T^{2(\ell(\VecLambda) + p)},$
and the result follows by summation.
By the Induction convolution lemma,
\[ \EX_w\brackets*{\s\IndVecLambda} 
= \sum_{(\eta_1, \eta_2)\in 
\DecompB(w^{\VecLambda}\to \Omega_B)} 
\EX_{\eta_1}\brackets*{\VecLambda} 
L_{\eta_2}^B(n). \]
This means that if $L_{\eta}^B(n) = n^{\chi(\Gamma)} \sum_{p=0}^{\infty} b_p n^{-p}$ then
\[ a_p = \sum_{(\eta_1, \eta_2)\in \DecompB(w^{\VecLambda}\to \Omega)} \EX_{\eta_1}\brackets*{\VecLambda} b_{p + \chi(\textup{Im}(\eta_1))}. \]
By Lemma~\ref{lemma_bound_coeff_L}, and since every quotient graph of $w^{\VecLambda}$ has at most $T$ vertices and edges, we get the bound $|b_{p + \chi(\textup{Im}(\eta_1))}|\le T^{2(p + \chi(\textup{Im}(\eta_1)))}$.
The term $\EX_{\eta_1}\brackets*{\VecLambda}$ can be bounded by $\s\VecLambda(1) =  \prod_{\phi} \phi(1)^{\ell(\VecLambda(\phi))}$ (where $1$ is the identity element in $G$), as characters attain their maxima at $1$. 
Moreover, by \cite[Proposition 5.7]{HP22} we know that the number of decompositions $(\eta_1, \eta_2)\in \DecompB(w^{\VecLambda}\to \Omega_B)$ with $\chi(\textup{Im}(\eta_1)) = x$ is at most $\binom{T}{2}^{\ell(\VecLambda) - x}$, where $\ell(\VecLambda)$ is the number of parts in the partition $\floor{\VecLambda}$. 
We get the bound
\[ |a_p| 
\le \s\VecLambda(1) \sum_{x = -p}^0 \binom{T}{2}^{\ell(\VecLambda) - x} T ^ {2 (p + x)} 
\le \s\VecLambda(1) T^{2(\ell(\VecLambda) + p)}. \]
\end{proof}

\begin{corollary}
    [Generalization of {\cite[Proposition 8.4]{HP22}}]
For every stable class function $f\in \mathcal{A}(G)$ of degree $D$, there is a constant $A\ge 1$ such that for every word $w$ of length $t$ and $n > \prn*{t\cdot D}^2$, 
\[ \abs*{ \EX_w[f] - \inner*{f, \mathbf{1}} - \mathscr{C}_w(f) n^{1-\pi(w)} } \le A \frac{\prn*{t\cdot D}^{2(\pi(w) + D)}}{n^{\pi(w) - 1}\cdot \prn*{n - \prn*{t\cdot D}^2}}. \]
\end{corollary}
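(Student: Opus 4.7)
The plan is to reduce the statement to Proposition~\ref{prop_effective_bound_sInd} by a finite linear-combination argument using the filtered basis $\braces*{\s\IndVecLambda}_{\VecLambda\in \PGhat}$ for $\mathcal{A}(G)$ from Corollary~\ref{corollary_sInd_is_basis}. Since $f\in\AGled$ where $d\defeq D=\deg(f)$, we may write
\[ f \;=\; \sum_{\VecLambda\in \PleDGhat} c_{\VecLambda}\,\s\IndVecLambda, \]
which is a finite linear combination with fixed coefficients $c_{\VecLambda}\in\FG$ depending only on $f$.

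Next I would observe that each of the three functionals appearing in the estimate, namely $f\mapsto \EX_w[f]$, $f\mapsto \inner*{f,\mathbf{1}}$, and $f\mapsto \mathscr{C}_w(f) = \sum_{\phi\in\hat G:\,\pi_\phi(w)=\pi(w)} \inner*{f,\chi_\phi}\,\mathscr{C}_\phi(w)$, is linear in $f$. Therefore the error term
\[ E_w(f)\;\defeq\;\EX_w[f] - \inner*{f,\mathbf{1}} - \mathscr{C}_w(f)\,n^{1-\pi(w)} \]
satisfies $E_w(f) = \sum_{\VecLambda} c_{\VecLambda}\,E_w\prn*{\s\IndVecLambda}$, and by the triangle inequality
\[ \abs*{E_w(f)} \;\le\; \sum_{\VecLambda\in \PleDGhat} \abs*{c_{\VecLambda}} \cdot \abs*{E_w\prn*{\s\IndVecLambda}}. \]

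For each summand I would apply Proposition~\ref{prop_effective_bound_sInd}, which gives, whenever $n> T_{\VecLambda}^2$ with $T_{\VecLambda}\defeq t\cdot\abs*{\abs*{\VecLambda}}$,
\[ \abs*{E_w\prn*{\s\IndVecLambda}} \;\le\; \frac{\s\VecLambda(1)\cdot T_{\VecLambda}^{2(\pi(w)+\ell(\VecLambda))}}{n^{\pi(w)-1}\cdot (n-T_{\VecLambda}^{2})}. \]
Now for every $\VecLambda$ in the sum we have $\abs*{\abs*{\VecLambda}}\le D$ and (since every part of $\floor{\VecLambda}$ has size $\ge 1$) also $\ell(\VecLambda)\le D$, hence $T_{\VecLambda}\le tD$ and $T_{\VecLambda}^{2(\pi(w)+\ell(\VecLambda))}\le (tD)^{2(\pi(w)+D)}$, while the denominator satisfies $n-T_{\VecLambda}^{2}\ge n-(tD)^{2}$ under the assumption $n>(tD)^{2}$. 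Setting
\[ A \;\defeq\; \max\braces*{1,\; \sum_{\VecLambda\in \PleDGhat} \abs*{c_{\VecLambda}}\cdot \s\VecLambda(1) } \]
(a finite constant depending only on $f$) and summing yields exactly the desired estimate.

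The routine calculation is uniformizing the bound over all $\VecLambda$; the only place requiring a tiny bit of care is checking that the substitutions $T_{\VecLambda}\le tD$ and $\ell(\VecLambda)\le D$ legitimately push the bounds of Proposition~\ref{prop_effective_bound_sInd} in the right direction (which they do because the numerator is monotone increasing in $T_{\VecLambda}$ and $\ell(\VecLambda)$ while the denominator is monotone decreasing in $T_{\VecLambda}$), and that the hypothesis $n>T_{\VecLambda}^{2}$ is implied by $n>(tD)^{2}$. I do not see a substantive obstacle here beyond correct bookkeeping of the dependence of $A$ on $f$.
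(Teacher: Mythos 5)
Your proposal is correct and follows essentially the same route as the paper: decompose $f$ into the filtered basis $\braces*{\s\IndVecLambda}_{\VecLambda\in\PleDGhat}$, apply Proposition~\ref{prop_effective_bound_sInd} term by term using linearity of all three functionals, and uniformize via $\abs*{\abs*{\VecLambda}}\le D$, $\ell(\VecLambda)\le D$ with the same choice of $A$.
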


\begin{proof}
By Corollary~\ref{corollary_sInd_is_basis}, every stable function $f$ is a linear combination $f = \sum_{\VecLambda\in \PleDGhat} \beta_{\VecLambda}\cdot  \s\IndVecLambda $
where the sum runs over multi-partitions $\VecLambda$ with $\abs*{\abs*{\VecLambda}}\le D$. 
Clearly every such $\VecLambda$ has at most $D$ parts. Let
$A\defeq \max\prn*{1, \sum_{\VecLambda\in \PleDGhat} |\beta_{\VecLambda}|\cdot \s\VecLambda(1) }. $
As the bounded expression in Proposition~\ref{prop_effective_bound_sInd} is linear in $\s\IndVecLambda$, a linear combination of the previous bounds gives:
\begin{equation*}
    \begin{split}
        \abs*{ \EX_w[f] - \inner*{f, \mathbf{1}} - \mathscr{C}_w(f) n^{1-\pi(w)} }
        &\le \sum_{\VecLambda\in \PleDGhat} |\beta_{\VecLambda}|\cdot  \frac{\s\VecLambda(1)\cdot \prn*{t\cdot \abs{\abs{\VecLambda}}}^{2(\pi(w) + \ell(\VecLambda))}}{n^{\pi(w) - 1}\cdot (n - \prn*{t\cdot \abs{\abs{\VecLambda}}}^2)}\\
        &\le A \frac{\prn*{t\cdot D}^{2(\pi(w) + D)}}{n^{\pi(w) - 1}\cdot \prn*{n - \prn*{t\cdot D}^2}}.
    \end{split}
\end{equation*}
\end{proof}

\begin{proposition}
Let $f\in \mathcal{A}(G)$ be a stable class function. Then there exists $B > 0$ such that for every $w\in F_r-\{1\}$, $ \abs*{ \mathscr{C}_w(f) } \le B\cdot |\textup{Crit}(w)|. $
\end{proposition}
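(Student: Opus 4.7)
The plan is to bound the definition
\[\mathscr{C}_w(f) = \sum_{\substack{\phi\in \hat{G}:\\ \pi_{\phi}(w)=\pi(w)}} \inner*{f, \chi_\phi}\cdot \mathscr{C}_{\phi}(w)\]
factor-by-factor. First I would note that since $G$ is finite, $\hat{G}$ is finite, so the sum has at most $|\hat{G}|$ terms, and the coefficients $\inner*{f,\chi_\phi}$ are fixed constants depending only on $f$ (they stabilize by Corollary~\ref{corollary_stabilization}, and are $0$ for all but finitely many $\phi$ when one views $f$ via the basis $\{\s\IndVecLambda\}$).

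Next I would bound each $|\mathscr{C}_\phi(w)|$. By Equation~\eqref{eq_def_crit_phi_pi}, when $\pi_\phi(w)=\pi(w)$ we have $\mathscr{C}_\phi(w)=\sum_{H\in \textup{Crit}_\phi(w)} \EX_{w\to H}[\phi]$. The key claim is that $\textup{Crit}_\phi(w)\subseteq \textup{Crit}(w)$ whenever $\pi_\phi(w)=\pi(w)$. For $\phi=\mathbf{1}$ this is the content of \cite[Proposition 2.3]{Sho23I}. For $\phi\neq \mathbf{1}$, suppose $H\in \textup{Crit}_\phi(w)$, so $w\in H$, $\mathrm{rk}(H)=\pi(w)$, and $\EX_{w\to H}[\phi]\neq 0$. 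If $w$ were primitive in $H$, then the restriction of a uniform $\alpha\sim U(\Hom(H,G))$ to $\inner*{w}$ would be uniform on $G$, yielding $\EX_{w\to H}[\phi]=\inner*{\phi,\mathbf{1}}=0$, a contradiction. So $w$ is non-primitive in $H$, which together with $\mathrm{rk}(H)=\pi(w)$ gives $H\in \textup{Crit}(w)$.

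Now each term $|\EX_{w\to H}[\phi]|$ is at most $\phi(1)$ by the triangle inequality, since $|\phi(g)|\le \phi(1)$ for every $g\in G$. Hence
\[|\mathscr{C}_\phi(w)| \le |\textup{Crit}_\phi(w)|\cdot \phi(1) \le |\textup{Crit}(w)|\cdot \phi(1),\]
and combining,
\[|\mathscr{C}_w(f)| \le \sum_{\phi\in \hat{G}} |\inner*{f,\chi_\phi}|\cdot \phi(1)\cdot |\textup{Crit}(w)|,\]
so one may take $B\defeq \sum_{\phi\in \hat{G}} |\inner*{f,\chi_\phi}|\cdot \phi(1)$, which is finite and depends only on $f$ and $G$.

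There is no real obstacle here; the only point requiring a small argument is the containment $\textup{Crit}_\phi(w)\subseteq \textup{Crit}(w)$, which follows from the Schur-type vanishing above. Everything else is a one-line triangle-inequality estimate together with finiteness of $\hat{G}$.
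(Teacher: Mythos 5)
Your proof is correct and follows essentially the same route as the paper: bound each $\abs{\mathscr{C}_\phi(w)}$ by $\abs{\textup{Crit}(w)}\cdot\phi(1)$ via the triangle inequality and $\abs{\phi(g)}\le\phi(1)$, then sum over the finitely many $\phi\in\hat G$. The one place you do extra work is re-deriving the containment $\textup{Crit}_\phi(w)\subseteq\textup{Crit}(w)$ (under $\pi_\phi(w)=\pi(w)$); the paper bypasses this by invoking Equation~\eqref{eq_def_crit_phi_pi}, which already rewrites $\mathscr{C}_\phi(w)$ as a sum over $\textup{Crit}(w)$. Your Schur-type vanishing argument for the containment is fine, though note the paper already records this observation after Definition~\ref{def_witnesses_full}. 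Your constant $B=\sum_{\phi}\abs{\inner{f,\chi_\phi}}\phi(1)$ differs cosmetically from the paper's $B_0\cdot\abs{G}$ with $B_0=\max_\phi\abs{\inner{f,\chi_\phi}}$ (yours is in fact at least as sharp, since $\sum_\phi\phi(1)\le\sum_\phi\phi(1)^2=\abs{G}$); both are valid. Minor nit: the stabilization of $\inner{f,\chi_\phi}$ is automatic since both are stable class functions, and Corollary~\ref{corollary_stabilization} isn't the right citation for it, but this does not affect correctness.
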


\begin{proof}
First, note that for every $\phi\in \hat{G}$ with $\pi_{\phi}(w)=\pi(w)$, we can bound
\[\abs*{ \mathscr{C}_{\phi}(w) } 
= \abs{ \sum_{H\in \textup{Crit}(w)} \EX_{w\to H}[\phi] }
\le |\textup{Crit}(w)|\cdot \phi(1). \]
Now let $B_0\defeq \max_{\phi\in \hat{G}} \abs*{ \inner*{f, \chi_\phi} }$ and choose $B\defeq B_0\cdot |G|$. Then
\[\abs*{ \mathscr{C}_w(f) } \le B_0 \sum_{\phi\in \hat{G}} |\textup{Crit}(w)|\cdot \phi(1) \le B_0\cdot |\textup{Crit}(w)| \cdot |G| 
= B\cdot |\textup{Crit}(w)|.\]
\end{proof}

\begin{corollary}
For every stable class function $f\in \mathcal{A}(G)$ of degree $D$, there is a constant $C\ge 1$ such that for every word $w$ of length $t$ and $n > \prn*{t\cdot D}^2$, 
\[ \abs*{ \EX_w[f] - \inner*{f, \mathbf{1}}  } \le
\frac{C}{n^{\pi(w) - 1}}\cdot \prn*{|\textup{Crit}(w)| + \frac{\prn*{t\cdot D}^{2(\pi(w) + D)}}{n - \prn*{t\cdot D}^2}}.\]
\end{corollary}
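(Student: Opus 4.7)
The plan is to combine the two immediately preceding results via the triangle inequality. Denote by $A$ the constant from the previous corollary and by $B$ the constant from the previous proposition applied to $f$. Then for every word $w$ of length $t$ and every $n > (tD)^2$ we have
\[ \abs*{ \EX_w[f] - \inner*{f, \mathbf{1}} - \mathscr{C}_w(f) n^{1-\pi(w)} } \le A \frac{\prn*{t\cdot D}^{2(\pi(w) + D)}}{n^{\pi(w) - 1}\cdot \prn*{n - \prn*{t\cdot D}^2}}, \]
and simultaneously $\abs*{\mathscr{C}_w(f)} \le B\cdot |\textup{Crit}(w)|$.

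First I would write
\[ \EX_w[f] - \inner*{f, \mathbf{1}} = \prn*{\EX_w[f] - \inner*{f, \mathbf{1}} - \mathscr{C}_w(f)\, n^{1-\pi(w)}} + \mathscr{C}_w(f)\, n^{1-\pi(w)}, \]
and apply the triangle inequality. The first summand is bounded by the previous corollary, and the second summand, after taking absolute values, is bounded by $B\cdot |\textup{Crit}(w)|\cdot n^{1-\pi(w)}$ thanks to the previous proposition.

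Setting $C \defeq \max(A, B, 1)$ and factoring out $n^{-(\pi(w)-1)}$ from both summands yields the stated inequality. There is no real obstacle here; the only thing to check is that $C\ge 1$ (enforced by the $\max$ with $1$), so the statement has the exact form claimed.
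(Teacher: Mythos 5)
Your proof is correct and matches the paper's one-line argument (\enquote{Take $C=\max\{A,B\}$ from the propositions above}) exactly: split off $\mathscr{C}_w(f)\,n^{1-\pi(w)}$, apply the triangle inequality, bound each piece by the preceding corollary and proposition, and factor out $n^{-(\pi(w)-1)}$. The extra $\max$ with $1$ is harmless but redundant, since the preceding corollary already guarantees $A\ge 1$.
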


\begin{proof}
Take $C=\max\{A, B\}$ from the propositions above.
\end{proof}

Now we have finished generalizing the tools from \cite{HP22} to wreath products, and we can piously follow the rest of the proof from \cite{HP22}, without change (even though we have a slightly different notation).

\begin{definition}
Let $\Gamma$ be a graph. Denote by $\Vec{E}(\Gamma)$ the set of oriented edges in $\Gamma$, i.e.\ for every $e\in E(\Gamma)$ we have $e, \Bar{e}\in \Vec{E}(\Gamma)$, and $|\Vec{E}(\Gamma)| = 2|E(\Gamma)|$.
The \textbf{Hashimoto} matrix of $\Gamma$, denoted
$B = B_{\Gamma}\in M_{\Vec{E}(\Gamma)\times \Vec{E}(\Gamma)}(\{0,1\})$,
is defined by
\[B_{e, f} = \bbone_{trg(e)=src(f)\textrm{ and } f\neq \Bar{e}}. \]
If $\Gamma$ is $d$-regular, we denote by $\nu(\Gamma) = |\nu_2(\Gamma)|$ the largest absolute value of a non-trivial (i.e.\ not $\in \{1, d-1\}$) eigenvalue of its Hashimoto matrix. 
\end{definition}

We wish to bound (from above) $\nu(\Gamma)$, where $\Gamma$ is the Schreier graph of a representation-stable action of $G\wr S_n$. 
Due to Ihara-Bass formula, this will imply a bound on the nontrivial spectrum of the adjacency matrix.

\begin{theorem}
\label{thm_cite_HP22_expansion_graphs}
(\cite[Section 8]{HP22})
Let $(G_n\acts X_n)_{n=1}^{\infty}$ be a sequence of transitive finite group actions, with corresponding permutation character $\psi = \psi_n$, of dimension $|X_n| = O(n^s)$ for some $s > 0$.
Let $\Gamma_n \defeq \textup{Sch}_r(\psi_n)$.
If there are constants $C, D >0$ such that for every $w\in F_r-\{1\}$ of length $t$ we have a bound
$ \EX_w[\psi] - 1 \le \frac{C}{n^{\pi(w) - 1}}\cdot \prn*{|\textup{Crit}(w)| + \frac{\prn*{t\cdot D}^{2(\pi(w) + D)}}{n - \prn*{t\cdot D}^2}}, $
then a.a.s.\ (as $n\to \infty$)
\[ \mu_{\Gamma_n} \le 2\sqrt{d - 1}\cdot \exp\prn*{ \frac{2s^2}{e^2(d-1)} }. \]
\end{theorem}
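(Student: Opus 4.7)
The plan is to adapt the trace method from \cite[Section 8]{HP22}. I would bound the non-backtracking spectrum $\nu(\Gamma_n)$ first, and then transfer the bound to the adjacency spectrum $\mu_{\Gamma_n}$ via the Ihara--Bass determinantal formula, which relates the spectra of the two matrices for a regular graph.

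For even $k$, $\mathrm{tr}(B_n^k)$ counts closed non-backtracking walks of length $k$ in $\Gamma_n$. Each such walk starting at $x\in X_n$ corresponds to a cyclically reduced word $w\in \mathfrak{CR}_k(F_r)$ together with the condition that the random permutation $\rho_n(w)$ fixes $x$. Hence
\[
\EX\brackets*{\mathrm{tr}(B_n^k)} \;=\; \sum_{w\in \mathfrak{CR}_k(F_r)} \EX_w[\psi_n].
\]
The constant part $1$ in $\EX_w[\psi_n]$ contributes approximately $(d-1)^k$, accounting for the trivial eigenvalue $d-1$ of $B_n$, while the assumed bound on $\EX_w[\psi_n]-1$ controls everything else. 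Stratifying the remaining sum by $\pi(w)=q\ge 2$ and invoking the combinatorial estimate $\sum_{w\in\mathfrak{CR}_k,\,\pi(w)=q}|\textup{Crit}(w)| \le (d-1)^{k/q}\cdot\mathrm{poly}(k)$ from \cite{PP15,HP22}, the deviation is bounded by $\sum_{q\ge 2} n^{s-q+1}(d-1)^{k/q}\cdot\mathrm{poly}(k)$, where the factor $n^s$ accounts for the trivial-contribution normalization by $|X_n|$.

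Then, I would take $k=k(n)$ growing logarithmically in $n$, tuned to minimize the leading stratum, and apply Markov's inequality to the centered statistic $\mathrm{tr}(B_n^k) - (d-1)^k$ to obtain, with probability $1-o(1)$,
\[
\nu(\Gamma_n) \;\le\; 2\sqrt{d-1}\cdot \exp\prn*{\frac{2s^2}{e^2(d-1)}}.
\]
The constant $\exp\prn*{2s^2/(e^2(d-1))}$ emerges from optimizing the exponent $\frac{1}{k}\log\prn*{(d-1)^{k/q}\cdot n^{s-q+1}}$ jointly in $q$ and $k$, a standard calculus exercise producing the factor $e^2$ at the stationary point. Finally, Ihara--Bass converts this bound on $\nu(\Gamma_n)$ into the stated bound on $\mu_{\Gamma_n}$, up to the finitely many trivial eigenvalues corresponding to the fixed $G\wr S_\infty$-invariant subspaces of the action.

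The hard part will be the bookkeeping in the stratification by $\pi(w)$: the combinatorial estimate on $\sum_w|\textup{Crit}(w)|$ is the technical heart of \cite[Section 8]{HP22}, and the choice of $k=k(n)$ must be performed carefully to recover the exact constant. Since the theorem is imported essentially verbatim from \cite{HP22}, the only genuine verification required is that the hypothesis---the effective bound on $\EX_w[\psi_n]-1$ established in the preceding corollary---matches in form the input required by the analysis in \cite{HP22}; once confirmed, the conclusion follows.
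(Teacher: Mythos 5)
Your proposal follows essentially the same approach as the paper's proof, which (like yours) is itself a sketch of milestones translating \cite[Section 8]{HP22}: the trace method applied to the Hashimoto matrix, stratification of cyclically reduced words by primitivity rank, the Puder $|\textup{Crit}(w)|$ growth estimate, a logarithmic choice of walk length $t$, Markov's inequality, and Ihara--Bass to convert back to the adjacency spectrum. One small imprecision: the combinatorial estimate you quote, $\sum_{w\in\mathfrak{CR}_t,\,\pi(w)=q}|\textup{Crit}(w)|\lesssim(d-1)^{t/q}\,\mathrm{poly}(t)$, does not match the actual input from \cite{Expansion_Puder_2015} used in the paper, which is $\limsup_{t\to\infty}\bigl[\sum_{w\in\mathfrak{CR}_t,\pi(w)=m}|\textup{Crit}(w)|\bigr]^{1/t}=\max(\sqrt{2r-1},\,2m-1)$ --- your form is only correct at $q=2$ and underestimates the sum for larger $q$; since you correctly cite the sources and flag this step as the technical heart to be imported, this does not amount to a gap, but the optimization in $t$ (and hence the emergence of $\exp(2s^2/(e^2(d-1)))$) must be carried out against the $\max(\sqrt{2r-1},2m-1)$ form, as in the paper's step 5, not the $(d-1)^{t/q}$ form.
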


\begin{proof}
This was proved in \cite{HP22}, but with a slightly different formulation and notation, so we present some milestones along the proof to help translating the proof of \cite{HP22} to our language.
\begin{enumerate}
    \item  Let $t\in 2\N$. Then
    $\EX[Re[\nu_2(\Gamma_n)^t]] \le \brackets*{ \sum_{w\in \mathfrak{CR}_t(\textbf{F}_r)} (\EX_w[\psi] - 1) } + 2|X_n| (d-1)^{t/2}. $

    \item \cite[Proposition 4.3 and Theorem 8.2]{Expansion_Puder_2015} 
    For every $r\ge 2$ and $m\in \{1, \ldots, r\}$,
    \[ \limsup_{t\to\infty} \brackets*{ \sum_{w\in \mathfrak{CR}_t({F}_r): \,\pi(w) = m} |\textup{Crit}(w)| }^{\frac{1}{t}} = \max(\sqrt{2r-1}, 2m-1). \]
    Note that we consider only $m\in \{1, \ldots, r\}$ even though $\{\pi(w)\}_{w\in F_r} = \{0, \ldots, r, \infty\}$ since we use only long words (so $\pi(w)\neq 0$), and primitive words $w$ do not contribute to the summation (since $\EX_w[\psi] - 1 = 0$) so we may assume $\pi(w) < \infty$.
    
    \item $\sum_{w\in \mathfrak{CR}_t({F}_r)} \prn*{\EX_w[\psi] - 1} \le C\cdot \prn*{1 + \frac{\prn*{t\cdot D}^{2(\pi(w) + D)}}{n - \prn*{t\cdot D}^2}} \sum_{m=1}^r \frac{1}{n^{m - 1}} \sum_{w\in \mathfrak{CR}_t({F}_r)\, \pi(w) = m} \abs*{ \textup{Crit}(w)}$
    
    \item Combining the previous 2 items, for every $\varepsilon > 0$ and every large enough $t = t(\varepsilon)$,
    \[\sum_{w\in \mathfrak{CR}_t({F}_r)} \prn*{\EX_w[\psi] - 1} \le C\cdot \prn*{1 + \frac{\prn*{t\cdot D}^{2(\pi(w) + D)}}{n - \prn*{t\cdot D}^2}} \sum_{m=1}^r \frac{1}{n^{m - 1}}  \brackets*{\max(\sqrt{2r-1}, 2m-1) +\varepsilon}^t. \]
    
    \item Choosing $t\approx log_c(n)$ (and keeping $t\in 2\N$), for every $\varepsilon > 0$ and large enough $n, t$ we get
    \[\EX[Re[\nu_2(\Gamma_n)^t]] \le \brackets*{ (1 + \varepsilon) \cdot \sqrt{d - 1} \cdot c ^ s}^t. \]
    If $d$ is large then the optimal value of $c$ turns out to be $ c = e^{\frac{2}{e\sqrt{d - 1}}}. $
    
    \item By using the Ihara-Bass formula one can get a connection between the spectrum of the Hashimoto matrix and the adjacency matrix, and by some more analysis the result follows.

\end{enumerate}
\end{proof}

\begin{remark}
As in \cite[Remark 8.7]{HP22}, also here Conjecture~\ref{conj_great_HP22} for $(G\wr S_n)_n$ implies a uniform bound 
$ \mu_{\Gamma_n} \le 2\sqrt{d - 1}\cdot \exp\prn*{ \frac{2}{e^2(d-1)} } $ (not depending on $|X_n|$).
\end{remark}

\section{Proof of the Induction Convolution Lemma}
\label{section_proof_of_ICL}

Induced representations have the 
following convenient form:

\begin{remark}
\label{remark_explicit_general_induction}
Generally, given a representation $\rho\in \Hom(K, GL(W))$ of a subgroup $K\le G$, one can write $\Ind_K^G \rho$ explicitly as a blocks-matrix representation.
Given a transversal $S$ of $G/K$ and $g\in G$, the matrix $\Ind^G_K \rho(g)$ has $S\times S$ blocks, where the $(s, t)$ block is
\begin{equation*}
    \Ind^G_K \rho(g)_{s, t} = \begin{cases}
    \rho(s^{-1} g t) &\textrm{if }s^{-1}gt\in K,\\
    0& \textrm{otherwise.}
    \end{cases}
\end{equation*}
\end{remark}

\begin{definition}[Block-Trace]
Let $A\in M_{n\times n}(\FG)$ be a matrix.
For every $d \divides n$ let
\[ tr^n_d(A) \defeq \sum_{i=1}^{n/d} A_{d: (i, i)} \in M_{d\times d}(\FG) \]
where $A_{d: (i, i)}$ is the $(i, i)^{th}$ block in $A$.
Clearly $tr = tr^n_1$, and for every $a \divides b \divides n$ we have $tr^b_a \circ tr^n_b = tr^n_a$.
\end{definition}

Before we prove the induction-convolution lemma for multiple words, we need some more machinery, that will help us deal with different characters of different dimensions together: tensor product. We denote Kronecker product by $\otimes$.
\begin{definition}
\label{def_tensor_embeddings}
Let $a_1, \ldots, a_k\in \N$.
Define a matrix algebra 
\[R_{a_1, \ldots, a_k}(\FG) \defeq \bigotimes_{i=1}^k M_{a_i\times a_i}(\FG) = \textup{End}\prn*{ \bigotimes_{i=1}^k \FG^{a_i} } \]
and define for every $1\le i\le k$ a ring-homomorphisms $T_i \colon M_{a_i\times a_i }(\FG) \to R_{a_1, \ldots, a_k}(\FG)$
by tensoring the identity matrix, i.e. applying the Kronecker product:
\[ T_1(A) = A \otimes I_{a_2} \otimes \ldots \otimes I_{a_k}, \quad\quad \ldots, \quad\quad  T_k(A) = I_{a_1} \otimes I_{a_2} \otimes \ldots \otimes A. \]
\end{definition}

In the proof of the induction-convolution lemma, we will need to handle a product of induced characters, that are all induced from an index $n$ subgroup. To make the different representations act on the same vector space, we use tensors.

\begin{proposition} 
\label{prop_tensors}
Let $(A_i)_{i=1}^k$ be a sequence of square block matrices, $A_i\in M_{(a_i n)\times (a_i n)}(\FG)$. 
We think of $A_i$ as $n\times n$ blocks of size $a_i\times a_i$.
Let $a_1, \ldots, a_k, n\in \N$. 
The tensors embeddings
$T_i \colon M_{a_i\times a_i }(\FG) \to R_{a_1, \ldots, a_k}(\FG)$
behave nicely with trace and product:
\begin{enumerate}
    \item The product of traces is $\prod_{i=1}^k tr^{a_i n}_1 (A_i) = tr^{a_1\cdots a_k}_1 \prn*{\prod_{i=1}^k T_i\prn*{tr^{a_i n}_{a_i} A_i}}. $
    \item For every $i \neq j\in [k]$, $T_i(B_i) T_j(B_j) = T_j(B_j) T_i(B_i).$
\end{enumerate}
\end{proposition}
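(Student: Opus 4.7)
The plan is to prove part (2) first, since it is the fundamental commutativity statement that makes part (1) interpretable, and then to deduce part (1) from it using standard properties of Kronecker products.

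For part (2), I would simply invoke the mixed-product property of the Kronecker product, $(X_1 \otimes Y_1)(X_2 \otimes Y_2) = (X_1 X_2) \otimes (Y_1 Y_2)$, applied slot-by-slot. Writing $T_i(B_i) = I_{a_1} \otimes \cdots \otimes B_i \otimes \cdots \otimes I_{a_k}$ and $T_j(B_j) = I_{a_1} \otimes \cdots \otimes B_j \otimes \cdots \otimes I_{a_k}$ (with $B_i, B_j$ in slots $i, j$ respectively, and assuming $i\neq j$), both products $T_i(B_i) T_j(B_j)$ and $T_j(B_j) T_i(B_i)$ reduce, slot by slot, to the same tensor whose $i$-th slot is $B_i$, $j$-th slot is $B_j$, and all other slots are identity matrices. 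This is completely routine.

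For part (1), I would use part (2) iteratively to rewrite the right-hand side. Setting $B_i \defeq tr^{a_i n}_{a_i}(A_i) \in M_{a_i \times a_i}(\FG)$, the commutativity from (2) together with the mixed-product identity yields
\[ \prod_{i=1}^k T_i(B_i) = B_1 \otimes B_2 \otimes \cdots \otimes B_k. \]
Now I would apply two standard identities for traces:
\begin{enumerate}
    \item $tr^{a_1\cdots a_k}_1(B_1 \otimes \cdots \otimes B_k) = \prod_{i=1}^k tr^{a_i}_1(B_i)$, the multiplicativity of trace under Kronecker product;
    \item $tr^{a_i}_1\prn*{tr^{a_i n}_{a_i}(A_i)} = tr^{a_i n}_1(A_i)$, which is merely the identity $tr^{a_i}_1 \circ tr^{a_i n}_{a_i} = tr^{a_i n}_1$ noted already after the definition of block-trace (it follows because both sides equal the sum of the diagonal entries of $A_i$).
\end{enumerate}
Chaining these gives
\[ tr^{a_1\cdots a_k}_1\prn*{\prod_{i=1}^k T_i(B_i)} = \prod_{i=1}^k tr^{a_i n}_1(A_i), \]
which is precisely the claimed identity.

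There is no real obstacle here: the proposition is a bookkeeping lemma about Kronecker products, and the two properties amount to the mixed-product identity and the multiplicativity of trace under tensor product. The only mildly delicate point is keeping the indexing of block sizes straight; presenting the argument for $k=2$ and then invoking induction on $k$ (where the tensor factor in position $k$ is peeled off) would be the cleanest way to handle this without drowning in notation.
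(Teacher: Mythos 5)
Your proposal is correct and uses the same two ingredients as the paper's proof — the mixed-product identity for Kronecker products and the multiplicativity of trace under tensoring — merely organized in the opposite direction (proving the commutativity in part (2) first and then collapsing $\prod_i T_i(B_i) = B_1\otimes\cdots\otimes B_k$ to deduce part (1), whereas the paper starts from the product of traces and works toward the $T_i$ expression). This is a cosmetic reordering rather than a genuinely different argument.
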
 

\begin{proof}
We use standard properties of Kronecker product: 
\begin{itemize}
    \item For every 2 matrices $A, B$, we have $tr(A\otimes B) = tr(A)\cdot tr(B)$.
    \item For every 4 matrices $A_{a\times a'}, B_{a'\times b}, C_{c\times c'}, D_{c', d}$, we have
    $(A\cdot B)\otimes (C\cdot D) = (A\otimes C)\cdot (B\otimes D)$
\end{itemize}
For simplicity of notation, we prove the proposition only for $k=2$:
\begin{enumerate}
    \item Product of Traces:
    \begin{equation*}
    \begin{split}
        \prod_{i=1}^2 tr^{a_i n}_1 (A_i)
        &= tr^{a_1 a_2}_1 \prn*{\prn*{ tr^{a_1 n}_{a_1}A_1}
        \otimes \prn*{tr^{a_2 n}_{a_2}A_2 }} = tr^{a_1 a_2}_1 \prn*{ \prn*{tr^{a_1 n}_{a_1}A_1\cdot I_{a_1}} 
        \otimes \prn*{ I_{a_2}\cdot \prn*{tr^{a_2 n}_{a_2}A_2 }} }\\
        &= tr^{a_1 a_2}_1\prn*{ \prn*{tr^{a_1 n}_{a_1}A_1 \otimes I_{a_2}} \cdot \prn*{I_{a_1}\otimes tr^{a_2 n}_{a_2}A_2} } = tr^{a_1 a_2}_1\prn*{ T_1\prn*{tr^{a_1 n}_{a_1}A_1}\cdot T_2\prn*{tr^{a_2 n}_{a_2}A_2} }.
    \end{split}
    \end{equation*}
    
    \item Different embeddings commute:
    \begin{equation*}
        \begin{split}
            T_1(A_1) \cdot T_2(A_2) 
            &= \prn*{A_1\otimes I_{a_2}}\prn*{I_{a_1}\otimes A_2} = \prn*{A_1 \cdot I_{a_1}}\otimes \prn*{I_{a_2}\cdot A_2}\\
            &= \prn*{I_{a_1}\cdot A_1}\otimes \prn*{A_2\cdot A_{a_2}} = \prn*{I_{a_1}\otimes A_2}\prn*{A_1\otimes A_{a_2}} = T_2(A_2)\cdot T_1(A_1).
        \end{split}
    \end{equation*}
\end{enumerate}
\end{proof}

Recall the notations $G_n\defeq G\wr S_n$ and $(\Ind\zeta)_j \defeq \Ind_{G\times G_{n-1}}^{G_n} (\zeta_j)$, and recall Lemma~\ref{lemma_ICL}:
Let $\Vec{w} = \braces*{w_1, \ldots, w_k}$ be a multi-word, $\Vec{\mathcal{J}}\in \moccFr$ and $\eta \in \Hom(\Vec{w}, \Vec{\mathcal{J}})$ with representatives $\braces*{J_1, \ldots, J_{\ell}}$. 
Also let $G$ be a finite group, and $\zeta\colon \Vec{w}\to \FGconjG$. 
Then
\[\EX_{\eta}[\Ind\zeta] 
= \sum_{(\eta_1, \eta_2)\in \DecompB(\eta)} \EX_{\eta_1}[\zeta] \cdot L_{\eta_2}^B(n). \]

\begin{proof} [Proof of Lemma~\ref{lemma_ICL}]
We work in steps.\\
\textbf{Step 0:}
We make a reduction to the case $\zeta\colon \Vec{w}\to \PGhat$ of irreducible characters.
Recall Definition~\ref{def_ex_eta_zeta}, and note that induction is a linear operator on class function, expectation is a linear operator, and $\EX_{\eta}[\cdot], \EX_{\eta_1}[\cdot]$ are multi-linear in $\zeta$, i.e.\ linear in every $\zeta_i$.
Thus it is suffices to prove the lemma for a the linear basis $\braces*{\zeta\colon \Vec{w}\to \PGhat}$.
So from now on assume $\zeta\colon \Vec{w}\to \PGhat$. \\ 

\textbf{Step 1:} 
We make a reduction to the case $\ell=1$.
$\EX_{\eta}[\Ind\zeta]$ is multiplicative with respect to $J_1,\ldots, J_{\ell}$ so we want to show that $\sum_{(\eta_1, \eta_2)\in \DecompB(\eta)} \EX_{\eta_1}[\zeta] \cdot L_{\eta_2}^B(n)$ is also multiplicative.
Indeed, for every $1\le t\le \ell$ denote $\Vec{w}_t \defeq \eta^{-1}(J_t), \,\,\eta^{t} \defeq \eta \restriction_{\Vec{w}_t}$. Then since $\EX_{\eta_1}[\zeta], L_{\eta_2}^B(n)$ are both multiplicative, 
\begin{equation*}
    \begin{split}
        & \prod_{t=1}^{\ell} \sum_{(\eta_1^t, \eta_2^t)\in \DecompB(\eta^t)} \EX_{\eta_1^t}[\zeta] \cdot L_{\eta_2^t}^B(n) \\
        &= \sum_{(\eta_1^1, \eta_2^1)\in \DecompB(\eta^1)} \ldots \sum_{(\eta_1^{\ell}, \eta_2^{\ell})\in \DecompB(\eta^{\ell})}  \prod_{t=1}^{\ell} \EX_{\eta_1^t}[\zeta] \cdot L_{\eta_2^t}^B(n)\\
        &= \sum_{(\eta_1, \eta_2)\in \DecompB(\eta)} \EX_{\eta_1}[\zeta] \cdot L_{\eta_2}^B(n).\\
    \end{split}
\end{equation*}
Thus from now on we may assume that $\Vec{\mathcal{J}}$ is just the (singleton of the) ambient group $F_r$.\\

\textbf{Step 2:} Recall Remark~\ref{remark_explicit_general_induction}. 
Denote the dimension of $\zeta_i$ by $d_i \defeq \zeta_i(1)$, and let $\rho_i\in \Hom(G, GL_{d_i n}(\FG))$ be a representation that yields $\Ind\zeta_i$ in the form of Remark~\ref{remark_explicit_general_induction}. 
We wish to use the block-matrix structure of $\rho$.
\begin{equation*}
    \begin{split}
        \EX_{\eta}[\Ind\zeta]
        &= \EX_{(g_1, \ldots, g_r)\in G_n^r} \brackets*{\prod_{i=1}^k \Ind\zeta_i(w_i(g_1, \ldots, g_r))}\\
        &= \EX_{(g_1, \ldots, g_r)\in G_n^r} \brackets*{\prod_{i=1}^k tr^{d_i n}_{1} \rho_i(w_i(g_1, \ldots, g_r))}.
    \end{split}
\end{equation*}
For every $i$, let $(V_{\zeta_i}, \Phi_i)$ be the representation that yields the irreducible character $\zeta_i$, i.e.\ $\zeta_i = tr(\Phi)$.
Now we define a ring $R \defeq \bigotimes_{i=1}^k \textup{End}(V_{\zeta_i}) = \textup{End}\prn*{\bigotimes_{i=1}^k V_{\zeta_i}}. $
This ring consist of matrices $D\times D$ over $\FG$, where $D\defeq \prod_{i=1}^k d_i. $
Now by Proposition~\ref{prop_tensors} (part 1) about Kronecker products we get
\[\prod_{i=1}^k tr^{d_in}_1 \rho_i(w_i(g_1, \ldots, g_r))
= tr^{D}_1 \prn*{\prod_{i=1}^k T_i\prn*{tr^{d_i n}_{d_i} \rho_i\prn*{w_i\prn*{g_1, \ldots, g_r}}}} \]
where $T_i\colon \textup{End}(V_{\zeta_i})\to R$ is the map defined earlier in Definition~\ref{def_tensor_embeddings}.
This lets us work with $tr^{d_i n}_{d_i} \rho_i\prn*{w_i\prn*{g_1, \ldots, g_r}}$
which is a trace of an $n\times n$ matrix over the ring $M_{d_i\times d_i}(\FG)$. 
Thus for a word $w_i = b_{j(0)}^{\varepsilon(0)}\cdots b_{j(L_i-1)}^{\varepsilon(L_i-1)} \in F_r$ (where $\varepsilon(0), \ldots, \varepsilon(L_i-1)\in \{\pm 1\}$ and $B = \{b_j\}_{j=1}^r$ is a basis for $F_r$), we have \allowdisplaybreaks
\begin{equation*}
    \begin{split}
        tr^{d_i n}_{d_i} \rho_i\prn*{w_i\prn*{g_1, \ldots, g_r}}
        &= tr^{d_i n}_{d_i} \rho_i\prn*{g_{j(1)}^{\varepsilon(1)}\cdots g_{j(L_i)}^{\varepsilon\prn*{L_i}}}\\
        &= tr^{d_i n}_{\zeta_i\prn*{1}} \rho_i\prn*{g_{j(1)}^{\varepsilon(1)}}\cdots \rho_i\prn*{ g_{j\prn*{L_i}}^{\varepsilon\prn*{L_i}}}\\
        &= \sum_{f\colon \Z/L_i \to [n]} \rho_i\prn*{g_{j(1)}^{\varepsilon(1)}}_{f(0),f(1)} \cdots \rho_i\prn*{g_{j(L_i)}^{\varepsilon\prn*{L_i}}}_{f\prn*{L-1},f(0)}
    \end{split}
\end{equation*}
which gives us, by the formula for trace of product, and since $T_i$ are ring-homomorphisms,
\begin{equation*}
    \begin{split}
    \EX_{\eta}[\Ind\zeta] 
    &= \EX_{(g_1, \ldots, g_r)\in G_n^r} \brackets*{tr^{D}_1 \prn*{\prod_{i=1}^k T_i\prn*{tr^{d_i n}_{d_i} \rho_i(w_i(g_1, \ldots, g_r))}}} \\
    &= \EX_{(g_1, \ldots, g_r)\in G_n^r} \brackets*{tr^{D}_1 \prn*{\prod_{i=1}^k T_i\prn*{\sum_{f\colon \Z/L_i \to [n]} \rho_i\prn*{g_{j(1)}^{\varepsilon(1)}}_{f(0),f(1)} \cdots \rho_i\prn*{g_{j(L_i)}^{\varepsilon(L_i)}}_{f(L-1),f(0)}}}} \\
    &= \EX_{(g_1, \ldots, g_r)\in G_n^r} \brackets*{tr^{D}_1 \prn*{\prod_{i=1}^k \sum_{f\colon \Z/L_i \to [n]} T_i\rho_i\prn*{g_{j(1)}^{\varepsilon(1)}}_{f(0),f(1)} \cdots T_i\rho_i\prn*{g_{j(L_i)}^{\varepsilon(L_i)}}_{f(L-1),f(0)}}} \\
    &= \EX_{(g_1, \ldots, g_r)\in G_n^r} \brackets*{tr^{D}_1 \prn*{\sum_{f\colon V(\Vec{w})\to [n]}\prod_{i=1}^k T_i\rho_i\prn*{g_{j(1)}^{\varepsilon(1)}}_{f_i(0),f_i(1)} \cdots T_i\rho_i\prn*{g_{j(L_i)}^{\varepsilon(L_i)}}_{f_i(L_i-1),f_i(0)}}} \\
    &= \sum_{f\colon V(\Vec{w})\to [n]} \EX_{(g_1, \ldots, g_r)\in G_n^r} \brackets*{tr^{D}_1 \prn*{\prod_{i=1}^k T_i\rho_i\prn*{g_{j(1)}^{\varepsilon(1)}}_{f_i(0),f_i(1)} \cdots T_i\rho_i\prn*{g_{j(L_i)}^{\varepsilon(L_i)}}_{f_i(L_i-1),f_i(0)}}} \\
    \end{split}
    \end{equation*}
where $f_i = f\restriction_{\Gamma_B(w_i)}$ is the restriction of $f$ to the connected component $\Gamma_B(w_i)\subseteq \Gamma_B(\vec{w})$.
The $n\times n$ matrices $\rho_i$ (defined in $GL_n\prn*{\bigotimes_{i=1}^k V_{\zeta_i}}$) are monomial (that is, every row and column has precisely one nonzero entry) by the structure of induced representation.
Thus if we have two edges $e_1 = (s_1\overset{b_j}{\to} t_1), e_2 = (s_2\overset{b_j}{\to} t_2)\in E(\Gamma_B(\vec{w}))$ with a collision $f(s_1) = f(s_2)$, we must also have $f(t_1) = f(t_2)$ for the contribution of $f$ to be nonzero. (This is true also in the opposite direction, $f(t_1) = f(t_2)\implies f(s_1) = f(s_2)$, as the group element we substitute in $b_j$ is invertible).
In other words, the quotient graph obtained by gluing vertices with the same $f$-value is a multi core graph.
Therefore, we can decompose each $f$ to its unique surjective-then-injective decomposition, and obtain a quotient map $\eta_1\in \mathcal{Q}_B(\vec{w})$ with image graph $\Gamma = \textup{Im}(\eta_1)$, and we define $L_{\eta_1}'$ by considering only injective vertex-functions:
\[L_{\eta_1}'(\Ind\zeta) \defeq \sum_{f\colon V(\Gamma)\hookrightarrow [n]} \EX \brackets*{tr^{D}_1 \prn*{\prod_{i=1}^k \prod_{z_i\in \Z / L_i\Z} T_i\rho_i\prn*{g_{j(z_i)}^{\varepsilon(z_i)}}_{f_i(\eta_1(z_i)),f_i(\eta_1(z_i + 1))}}}. \]
where the expectation is over $(g_1, \ldots, g_r)\sim U\prn{G_n^r}$
(the notation assumes an implicit bijection between $V(\Gamma_B(w_i))$ and $\Z/L_i$).
This means we proved 
\begin{equation*}
   \EX_{\eta}[\Ind\zeta] = \sum_{\eta_1\in \mathcal{Q}_B(\Vec{w})} L_{\eta_1}'(\Ind\zeta).
\end{equation*}

From now on we fix $\eta_1\in \mathcal{Q}_B(\Vec{w})$ with image $\Gamma = \textup{Im}(\eta_1)$ and we want to prove
\begin{equation*}
\begin{split}
    L_{\eta_1}'(\Ind\zeta) &\overset{?}{=} \EX_{\eta_1}[\zeta] \cdot \sum_{\substack{\eta_2:\quad (\eta_1, \eta_2)\\\in \DecompB(\eta)}}  L_{\eta_2}^B(n)\\
    &= \EX_{\eta_1}[\zeta]\cdot L_{\eta_2}^B(n)
\end{split}
\end{equation*}
where in the last expression, 
$\eta_2$ is the unique morphism 
$\Gamma \to \Omega_B$
(Recall that 
$\textup{Im}(\eta) = \Omega_B$
is the bouquet graph on the letters of 
$\vec{w}$.
)

\textbf{Step 3:}
Recall that we have fixed $\Gamma_B(\vec{w})\overset{\eta_1}{\to} 
\Gamma \overset{\eta_2}{\to} \Omega_B$.
We say that an injective function $f\colon V(\Gamma)\hookrightarrow [n]$ is \textbf{valid} with respect to $g_1, \ldots, g_r\in G_n$, 
if for every edge $e = (s\overset{b_j}{\to} t)\in E(\Gamma)$ we have $g_j(f(s)) = f(t)$. (Here we use the action $G_n\acts [n]$). 
By substituting trivial representations $\zeta_i = \textbf{1}$ for every $i$, we see that
\begin{equation*}
\begin{split}
L_{\eta_1}'(\Ind \textbf{1}) 
&= \sum_{f\colon V(\Gamma)\hookrightarrow [n]} \EX_{(g_1, \ldots, g_r)\in G_n^r} \brackets*{tr^{1}_1 \prn*{\prod_{i=1}^k \bbone_{g_{j(1)}^{\varepsilon(1)}.f_i(\eta_1(0)) = f_i(\eta_1(1))} \cdots \bbone_{g_{j(L_i)}^{\varepsilon(L_i)}.f_i(\eta_1(L_i-1)) = f_i(\eta_1(0))}}} \\
&= \EX_{(g_1, \ldots, g_r)\in G_n^r} \#\{\textup{valid injective functions }f\colon V(\Gamma)\hookrightarrow [n]\} \\
&= L_{\eta_2}(\Ind \textbf{1}).
\end{split}
\end{equation*}
(Recall that $\eta_2$ was determined by $\eta_1$ so this is not surprising).\\

\textbf{Step 4:}
Now is the first time we use the wreath-product structure of $G_n$. We have
$ \forall s, t\in [n], i\in [k], v\in G^n, \sigma\in S_n$:
\[\rho_i(v, \sigma)_{s, t} = \bbone_{\sigma(s)=t} \Phi_i(v(s))\]
(recall $\zeta_i = tr(\Phi_i)$, i.e.\ $\Phi_i\in \Hom(G, GL_{d_i}(\FG))$ is the representation yielding $\zeta_i$).
In particular for every $s, t\in [n], \varepsilon\in \{\pm 1\}$ and $i\in [k]$, 
\[ T_i\prn*{\rho_i(v, \sigma)^{\varepsilon}_{s, t}} = \bbone_{\sigma^{\varepsilon}(s)=t} T_i\Phi_i(v^{\varepsilon}(x)) \]
where 
\begin{equation*}
    x=
    \begin{cases}
    s& \textrm{ if } \varepsilon=1,\\
    t& \textrm{ if } \varepsilon=-1.
    \end{cases}
\end{equation*}
Sampling $g_1, \ldots, g_r\in G_n = G\wr S_n$ at random is the same as sampling $v_1, \ldots, v_r\in G^n, \sigma_1, \ldots, \sigma_r\in S_n$ at random.
For every edge $e = (s\overset{b_j}{\to} t)\in E(\Gamma)$ and $f\colon V(\Gamma)\hookrightarrow [n]$ associate the $G$-random variable $\beta(e)\defeq v_j(f(s))$. Every $\beta(e)\sim U(G)$ is distributed uniformly.
If $e_1, e_2$ are different edges in $\Gamma$ with the same label $b_j$ and $f$ is valid, 
then $v_j(f(src(e_1)))\neq v_j(f(src(e_2)))$ and thus the associated $G$-random variables $\beta(e_1), \beta(e_2)$ are independent (conditioned on the validity of $f$). 
Now for every injective vertex-function $f$,
\begin{equation*}
    \begin{split}
    (\star) 
    \defeq& \EX_{(g_1, \ldots, g_r)\in G_n^r} \brackets*{tr^{D}_1 \prn*{\prod_{i=1}^k T_i\rho_i\prn*{g_{j(1)}^{\varepsilon(1)}}_{f_i(\eta_1(0)),f_i(\eta_1(1))} \cdots T_i\rho_i\prn*{g_{j(L_i)}^{\varepsilon(L_i)}}_{f_i(\eta_1(L_i-1)),f_i(\eta_1(0))}}}\\
    =& \EX_{(g_1, \ldots, g_r)\in G_n^r} \brackets*{\bbone_{f \textup{ is valid}} \cdot tr^{D}_1 \prn*{\prod_{i=1}^k \prod_{e\in {\eta_1}_*(\Gamma_B(w_i))} T_i \Phi_i (\beta(e))}}\\
    =& \PR\prn*{f \textup{ is valid}}\cdot \EX_{(g_1, \ldots, g_r)\in G_n^r} \brackets*{tr^{D}_1 \prn*{\prod_{i=1}^k T_i \Phi_i \prn*{\prod_{e\in {\eta_1}_*(w_i)} \beta(e)}} \middle\vert f \textup{ is valid}}.\\
    \end{split}
\end{equation*}
Here the product $\prod_{e\in {\eta_1}_*(\Gamma_B(w_i))}$ runs over the $\eta_1$-images of $w_i$-edges, according to their order in $\Gamma_B(w_i)$. 
By Proposition~\ref{prop_tensors} (part 1), and since $\beta(e)$ are all independent conditioned on the validity of $f$,
\begin{equation*}
    \begin{split}
        (\star) 
        =& \PR\prn*{f \textup{ is valid}}\cdot \EX_{(g_1, \ldots, g_r)\in G_n^r} \brackets*{\prod_{i=1}^k tr\Phi_i \prn*{\prod_{e\in {\eta_1}_*(w_i)} \beta(e)} \middle\vert f \textup{ is valid}}\\
        =& \PR\prn*{f \textup{ is valid}}\cdot \EX_{\beta\sim U\prn*{E(\Gamma)\to G}} \brackets*{\prod_{i=1}^k \zeta_i \prn*{\prod_{e\in {\eta_1}_*(w_i)} \beta(e)}}.\\
        (\textup{Prop.~\ref{prop_geometric_interpretation_of_E_eta}})\quad\quad\quad = & 
        \PR\prn*{f \textup{ is valid}}\cdot \EX_{\eta_1}[\zeta].
    \end{split}
\end{equation*}
This finishes the proof:
\begin{equation*}
\begin{split}
    L_{\eta_1}'(\Ind\zeta) 
    &= \sum_{f\colon V(\Gamma)\hookrightarrow [n]} (\star) \\
    &= \sum_{f\colon V(\Gamma)\hookrightarrow [n]} \PR\prn*{f \textup{ is valid}}\cdot \EX_{\eta_1}[\zeta]\\
    &= \EX_{\eta_1}[\zeta] \cdot \sum_{f\colon V(\Gamma)\hookrightarrow [n]} \PR\prn*{f \textup{ is valid}}\\
    &= \EX_{\eta_1}[\zeta] \cdot L_{\eta_2}(\Ind \textbf{1})\\
    &= \EX_{\eta_1}[\zeta]\cdot L_{\eta_2}^B(n).
\end{split}
\end{equation*}
\end{proof}

\appendix
\section{Stable Representations: Brief Overview}
\label{appendix_stable_algebra}

Stability is formalized in \cite{Church_2015} for some specific families of groups.
In the case of the symmetric groups $S_n$, stable representations coincide with finitely generated \textbf{FI}-modules, defined in \cite{Church_2015}.
An \textbf{FI}-module over a ring $R$ is a functor from the category of \textbf{F}inite sets and \textbf{I}njective functions, to the category of $R$-modules.
This idea of analyzing stable representations via functors from \enquote{set-like} categories was developed quite extensively since then, 
as demonstrated in the following table: 

\begin{table}[ht!]
\centering
\begin{tabular}{|| c | c | c | c | c||} 
\hline
Paper
& Category
& $G_n$ 
& Description \\ [0.5ex] 
\hline\hline

\cite{Church_2015}
& \textbf{FI} 
& $S_n$ 
& Symmetric groups\\  [1ex] \hline

\cite{WILSON2014269}
& $\textbf{FI}_{\mathcal{W}}$
& $B_n, D_n$
& Classical Weyl groups \\ [1ex] \hline

\cite{Sam_2019}, \cite{gan2016coinduction}, \cite{casto2016fig}
& $\FIG$
& $G\wr S_n$ 
& Wreath products with $S_n$ \\ [1ex] \hline

\cite{Putman_2017},
\cite{gan2017representation} 
& $\textbf{VI}_q, \textbf{VI}$
& $\textup{GL}_n(\F_q)$
& General linear groups over $\F_q$ \\ [1ex] \hline

\cite{Gadish_2017}
& $\textbf{C}$ of $\textbf{FI}$-type 
& $\textup{Aut}_{\textbf{C}}(c)$
& Automorphisms of $\textbf{C}$-objects \\ [1ex] \hline

 \hline
\end{tabular}
\caption{Papers about Categories of FI-type}
\label{table:PapersFI}
\end{table}
\FloatBarrier

\noindent where the categories are defined as
\begin{table}[ht!]
\centering
\begin{tabular}{|| c | c | c ||} 
\hline
Category
& Objects
& Morphisms \\ [0.5ex] 
\hline\hline

\textbf{FI} 
& $\{\{1, \ldots, n\}\}_{n=0}^{\infty}$
& Injective functions \\  [1ex] \hline

$\textbf{FI}_{\mathcal{W}}$
& $\{\{\pm 1, \ldots, \pm n\}\}_{n=0}^{\infty}$
& Injective odd functions\\ [1ex] \hline

$\FIG$
& $\{[n]\times G\}_{n=0}^{\infty}$ 
& $G$-equivariant injections \\ [1ex] \hline

$\textbf{VI}_q$
& $\{\F_q^n\}_{n=0}^{\infty}$
& $\F_q$-linear embeddings \\ [1ex] \hline

 \hline
\end{tabular}
\caption{Categories of FI-type: Objects and Morphisms}
\label{table:categoriesFI}
\end{table}
\FloatBarrier

\noindent (up to some technical details regarding $\textbf{FI}_{\mathcal{W}}$).


Consider the algebra of stable class functions on $G\wr S_{\bullet}$.
The \enquote{integral} elements of this algebra form the ring of stable (virtual) characters. 
This ring is the Grothendieck ring of the category of algebraic representations of $G\wr S_{\infty}$, that is, the ring of characters of algebraic\footnote{An algebraic representation of $S_{\infty}$ is a sub-quotient of a tensor power of the natural permutation representation $\textup{nat}(S_{\infty}) = \Q^{\infty}$. The analog for a wreath product is a sub-quotient of product of representations of the form $\Indphi$, see Definition~\ref{def_Ind_phi}. } representations of $G\wr S_{\infty}$.
See \cite{sam_snowden_stability} for more about algebraic representations and representation stability.

\section{Glossary}

\begin{longtable}{||c | c | c ||} 
 \hline
 Notation 
 & Description 
 & Comments \\[0.7ex] 
 \hline\hline
 $[n]$              & $\{1, \ldots, n\}$                    &                                   \\[0.7ex] 
 $(n)_t$            & $ n\cdot (n-1)\cdots (n-t+1)$         & Falling factorial                 \\[0.7ex]
 $F_r$              & $\textup{Free}(\{b_1, \ldots, b_r\})$ & The ambient free group            \\[0.7ex]
 $\Omega_B$         & $\Gamma_{\{b_1, \ldots, b_r\}}(F_r) $ & The bouquet with $r$ petals       \\[0.7ex]
 $\eta$             & Morphism of multi core graphs         & Also morphism in $\mucg$          \\[0.7ex]
 $\lambda$          & Integer partition                     & Also a Young diagram              \\[0.7ex]
 $\VecLambda$       & Multi partition                       & A function to Young diagrams      \\[0.7ex]
 $G$                & A finite group                        &                                   \\[0.7ex] 
 $\hat{G}$          & Irreducible characters of $G$         &                                   \\[0.7ex] 
 Irrep              & Irreducible representation            &                                   \\[0.7ex] 
 Rep-stable         & Representation-stable                 & A property of a group action      \\[0.7ex] 
 $\textup{char}(G)$ & All characters of $G$                 &                                   \\[0.7ex]
 $\conj(G)$         & Conjugacy classes of $G$              &                                   \\[0.7ex]
 $\FG$              & A splitting fielf of $G$              & In characteristic 0               \\[0.7ex]
 $\FGconjG$         & Class functions $G\to \FG$            &                                   \\[0.7ex]
 $G_n$              & $G\wr S_n$                            &                                   \\[0.7ex] 
 $C_m$              & $\Z/m\Z$                              & $C_{\infty} = \Ss^1$              \\[0.7ex]
 $\textbf{1}$       & $\textbf{1}\colon G\to \{1\}$         & The trivial character             \\[0.7ex]
 $w$                & Element of $F_r$                      &                                   \\[0.7ex]
 $\EX_w[f]$         & $\EX_{\alpha\sim U(\Hom(F_r, G))} [f(\alpha(w))]$ & The $w$-measure of $f$ \\ [1ex] 
 \hline
\caption{Glossary} 
\label{table:glossary}
\end{longtable}
\FloatBarrier


\printbibliography[heading=bibintoc]
\noindent
Yotam Shomroni,\\
School of Mathematical Sciences,\\
Tel Aviv University,\\
Tel Aviv, 6997801, Israel\\
yotam.shomroni@gmail.com

\end{document}